\title[Bar category of modules and homotopy adjunction for tensor functors]{Bar category of modules and \\ homotopy adjunction for tensor functors}
\author{Rina Anno}
\email{ranno@ksu.edu}
\address{Department of Mathematics \\
Kansas State University \\
138 Cardwell Hall \\
Manhattan, KS 66506\\
USA}
\author{Timothy Logvinenko} 
\email{LogvinenkoT@cardiff.ac.uk} 
\address{School of Mathematics\\ 
Cardiff University\\
Senghennydd Road\\
Cardiff, CF24 4AG\\
UK}
\let\amsamp=&
\gdef\smallampmatrix{%
  \begingroup
  \let&=\amsamp
  \begin{smallmatrix}%
}
\gdef\endsmallampmatrix{\end{smallmatrix}\endgroup}
\DeclareMathOperator{\img}{Im}
\DeclareMathOperator{\homm}{Hom}
\DeclareMathOperator{\eend}{End}
\DeclareMathOperator{\picr}{Pic}
\DeclareMathOperator{\tot}{Tot}
\DeclareMathOperator{\cl}{Cl}
\DeclareMathOperator{\ev}{ev}
\DeclareMathOperator{\trace}{tr}
\DeclareMathOperator{\composition}{cmps}
\DeclareMathOperator{\action}{act}
\DeclareMathOperator{\mlt}{mlt}
\DeclareMathOperator{\modd}{\bf Mod}
\DeclareMathOperator{\lder}{\bf L}
\DeclareMathOperator{\rder}{\bf R}
\DeclareMathOperator{\ldertimes}{\overset{\lder}{\otimes}}
\DeclareMathOperator{\id}{Id}
\DeclareMathOperator{\cone}{Cone}
\DeclareMathOperator{\conv}{Conv}
\DeclareMathOperator{\opp}{{opp}}
\DeclareMathOperator{\fg}{{\it fg}}
\DeclareMathOperator{\qrep}{\it \mathcal{Q}r}
\DeclareMathOperator{\hproj}{\mathcal{P}}
\DeclareMathOperator{\acyc}{\it \mathcal{A}c}
\DeclareMathOperator{\semi-free}{\mathcal{S}\mathcal{F}}
\DeclareMathOperator{\sffg}{\mathcal{S}\mathcal{F}_{\fg}}
\DeclareMathOperator{\perf}{{\it \mathcal{P}erf}}
\DeclareMathOperator{\hmtpy}{{Ho}}
\DeclareMathOperator{\tria}{{Tria}}
\DeclareMathOperator{\twcx}{{Tw}}
\DeclareMathOperator{\pretriag}{{Pre\text{-}Tr}}
\DeclareMathOperator{\DGFun}{{DGFun}}
\DeclareMathOperator{\TPair}{{\bf TPair}}
\DeclareMathOperator{\alg}{{\bf Alg}}
\DeclareMathOperator{\strict}{{strict}}
\DeclareMathOperator{\DGMod}{\it{\mathcal{D}\mathcal{G}\mathcal{M}od}}
\begin{document}

\def\bv{\mathbf{v}}
\def\kgc_{K^*_G(\mathbb{C}^n)}
\def\kgchi_{K^*_\chi(\mathbb{C}^n)}
\def\kgcf_{K_G(\mathbb{C}^n)}
\def\kgchif_{K_\chi(\mathbb{C}^n)}
\def\gpic_{G\text{-}\picr}
\def\gcl_{G\text{-}\cl}
\def\trch_{{\chi_{0}}}
\def\regring{{R}}
\def\regrep{{V_{\text{reg}}}}
\def\givrep{{V_{\text{giv}}}}
\def\lbar{{(\mathbb{Z}^n)^\vee}}
\def\genpx_{{p_X}}
\def\genpy_{{p_Y}}
\def\genpcn_{p_{\mathbb{C}^n}}
\def\gnat{gnat}
\def\twalg{{\regring \rtimes G}}
\def\L{{\mathcal{L}}}
\def\O{{\mathcal{O}}}
\def\gcd{\mbox{gcd}}
\def\lcm{\mbox{lcm}}
\def\tf{{\tilde{f}}}
\def\tD{{\tilde{D}}}
\def\A{{\mathcal{A}}}
\def\B{{\mathcal{B}}}
\def\C{{\mathcal{C}}}
\def\D{{\mathcal{D}}}
\def\F{{\mathcal{F}}}
\def\H{{\mathcal{H}}}
\def\L{{\mathcal{L}}}
\def\R{{\mathcal{R}}}
\def\T{{\mathcal{T}}}
\def\U{{\mathcal{U}}}
\def\barA{{\bar{\mathcal{A}}}}
\def\tildeA{{\tilde{\mathcal{A}}}}
\def\barAi{{\bar{\mathcal{A}}_1}}
\def\barAj{{\bar{\mathcal{A}}_2}}
\def\barB{{\bar{\mathcal{B}}}}
\def\barC{{\bar{\mathcal{C}}}}
\def\barD{{\bar{\mathcal{D}}}}
\def\barT{{\bar{T}}}
\def\M{{\mathcal{M}}}
\def\Aopp{{\A^{\opp}}}
\def\Bopp{{\B^{\opp}}}
\def\Copp{{\C^{\opp}}}
\def\aA{\leftidx{_{a}}{\A}}
\def\bA{\leftidx{_{b}}{\A}}
\def\Aa{{\A_a}}
\def\Ea{E_a}
\def\aE{\leftidx{_{a}}{E}{}}
\def\Eb{E_b}
\def\bE{\leftidx{_{b}}{E}{}}
\def\Fa{F_a}
\def\aF{\leftidx{_{a}}{F}{}}
\def\Fb{F_b}
\def\bF{\leftidx{_{b}}{F}{}}
\def\aM{\leftidx{_{a}}{M}{}}
\def\aMb{\leftidx{_{a}}{M}{_{b}}}
\def\Ma{{M_a}}
\def\modk{{\modd\text{-}k}}
\def\kmodk{{k\text{-}\modd\text{-}k}}
\def\modA{{\modd\text{-}\A}}
\def\modbar{{\overline{\modd}}}
\def\modbarA{{\overline{\modd}\text{-}\A}}
\def\modbarAopp{{\overline{\modd}\text{-}\Aopp}}
\def\modB{{\modd\text{-}\B}}
\def\modbarB{{\overline{\modd}\text{-}\B}}
\def\modbarBopp{{\overline{\modd}\text{-}\Bopp}}
\def\modAB{{\modd\text{-}\A}\text{-}\B}
\def\modbarAB{{\overline{\modd}\text{-}\A\text{-}\B}}
\def\modBA{{\modd\text{-}\B}\text{-}\A}
\def\modbarBA{{\overline{\modd}\text{-}\B\text{-}\A}}
\def\modAA{{\modd\text{-}\A}\text{-}\A}
\def\modbarAA{{\overline{\modd}\text{-}\A\text{-}\A}}
\def\modBB{{\modd\text{-}\B}\text{-}\B}
\def\modbarBB{{\overline{\modd}\text{-}\B\text{-}\B}}
\def\modbarAC{{\overline{\modd}\text{-}\A\text{-}\C}}
\def\modCA{{\modd\text{-}\C}\text{-}\A}
\def\modbarCA{{\overline{\modd}\text{-}\C\text{-}\A}}
\def\modbarBC{{\overline{\modd}\text{-}\B\text{-}\C}}
\def\modCB{{\modd\text{-}\C}\text{-}\B}
\def\modbarCB{{\overline{\modd}\text{-}\C\text{-}\B}}
\def\modCD{{\modd\text{-}\C}\text{-}\D}
\def\modbarCD{{\overline{\modd}\text{-}\C\text{-}\D}}
\def\modDB{{\modd\text{-}\D}\text{-}\B}
\def\modbarDB{{\overline{\modd}\text{-}\D\text{-}\B}}
\def\modDC{{\modd\text{-}\D}\text{-}\C}
\def\modbarDC{{\overline{\modd}\text{-}\D\text{-}\C}}
\def\AmodbarA{\A\text{-}{\overline{\modd}\text{-}\A}}
\def\AmodbarB{\A\text{-}{\overline{\modd}\text{-}\B}}
\def\AmodbarC{\A\text{-}{\overline{\modd}\text{-}\C}}
\def\AmodbarD{\A\text{-}{\overline{\modd}\text{-}\D}}
\def\BmodbarA{\B\text{-}{\overline{\modd}\text{-}\A}}
\def\BmodbarB{\B\text{-}{\overline{\modd}\text{-}\B}}
\def\BmodbarC{\B\text{-}{\overline{\modd}\text{-}\C}}
\def\BmodbarD{\B\text{-}{\overline{\modd}\text{-}\D}}
\def\CmodbarA{\C\text{-}{\overline{\modd}\text{-}\A}}
\def\CmodbarB{\C\text{-}{\overline{\modd}\text{-}\B}}
\def\CmodbarC{\C\text{-}{\overline{\modd}\text{-}\C}}
\def\CmodbarD{\C\text{-}{\overline{\modd}\text{-}\D}}
\def\DmodbarA{\D\text{-}{\overline{\modd}\text{-}\A}}
\def\DmodbarB{\D\text{-}{\overline{\modd}\text{-}\B}}
\def\DmodbarC{\D\text{-}{\overline{\modd}\text{-}\C}}
\def\DmodbarD{\D\text{-}{\overline{\modd}\text{-}\D}}
\def\sfA{{\semi-free(\A)}}
\def\sfB{{\semi-free(\B)}}
\def\sffgA{{\sffg(\A)}}
\def\sffgB{{\sffg(\B)}}
\def\hprojA{{\hproj(\A)}}
\def\hprojB{{\hproj(\B)}}
\def\qrepA{{\qrep(\A)}}
\def\qrepB{{\qrep(\B)}}
\def\opp{{\text{opp}}}
\def\perfsf{{\semi-free^{\perf}}}
\def\prfhpr{{\hproj^{\scriptscriptstyle\perf}}}
\def\prfhprA{{\prfhpr(\A)}}
\def\prfhprB{{\prfhpr(\B)}}
\def\prfhprAopp{{\prfhpr(\Aopp)}}
\def\prfhprBopp{{\prfhpr(\Bopp)}}
\def\perfsfA{{\perfsf(\A)}}
\def\perfsfB{{\perfsf(\B)}}
\def\qrhpr{{\hproj^{qr}}}
\def\qrhprA{{\qrhpr(\A)}}
\def\qrhprB{{\qrhpr(\B)}}
\def\qrsf{{\semi-free^{qr}}}
\def\qrsf{{\semi-free^{qr}}}
\def\qrsfA{{\qrsf(\A)}}
\def\qrsfB{{\qrsf(\B)}}
\def\Aperfsf{{\semi-free^{\A\text{-}\perf}(\AbimB)}}
\def\Bperfsf{{\semi-free^{\B\text{-}\perf}(\AbimB)}}
\def\Aprfhpr{{\hproj^{\A\text{-}\perf}(\AbimB)}}
\def\Bprfhpr{{\hproj^{\B\text{-}\perf}(\AbimB)}}
\def\Aqrhpr{{\hproj^{\A\text{-}qr}(\AbimB)}}
\def\Bqrhpr{{\hproj^{\B\text{-}qr}(\AbimB)}}
\def\Aqrsf{{\semi-free^{\A\text{-}qr}(\AbimB)}}
\def\Bqrsf{{\semi-free^{\B\text{-}qr}(\AbimB)}}
\def\modAopp{{\modd\text{-}\Aopp}}
\def\modBopp{{\modd\text{-}\Bopp}}
\def\AmodA{{\A\text{-}\modd\text{-}\A}}
\def\AmodB{{\A\text{-}\modd\text{-}\B}}
\def\AmodC{{\A\text{-}\modd\text{-}\C}}
\def\AmodD{{\A\text{-}\modd\text{-}\D}}
\def\BmodA{{\B\text{-}\modd\text{-}\A}}
\def\BmodB{{\B\text{-}\modd\text{-}\B}}
\def\BmodC{{\B\text{-}\modd\text{-}\C}}
\def\BmodD{{\B\text{-}\modd\text{-}\D}}
\def\CmodA{{\C\text{-}\modd\text{-}\A}}
\def\CmodB{{\C\text{-}\modd\text{-}\B}}
\def\CmodC{{\C\text{-}\modd\text{-}\C}}
\def\CmodD{{\C\text{-}\modd\text{-}\D}}
\def\DmodA{{\D\text{-}\modd\text{-}\A}}
\def\DmodB{{\D\text{-}\modd\text{-}\B}}
\def\DmodC{{\D\text{-}\modd\text{-}\C}}
\def\DmodD{{\D\text{-}\modd\text{-}\D}}
\def\AbimA{{\A\text{-}\A}}
\def\AbimC{{\A\text{-}\C}}
\def\BbimA{{\B\text{-}\A}}
\def\BbimB{{\B\text{-}\B}}
\def\BbimC{{\B\text{-}\C}}
\def\CbimA{{\C\text{-}\A}}
\def\CbimB{{\C\text{-}\B}}
\def\CbimC{{\C\text{-}\C}}
\def\AbimD{{\A\text{-}\D}}
\def\BbimD{{\B\text{-}\D}}
\def\CbimD{{\C\text{-}\D}}
\def\DbimD{{\D\text{-}\D}}
\def\DbimA{{\D\text{-}\A}}
\def\DbimB{{\D\text{-}\B}}
\def\DbimC{{\D\text{-}\C}}
\def\AhprA{{\hproj\left(\AbimA\right)}}
\def\BhprB{{\hproj\left(\BbimB\right)}}
\def\AhprB{{\hproj\left(\AbimB\right)}}
\def\BhprA{{\hproj\left(\BbimA\right)}}
\def\AbarA{{\overline{\A\text{-}\A}}}
\def\AbarB{{\overline{\A\text{-}\B}}}
\def\BbarA{{\overline{\B\text{-}\A}}}
\def\BbarB{{\overline{\B\text{-}\B}}}
\def\QAbimB{{Q\A\text{-}\B}}
\def\AbimB{{\A\text{-}\B}}
\def\AonebimB{{\A_1\text{-}\B}}
\def\AtwobimB{{\A_2\text{-}\B}}
\def\BbimA{{\B\text{-}\A}}
\def\Aperf{{\A\text{-}\perf}}
\def\Bperf{{\B\text{-}\perf}}
\def\MddA{{M^{\tilde{\A}}}}
\def\MddB{{M^{\tilde{\B}}}}
\def\MhdA{{M^{h\A}}}
\def\MhdB{{M^{h\B}}}
\def\NhdB{{N^{h\B}}}
\def\Cat{{Cat}}
\def\DGCat{{DG\text{-}Cat}}
\def\HoDGCat{{\hmtpy(\DGCat)}}
\def\HoDGCatV{{\hmtpy(\DGCat_\mathbb{V})}}
\def\tr{{tr}}
\def\pretr{{pretr}}
\def\kctr{{kctr}}
\def\PreTrCat{{\DGCat^\pretr}}
\def\KcTrCat{{\DGCat^\kctr}}
\def\HoPretrCat{{\hmtpy(\PreTrCat)}}
\def\HoKcTrCat{{\hmtpy(\KcTrCat)}}
\def\Aquasirep{{\A\text{-}qr}}
\def\QAquasirep{{Q\A\text{-}qr}}
\def\Bquasirep{{\B\text{-}qr}} 
\def\lderA{{\tilde{\A}}} 
\def\lderB{{\tilde{\B}}} 
\def\adjunit{{\mathrm{adj.unit}}}
\def\adjcounit{{\mathrm{adj.counit}}}
\def\degzero{{\mathrm{deg.0}}}
\def\degone{{\mathrm{deg.1}}}
\def\degminusone{{\mathrm{deg.-1}}}
\def\bareta{{\overline{\eta}}}
\def\barzeta{{\overline{\zeta}}}
\def\Ract{{R {\action}}}
\def\barRact{{\overline{\Ract}}}
\def\actL{{{\action} L}}
\def\baractL{{\overline{\actL}}}
\def\Ainfty{{A_{\infty}}}
\def\noddinf{{{\bf Nod}_{\infty}}}
\def\noddinfstr{{{\bf Nod}^{\text{strict}}_{\infty}}}
\def\noddinfA{{\noddinf\A}}
\def\noddinfB{{\noddinf\B}}
\def\noddinfAB{{\noddinf\AbimB}}
\def\noddinfBA{{\noddinf\BbimA}}
\def\noddinfCD{{\noddinf\CbimD}}
\def\noddinfu{{({\bf Nod}_{\infty})_u}}
\def\noddinfuA{{(\noddinfA)_u}}
\def\noddinfhu{{({\bf Nod}_{\infty})_{hu}}}
\def\noddinfhuA{{(\noddinfA)_{hu}}}
\def\noddinfdg{{({\bf Nod}_{\infty})_{dg}}}
\def\noddinfdgA{{(\noddinfA)_{dg}}}
\def\noddinfdgAA{{(\noddinf\AbimA)_{dg}}}
\def\noddinfdgAB{{(\A\text{\rm-}\noddinf\text{\rm-}\B)_{dg}}}
\def\noddinfdgB{{(\noddinfB)_{dg}}}
\def\moddinf{{\modd_{\infty}}}
\def\moddinfA{{\modd_{\infty}\A}}
\def\infbar{{B_\infty}}
\def\reduced{{\text{red}}}
\def\naug{{\text{na}}}
\def\infbarnaug{{B^{\naug}_\infty}}
\def\infbarbim{{B^{\text{bim}}_\infty}}
\def\infbarA{{B^\A_\infty}}
\def\infbarB{{B^\B_\infty}}
\def\infbarC{{B^\C_\infty}}
\def\inftimes{{\overset{\infty}{\otimes}}}
\def\infhom{{\overset{\infty}{\homm}}}
\def\barhom{{\mathrm H\overline{\mathrm{om}}}}
\def\barend{{\overline{\eend}}}
\def\bartimes{{\;\overline{\otimes}}}
\def\triaA{{\tria \A}}
\def\TPairdg{{\TPair^{dg}}}
\def\algA{{\alg(\A)}}
\def\Ainfty{{A_{\infty}}}
\def\alphahat{{\hat{\alpha}}}
\def\betahat{{\hat{\beta}}}
\def\gammahat{{\hat{\gamma}}}

\theoremstyle{definition}
\newtheorem{defn}{Definition}[section]
\newtheorem*{defn*}{Definition}
\newtheorem{exmpl}[defn]{Example}
\newtheorem*{exmpl*}{Example}
\newtheorem{exrc}[defn]{Exercise}
\newtheorem*{exrc*}{Exercise}
\newtheorem*{chk*}{Check}
\newtheorem*{remarks*}{Remarks}
\theoremstyle{plain}
\newtheorem{theorem}{Theorem}[section]
\newtheorem*{theorem*}{Theorem}
\newtheorem{conj}[defn]{Conjecture}
\newtheorem*{conj*}{Conjecture}
\newtheorem{prps}[defn]{Proposition}
\newtheorem*{prps*}{Proposition}
\newtheorem{cor}[defn]{Corollary}
\newtheorem*{cor*}{Corollary}
\newtheorem{lemma}[defn]{Lemma}
\newtheorem*{claim*}{Claim}
\newtheorem{Specialthm}{Theorem}
\renewcommand\theSpecialthm{\Alph{Specialthm}}
\numberwithin{equation}{section}
\renewcommand{\textfraction}{0.001}
\renewcommand{\topfraction}{0.999}
\renewcommand{\bottomfraction}{0.999}
\renewcommand{\floatpagefraction}{0.9}
\setlength{\textfloatsep}{5pt}
\setlength{\floatsep}{0pt}
\setlength{\abovecaptionskip}{2pt}
\setlength{\belowcaptionskip}{2pt}
\begin{abstract}
Given a DG-category $\A$ we introduce the 
\em bar category of modules \rm $\modbarA$. It is a DG-enhancement
of the derived category $D(\A)$ of $\A$ which is isomorphic to 
the category of DG $\A$-modules with $\Ainfty$-morphisms between 
them. However, it is defined intrinsically in the language of 
DG-categories and requires no complex machinery or sign conventions
of $\Ainfty$-categories. We define for these bar categories 
Tensor and Hom bifunctors, dualisation functors, and a convolution
of twisted complexes. The intended application is to working with 
DG-bimodules as enhancements of exact functors between triangulated 
categories. As a demonstration we develop a homotopy 
adjunction theory for tensor functors between derived categories 
of DG-categories. It allows us to show in an enhanced setting that   
given a functor $F$ with left and right adjoints $L$ and $R$ 
the functorial complex 
$FR \xrightarrow{F\action{R}} FRFR \xrightarrow{FR\trace - \trace{FR}} 
FR \xrightarrow{\trace} \id$ lifts to a canonical twisted 
complex whose convolution is the square of the spherical twist of $F$. 
We then write down four induced functorial Postnikov systems 
computing this convolution. 
\end{abstract}

\maketitle

\section{Introduction}
\label{section-introduction}

DG-enhancements of triangulated categories were introduced by
Bondal and Kapranov in
\cite{BondalKapranov-EnhancedTriangulatedCategories} to overcome
the axiomatic imperfections of the latter 
\cite{Verdier-DesCategoriesDeriveesDesCategoriesAbeliennes}. 
A DG-enhancement of a triangulated category $\mathcal{T}$ is a
pretriangulated differentially graded (DG) category $\A$ with 
$H^0(\A) \simeq \T$. Working formally in $\A$ and truncating 
down to $\T$ fixes a number of issues. Many constructions 
of this kind are independent of the choices of lifts to $\A$ and 
even $\A$ itself. See \cite{Keller-DerivingDGCategories},
\cite{Toen-LecturesOnDGCategories}, 
\cite[\S2]{AnnoLogvinenko-SphericalDGFunctors} for an introduction to 
DG-categories, 
\cite{BondalKapranov-EnhancedTriangulatedCategories},
\cite{LuntsOrlov-UniquenessOfEnhancementForTriangulatedCategories}, 
\cite[\S4]{AnnoLogvinenko-SphericalDGFunctors} for an introduction 
to DG-enhancements, and  
\cite{Toen-TheHomotopyTheoryOfDGCategoriesAndDerivedMoritaTheory}
for some key technical results.  

Most triangulated categories which arise in algebra and geometry 
are derived categories. These are $H^0(-)$ truncations of 
DG-categories of complexes of objects in an abelian category, 
and thus possess a natural DG-enhancement. 
Examples include the derived categories of sheaves or constructible 
sheaves on a topological space or of quasi-coherent sheaves, coherent 
sheaves, or $D$-modules on an algebraic variety. Moreover, 
in a number of these examples (e.g. the derived
category of any Grothendieck category) this natural enhancement 
is unique up to quasi-equivalence
\cite{LuntsOrlov-UniquenessOfEnhancementForTriangulatedCategories}, 
\cite{CanonacoStellari-UniquenessOfDGEnhancementsForTheDerivedCategoryOfAGrothendieckCategory}.
Thus functorial constructions carried out in 
a DG-enhancement will produce the same results in the triangulated 
category regardless of the choice of the enhancement. 
For technical reasons, it is best to work in a Morita framework:
instead of enhancing $\T$ with some DG-category $\A$ we enhance
$\T$ with the DG-category $\modA$ of modules over $\A$. 

Let $\T$ and $\U$ be triangulated categories. The exact functors
$\T \rightarrow \U$ do not form a triangulated category. 
However, if we Morita enhance $\T$ and $\U$ by DG-categories 
$\A$ and $\B$, by a fundamental result of To{\" e}n 
\cite{Toen-TheHomotopyTheoryOfDGCategoriesAndDerivedMoritaTheory}
the enhanceable functors form a triangulated category
equivalent to the derived category of $
\B$-perfect $\A$-$\B$-bimodules. In algebraic 
geometry, let $\T = D(X)$ and $\U = D(Y)$
be the derived categories of coherent sheaves 
of separated schemes $X$ and $Y$ of finite type over a field. 
The DG-category of perfect $\AbimB$-bimodules DG-enhances 
$D(X \times Y)$ and the enhanceable functors are the Fourier-Mukai transforms 
\cite{Toen-TheHomotopyTheoryOfDGCategoriesAndDerivedMoritaTheory}, 
\cite{LuntsSchnurer-NewEnhancementsOfDerivedCategoriesOfCoherentSheavesAndApplications}. Thus studying 
$\AbimB$-bimodules as DG-enhancements of exact functors 
$D(\A) \rightarrow D(\B)$ can be viewed as the universal 
Fourier-Mukai theory for enhanced triangulated categories. 

In their work on spherical functors \cite{AnnoLogvinenko-SphericalDGFunctors} 
and $\mathbb{P}^n$-functors \cite{AnnoLogvinenko-PFunctors}
the authors used the above approach for various 
technical constructions such as taking cones of natural 
transformations and, more generally, convolutions of complexes of functors. 
Though working in quasi-equivalent enhancements produces 
the same results, we discovered that for carrying out 
explicit computations choosing a suitable enhancement makes
a world of difference. In this paper we define and study 
the DG-enhancement framework for the derived categories 
of DG-modules and bimodules which we found most suitable. 
As a demonstration, we construct an explicit homotopy 
adjunction theory and thus explicit $2$-categorical adjunctions
for DG-bimodules. 

Let $\A$ be a DG-category and let $\modA$ be the DG-category 
of right $\A$-modules. Two enhancements commonly used in 
the literature for $D(\A)$ are the subcategory $\hprojA$ of
the $h$-projective modules in $\modA$, and the Drinfield quotient 
$\modA / \acyc(\A)$ by the subcategory $\acyc(\A)$ of 
acyclic modules \cite{Drinfeld-DGQuotientsOfDGCategories}. 
Neither turned out to be suitable for our purposes. 
The problem with the Drinfeld quotient is that its morphisms 
are inconvenient to work with explicitly.  The problem with 
$\hprojA$ is that when working with bimodules
the diagonal bimodule $\A$, which corresponds to the
identity functor $D(\A) \rightarrow D(\A)$, is not $h$-projective.
Hence every construction involving the identity functor has to be
$h$-projectively resolved leading to many formulas becoming 
vastly more complicated than they should be, 
as seen in \cite{AnnoLogvinenko-SphericalDGFunctors}.

This can be fixed by working with DG $\A$-modules
and $\Ainfty$-morphisms between them. In other words, 
the full subcategory $\noddinfdgA$ of the DG-category 
$\noddinfA$ of $\Ainfty$ $\A$-modules which consists 
of DG-modules. In $\noddinfdgA$ all quasi-isomorphisms are 
already homotopy equivalences, there is no need to 
take resolutions and $D(\A) \simeq H^0(\noddinfdgA)$. 
However, the machinery of $\Ainfty$-categories lends itself 
poorly to explicit computations due to e.g. 
the complicated sign conventions involved. It seems
wasteful to use the full generality of the $\Ainfty$-language
to only consider $\Ainfty$-morphisms between DG-modules
over a DG-category. To this end we introduce the 
\em bar category of modules \rm over $\A$, 
a technical tool which simplifies the $\Ainfty$-machinery involved 
to the extent actually necessary. It is a category 
\em isomorphic \rm to $\noddinfdgA$, yet it has an 
intrinsic definition entirely in terms of DG-modules 
and the bar complex $\barA$. This builds on the ideas  
of \cite[\S6.6]{Keller-DerivingDGCategories}. 
Keller works with a set of compact generators to obtain 
a Morita enhancement of $D(\A)$. We work with all the $\A$-modules
to obtain a usual enhancement of $D(\A)$ and we establish the 
isomorphism to $\noddinfdgA$:
\begin{defn*}[Definition \ref{defn-bar-category-of-modules}]
Let $\A$ be a DG-category. Define the \em bar category of modules \rm
$\modbarA$ as follows:
\begin{itemize}
\item 
The object set of $\modbarA$ is the same as that of $\modA$:
DG-modules over $\A$.  
\item For any $E,F \in \modA$ set
\begin{align*}
\homm_{\modbarA}(E,F) = \homm_\A(E \otimes_\A \barA, F)
\end{align*}
and write $\barhom_\A(E,F)$ to denote this $\homm$-complex. 
\item For any $E \in \modA$ set $\id_E \in \barhom_\A(E,E)$
to be the element given by 
\begin{align*}
E \otimes_\A \barA \xrightarrow{\id \otimes \tau} E \otimes_\A \A
\xrightarrow{\sim} E
\end{align*}
where $\tau \colon \barA \rightarrow \A$ is the canonical projection. 
\item For any $E,F,G \in \modA$ define the composition map 
\begin{align*}
\barhom_\A(F,G) \otimes_k \barhom_\A(E,F)
\longrightarrow 
\barhom_\A(E,G)
\end{align*}
by setting for any $\alpha\colon E \otimes_\A \barA \rightarrow F$
and $\beta\colon F \otimes_\A \barA \rightarrow G$ 
their composition to be 
\begin{align*}
E \otimes_\A \barA \xrightarrow{\id \otimes \Delta}
E \otimes_\A \barA \otimes_\A \barA
\xrightarrow{\alpha \otimes \id}
F \otimes_\A \barA 
\xrightarrow{\beta}
G
\end{align*}
where $\Delta \colon \barA \rightarrow \barA \otimes_\A \barA$ is the
canonical comultiplication. 
\end{itemize}
\end{defn*}
In Prop.~\ref{prps-modbarA-to-noddinfdgA-isomorphism} we show
that $\modbarA$ is isomorphic to $\noddinfdgA$, thus it is also 
a DG-enhancement of $D(\A)$.
Let $\B$ be another DG category. We similarly 
define $\AmodbarB$, the bar category of DG-bimodules. 
We write down bifunctors $\bartimes$ and $\barhom$
for DG-bimodules which correspond to their $\Ainfty$-counterparts. 
In Prop.~\ref{prps-bartimes-and-barhom-are-dg-adjoint}
we prove the Tensor-Hom adjunction for 
$\bartimes$ and $\barhom$ and give formulas for 
its adjunction\hspace{0.10cm}units and counits.
Next is the dualisation theory: we define the 
functors $(-)^\barA$ and $(-)^\barB$ which are equivalences on 
the subcategories of $\A$- and $\B$-perfect bimodules, 
respectively. We show in 
Lemma~\ref{lemma-bringing-a-factor-into-the-barhom-is-a-homotopy-equivalence}
that for any $M \in \AmodbarB$ the bimodules $M^\barA$ and $M^\barB$ 
enhance the derived functors $\rder\homm_\A(M,-)$ and
$\rder\homm_\B(M,-)$ whenever $M$ is $\A$- and $\B$-perfect, respectively. 
This is crucial for the homotopy adjunction theory we develop later. 

The constructions such as cones of natural transformations
or convolutions of complexes of functors are usually done 
via twisted complexes \cite{BondalKapranov-EnhancedTriangulatedCategories}, 
\cite[\S3]{AnnoLogvinenko-SphericalDGFunctors}. Ideally, 
this needs the DG-enhancement to be strongly pretriangulated,
which the bar category $\AmodbarB$ is not. 
However, in Defn.~\ref{defn-convolution-in-modbarA} we define 
the convolution functor 
which is a quasi-equivalence and a homotopy inverse to 
the inclusion $\AmodbarB \hookrightarrow \pretriag(\AmodbarB)$. 
Together with the formulas for Tensor, Hom and dualisation of 
twisted complexes of bimodules given in 
Lemmas~\ref{lemma-tensoring-and-homming-for-twisted-complexes}
and \ref{lemma-duals-of-twisted-complexes}
it enables the constructions we need. 

Next we examine the biggest drawback of bar categories:
the natural map $\A \bartimes_\A M \xrightarrow{\alpha} M$, 
analogous to the $\A$-action isomorphism $\A \otimes_\A M \simeq M$, 
is not an isomorphism, but only a homotopy equivalence. 
We study the higher homotopies involved. It is well known 
that any homotopy equivalence in a DG-category can be completed
to a certain universal system
\eqref{eqn-quiver-presentation-of-generating-cofibration-category}
of morphisms and relations
\cite[\S3.7]{Drinfeld-DGQuotientsOfDGCategories}\cite{Tabuada-UneStructureDeCategorieDeModelesDeQuillenSurLaCategorieDesDG-Categories}
\cite[App]{AnnoLogvinenko-SphericalDGFunctors}. We do better and write down 
a homotopy inverse $\beta_0\colon M \rightarrow \A \bartimes_\A M$
and a degree $-1$ map $\theta$ for which: 
\begin{prps*}[Prop.~\ref{prps-alpha-beta-theta}]
Let $\A$ and $\B$ be DG-categories and let $M \in \AmodbarB$. 
The sub-DG-category of $\AmodbarB$ generated by $\alpha, \beta_0$ and 
$\theta$ is the free DG-category generated by these modulo
the following relations:
\begin{equation}
\label{eqn-quiver-presentation-of-alpha-beta-theta-intro}
\begin{minipage}[c][1in][c]{1.5in}
$d\alpha = d\beta_0 = 0$, \\
$d\theta = \id - \beta_0 \circ \alpha$, \\
$0 =  \alpha \circ \beta_0 - \id$, \\
$\alpha \circ \theta = 0$. 
\end{minipage}
\quad\quad
\begin{tikzcd}[column sep={2cm},row sep={1.5cm}] 
M
\ar[bend left=20]{rr}{\beta_0}
& &
\A \bartimes_\A M
\ar[bend left=20]{ll}{\alpha}
\ar[out=30, in=-30,loop,distance=6em, dotted]{}{\theta}
\end{tikzcd}
\end{equation}
\end{prps*}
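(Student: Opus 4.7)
The plan is to first construct $\beta_0$ and $\theta$ explicitly from the standard structure of the bar complex, verify the four listed relations by direct computation, and then establish freeness by enumerating normal-form morphisms in the abstract free DG-category and showing their images in $\AmodbarB$ are $k$-linearly independent.

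For the construction, unfolding the definition of $\bartimes_\A$ identifies $\A \bartimes_\A M$ with $\barA \otimes_\A M$ as an $\A$-$\B$-bimodule and $\alpha$ with the map induced by the canonical augmentation $\tau\colon \barA \to \A$. I would define $\beta_0 := \iota \otimes \id_M$, where $\iota \colon \A \to \barA$ is the canonical section of $\tau$ (inclusion as the bar-length-zero summand), and $\theta := s \otimes \id_M$, where $s \colon \barA \to \barA$ is the standard degree $-1$ contracting homotopy of the two-sided bar resolution, normalised so that $\tau s = 0$, $s \iota = 0$, and $d(s) = \id_\barA - \iota \tau$ as endomorphisms of $\barA$ in $\AmodA$. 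Each of the four relations then reduces to a basic property of $\iota$, $\tau$, $s$: closedness of $\alpha$ and $\beta_0$ follows from $\tau$ and $\iota$ being chain maps; $\alpha \circ \beta_0 = \id_M$ follows from $\tau \iota = \id_\A$; $\alpha \circ \theta = 0$ follows from $\tau s = 0$; and $d(\theta) = \id - \beta_0 \alpha$ follows by applying $- \otimes_\A \id_M$ to the homotopy identity $d(s) = \id_\barA - \iota \tau$.

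For the freeness claim, let $\F$ denote the free DG-category on two objects $M$, $X := \A \bartimes_\A M$ and generators $\alpha, \beta_0, \theta$ modulo the four relations. Since $\beta_0$ and $\theta$ are the only generators with target $X$, any word in which $\alpha$ appears but is not the first-applied letter contains one of the forbidden adjacencies $\alpha\beta_0$ or $\alpha\theta$; a short case analysis then yields the $k$-bases
\begin{align*}
\homm_\F(M,M) &= k \cdot \id_M, & \homm_\F(X,M) &= k \cdot \alpha, \\
\homm_\F(M,X) &= \bigoplus_{n \geq 0} k \cdot \theta^n \beta_0, & \homm_\F(X,X) &= k \cdot \id_X \oplus k \cdot \beta_0 \alpha \oplus \bigoplus_{n \geq 1} \bigl( k \cdot \theta^n \oplus k \cdot \theta^n \beta_0 \alpha \bigr).
\end{align*}
The evident DG-functor $\F \to \AmodbarB$ sends the abstract generators to the concrete ones, and finishing the proof reduces to checking it is injective on each $\homm$-complex. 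The key tool is the bar-length grading $\barA = \bigoplus_n \barA_n$: $\iota$ identifies $\A$ with $\barA_0$, $\tau$ projects onto $\barA_0$, and $s$ raises bar-length by one. Under this grading $\theta^n \beta_0$ is supported in $\barA_n \otimes_\A M$, $\alpha$ is supported on $\barA_0 \otimes_\A M$, and the four families spanning $\homm_\F(X,X)$ act as manifestly different combinations of projection onto $\barA_0$ and shift of bar-length by $n$; any $k$-linear relation among their images therefore decomposes across distinct bigraded summands of $\barhom_\A(X,X)$ and forces all coefficients to vanish. The main obstacle here is justifying that these normal forms truly exhaust $\F$ rather than merely $k$-generate it: the relevant rigidity follows because both reductions $\alpha\beta_0 = \id_M$ and $\alpha\theta = 0$ are local on adjacent pairs involving $\alpha$, so every reduced word contains $\alpha$ at most once (necessarily as its first-applied letter), which pins down the listed normal forms.
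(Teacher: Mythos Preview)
Your construction of $\beta_0$ and $\theta$ cannot work. An $\A$-$\A$-bimodule section $\iota\colon \A \to \barA$ of $\tau$ does not exist in general: the degree-zero piece of $\barA$ is $\A \otimes_k \A$, and a bimodule section of the multiplication map $\A \otimes_k \A \to \A$ is a separability idempotent, which is not assumed. The ``insert a $1$ on the left'' map you have in mind is only a one-sided $\A$-module map. Likewise there is no bimodule homotopy $s$ with $d(s) = \id_\barA - \iota\tau$, since that would make $\tau$ a homotopy equivalence in $\AmodA$ and hence $\A$ an $h$-projective bimodule over itself --- again not assumed, and precisely the failure that motivates introducing the bar category in the first place. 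There is also an internal inconsistency: your normalisation $s\iota = 0$ forces $\theta\circ\beta_0 = (s\iota)\otimes\id_M = 0$ as ordinary bimodule maps, hence also in $\AmodbarB$ since the inclusion $\AmodB \hookrightarrow \AmodbarB$ is a functor; this contradicts the linear independence of $\{\theta^n\beta_0\}_{n\geq 0}$ you later assert.

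The point is that $\beta_0$ and $\theta$ are \emph{not} in the image of $\AmodB \hookrightarrow \AmodbarB$. A morphism $E \to F$ in $\AmodbarB$ is by definition an $\AmodB$-map $\barA \otimes_\A E \otimes_\B \barB \to F$, with composition built from the comultiplication $\Delta$ on $\barA$, $\barB$. In these terms the paper takes $\beta_0$ to be $\id \otimes \id \otimes \tau\colon \barA \otimes M \otimes \barB \to \barA \otimes M$ (the leading $\barA$ is \emph{kept}, not projected away via $\tau$) and $\theta$ to be $\mu \otimes \id \otimes \tau$, where $\mu\colon \barA \otimes_\A \barA \to \barA$ is a genuine degree~$-1$ bimodule map extracted from the algebra structure on the extended bar complex, satisfying $d\mu = \tau\otimes\id - \id\otimes\tau$ and $\tau\mu = 0$. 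The four relations follow from these identities together with the coalgebra axioms for $(\barA,\Delta,\tau)$. For linear independence the paper shows $\theta^k\beta_0 = \beta_k$ corresponds to the explicit ``insert $k$ ones'' endomorphism $\lambda_k$ of $\barA$, and similarly writes $\theta^k$ and $\theta^k\beta_0\alpha$ as induced by visibly distinct maps $\barA\otimes_\A\barA \to \barA$; since different $k$ give different degrees, this suffices. Your normal-form enumeration of the free DG-category is correct and matches what is needed, but the realisation in $\AmodbarB$ must go through these bar-category constructions rather than ordinary bimodule maps.
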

The relations in 
\eqref{eqn-quiver-presentation-of-alpha-beta-theta-intro} 
can be obtained from those in 
the universal system 
\eqref{eqn-quiver-presentation-of-generating-cofibration-category}
by setting $\theta_x = 0$, $\alpha \circ \theta_y = 0$, 
and $\phi = - \theta_y^2 \circ \beta$ in the notation thereof. 
In Prop.~\ref{prps-gamma-delta-kappa}
we prove analogous results 
for the adjoint homotopy equivalence $\gamma\colon M \rightarrow \barhom_\A(\A,M)$.  

In the latter half of the paper we use the bar categories to give 
a homotopy adjunction theory for DG-bimodules. 
Our first main result is the following straightforward, but very useful fact:
\begin{theorem}[cf. Theorem \ref{theorem-TFAE-M-is-B-and-A-perfect}]
Let $\A$ and $\B$ be DG-categories and let $f\colon D(\A) \rightarrow
D(\B)$ be a tensor functor. Let $M \in \AmodbarB$ be any enhancement of $f$. 
\begin{enumerate}
\item 
\label{item-intro-TFAE-M-is-B-perfect}
The following are equivalent:
\begin{enumerate}
\item 
\label{item-intro-TFAE-M-is-B-perfect-item-right-adjoint-is-cts}
The right adjoint $r$ of $f$ is continuous.
\item 
\label{item-intro-TFAE-M-is-B-perfect-item-f-preserves-compact-objects}
$f$ restricts to $D_{c}(\A) \rightarrow D_{c}(\B)$. 
\item 
\label{item-intro-TFAE-M-is-B-perfect-item-M-is-B-perfect}
$M$ is $\B$-perfect.
\item 
\label{item-intro-TFAE-M-is-B-perfect-item-MbarB-is-the-right-adjoint}
$M^{\barB}$ enhances the right adjoint $r$ of $f$. 
\end{enumerate}
\item 
\label{item-intro-TFAE-M-is-A-perfect}
The following are equivalent:
\begin{enumerate}
\item 
\label{item-intro-TFAE-M-is-A-perfect-item-l-exists}
The left adjoint $l$ of $f$ exists.
\item 
\label{item-intro-TFAE-M-is-A-perfect-item-l-preserves-compact-objects}
The left adjoint $l$ of $f$ exists and restricts to $D_{c}(\B) \rightarrow D_{c}(\A)$.
\item 
\label{item-intro-TFAE-M-is-A-perfect-item-l-is-A-perfect}
$M$ is $\A$-perfect. 
\item 
\label{item-intro-TFAE-M-is-A-perfect-item-MbarA-is-the-left-adjoint}
$M^{\barA}$ enhances the left adjoint $l$ of $f$.  
\end{enumerate}
\end{enumerate}
\end{theorem}

In the $2$-categorical language
of \cite{Benabou-IntroductionToBicategories}, let 
$\DGMod$ be the bicategory whose objects are DG-categories, 
whose 1-morphism categories are the derived categories of DG-bimodules,
whose composition is given by the derived tensor product, 
and whose identity object is the diagonal bimodule. 
Let $M \in \AmodbarB$ be $\A$- and $\B$-perfect. 
Write $F$, $L$, and $R$ for $M$, $M^\barA$, and $M^\barB$
considered as $1$-morphisms in $\DGMod$. 
In Defns.~\ref{defn-homotopy-trace-maps}-\ref{defn-homotopy-action-maps}
we write down the homotopy adjunction units and counits for $(L,F,R)$
$$ \id_\A \xrightarrow{\action} RF, \quad 
\id_\B \xrightarrow{\action} FL, \quad 
FR \xrightarrow{\trace} \id_\B, \quad
LF \xrightarrow{\trace} \id_\A, $$
and in Proposition \ref{prps-homotopy-adjunction} we show that the
following identities hold up to homotopy: 
\begin{align}
\label{eqn-intro-F-FRF-F-and-R-RFR-R-maps}
F \xrightarrow{F\action} FRF \xrightarrow{{\trace}F} F = \id_F
\quad \text{ and } \quad R \xrightarrow{\action R} RFR
\xrightarrow{R\trace} R = \id_R, 
\\
\label{eqn-intro-F-FLF-F-and-L-LFL-L-maps}
F \xrightarrow{{\action}F} FLF \xrightarrow{F{\trace}} F = \id_F
\quad \text{ and } \quad L \xrightarrow{L{\action}} LFL 
\xrightarrow{{\trace}L} L = \id_L. 
\end{align}
This implies $2$-categorical adjunctions between $L$, $F$,
and $R$ in $\DGMod$. Such $2$-categorical adjunctions, 
when they exist, are strongly unique: $L$ and $R$ are
determined by $F$ up to the unique isomorphism. 

An exact functor $f\colon D(\A) \rightarrow D(\B)$ is a \em
tensor \rm functor if it is 
isomorphic to tensor multiplication by some $M \in \AmodB$. 
In Theorem \ref{theorem-TFAE-M-is-B-and-A-perfect} we show $f$ 
has left and right adjoints $l$ and $r$ which are also tensor functors 
if and only if $M$ is $\B$- and $\A$-perfect. Our homotopy adjunction
theory then implies that if we fix an enhancement of such $f$
the enhancements of $l$ and $r$ which  
satisfy \eqref{eqn-intro-F-FRF-F-and-R-RFR-R-maps} and 
\eqref{eqn-intro-F-FLF-F-and-L-LFL-L-maps} exist and are unique. 

Next we study the homotopies up to which  
\eqref{eqn-intro-F-FRF-F-and-R-RFR-R-maps} 
and \eqref{eqn-intro-F-FLF-F-and-L-LFL-L-maps}
hold, and the relations between these homotopies. 
These can be packaged up as several canonical twisted 
complexes associated to a homotopy adjunction. We
write down explicit degree $-1$ maps $\xi_\B$, $\xi'_\B$,
$\upsilon_\B$,
and a degree $-2$ map $\nu_\B$ such that:

\begin{theorem}[cf. Theorems 
\ref{theorem-canonical-twisted-complex-associated-to-homotopy-adjunction}
and 
\ref{theorem-the-convolution-of-FR-FRFR-FR-Id-is-T^2-etc}]
The following are twisted complexes over $\BmodbarB$ and $\AmodbarA$:
\begin{small}
\begin{equation}
\label{eqn-intro-FR-FRFR-FR-Id-twisted-complex}
\begin{tikzcd}[column sep={3cm},row sep={1.5cm}] 
FR
\ar{r}{F{\action}R}
\ar[bend left=15,dashed]{rr}[']{\xi'_\B}
&
FRFR
\ar{r}{FR\trace - \trace FR}
&
FR
\ar{r}{\trace}
&
\id_\B
\end{tikzcd}
\end{equation}
\begin{equation}
\label{eqn-intro-Id-RF-RFRF-RF-twisted-complex}
\begin{tikzcd}[column sep={3cm},row sep={1.5cm}] 
\id_\A
\ar{r}{{\action}}
\ar[bend left=16,dashed]{rrr}[']{\nu_\B}
\ar[bend left=15,dashed]{rr}[']{-\upsilon_\B}
&
RF
\ar{r}{{\action}RF - RF{\action}}
\ar[bend left=15,dashed]{rr}[']{\xi_\B}
&
RFRF
\ar{r}{R{\trace}F}
&
RF
\end{tikzcd}
\end{equation}
\end{small}
Their convolutions are homotopic 
to $T^2$ and $C^2$, the squares of the spherical twist and co-twist of $F$. 
\end{theorem}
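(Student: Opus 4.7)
The proof divides into two independent parts: verifying the Maurer--Cartan relations required for \eqref{eqn-intro-FR-FRFR-FR-Id-twisted-complex} and \eqref{eqn-intro-Id-RF-RFRF-RF-twisted-complex} to be twisted complexes, and computing their convolutions.

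For the first part, I write \eqref{eqn-intro-FR-FRFR-FR-Id-twisted-complex} as a family $(K_i, \alpha_{ij})$ with $K_1 = FR,\ K_2 = FRFR,\ K_3 = FR,\ K_4 = \id_\B$, straight arrows $a = F{\action}R$, $b = FR{\trace} - {\trace}FR$, $c = {\trace}$ of degree $0$, and a single higher map $\alpha_{13} = \xi'_\B$ of degree $-1$. The Maurer--Cartan equations beyond the closedness of $a, b, c$ reduce to $d\xi'_\B + b \circ a = 0$ and $c \circ \xi'_\B = 0$. The first is precisely Cor.~\ref{cor-FR-FRFR-FR-equals-zero-and-other}; the second I plan to establish by unfolding $\xi'_\B$ from its defining formula and applying a zigzag relation from Prop.~\ref{prps-homotopy-adjunction}. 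Analogously, for \eqref{eqn-intro-Id-RF-RFRF-RF-twisted-complex} with $K_1 = \id_\A,\ K_2 = RF,\ K_3 = RFRF,\ K_4 = RF$ and higher maps $\alpha_{13} = -\upsilon_\B$, $\alpha_{24} = \xi_\B$, $\alpha_{14} = \nu_\B$, the Maurer--Cartan relations collapse to the defining identities for $\upsilon_\B$ and $\xi_\B$ (Cor.~\ref{cor-RF-RFRF-RF-equals-dFxiB-and-other}) and for $\nu_\B$ (Prop.~\ref{prps-defining-the-maps-nu_A-and-nu_B}).

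For the second part, I plan to compute the convolutions via iterated cone decomposition. Realise $T = \cone({\trace}\colon FR \to \id_\B)$ and $C = \cone({\action}\colon \id_\A \to RF)$ as two-term twisted complexes in $\BmodbarB$ and $\AmodbarA$. Applying Lemma~\ref{lemma-tensoring-and-homming-for-twisted-complexes}, the sub-twisted-complex $[FR \xrightarrow{F{\action}R} FRFR]$ of \eqref{eqn-intro-FR-FRFR-FR-Id-twisted-complex} convolves (up to shift) to $FCR = F \bartimes_\A C \bartimes_\A R$, while $[FR \xrightarrow{\trace} \id_\B]$ convolves to $T$. The middle map $b$ together with $\xi'_\B$ supplies the gluing datum producing a morphism $FCR \to T$ in $\BmodbarB$ whose cone is the full convolution of \eqref{eqn-intro-FR-FRFR-FR-Id-twisted-complex}. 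Comparing with the triangle obtained by applying $(-) \bartimes_\B T$ to the defining triangle $FR \to \id_\B \to T$, or equivalently by applying $F \bartimes_\A (-) \bartimes_\B R$ to the triangle $\id_\A \to RF \to C$ and then splicing with the triangle for $T$, identifies this cone with $T^2$. The argument for $C^2$ and \eqref{eqn-intro-Id-RF-RFRF-RF-twisted-complex} is dual, with the additional degree $-2$ map $\nu_\B$ now encoding the second-level gluing required because three higher maps are present rather than one.

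The principal obstacle is the explicit identification of $\xi'_\B$ and $\nu_\B$ with the gluing data produced by this Postnikov assembly. Because the diagonal bimodule is only a \emph{homotopy} unit for $\bartimes$, the assembly naturally yields a complex in which the pieces are connected by homotopies rather than strict morphisms, and the correction homotopies $\theta$ and $\kappa$ from Props.~\ref{prps-alpha-beta-theta} and \ref{prps-gamma-delta-kappa} will need to be carried through the computation. Once this bookkeeping is carried out in closed form, the remaining convolution identifications follow by routine application of the tensor and convolution formulas of Lemmas~\ref{lemma-tensoring-and-homming-for-twisted-complexes} and \ref{lemma-duals-of-twisted-complexes}.
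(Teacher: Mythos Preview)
Your Part 1 is essentially the paper's argument, but with two omissions. First, you skipped the condition $\trace \circ (FR\trace - \trace FR) = 0$: with $\alpha_{24} = 0$ this is a genuine Maurer--Cartan equation, and the paper checks it by unwinding the special convention for $R\trace$ (via $\gamma$ and composition) so that both $\trace \circ FR\trace$ and $\trace \circ \trace FR$ reduce to the same evaluation. Second, your plan to get $\trace \circ \xi'_\B = 0$ from the zigzag relations of Prop.~\ref{prps-homotopy-adjunction} is misdirected: those relations produce $\id + d\chi_\B$, not zero. The paper instead unfolds $\xi'_\B = \omega_\B(\id_M)^{\barB} \bartimes \id - \id \bartimes \omega_\B(\id_M)$ and observes that, after composing with $\trace = \ev$, the two summands cancel by associativity of composition in $\modbarB$. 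For \eqref{eqn-intro-Id-RF-RFRF-RF-twisted-complex} your reductions to the defining identities of $\upsilon_\B$, $\xi_\B$, $\nu_\B$ are correct.

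Your Part 2 takes a genuinely different route from the paper and has a gap at the step you yourself flag. The paper does \emph{not} decompose the four-term complex into halves and compare gluing maps. Instead it writes $T^2$ directly, via Lemma~\ref{lemma-tensoring-and-homming-for-twisted-complexes}, as the three-term twisted complex
\[
FRFR \xrightarrow{\left(\begin{smallmatrix} -\trace FR \\ FR\trace \end{smallmatrix}\right)} FR \oplus FR \xrightarrow{(\trace\;\;\trace)} \id_\B,
\]
constructs an explicit closed degree-zero map from this to \eqref{eqn-intro-FR-FRFR-FR-Id-twisted-complex} (identity on $FRFR$ and $\id_\B$, the sum map on the middle term), and then shows the cone of this map is null-homotopic by the Rectangle and Extraction Lemmas. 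This sidesteps exactly the obstacle you raise: there is no need to identify an abstractly produced gluing map $FCR \to T$ with a specific map in a known triangle, because the comparison is made at the level of twisted complexes rather than in the triangulated category. Your approach would require showing that the map induced by $(b,\xi'_\B)$ under your chosen identifications $\cone(F\action R) \simeq FRT[-1]$ and $\cone(\trace) = T$ agrees with $\trace T$ (or the analogous map), and since cones are not functorial this does not follow from knowing the objects alone. That identification is essentially equivalent in difficulty to the paper's explicit computation, so your plan does not actually shortcut anything; carrying it out would amount to redoing the paper's argument in a less transparent coordinate system.
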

We also give analogous twisted complexes for $FL$ and $LF$. 
The spherical twist and cotwist of a functor
are well-studied notions with numerous applications. 
If $F$ is fully faithful, then $C = 0$ and $T$ is the
mutation functor which kills $\A$ and maps $\leftidx{^\perp}\A$ equivalently 
to $\A^\perp$ 
\cite{Bondal-RepresentationOfAsssociativeAlgebrasAndCoherentSheaves}. 
If $F$ is spherical \cite{AnnoLogvinenko-SphericalDGFunctors}, 
then $C$ and $T$ are autoequivalences. Moreover, as spherical
functors are $\mathbb{P}^1$-functors
\cite[Prop.~7.1]{AnnoLogvinenko-PFunctors}, the theorem above shows that
the $\mathbb{P}$-twist of $F$ is isomorphic to $T^2$, generalising
similar statements made for split $\mathbb{P}^1$-functors in
\cite[Ex.~4.2(3)]{Addington-NewDerivedSymmetriesOfSomeHyperkaehlerVarieties} and
$\mathbb{P}^1$-objects in 
\cite[Prop. 2.9]{HuybrechtsThomas-PnObjectsAndAutoequivalencesOfDerivedCategories}. 

Finally, in \S\ref{section-derived-category-perspective} we
intepret the data of 
\eqref{eqn-intro-FR-FRFR-FR-Id-twisted-complex} in terms of
the derived category $D(\BbimB)$ of $\BbimB$-bimodules.
Any twisted complex in the DG-enhancement defines 
several \em Postnikov systems \rm
in the triangulated category which compute its convolution and the
convolutions of its subcomplexes, cf.
\S\ref{section-zolotom-po-mramoru}. 
The data of \eqref{eqn-intro-FR-FRFR-FR-Id-twisted-complex}
defines four Postnikov systems in $D(\BbimB)$. These turn out
to have an intrinsic description we give in Theorem 
\ref{theorem-canonical-postnikov-systems-for-FR-FRFR-FR-Id}
which implies a number of explicit relations 
between various natural morphisms involving $\id$, $FR$, $FRFR$, $T$, 
$FRT$, $T^2$ and extensions thereof which are highly useful in computations 
of spherical and $\mathbb{P}$-twists. 

\subsection{The layout of the paper} 
In \S\ref{section-preliminaries} we give
prerequisites on DG-categories and $A_\infty$-categories, 
and on their modules and bimodules. 
In \S\ref{section-bar-category-of-modules} we 
introduce the bar categories of modules and bimodules.
Then in \S\ref{section-homotopy-adjunction-for-tensor-functors} 
we construct homotopy adjunctions for DG-bimodules 
and their associated twisted complexes. 

\subsection{Acknowledgements} The authors would like to thank
Alexander Efimov and Sergey Arkhipov for helpful discussions. They would also like to thank Olaf Schn{\"u}rer for many useful comments which led to significant improvements in the manuscript.  The first author would like 
to thank Kansas State University for a stimulating research 
environment, as well as for inviting the second author to visit.  
The second author would like to offer similar thanks to Cardiff University. 
Both authors would like to thank Banff International Research Station for Mathematical Innovation 
and Discovery (BIRS) for extending its hospitality to them during 
the ``Homological Mirror Geometry'' workshop in March 2016. 

\section{Preliminaries}
\label{section-preliminaries}

Let $k$ be any field. Throughout this paper,  we work over $k$ as the
base field and all the categories we consider are $k$-linear.  Some of
our results, e.g. the  definition of the bar-category of modules and
its isomorphism to the category of DG-modules with $\Ainfty$-morphisms
between them, work just as well when $k$ is only a commutative ring.
However, crucially, Lemma
\ref{lemma-over-a-field-N-otimes_k-A-is-semi-free} stops working when
$k$ is not a field: DG-tensoring over $k$ with the diagonal bimodule 
no longer necessarily produces a semi-free module.
Consequently, $\Ainfty$-tensoring with the diagonal bimodule is no
longer a functorial semi-free resolution as described in
\S\ref{section-functorial-semi-free-resolution-for-noddinfhuA} and
$\Ainfty$-quasi-isomorphisms are not necessarily all homotopy
equivalences.  Thus when $k$ is not a field the bar-category of
modules $\modbarA$ of a DG-category $\A$, while still being
a well-defined pretriangulated DG category, isn't necessarily a
DG-enhancement of the derived category of $\A$. 

We use the following notation for the derived categories we work with. 
For a DG-category $\A$ we denote by $D(\A)$ the derived category of right DG $\A$-modules, cf. \S\ref{section-preliminaries-on-DG-categories}. For an $\Ainfty$-category $\A$ we denote by $D(\A)$ the derived category of right $\Ainfty$ $\A$-modules, cf. \S\ref{section-the-derived-category-of-an-Ainfty-category}. In case of a DG-category $\A$ we further denote by $D_\infty(\A)$ the derived category of right $\Ainfty$ $\A$-modules as per \S\ref{section-the-derived-category-of-an-Ainfty-category}. Similarly, given two DG or $\Ainfty$-categories $\A$ and $\B$ we denote by $D(\AbimB)$ the derived category of the corresponding $\AbimB$-bimodules, etc.  For all these triangulated categories, we denote by $D_c(\bullet)$ their full subcategories consisting of compact objects. For a scheme $X$ over $k$ we denote by $D_{qc}(X)$ the derived category of complexes of $\mathcal{O}_X$-modules with quasi-coherent cohomologies and by $D(X)$ the derived category of complexes with coherent and bounded cohomologies. 

We also need to introduce a notation for maps between direct sums 
of modules. For any two direct sums
$E = \oplus_{i=1}^n E_i$ and $F = \oplus_{j=1}^m F_j$ of objects in an 
additive category we denote any map $$\alpha\colon E \rightarrow F$$
between them by the $m \times n$-matrix $(\alpha_{ij})$ where 
each $\alpha_{ij}$ is the restriction of $\alpha$ to a map $E_j
\rightarrow F_i$. 

\subsection{DG-categories, modules, and bimodules}
\label{section-preliminaries-on-DG-categories}

For a brief introduction to DG-categories, DG-modules, and 
the technical notions involved we direct the reader to 
\cite{AnnoLogvinenko-SphericalDGFunctors}, 
\S2-4. The present paper was written with that survey in mind. 
We employ freely any notion or piece of notation 
introduced in \cite{AnnoLogvinenko-SphericalDGFunctors},
\S2-4. However, for the convenience of the reader, below we briefly 
summarise some of the most relevant facts and notation. 

Let $\A$ be a DG-category. A (right) $\A$-module is a DG-functor 
from $\Aopp$ to $\modk$, the DG-category of DG
$k$-modules. Its underlying graded module is its 
composition with the forgetful functor to the graded
category of graded $k$-modules. 
Given an $\A$-module $E$ for each $a \in \A$ we write
$E_a$ for the complex $E(a) \in \modk$. An $\A$-module $E$ is 
\em acyclic \rm if it is acyclic levelwise in $\modk$, i.e. for 
any $a \in \A$ the complex $E_a$ is acyclic. 

Let $E$ and $F$ be $\A$-modules. Define $\homm_{\A}(E,F)$
to be the DG complex of $k$-modules whose $i$-th graded part 
consists of all degree $i$ natural transformations of the graded 
modules underlying $E$ and $F$ and whose differential is
defined levelwise in $\modk$. The DG category $\modA$ has 
$\A$-modules as objects, morphism complexes $\homm_{\A}(E,F)$, and 
the composition defined levelwise in $\modk$. 
An $\A$-module $E$ is \em $h$-projective \rm  
if the complex $\homm_\A(E,A)$ is acyclic for any acyclic $A$.  
Write $\acyc(\A)$ and $\hprojA$ for the full subcategories of $\modA$
consisting of acyclic and $h$-projective modules, respectively.  

A morphism in $\modA$ is a \em quasi-isomorphism \rm
if it is a quasi-isomorphism levelwise in $\modk$. 
The \em derived category $D(\A)$ \rm of $\A$ is the
localisation of the homotopy category $H^0(\modA)$ 
by quasi-isomorphisms. It is constructed as the
Verdier quotient of $H^0(\modA)$ by $H^0(\acyc (\A))$. 
It comes, in particular, with the
canonical projection $H^0(\modA) \rightarrow D(\A)$
whose composition with the inclusion 
$H^0(\hprojA) \hookrightarrow H^0(\modA)$ 
is an equivalence of categories. An $\A$-module $E$ is \em perfect \rm
if its image in $D(\A)$ is a compact object, i.e. the functor
$\homm_{D(\A)}(E,-)$ commutes with infinite direct sums.  

Let $\B$ be another DG category. An \em $\A$-$\B$-bimodule \em is 
an $\Aopp \otimes_k \B$-module. We write $\AmodB$ for the DG category
of $\A$-$\B$-bimodules. For any $\A$-$\B$-bimodule $M$ and any $a \in \A$
and $b \in \B$ write $\aM_b$, $\aM$, and $M_b$ 
for the complex $M(a,b) \in \modk$, the $\B$-module $M(a,-)$, and 
the $\Aopp$-module $M(-,b)$, respectively. 
The bimodule $M$ is $\A$-perfect if it is perfect 
levelwise in $\modA$, i.e. for any $b \in \B$ the $\A$-module $M_b$
is perfect. We similarly define $\B$-perfection and 
$\A$- and $\B$-versions of acyclicity, $h$-projectivity, etc. 

The \em diagonal bimodule \rm $\A \in \AmodA$ is defined by 
$$ \aA_b = \homm_\A(b,a) $$ 
with the left and right $\A$-action given by the post- and 
pre-composition in $\A$, respectively. 
The composition in $\A$ defines a canonical map 
\begin{align}
\label{eqn-canonical-map-Aotimes_kA-to-A}
\A \otimes_k \A \mapsto \A
\end{align}
where $\otimes_k$ on the LHS denotes the tensor product 
of $\A$ as an $\A$-$k$-bimodule with $\A$ as a $k$-$\A$-bimodule. 

For any $\A$-$\B$-bimodule $M$ we have canonical maps
\begin{align}
\A \xrightarrow{\action} \homm_\B(M,M) \\
\B \xrightarrow{\action} \homm_\A(M,M) 
\end{align}
in $\AmodA$ and $\BmodB$, respectively. These are called 
\em $\A$- and $\B$-action maps\rm, respectively, because
they represent the action of $\A$ (resp. $\B$) on $M$ 
by $\B$-module (resp. $\A$-module) morphisms. Note that 
e.g. the bimodule $\homm_\B(M,M)$ has an $\A$-algebra structure
defined by the composition. It therefore defines a DG-category
with the same set of objects as $\A$. This DG-category is 
precisely the image of the functor $\A \rightarrow \modB$ 
which corresponds to $M$, cf.  
\cite[\S2.1.5]{AnnoLogvinenko-SphericalDGFunctors}. 

For any $\A$-$\B$-bimodule $M$ 
the \em shift of $M$ by $n \in \mathbb{Z}$ to the left \rm is the
$\A$-$\B$-bimodule $M[n]$ defined by 
$$ \left(_a\left(M[n]\right))_b\right)_i = (\aMb)_{i+n} $$
where $\B$ acts naturally, $\A$ acts with a sign twist 
$a._{M[n]}m = (-1)^{n \deg(a)} a._{M} m$, and the differential is $(-1)^n d_M$. 

Let $\C$ and $\D$ be DG-categories. For any 
$M \in \AmodB$, $L \in \CmodB$, and $N \in \DmodB$
we have the \em composition \rm  map in $\DmodC$
$$ \homm_\B(M,N) \otimes_\A \homm_\B(L,M)
\xrightarrow{\composition} \homm_\B(L,N) $$
which is defined levelwise by composition in $\modB$. 

Let $M \in \AmodB$. We have the usual Tensor-Hom adjunction: 
for any DG-category $\C$ 
$$ (-) \otimes_\A M\colon \CmodA \rightarrow \CmodB $$
is left adjoint to 
$$ \homm_\B(M,-) \colon \CmodB \rightarrow \CmodA. $$
The adjunction counit 
\begin{align}
\label{eqn-adjunction-counit-for-(-)-otimes-M} 
\homm_\B\bigl(M,-\bigr) \otimes_\A M \xrightarrow{\ev} \id 
\end{align}
is called the \em evaluation map\rm, as it is defined by
$$ \alpha \otimes m \mapsto \alpha(m). $$
Similarly, we call the adjunction unit
\begin{align}
\label{eqn-adjunction-unit-for-(-)-otimes-M} 
\id \xrightarrow{\mlt} \homm_\B\bigl(M,(-) \otimes_\A M\bigr) 
\end{align}
the \em tensor multiplication map\rm, as it is defined by
$$ s \mapsto s \otimes (-). $$

Analogously,
$$ M \otimes_{\B} (-) \colon \BmodC \rightarrow \AmodC $$
is left adjoint to 
$$ \homm_\Aopp(M,-) \colon \AmodC \rightarrow \BmodC $$
with the adjunction counit
\begin{align}
\label{eqn-adjunction-counit-for-M-otimes-(-)} 
M \otimes_\B \homm_\Aopp\bigl(M,-\bigr) & \xrightarrow{\ev} \id 
\\
\nonumber
m \otimes \alpha & \mapsto (-1)^{\deg(m)\deg(\alpha)} \alpha(m)
\end{align}
and the adjunction unit
\begin{align}
\label{eqn-adjunction-unit-for-M-otimes-(-)} 
\id &\xrightarrow{\mlt}
\homm_\Aopp\bigl(M,M\otimes_\B(-)\bigr) 
\\
\nonumber
s &\mapsto (-1)^{\deg(-)\deg(s)} (-) \otimes s.
\end{align}

Let $M \in \AmodB$. The action of $\A$ on $M$ defines 
the canonical isomorphism
\begin{align}
\label{eqn-DG-A-otimes-M-to-M-isomorphism}
\A \otimes_\A M &\xrightarrow{\sim} M \\
a \otimes m &\mapsto a.m. 
\end{align}
The right adjoint of \eqref{eqn-DG-A-otimes-M-to-M-isomorphism}  
with respect to $(-) \otimes_\A M$ is 
the $\A$-action map $\A \xrightarrow{\action} \homm_\B(M,M)$. 
The right adjoint of \eqref{eqn-DG-A-otimes-M-to-M-isomorphism}
with respect to $\A \otimes_\A (-)$ is the canonical isomorphism 
\begin{align}
\label{eqn-DG-M-to-hom-A-M-isomorphism}
M &\xrightarrow{\sim} \homm_\A\left(\A,M\right) \\
m &\mapsto (-1)^{\deg(-)\deg(m)} (-).m. 
\end{align}
Similarly, we have canonical isomorphisms 
\begin{align}
\label{eqn-DG-M-otimes-B-to-M-isomorphism}
M \otimes_\B \B \xrightarrow{\sim} M, \\
\label{eqn-DG-M-to-hom-B-M-isomorphism}
M \xrightarrow{\sim} \homm_\B(\B,M). 
\end{align}

The canonical isomorphisms \eqref{eqn-DG-M-to-hom-A-M-isomorphism}
and \eqref{eqn-DG-M-to-hom-B-M-isomorphism} identify evaluation maps 
with composition maps. E.g. 
$$
\homm_\B(M,E) \otimes_\A M 
\xrightarrow{\id \otimes \eqref{eqn-DG-M-to-hom-B-M-isomorphism} } 
\homm_\B(M,E) \otimes_\A \homm_\B(\B,M) 
\xrightarrow{\composition}
\homm_\B(\B, E) 
\xrightarrow{\eqref{eqn-DG-M-to-hom-B-M-isomorphism}^{-1}}
E
$$ 
is the evalution map \eqref{eqn-adjunction-counit-for-(-)-otimes-M}. 

Finally, for all $M \in \AmodB$, $N \in \DmodB$ and $L \in \CmodA$
we have a canonical map 
\begin{align}
\label{eqn-bringing-a-factor-into-the-hom} 
L \otimes_\A \homm_\B(N,M) &\longrightarrow \homm_\B(N,L \otimes_\A M) \\
\nonumber
l \otimes \alpha &\mapsto l \otimes \alpha(-)
\end{align}
which is a quasi-isomorphism when $N$ is $\B$-perfect or $L$ is 
$\A$-perfect, cf.~\cite[\S2.2]{AnnoLogvinenko-SphericalDGFunctors}.
We can also write $\eqref{eqn-bringing-a-factor-into-the-hom}$ as
the composition
\begin{align*}
L \otimes_\A \homm_\B(N,M) 
\xrightarrow{\mlt \otimes \id}
\homm_\B(M, L \otimes_\A M)
\otimes_\A
\homm_\B(N,M)
\xrightarrow{\composition}
\homm_\B(N, L \otimes_\A M).
\end{align*}

\subsection{Twisted complexes and their convolutions}

Twisted complexes were originally defined in
\cite{BondalKapranov-EnhancedTriangulatedCategories}, 
and then re-defined in 
\cite{BondalLarsenLunts-GrothendieckRingofPretriangulatedCategories}. 
For the convenience of the reader, we give below a brief summary 
of the definitions we use in this paper. See 
\cite[\S3]{AnnoLogvinenko-SphericalDGFunctors} for the detailed 
version of the same exposition.  

\begin{defn}
\label{defn-twisted-complex} 
Let $\A$ be a DG-category. A \em twisted complex $(E_i, \alpha_{ij})$ \rm 
over $\A$ is a collection of
\begin{itemize}
\item An object $E_i \in \A$ for each $i \in \mathbb{Z}$. 
\item An element $\alpha_{ij} \in \homm^{i - j +1}_\A(E_i, E_j)$ for
each $i,j \in \mathbb{Z}$. 
\end{itemize}
satisfying:
\begin{itemize}
\item $E_i = 0$ for all but a finite number of $i$, 
\item $(-1)^j d\alpha_{ij} + \sum_k \alpha_{kj} \circ \alpha_{ik} =
0$. 
\end{itemize}
\end{defn} 

A twisted complex is called \em one-sided \rm if $\alpha_{ij} = 0$ for all $i
\geq j$. 

\begin{defn}
The \em DG-category of twisted complexes \rm $\twcx(\A)$ 
has as objects all twisted complexes over $\A$. The complex
of morphisms 
$$ \homm_{\twcx(\A)}\bigl( (E_i, \alpha_{ij}), (F_i, \beta_{ij}) \bigr), $$
has as its degree $p$ part 
$$ \bigoplus_{k,l \in \mathbb{Z}} \homm^{p - l + k}_{\A}(E_k, F_l) $$
and for each $\gamma \in \homm^{p-l+k}_{\A}(E_k, F_l)$ we have
$$ d\gamma = (-1)^l d_{\A} \gamma + \sum_{m \in
\mathbb{Z}}\left( \beta_{lm} \circ \gamma - (-1)^{p} \gamma
\circ \alpha_{mk} \right), $$ 
where $d_\A$ is the differential on morphisms in $\A$. 
\end{defn}

\begin{defn}
\label{defn-convolution-functor}
Let $\A$ be a DG-category. We define the \em convolution functor \rm
\begin{equation*}
\conv\colon \twcx(\A) \rightarrow \modA 
\end{equation*}
as follows. On objects, for any $(E_i, \alpha_{ij}) \in \twcx(\A)$ 
its \em convolution \rm $\conv(E_i, \alpha_{ij})$
is obtained by taking the $\A$-module $\bigoplus_i E_i[-i]$, 
where we use Yoneda embedding to embed $E_i$ into $\modA$, and 
equipping it with the new differential 
$d_{old} + \sum_{i,j} \alpha_{ij}$, where $d_{old}$ denotes
the natural differential on $\bigoplus_i E_i[-i]$. 
For brevity, we use curly brackets to denote the convolution,
e.g. $\bigl\{ E_i, \alpha_{ij} \bigr\}$. 

On morphisms, for any
$\gamma \colon  (E_i, \alpha_{ij}) \rightarrow  (F_i, \beta_{ij}) $
its \em convolution \rm 
$$ \conv(\gamma) \colon 
\bigl\{ E_i, \alpha_{ij} \bigr\} \rightarrow 
\bigl\{ F_i, \alpha_{ij} \bigr\} $$
is the $\A$-module morphism whose morphism of 
the underlying graded modules 
$$ \bigoplus_{k \in \mathbb{Z}} E_k[-k] \rightarrow \bigoplus_{l \in
\mathbb{Z}} F_l[-l] $$
has as the components $E_k[-k] \rightarrow F_l[-l]$ 
the corresponding components of $\gamma \in
\bigoplus_{k,l \in \mathbb{Z}} \homm^{\bullet}_{\A}(E_k, F_l)$. 
\end{defn}

\subsection{Pretriangulated categories}

Let $\A, \B$ be DG-categories.  A functor $F\colon \A \rightarrow \B$ 
is a \em quasi-equivalence \rm if 
it acts by quasi-isomorphisms on morphism complexes and
if $H^0(F)$ is an equivalence of categories. 

The DG-category $\A$ is \em pretriangulated \rm if 
$H^0(\A) \subset H^0(\modA)$ is a triangulated subcategory. 
The \em pretriangulated hull $\pretriag(\A)$ \rm of $\A$ is the
the full subcategory of $\twcx(\A)$ supported at one-sided 
twisted complexes. The convolution functor 
$\conv\colon \pretriag(\A) \rightarrow \modA$ 
of Defn.~\ref{defn-convolution-functor} is fully faithful, 
and its image descends to the triangulated hull of 
$H^0(\A)$ in $H^0(\modA)$. 

Thus $\A$ is pretriangulated if and only if 
$\A \hookrightarrow \pretriag(\A)$ is a quasi-equivalence. 
It is \em strongly pretriangulated \rm if 
$\A \hookrightarrow \pretriag(\A)$ is an equivalence. 
In other words, if it has a quasi-inverse 
$T\colon \pretriag(\A) \rightarrow \A$. 
Then 
$$\pretriag(\A) \xrightarrow{T} \A \hookrightarrow \modA$$
is isomorphic to the convolution functor. By abuse of notation, 
we also refer to $T$ as the \em convolution functor.\rm

It is one of the main results of 
\cite{BondalKapranov-EnhancedTriangulatedCategories}
that the category $\pretriag(\A)$ is strongly pretriangulated. 
For any strongly pretriangulated $\C$, 
the category $\DGFun(\A,\C)$ is strongly pretriangulated since 
convolutions can be defined levelwise in $\C$. 
In particular, $\modA$ is strongly pretriangulated since $\modk$ is. 
Finally, a full subcategory of a strongly pretriangulated DG-category
is strongly pretriangulated if it is pretriangulated and closed under
homotopy equivalences. Thus $\hprojA$ and $\prfhprA$ are 
strongly pretriangulated subcategories of $\modA$, and 
$\Aprfhpr$ and $\Bprfhpr$ are strongly preriangulated 
subcategories of $\AmodB$.  

\subsection{Postnikov systems}
\label{section-zolotom-po-mramoru}

A limited version of this notion appears in e.g.
\cite[\S1.3]{Orlov-EquivalencesOfDerivedCategoriesAndK3Surfaces}
and \cite[\S{IV.2}]{GelfandManin-MethodsOfHomologicalAlgebra}. 
Roughly, it is the data of computing a convolution of a complex of 
objects in a triangulated category. We give a brief summary below. 

Let $\mathcal{T}$ be a triangulated category. Let $(E_i, f_i)$ be 
a complex of objects in $T$ of length $n$:
$$ E_1 \xrightarrow{f_1} E_2 \xrightarrow{f_2} E_3 \rightarrow \dots
\rightarrow E_{n-1} \xrightarrow{f_{n-1}} E_n $$
with all $f_i \circ f_{i-1} = 0$. 

\begin{defn}
\label{defn-postnikov-system}
A \em Postnikov system \rm on the complex $(E_i,f_i)$ is the set of data 
defined as follows.  

For $n = 1$, i.e. the complex consists of a single 
object $E_1$, a Postnikov system on it is an empty set of data.  

For any $n > 1$, the data of a Postnikov system on $(E_i,f_i)$ consists of:
\begin{itemize}
\item A choice of $ i \in \left\{1, \dots, n-1\right\}$. 
\item For the morphism $f_i$, its completion to an exact triangle in
$\mathcal{T}$:
\begin{equation}
\begin{tikzcd}
E_i 
\ar{rr}{f_i}
& 
& 
E_{i+1}. 
\ar{dl}{g_i}
\\
&
E_{i,i+1}
\ar[dashed]{ul}{h_i}
\end{tikzcd}
\end{equation}
Here, dashed and dotted arrows denote morphisms of degree $1$
and $-1$ respectively. 
\item For those of the morphisms $f_{i-1}$ and $f_{i+1}$ which exist,
their lifts to morphisms $f_{i-1,i+1}$ and $f_{i,i+2}$ to 
$E_{i,i+1}[-1]$ and from $E_{i,i+1}$, respectively, with 
$f_{i,i+2} \circ f_{i-1,i+1} = 0$:
\begin{equation}
\begin{tikzcd}
E_{i-1}
\ar{rr}{f_{i-1}}
\ar[dotted]{drrr}[description]{f_{i-1,i+1}}
&
&
E_i 
\ar{rr}{f_i}
& 
& 
E_{i+1}
\ar{rr}{f_{i+1}}
\ar{dl}[description]{g_i}
&
&
E_{i+2}. 
\\
&
&
&
E_{i,i+1}
\ar[dashed]{ul}[description]{h_i}
\ar{urrr}[description]{f_{i,i+2}}
&
&
&
\end{tikzcd}
\end{equation}
\item A Postnikov system for the resulting length $n-1$ complex:
\begin{equation}
\label{eqn-new-length-n-1-complex-in-postnikov-system}
E_1[1] \xrightarrow{f_1} E_2[1] \xrightarrow{f_2} E_3[1] \rightarrow \dots
\rightarrow E_{i-1}[1] \xrightarrow{f_{i-1,i+1}} E_{i,i+1} 
\xrightarrow{f_{i,i+2}} E_{i+2} \rightarrow \dots
\rightarrow E_{n-1} \xrightarrow{f_{n-1}} E_n. 
\end{equation}
\end{itemize}
\end{defn}

In other words, the data of a Postnikov system may be viewed
as the data of an iterative process. First we choose any two 
consecutive objects of a complex and replace them by the cone of the
differential between them. Then, we lift the neighbouring
differentials so that they compose to zero and we thus obtain a complex again. 
Then we repeat, with the length of the complex decreasing by one 
at each iteration. 

Postnikov systems are not unique and, as it may not always be possible 
to lift the neighbouring differentials so that they compose to zero, 
Postnikov systems do not necessarily exist. When they do, they compute 
convolutions of $(E_i, f_i)$:

\begin{defn}
The \em convolution \rm of a Postnikov system on the complex $(E_i,
f_i)$ as defined in Defn.~\ref{defn-postnikov-system} is the
$E_1$ if $n = 1$ and the convolution of the Postnikov system 
\eqref{eqn-new-length-n-1-complex-in-postnikov-system} if $n > 1$. 
\end{defn}

That is, it is the single object left
in the end of the iterative process described by the Postnikov system. 

\begin{exmpl}
Two of the $6$ possible types of Postnikov systems on 
a length $4$ complex $E_1 \rightarrow E_2 \rightarrow E_3 \rightarrow E_4$:
\begin{equation}
\label{eqn-example-length-4-postnikov-system}
\begin{tikzcd}[column sep={1.0825cm,between origins}, row sep={1.25cm,between origins}] 
E_1
\ar{rr}{f_1}
\ar[dotted]{ddrrrr}[description]{f_{14}}
&&
E_2
\ar{rr}{f_2}
\ar[dotted]{drrr}[description]{f_{24}}
\ar[phantom]{dddr}[description, pos=0.6]{\star}
&&
E_3
\ar{rr}{f_3}
\ar[phantom]{dd}[description, pos=0.6]{\star}
&&
E_4
\ar{ld}{g_3}
\arrow[to=N24,phantom]{}[description, pos=0.275]{\star}
\\
&&
& |[alias=N24]|
&
& 
E_{3,4}
\ar[dashed]{lu}[description]{h_3}
\ar[dotted]{ld}{g_{24}}
&
\\
&&
&&
E_{2,3,4}
\ar[dashed]{lluu}[description]{h_{24}}
\ar[dotted]{ld}{g_{14}}
&&
\\
&&
&
E_{1,2,3,4}
\ar[dashed]{llluuu}{h_{14}}
&
&&
\end{tikzcd}
\begin{tikzcd}[column sep={1.0825cm,between origins}, row sep={1.25cm,between origins}] 
E_1
\ar{rr}{f_1}
&&
E_2
\ar{rr}{f_2}
\ar{ld}{g_1}
\ar[dotted]{drrr}[description]{f_{24}}
&&
E_3
\ar{rr}{f_3}
&&
E_4.
\ar{ld}{g_3}
\arrow[to=N24,phantom]{}[description, pos=0.275]{\star}
\\
&
E_{1,2}
\ar[dotted]{rrrr}{f_{14}}
\ar[dashed]{lu}[description]{h_1}
&
& |[alias=N24]| 
\arrow[ulll,phantom]{}[description,pos=0.725]{\star}
\arrow[dd,phantom]{}[description,pos=0.333]{\star}
&
& 
E_{3,4}
\ar[dashed]{lu}[description]{h_3}
\ar[dotted]{lldd}{g_{14}}
&
\\
&&
&&
&&
\\
&&
&
E_{1,2,3,4}
\ar[dashed]{lluu}{h_{14}}
&
&&
\end{tikzcd}
\end{equation}
Here, dashed and dotted arrows denote morphisms of degree $>0$
and $<1$, respectively, the triangles denoted by $\star$ are exact,
and the remaining triangles are commutative. 
\end{exmpl}

Suppose now we have a strongly pretriangulated DG-enhancement $\A$ 
of $\T$, i.e. a strongly pretriangulated $\A$ and a choice
of isomorphism $\iota\colon H^0(\A) \simeq \T$. 

\begin{defn}
Let $(E_i, f_i)$ be a length $n$ complex of objects in $\T$.  
A twisted complex $(\bar{E}_i, \bar{f}_{ij})$ over $\A$ is
a \em lift \rm of $(E_i, f_i)$ if $E_{i} = \iota(\bar{E}_{i-n})$
and $f_i = \iota(\bar{f}_{i-n,i-n+1}) $ for all 
$i \in \left\{1, \dots, n-1\right\}$. 

Conversely, for any twisted complex $(\bar{E}_i, \bar{f}_{ij})$ over $\A$
with $\bar{E}_i \neq 0$ only for $i \in \left\{-n+1, \dots, 0\right\}$, 
denote by $\iota(\bar{E}_i, \bar{f}_{ij})$ the complex 
$$ \iota(\bar{E}_{-n+1}) 
\xrightarrow{[\bar{f}_{-n+1,-n+2}]} 
\iota(\bar{E}_{-n+1})
\rightarrow 
\dots 
\rightarrow 
\iota(\bar{E}_{-1}) 
\xrightarrow{[\bar{f}_{-1,0}]} 
\iota(\bar{E}_{0}),$$
whose lift $(\bar{E}_i, \bar{f}_{ij})$ is. 
\end{defn}

The index shift in the definition above is introduced to ensure
that a lift of a complex $E_1 \rightarrow E_2 \rightarrow \dots
\rightarrow E_n$ lifts $E_n$ to the degree $0$ element of the twisted
complex. This is so that notions of convolution for 
twisted complexes and for ordinary complexes coincide, 
as per following two results.  

\begin{prps}
Let $\bar{E} = (\bar{E}_i, \bar{f}_{ij})$ be a twisted complex over $\A$. 
For any $k \in \mathbb{Z}$ there is a unique twisted complex
we denote by $\bar{E}(\hat{k})$ which has the same convolution as $\bar{E}$
and the object set:
$$
\bar{E}(\hat{k})_i = 
\begin{cases}
\bar{E}_{i}, & i > k + 1 \\
\left\{ \bar{E}_{i-1} \xrightarrow{\bar{f}_{i-1,i}}
\underset{\degzero}{\bar{E}_{i+1}} \right\}, & i = k+1, \\
\bar{E}_{i-1}[1], & i < k + 1 
\end{cases}
$$
\end{prps}
\begin{proof}
By the definition of the convolution functor, the convolution 
of $\bar{E}$ in $\modA$ is the $\A$-module $\bigoplus \bar{E}_i[-i]$ 
whose differential $d_{\bar{E}}$ is modified by adding to it the sum 
of the twisted differentials $\bar{f}_{ij}$. Conversely, the difference 
between $d_{\bar{E}}$ and the natural differential $\bigoplus \bar{E}_i[-i]$
is an endomorphism of the underlying graded module which decomposes uniquely 
into the set of twisted differentials $\bar{f}_{ij}$. 

Since $\bigoplus_i \bar{E}(\hat{k})_i[-i]$ has the same underlying 
graded module as $\bigoplus \bar{E}_i[-i]$ and $\bar{E}$, it follows
that there exists a unique set of twisted differentials which modifies 
the natural differential of $\bigoplus_i \bar{E}(\hat{k})_i[-i]$
to obtain $d_{\bar{E}}$. 
\end{proof}
\begin{cor}
\label{cor-postnikov-systems-from-twisted-complex}
Let $E = (E_i, f_i)$ be a length $n > 1$ complex of objects in $\T$. 
Let $\bar{E} = (\bar{E}_i, \bar{f}_{ij})$ be a twisted complex over $\A$ 
lifting $(E_i,f_i)$. Then the following is a Postnikov
system for $(E_i,f_i)$:
\begin{itemize}
\item Any choice of $i \in \left\{1, \dots, n-1\right\}$. 
\item The following choices of an exact triangle completing $f_i$
and of the lifts of $f_{i-1}$ and $f_{i+1}$:
\begin{equation*}
\begin{tikzcd}[column sep=1.5cm]
E_{i-1}
\ar{rr}{f_{i-1}}
\ar[dotted]{drrr}[description]{f_{i-1,i+1}}
&
&
E_i 
\ar{rr}{f_i}
& 
& 
E_{i+1}
\ar{rr}{f_{i+1}}
\ar{dl}[description]{g_i}
&
&
E_{i+2}, 
\\
&
&
&
\left\{\bar{E}_{i-n} \xrightarrow{\bar{f}_{i-n,i-n+1}}
\underset{\degzero}{\bar{E}_{i-n+1}} \right\}
\ar[dashed]{ul}[description]{h_i}
\ar{urrr}[description]{f_{i,i+2}}
&
&
&
\end{tikzcd}
\end{equation*} 
where $g_i$ and $h_i$ are the images
under $\iota$ of the convolutions of the tautological morphisms
from $\bar{E}_{i-n+1}$ to 
$\bar{E}_{i-n} \xrightarrow{\bar{f}_{i-n,i-n+1}}
\underset{\degzero}{\bar{E}_{i-n+1}}$ and from the latter
to $\bar{E}_{i-n}$ given by the identity maps, while $f_{i-1,i+1}$
and $f_{i, i+2}$ are the images under $\iota$ of the corresponding
differentials in the twisted complex $\bar{E}(\widehat{i-n})$. 
\item Any choice of a Postnikov system on the resulting 
complex $\iota\left(\bar{E}(\widehat{i-n})\right)$. 
\end{itemize}
\end{cor}

By applying Corollary \ref{cor-postnikov-systems-from-twisted-complex}
first to $E = \iota(\bar{E})$, then to $\iota(\bar{E}(\widehat{i-n}))$, 
and so on, one can produce for any permutation of $1, \dots, n$
a Postnikov system on $E$ whose convolution would be
the image under $\iota$ of the convolution of $\bar{E}$. 

Even if the DG-enhancement $\A$ of $\mathcal{T}$ is not strongly
pretriangulated, then $\pretriag \A$ is a strongly pretriangulated
enhancement of $\mathcal{T}$. Any twisted complex $\bar{E}$ over $\A$ can 
be viewed as a twisted complex over $\pretriag \A$, and thus 
Corollary \ref{cor-postnikov-systems-from-twisted-complex}
applies to produce Postnikov systems on the complex
$\iota(\bar{E})$ in $\mathcal{T}$. 

\subsection{Rectangle and Extraction lemmas}

We also need the two following useful technical facts. 
Let $\A$ be a DG-category. All twisted complexes in this section 
are considered to be over $\A$. We say that a map $(f_{ij})$ of 
twisted complexes is \em one-sided \rm if $f_{ij} = 0$ for any $j < i$. 

\begin{lemma}[Rectangle Lemma]
\label{lemma-rectangle-lemma}
Let $E=(E_i, \alpha_{ij})$ and $F=(F_i,\beta_{ij})$ 
be one-sided twisted complexes. 
Let $f=(f_{ij})$ be a one-sided closed map $E \rightarrow F$ of degree $0$.
 
There exists a twisted complex $G=(G_i, \gamma_{ij})$ over
$\pretriag \A$ with each 
$$ G_i = \big( E_i \xrightarrow{(-1)^i f_{ii}} \underset{\degzero}{F_i} \big) $$
such that  
$$ \tot \left( G_i, \gamma_{ij} \right) \simeq 
\tot\left( E \xrightarrow{f} \underset{\degzero}{F} \right)
$$
in $\pretriag \A$. 
\end{lemma}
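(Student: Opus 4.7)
The plan is to explicitly write down the twisted complex $G$ as a ``rectangle'' whose $i$-th column is the two-term complex $G_i = (E_i \xrightarrow{(-1)^i f_{ii}} \underset{\degzero}{F_i})$ and whose higher differentials package the off-diagonal entries $\alpha_{ij}$, $\beta_{ij}$, and $f_{ij}$ into $2\times 2$ matrices. Concretely, for each $j>i$ I will set
\[
\gamma_{ij} \;=\;
\begin{pmatrix}
\alpha_{ij} & 0 \\
\epsilon_{ij}\, f_{ij} & \beta_{ij}
\end{pmatrix}
\colon G_j \longrightarrow G_i
\]
where $\epsilon_{ij}\in\{\pm 1\}$ is a sign determined by the pretriangulated conventions used in \cite[\S3]{AnnoLogvinenko-SphericalDGFunctors} (essentially $(-1)^{i}$ so as to intertwine the internal differential $(-1)^i f_{ii}$ of $G_i$ with the off-diagonal $f_{ij}$).

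The first step is to verify that the collection $(G_i, \gamma_{ij})$ satisfies the Maurer--Cartan/twisted-complex relations. Unpacking the $(1,1)$ and $(2,2)$ entries of the required identity reproduces precisely the twisted-complex relations for $E$ and $F$ respectively; the $(2,1)$ entry unpacks to the condition $d(f_{ij}) + \sum_{i<k<j} \bigl(\beta_{ik} f_{kj} - (-1)^{\ast} f_{ik} \alpha_{kj}\bigr) = 0$, which is precisely closedness of $f$ as a map of twisted complexes. Thus no new relations are imposed; the content of the lemma lies entirely in repackaging the data.

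The second step is the identification of convolutions. Both $\tot(G_i,\gamma_{ij})$ and $\tot\bigl(E \xrightarrow{f} \underset{\degzero}{F}\bigr)$ have the same underlying graded object $\bigoplus_i E_i[?] \oplus \bigoplus_i F_i[?]$ (with shifts dictated by position in the complexes), and by construction the two descriptions of the total differential agree block-by-block: the internal differential of each $G_i$ supplies the ``horizontal'' $f_{ii}$ maps, while the $\gamma_{ij}$ with $j>i$ supply the ``vertical'' $\alpha_{ij}$, $\beta_{ij}$ and the remaining off-diagonal $f_{ij}$. The required isomorphism in $\pretriag \A$ is then either the identity or a sign-twisting diagonal isomorphism (inserted to match the shift-by-one and the resulting sign conventions).

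The main obstacle is purely bookkeeping: the signs $\epsilon_{ij}$, the factor $(-1)^i$ in the internal differential of $G_i$, and the sign-twisting isomorphisms used in forming shifts and tensor products of twisted complexes (cf.~\cite[Lemma 3.4]{AnnoLogvinenko-SphericalDGFunctors}) must be chosen coherently so that the Maurer--Cartan verification and the identification of total complexes both succeed simultaneously. Once the correct signs are pinned down, each step is a direct unpacking of definitions.
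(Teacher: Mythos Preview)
Your approach is essentially the same as the paper's: explicitly define the $\gamma_{ij}$ as $2\times 2$ block matrices, verify the Maurer--Cartan relation component-by-component (recovering the twisted-complex relations for $E$ and $F$ and closedness of $f$), and then compare the two total complexes term-by-term. The paper's proof does exactly this.

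Two minor points where your sketch diverges from the paper. First, your arrow $\gamma_{ij}\colon G_j\to G_i$ is backwards; for a one-sided twisted complex the differentials run $G_i\to G_j$ with $j>i$. Second, your sign guess is not the one the paper uses: rather than an $\epsilon_{ij}$ on $f_{ij}$, the paper takes
\[
\gamma_{ij}=\begin{pmatrix}-\alpha_{ij}&0\\ f_{ij}&\beta_{ij}\end{pmatrix}\colon G_i\longrightarrow G_j,
\]
i.e.\ the sign is on the $E$-block. With this choice the total complex of $G$ has $i$-th term $E_{i+1}\oplus F_i$ and $(i,j)$-th differential $-\alpha_{i+1,j+1}+f_{i+1,j}+\beta_{ij}$, which matches $\tot(E\xrightarrow{f}F)$ on the nose (no sign-twisting isomorphism needed). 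Your choice would also work after conjugating by a diagonal sign, but the paper's is cleaner.
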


\begin{proof}
Since $f$ is closed of degree $0$ and one-sided, we have
$(df)_{ii}=df_{ii}=0$, and 
\begin{align*}
(df)_{ij}=(-1)^{j}df_{ij}+\sum\limits_{k=i}^{j-1} \beta_{kj} f_{ik} -
\sum\limits_{k=i+1}^j f_{kj} \alpha_{ik} = 0 \quad\quad j > i. 
\end{align*}
Define the twisted differentials $G_i \xrightarrow{\gamma_{ij}} G_j$ by the
following diagram:
\begin{align*}
\xymatrix@C=2cm{
E_i
\ar[r]^{(-1)^i f_{ii}}
\ar[d]_{-\alpha_{ij}}
\ar[rd]^{f_{ij}}
&
\underset{\degzero}{F_i}
\ar[d]^{\beta_{ij}}
\\
E_j
\ar[r]^{(-1)^j f_{jj}}
&
\underset{\degzero}{F_j}.
}
\end{align*}
The degree of this map is $i-j+1$. Note also that 
\begin{align*}
d(\gamma_{ij})=\qquad
\xymatrix@C=8cm{
E_i
\ar[r]^{(-1)^i f_{ii}}
\ar[d]_{d\alpha_{ij}}
\ar[rd]^<<<<<<<<<<<<<<<<<<<<<<<<<<<<<<{\quad\quad\quad\quad\quad\quad\quad\quad\quad\quad df_{ij}-(-1)^jf_{jj}\alpha_{ij}-(-1)^{i-j+1}(-1)^i\beta_{ij}f_{ii}}
&
F_i
\ar[d]^{d\beta_{ij}}
\\
E_j
\ar[r]^{(-1)^j f_{jj}}
&
F_j.
}
\end{align*}
We claim that $G=(G_i, \gamma_{ij})$ is a twisted complex. For this we need 
to have for all $i$ and $j$ 
\begin{align*}
(-1)^j d\gamma_{ij} + \sum\limits_{k=i+1}^{j-1} \gamma_{kj}\gamma_{ik} = 0. 
\end{align*}
The map on the LHS has three components: $E_i \rightarrow E_j$, 
$F_i \rightarrow F_j$ and $E_i \rightarrow F_j$.  
The first two components vanish since $E$ and $F$ are twisted complexes. 
Computing the $E_i \rightarrow F_j$ 
component we get
\begin{multline*}
(-1)^j \left( df_{ij}-(-1)^jf_{jj}\alpha_{ij}+(-1)^{j}\beta_{ij}f_{ii} \right)
+ \sum\limits_{k=i+1}^{j-1} \beta_{kj} f_{ik} - \sum\limits_{k=i+1}^{j-1} f_{kj} \alpha_{ik}=\\
= (-1)^j df_{ij} +\beta_{ij}f_{ii} + \sum\limits_{k=i+1}^{j-1} \beta_{kj} f_{ik} - f_{jj}\alpha_{ij} - \sum\limits_{k=i+1}^{j-1} f_{kj} \alpha_{ik} = (df)_{ij}=0.
\end{multline*}

Thus $(G_i, \gamma_{ij})$ is a twisted complex. 
Its total complex and that of $\left(E \xrightarrow{f} F\right)$ 
both have the $i$-th term 
$$ E_{i+1} \oplus  F_i$$ 
and the $ij$-th differential 
$$ - \alpha_{i+1,j+1} +  f_{i+1,j} + \beta_{ij}, $$
as desired.
\end{proof}

Recall \cite[\S3.2]{AnnoLogvinenko-SphericalDGFunctors} 
\cite[\S1]{BondalKapranov-EnhancedTriangulatedCategories} 
that there exists the convolution functor $\pretriag \modA \rightarrow \modA$
which is a category equivalence and which we denote by $\big\{-\big\}$. 

\begin{cor}
\label{cor-the-cone-is-convolution-of-cones-of-vertical-arrows}
Let $\A$ be a DG-category and let $E = (E_i, \alpha_{ij})$ and 
$F = (F_i,\beta_{ij})$ be one-sided twisted complexes
over $\modA$ and let $f=(f_{ij})$ be a one-sided closed map 
$E \rightarrow F$ of degree $0$. 

The cone of the induced map 
$$\left\{E\right\} \xrightarrow{f} \left\{F \right\}$$
is isomorphic to the convolution of a twisted complex whose
objects are isomorphic to 
\begin{align}
\label{eqn-the-cone-of-the-convolutions-of-twisted-complexes}
\cone\left(E_i \xrightarrow{f_{ii}} F_i\right).
\end{align}
\end{cor}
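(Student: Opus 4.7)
The plan is to deduce this corollary as an essentially immediate consequence of the Rectangle Lemma. First I would observe that, by the definition of the convolution functor $\{-\}$ on $\pretriag \modA$ and the construction of cones as convolutions of two-term twisted complexes, the cone of $\{E\} \xrightarrow{f} \{F\}$ is isomorphic to the convolution of the twisted complex
\begin{align*}
E \xrightarrow{f} \underset{\degzero}{F}
\end{align*}
viewed as a one-sided twisted complex over $\pretriag \modA$. Equivalently, it is isomorphic to $\{F\} \oplus \{E\}[1]$ with the natural twisted differential induced by $f$.

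Next I would apply Lemma \ref{lemma-rectangle-lemma} with the ambient DG-category taken to be $\modA$ (the lemma is stated for an abstract DG-category $\A$, and its proof does not use anything beyond the DG structure, so it applies equally well to $\modA$). The lemma produces a twisted complex $G = (G_i, \gamma_{ij})$ with objects
\begin{align*}
G_i = \big( E_i \xrightarrow{(-1)^i f_{ii}} \underset{\degzero}{F_i}\big)
\end{align*}
whose total complex agrees, in $\pretriag \modA$, with that of $\big(E \xrightarrow{f} \underset{\degzero}{F}\big)$. Combining with the first step, the convolution of $G$ is then isomorphic to the cone of $\{E\} \xrightarrow{f} \{F\}$.

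Finally, to identify each $G_i$ with $\cone(E_i \xrightarrow{f_{ii}} F_i)$, I would note that the sign $(-1)^i$ in front of $f_{ii}$ does not affect the isomorphism type of the two-term cone: multiplication by $-1$ on $E_i$ (or $F_i$) furnishes an explicit isomorphism of twisted complexes $\big(E_i \xrightarrow{(-1)^i f_{ii}} F_i\big) \simeq \big(E_i \xrightarrow{f_{ii}} F_i\big) = \cone(E_i \xrightarrow{f_{ii}} F_i)$ in $\pretriag \modA$. This gives the desired description and completes the proof.

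There is no real obstacle here: the entire content of the corollary is packaged in the Rectangle Lemma, and the work amounts to unpacking how cones are realised as convolutions of two-term twisted complexes and tracking the harmless sign $(-1)^i$.
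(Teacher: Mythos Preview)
Your proposal is correct and follows essentially the same approach as the paper: both reduce immediately to the Rectangle Lemma and then identify each $G_i$ with $\cone(E_i \xrightarrow{f_{ii}} F_i)$. The only point the paper makes slightly more explicit is the passage from the twisted complex $(G_i,\gamma_{ij})$ over $\pretriag\modA$ to the twisted complex $(\{G_i\},\gamma_{ij})$ over $\modA$, invoking the standard compatibility of convolution with totalisation (cf.\ Bondal--Kapranov); you implicitly use this when you speak of ``the convolution of $G$'', and it would do no harm to say so.
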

\begin{proof}
The convolution of the twisted complex 
$\tot \left(G_i, \gamma_{ij}\right)$ constructed in 
Lemma \ref{lemma-rectangle-lemma} is isomorphic to the convolution of the twisted complex $\left( \{G_i\}, \gamma_{ij}\right)$, cf.~diagram 
3.1 in \cite[\S3]{BondalKapranov-EnhancedTriangulatedCategories}. Each $\left\{G_i\right\}$ is isomorphic to  $\cone\left(E_i \xrightarrow{f_{ii}} F_i\right)$. Similarly, the convolution of $\tot\left( E \xrightarrow{f} F \right)$ is 
isomorphic to $\cone\left( \{E\} \xrightarrow{f} \{F\}\right)$. 
The claim now follows from Lemma \ref{lemma-rectangle-lemma}. 
\end{proof}

\begin{cor}
\label{cor-every-vertical-comp-is-quasiiso-then-map-is-itself-quasiiso}
Let $\A$ be a DG-category and let $(E_i, \alpha_{ij})$ and 
$(F_i,\beta_{ij})$ be one-sided bounded above twisted complexes
over $\modA$. Let $f=(f_{ij})$ be a one-sided closed map 
$(E_i, \alpha_{ij}) \rightarrow (F_j, \beta_{ij})$ of degree $0$. 

If each component $f_{ii}: E_i \to F_i$ is 
a quasi-isomorphism, then so is the induced map 
$$\left\{E_i, \alpha_{ij}\right\} \xrightarrow{f} \left\{F_i,
\beta_{ij}\right\}.$$
\end{cor}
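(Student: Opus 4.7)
The plan is to reduce the statement to acyclicity of a single convolution via Corollary \ref{cor-the-cone-is-convolution-of-cones-of-vertical-arrows}, and then establish that acyclicity by a filtration argument. By Corollary \ref{cor-the-cone-is-convolution-of-cones-of-vertical-arrows}, the cone of the induced map $\{E\} \xrightarrow{f} \{F\}$ is isomorphic to the convolution $T = \{C_i, \epsilon_{ij}\}$ of a one-sided twisted complex over $\modA$ whose $i$-th object $C_i$ is isomorphic to $\cone\bigl(E_i \xrightarrow{f_{ii}} F_i\bigr)$. Since $(E_i, \alpha_{ij})$ and $(F_i, \beta_{ij})$ are both bounded above, so is $(C_i, \epsilon_{ij})$; and since each $f_{ii}$ is a quasi-isomorphism, each $C_i$ is acyclic. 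It therefore suffices to show that the convolution of any one-sided bounded above twisted complex of acyclic DG $\A$-modules is itself acyclic.

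For this, I would equip $T$ with the descending filtration by DG-submodules $F^{\geq n} T$ whose underlying graded modules are $\bigoplus_{i \geq n} C_i[-i]$, with the differentials inherited from those of $T$. This is a sub-DG-module because the one-sided twisted differentials $\epsilon_{ij}$ only go from $C_i$ to $C_j$ with $j \geq i$, so they preserve the condition $i \geq n$. The bounded-above hypothesis on $(C_i, \epsilon_{ij})$ ensures that $F^{\geq n} T = 0$ for all sufficiently large $n$, while each successive quotient satisfies $F^{\geq n} T / F^{\geq n+1} T \cong C_n[-n]$, which is acyclic. A descending induction on $n$, applied to the long exact sequence in cohomology coming from
\[ 0 \to F^{\geq n+1} T \to F^{\geq n} T \to C_n[-n] \to 0, \]
then shows that $F^{\geq n} T$ is acyclic for every $n \in \mathbb{Z}$.

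Finally, since the underlying graded module of $T$ is a \emph{direct} sum over $i$, every element of $T$ belongs to some $F^{\geq n} T$. In particular, any cocycle $z \in T$ lies in $F^{\geq n} T$ for some $n$, and by the previous step $z$ is already a boundary in $F^{\geq n} T$ and hence in $T$. This establishes $H^{*}(T) = 0$, so the cone of $\{E\} \xrightarrow{f} \{F\}$ is acyclic and the map itself is a quasi-isomorphism. The main technical subtlety is ensuring that the filtration is compatible with the twisted differentials, which relies precisely on the one-sidedness hypothesis; once this is in place, both the inductive step and the passage from finite truncations to the whole convolution are routine.
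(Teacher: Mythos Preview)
Your proof is correct and follows essentially the same approach as the paper's: reduce via Corollary~\ref{cor-the-cone-is-convolution-of-cones-of-vertical-arrows} to showing that the convolution of a bounded-above one-sided twisted complex of acyclic modules is acyclic, and establish this by filtering by the finite subcomplexes $F^{\geq n}T$. The paper phrases the same argument slightly differently (induction on finite twisted complexes, then an exhaustive filtration by their convolutions), but your descending induction on $n$ and your final observation that every element lies in some $F^{\geq n}T$ amount to exactly this.
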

\begin{proof}
By Cor.~\ref{cor-the-cone-is-convolution-of-cones-of-vertical-arrows}
the cone of $f$ is isomorphic in $D(\A)$ to 
the convolution of the bounded above twisted complex whose objects are 
isomorphic to $\cone\left(E_i \xrightarrow{f_{ii}} F_i\right)$ in $D(\A)$. By
assumption $f_{ii}$ are quasi-isomorphisms, thus all the objects of 
this twisted complex are acyclic. To show that $f$ itself is a
quasi-isomoprhism, it remains to show that the convolution
of a bounded above twisted complex of acyclic modules is itself acyclic.
To see this, first note that the cone of any two acyclic modules is acyclic and, by induction, so is the convolution of any finite one-sided twisted complex of acyclic modules. Finally, the convolution of any bounded above twisted complex has an exhaustive filtration by the convolutions of its finite subcomplexes. The induced exhaustive filtration on cohomologies  
proves the claim. 
\end{proof}

\begin{lemma}[Extraction Lemma]
\label{lemma-extraction-lemma} 

Let $E \in \modA$ and let $Q \subset E$ be a null-homotopic
submodule. Let $\nu$ be a contracting homotopy of $Q$, that is
$\nu \in \homm^{-1}_\A(Q,Q)$ such that $d\nu = \id$. 
Suppose that on the level of the underlying
graded modules $E$ splits as $Q \oplus F$ for some graded $\A$-submodule $F$. 
Let the differential of $E$ with respect to that splitting be
\begin{equation}
\label{eqn-differential-of-the-module-with-an-acyclic-submodule}
\left(
\begin{matrix}
d_Q & \alpha
\\
\beta & \delta 
\end{matrix}
\right).
\end{equation}

Then $E$ is homotopy equivalent to $F$ equipped with the differential 
$d_F = \delta - \beta \circ \nu \circ \alpha$. 
\end{lemma}
\begin{proof}
Since $Q$ is a sub-DG-module of $E$, we have $d_Q^2 = 0$.
Since \eqref{eqn-differential-of-the-module-with-an-acyclic-submodule}
is a differential, it is a derivation and of square zero.
The fact \eqref{eqn-differential-of-the-module-with-an-acyclic-submodule} is 
a derivation together with the splitting of $E = Q \oplus F$
respecting $\A$-action implies that $\delta$ is also a derivation, 
while $\alpha$ and $\beta$ are maps of graded $\A$-modules. 
The fact that \eqref{eqn-differential-of-the-module-with-an-acyclic-submodule}
squares to zero implies that 
$$ \alpha \circ \beta = 0, $$
$$ \delta^2 + \beta \circ \alpha = 0, $$
$$ \delta \circ \beta + \beta \circ d_Q = 0, $$
$$ \alpha \circ \delta + d_Q \circ \alpha = 0. $$

The map $d_F$ is a derivation as it is the sum of the derivation $\delta$
and the graded $\A$-module map $- \beta \circ \nu \circ \alpha$. 
Moreover, we have 
\begin{align*}
d_F^2 & = \left(\delta - \beta \circ \nu \circ \alpha \right)^2 = 
\delta^2 - \delta \circ \beta \circ \nu \circ \alpha - 
\beta \circ \nu \circ \alpha \circ \delta + 
\beta \circ \nu \circ \alpha \circ \beta \circ \nu \circ \alpha = \\
& = \delta^2 + \beta \circ d_Q \circ \nu \circ \alpha + \beta \circ \nu
\circ d_Q \circ \alpha + 0 = 
\delta^2 + \beta \circ \left(d_Q \circ \nu + \nu \circ d_Q\right)
\circ \alpha = \\
& = \delta^2 + \beta \circ d\nu \circ \alpha = 
\delta^2 + \beta \circ \alpha  = 0. 
\end{align*}
Thus $d_F$ does indeed define a differential and hence a structure 
of a DG $\A$-module on $F$. It can be readily checked that 
with respect to that structure the maps $\alpha$ and $\beta$ are 
both closed.  

Consider the following maps of graded $\mathcal{A}$-modules:
\begin{align}
\label{eqn-extraction-lemma-E-to-F-homotopy-equivalence}
Q \oplus F 
\xrightarrow{ 
\left(
\begin{smallmatrix}
- \beta \circ \nu & \id 
\end{smallmatrix}
\right)
}
F
\\
F 
\xrightarrow{ 
\left(
\begin{smallmatrix}
-\nu \circ \alpha \\ \id 
\end{smallmatrix}
\right)
}
Q \oplus F
\label{eqn-extraction-lemma-F-to-E-homotopy-equivalence}. 
\end{align}
We claim that they
define mutually inverse homotopy equivalences $E \xrightarrow{\sim} F$
and $F \xrightarrow{\sim} E$ in $\modA$. Indeed, 
$$ 
\eqref{eqn-extraction-lemma-E-to-F-homotopy-equivalence}
\circ 
\eqref{eqn-extraction-lemma-F-to-E-homotopy-equivalence}
= \id + \beta \circ \nu^2 \circ  \alpha = 
\id - d(\beta \circ \nu^3 \circ \alpha), $$
while
$$
\eqref{eqn-extraction-lemma-F-to-E-homotopy-equivalence}
\circ 
\eqref{eqn-extraction-lemma-E-to-F-homotopy-equivalence}
= 
\left(
\begin{matrix}
0 & - \nu \circ \alpha \\
- \beta \circ \nu & \id
\end{matrix}
\right)
= 
\left(
\begin{matrix}
\id & 0 \\
0 & \id
\end{matrix}
\right)
- 
\left(
\begin{matrix}
\id & \nu \circ \alpha \\
\beta \circ \nu & 0 
\end{matrix}
\right)
= \id - 
d 
\left(
\begin{matrix}
\nu & 0 \\
0 & 0  
\end{matrix}
\right)
$$
as required. 
\end{proof}

Since the convolution functor $\pretriag \modA \rightarrow \modA$
is an equivalence, the Extraction Lemma can be applied to any twisted 
complex over $\modA$ with a null-homotopic subcomplex. For example: 

\begin{exmpl}
Let $\A$ be a DG-category and let 
\begin{equation}
\label{eqn-extraction-lemma-example}
\begin{tikzcd}[column sep={3cm},row sep={1.5cm}] 
\underset{\degzero}{E_0}
\ar{r}{
\left(
\begin{smallampmatrix}
\alpha_{01} \\ 
\delta_{01}
\end{smallampmatrix}
\right)
}
\ar[bend left=20,dashed]{rr}{
\left(
\begin{smallampmatrix}
\alpha_{02} \\ 
\delta_{02} 
\end{smallampmatrix}
\right)
}
\ar[bend left=30,dotted]{rrr}{\delta_{03}}
&
X \oplus E_1 
\ar{r}{
\left(
\begin{smallampmatrix}
\id & \alpha_{12} \\
\beta_{12} & \delta_{12}
\end{smallampmatrix}
\right)
}
\ar[bend right=20,dashed]{rr}{
\left(
\begin{smallampmatrix}
\beta_{13} & \delta_{13}
\end{smallampmatrix}
\right)
}
&
X \oplus E_2 
\ar{r}{
\left(
\begin{smallampmatrix}
\beta_{23} & \delta_{23} 
\end{smallampmatrix}
\right)
}
&
E_3
\end{tikzcd}
\end{equation}
be a twisted complex over $\modA$. Then the following is a twisted
complex homotopy equivalent to \eqref{eqn-extraction-lemma-example}: 
\begin{equation}
\label{eqn-extraction-lemma-example-excised}
\begin{tikzcd}[column sep={3cm},row sep={1.5cm}] 
\underset{\degzero}{E_0}
\ar{r}{
\delta_{01}
}
\ar[bend left=20,dashed]{rr}{
\delta_{02} - \beta_{12} \circ \alpha_{02}
}
\ar[bend left=30,dotted]{rrr}{
\delta_{03} - \beta_{13} \circ \alpha_{02}
}
&
E_1 
\ar{r}{
\delta_{12} - \beta_{12} \circ \alpha_{12}
}
\ar[bend right=20,dashed]{rr}{
\delta_{13} - \beta_{13} \circ \alpha_{12}
}
&
E_2 
\ar{r}{
\delta_{23}
}
&
E_3.
\end{tikzcd}
\end{equation}
\end{exmpl}
\begin{proof}
In Lemma \ref{lemma-extraction-lemma} set $Q$ to be the convolution 
of $\left( \underset{\degone}{X} \xrightarrow{\id} X \right)$ and 
set $F$ to be the graded module underlying $\bigoplus E_i[-i]$. 
Set the contracting homotopy $\nu$ to be given by
\begin{equation}
\begin{tikzcd}
\underset{\degone}{X} 
\ar{r}{\id}
& X 
\ar{dl}[description]{\id}
\\
\underset{\degone}{X} 
\ar{r}{\id}
& 
X.
\end{tikzcd}
\end{equation}

Then in 
\eqref{eqn-differential-of-the-module-with-an-acyclic-submodule}
the map $\delta$ is $\sum d_{E_i} + \sum \delta_{ij}$, $\alpha$ is $\sum
\alpha_{ij}$ and $\beta$ is $\sum \beta_{ij}$. The map $\beta \circ
\nu \circ \alpha$ is therefore 
\begin{equation*}
\begin{tikzcd}[column sep={3cm},row sep={0.5cm}] 
\underset{\degzero}{E_0}
\ar{dr}[description]{\alpha_{01}}
\ar{drr}[description]{\alpha_{02}}
&
E_1 
\ar{dr}[description]{\alpha_{12}}
&
E_2 
&
E_3
\\
& X & X \ar{dl}[description]{\id} & 
\\
& 
X 
\ar{dr}[description]{\beta_{12}} 
\ar{drr}[description]{\beta_{13}} 
& 
X \ar{dr}[description]{\beta_{23}} & 
\\
\underset{\degzero}{E_0} & E_1 & E_2 & E_3,
\end{tikzcd}
\end{equation*}
that is 
$ (\beta_{12} + \beta_{13}) \circ (\alpha_{02} + \alpha_{12})$. 
Thus the differential $\delta - \beta \circ \nu \circ \alpha$ on the graded 
module $\bigoplus E_i[-i]$ equals
\begin{align}
\label{eqn-excised-complex-differential} 
\sum d_{E_i} + \sum \delta_{ij} - 
(\beta_{12} + \beta_{13}) \circ (\alpha_{02} + \alpha_{12}). 
\end{align}
The convolution functor $\pretriag \modA \rightarrow \modA$
is an equivalence. Every differential on the graded 
complex $\bigoplus E_i[-i]$ corresponds to a twisted complex 
whose objects are $E_i$. By the definition of the convolution 
functor, the differential \eqref{eqn-excised-complex-differential}
corresponds to the twisted complex 
\eqref{eqn-extraction-lemma-example-excised}. 
\end{proof}

\subsection{$\Ainfty$-algebras, modules, and bimodules}
\label{section-Ainfty-algebras-modules-bimodules}

For an introduction to $\Ainfty$-categories we recommend
\cite{Keller-AInfinityAlgebrasModulesAndFunctorCategories}, and 
for a comprehensive technical text -- \cite{Lefevre-SurLesAInftyCategories}.
We refer the reader to the latter for the definitions and the notation 
we employ. Below, we summarise some of it and prove several minor 
new results. 

An \em $\Ainfty$-algebra \rm over $k$ is a graded $k$-bimodule $A$ 
together with graded maps
\begin{align}
\label{eqn-A-infty-algebra-maps}
m_i\colon A^{\otimes i} \rightarrow A \quad \quad i \geq 1
\end{align}
of degree $2 - i$ which lift to a differential which
gives the structure of an coaugmented DG coalgebra to 
the coaugmented tensor coalgebra 
$$ T^c(A[1]) = \bigoplus_{n \geq 0} A^{\otimes n}[n]$$ 
generated by $A[1]$. Such differential is necessarily
a sum of the zero map on the coaugmented part $k$ 
and a differential making
the reduced tensor coalgebra $\bigoplus_{n \geq 1} A^{\otimes n}[n]$
into a DG coalgebra. 
The resulting coaugmented DG-coalgebra $\bigoplus_{n \geq 0} A^{\otimes n}[n]$
is the \em bar construction $\infbar A $ \rm of $A$, whose counit we
denote by $\tau\colon \infbar A \rightarrow k$. 
The resulting DG-coalgebra $\bigoplus_{n \geq 1} A^{\otimes n}[n]$
is the \em non-augmented bar construction \rm $\infbarnaug A$. 

Let $A$ and $B$ be $\Ainfty$-algebras. An \em $\Ainfty$-algebra morphism \rm 
$f\colon A \rightarrow B$ is a collection of graded maps 
\begin{align}
\label{eqn-A-infty-morphism-maps}
f_i\colon A^{\otimes i} \rightarrow B \quad \quad i \geq 1
\end{align}
of degree $1 - i$ which lift to a morphism of augmented coalgebras
$\infbar A \rightarrow \infbar B$.

Let $A$ be an $\Ainfty$-algebra. A \em (right) $A$-module \rm is 
a graded $k$-module $E$ together with a collection of graded maps
\begin{align}
\label{eqn-A-infty-module-maps}
m_i\colon E \otimes A^{\otimes i - 1} \rightarrow E \quad \quad i \geq 1
\end{align}
of degree $2-i$ which lift to a differential on the free graded
$\infbar A$-comodule $E[1] \otimes_k \infbar A$ generated by $E[1]$. 
The resulting DG $\infbar A$-comodule is the \em bar construction 
$\infbar E$ \rm of $E$.  

Let $E$ and $F$ be two $A$-modules. An \em $\Ainfty$-module morphism \rm
$f\colon E \rightarrow F$ is a collection of graded maps 
\begin{align}
\label{eqn-A-infty-module-morphism-maps}
f_i \colon E \otimes_k A^{\otimes i - 1} \rightarrow F \quad \quad i \geq 1
\end{align}
of degree $1 - i$ which lift to a $\infbar A$-comodule morphism
$\infbar E \rightarrow \infbar F$. An $\Ainfty$-module morphism 
is \em strict \rm if $f_i = 0$ for $i \geq 2$. 

Let $A$ and $B$ be $\Ainfty$-algebras. An \em $A$-$B$-bimodule \rm is 
a graded $k$-bimodule $M$
together with a differential $m_{0,0}$ and the action maps
\begin{align}
m_{i,j}\colon A^{\otimes i} \otimes_k M \otimes_k B^{\otimes j} \rightarrow M  
\quad \quad i \geq 0, j \geq 0
\end{align}
of degree $1 - i - j$ which together lift to a differential 
\begin{align}
(\infbar A) \otimes_k M[1] \otimes_k (\infbar B) 
\longrightarrow 
(\infbar A) \otimes_k M[1] \otimes_k (\infbar B),
\end{align}
cf. \cite[\S2.5.1]{Lefevre-SurLesAInftyCategories}. 
The resulting DG $\infbar A$-$\infbar B$-bicomodule is
the \em bar construction $\infbar M$ \rm of $M$. Whenever it  
is necessary to avoid confusion, e.g. in the case of the diagonal
bimodule, we will denote the bimodule bar construction 
as $\infbarbim M$. 

We also consider the partial bar constructions. 
The \em $B$-bar construction $B^B_\infty M$ \rm is the 
right DG $(\infbar B)$-comodule 
whose underlying graded $\infbar B$-comodule is $M \otimes_k \infbar B$ 
and whose differential is constructed from $m_{0,j}$ for 
$j \geq 0$. It also carries the structure of a left $\Ainfty$
$A$-module, defined by $m_{i,j}$ for $i \geq 1, j \geq 0$. 
Likewise, the \em $A$-bar construction $B^A_\infty M$ \rm is 
the left DG $(\infbar A)$-comodule and right $\Ainfty$ $B$-module 
defined similarly. Consider now the DG-algebras $\eend_{\infbar B}(B^B_\infty M)$
and $\eend_{(\infbar A)^\opp}(B^A_\infty M)$.
Specifying the structure of $A$-$B$-bimodule on $M$ is 
equivalent to specifying the natural \em $A$-action \rm $\Ainfty$-morphism
\begin{align}
\label{eqn-A-infty-bimodule-left-action-map}
A \xrightarrow{\action_M} \eend_{\infbar B}(B^B_\infty M).
\end{align} 
Similarly, it is equivalent to specifying the natural \em $B$-action 
$\Ainfty$-morphism \rm
\begin{align}
\label{eqn-A-infty-bimodule-right-action-map}
B^{\opp} \xrightarrow{\action_M} \eend_{(\infbar A)^\opp}(B^A_\infty M), 
\end{align}
cf. the ``lemme clef'' of \cite[\S5.3]{Lefevre-SurLesAInftyCategories}. 
Explicitly, we define e.g. the $A$-action morphism by setting
for each $a_1 \otimes  \dots \otimes a_n \in A^{\otimes n}$ 
the endomorphism 
$\action_M(a_1 \otimes \dots \otimes a_n) \in \eend_{\infbar B}(B^B_\infty M)$ 
to be
\begin{align}
m \otimes b_1 \otimes \dots \otimes b_m \mapsto
\sum_{l = 0}^m (-1)^? 
m_{n,l}(a_1 \otimes \dots \otimes a_n 
\otimes m 
\otimes b_1 \otimes \dots \otimes b_l) \otimes
b_{l+1} \otimes \dots \otimes b_m
\end{align}
where the signs are dictated by 
the definitions in \cite[\S5.3]{Lefevre-SurLesAInftyCategories}. 

The \em diagonal bimodule \rm $A$ is defined by the graded maps
\begin{align}
\label{eqn-A-infty-diagonal-bimodule}
m_{i,j}\colon A^{\otimes i} \otimes_k A \otimes_k A^{\otimes j} 
\xrightarrow{m_{i+j+1}} A \quad \quad i \geq 0, j \geq 0
\end{align}
where $m_i$ are the maps which define the $\Ainfty$-algebra structure on $A$. 

Let $M$ and $N$ be two $A$-$B$-bimodules. An \em $\Ainfty$-bimodule 
morphism \rm $f\colon M \rightarrow N$ is a collection of graded maps 
\begin{align}
\label{eqn-A-infty-bimodule-morphism-maps}
f_{i,j} \colon A^{\otimes i} \otimes_k M \otimes_k B^{\otimes j}
\rightarrow N \quad \quad i \geq 0, j \geq 0
\end{align}
of degree $- i - j$ which lift to a $\infbar A$-$\infbar B$-bicomodule 
morphism $\infbar M \rightarrow \infbar N$. An $\Ainfty$-module morphism 
is \em strict \rm if $f_{i,j} = 0$ for $i \geq 1$ or $j \geq 1$. 

The DG $k$-module $\homm_{\infbar B}(B^B_\infty M, B^B_\infty N)$ has
a natural structure of an $A$-$A$-bimodule defined via the $A$-action maps 
for $B^B_\infty M$ and $B^B_\infty N$. Similar to
\eqref{eqn-A-infty-bimodule-left-action-map} and 
\eqref{eqn-A-infty-bimodule-right-action-map}, specifying 
an $\Ainfty$ $A$-$B$-bimodule morphism $M \xrightarrow{f} N$ is then 
equivalent to specifying a DG $\infbar A$-$\infbar A$-bicomodule morphism 
\begin{align}
\label{eqn-A-infty-map-of-bimodules-as-left-action-map}
\infbar A \xrightarrow{f_A}
\infbar \Big( \homm_{\infbar B}\left(B^B_\infty M, B^B_\infty N\right)\Big).
\end{align}
Similarly, it is equivalent to specifying a DG
$\infbar B$-$\infbar B$-bicomodule morphism 
\begin{align}
\label{eqn-A-infty-map-of-bimodules-as-right-action-map}
\infbar B \xrightarrow{f_B}
\infbar \Big(
\homm_{\infbar A}\left(B^A_\infty M, B^A_\infty N\right)
\Big).
\end{align}

\subsection{$\Ainfty$-categories}

Let $\mathbb{A}$ be a set. We define $k_\mathbb{A}$ to be the category 
whose set of objects is $\mathbb{A}$ and whose morphisms spaces are
\begin{align}
\homm_{k_\mathbb{A}}(a,b) = 
\begin{cases}
k & \quad \text{ if } a = b \\
0 & \quad \text{ if } a \neq b. 
\end{cases}
\end{align}
For any sets $\mathbb{A}$, $\mathbb{B}$ and $\mathbb{C}$, 
any graded $k_\mathbb{A}$-$k_{\mathbb{B}}$-bimodule $M$, and 
$k_\mathbb{B}$-$k_\mathbb{C}$-bimodule $N$
we denote by $\otimes_k$ their tensor product over 
$k_\mathbb{B}$:
\begin{align}
\leftidx{_a}(M \otimes_k N)_c  =  \bigoplus_{b \in \mathbb{B}}
\leftidx{_a}M_b \otimes_k  \leftidx{_b}N_c.
\end{align}
We give the category of graded $k_\mathbb{A}$-$k_\mathbb{A}$-bimodules 
a monoidal structure by equipping it with the multiplication given 
by $\otimes_k$ and the identity element given by the diagonal bimodule
$k_\mathbb{A}$. 

Given a graded $k_\mathbb{A}$-$k_\mathbb{B}$-bimodule $M$ and 
two maps of sets $\mathbb{A}' \xrightarrow{f} \mathbb{A}$ and
$\mathbb{B}' \xrightarrow{g} \mathbb{B}$, we write $\leftidx{_f}M_g$
for the graded $k_{\mathbb{A}'}$-$k_{\mathbb{B}'}$-bimodule obtained 
by pulling back along $f$ and $g$, i.e. 
\begin{align}
\leftidx{_{a'}}{\bigl(\leftidx{_f}M_g\bigr)}_{b'} = 
\leftidx{_{f(a')}}M_{g(b')} 
\quad\quad\quad
\forall\; a' \in \mathbb{A}', b' \in \mathbb{B}'.
\end{align}

An \em $\Ainfty$-category $\A$ \rm is an object set $\mathbb{A}$ and 
an $\Ainfty$-algebra $\A$ over $k_\mathbb{A}$, i.e. in the monoidal
category of graded $k_\mathbb{A}$-$k_\mathbb{A}$-bimodules. See 
\cite[\S1.1-1.2]{Lefevre-SurLesAInftyCategories} for an explanation 
of how the usual notions of ordinary, DG, and $\Ainfty$-algebras
over a ring generalise to those over an arbitrary monoidal category. 
We abuse the notation by also using $\A$ to denote 
the object set $\mathbb{A}$ where it doesn't cause confusion, 
e.g. we write $k_\A$ for $k_\mathbb{A}$. 

Given two $\Ainfty$-categories $\A$ and $\B$  an \em $\Ainfty$-functor \rm 
$\A \xrightarrow{F} \B$ is a map $\mathbb{A} \xrightarrow{\dot{F}}
\mathbb{B}$ of their object sets and a morphism 
$\A \rightarrow \leftidx{_{\dot{F}}}{\B}_{\dot{F}}$
of $\Ainfty$-algebras in the category of graded 
$k_\mathcal{A}$-$k_\mathcal{A}$-bimodules. 

The definitions of modules, bimodules, etc. for 
$\Ainfty$-algebras in \S\ref{section-Ainfty-algebras-modules-bimodules}
generalise similarly to $\Ainfty$-categories by considering 
the latter as $\Ainfty$-algebras in approriate categories of graded 
bimodules, cf.~\cite[\S5.1]{Lefevre-SurLesAInftyCategories}. 

Let $\A$ be an $\Ainfty$-category. We denote by $\noddinfA$ 
the DG-category of all right $\Ainfty$-modules over $\A$. Its objects
are right $\A$-modules and for any two objects $E$ and $F$ we have
\begin{align}
\homm_{\noddinfA}(E,F) \simeq \homm_{\infbar \A}(\infbar E, \infbar F), 
\end{align}
cf. \cite[\S5.2]{Lefevre-SurLesAInftyCategories}. It follows that the
elements of $\homm_{\noddinfA}(E,F)$ can be identified with
arbitrary collections of graded $k_\A$-module morphisms
$\left\{ E \otimes_k \A^{\otimes i} \rightarrow F \right\}_{i \geq
0}$. Note that such collection defines a morphism of
$\Ainfty$-modules, as per
\S\ref{section-Ainfty-algebras-modules-bimodules}, if and only if
the corresponding element is closed of degree $0$. 

Let $\A$ and $\B$ be $\Ainfty$-categories. The DG-category
$\noddinfAB$ of $\Ainfty$ $\AbimB$-bimodules is defined similarly 
to the above. 

Let $M$ be an $\AbimB$ bimodule. The notions of 
partial bar constructions and action maps defined 
in \S\ref{section-Ainfty-algebras-modules-bimodules} extend
to $\Ainfty$-functors $\A \rightarrow \noddinfB$ and $\Bopp
\rightarrow \noddinf \Aopp$, cf. 
\cite[Cor. 5.3.0.2]{Lefevre-SurLesAInftyCategories}. 
Given $a \in \A$ and $b \in \B$ we write $\aM$ and $M_b$ for 
their images under these functors. 
When $\A$ and $\B$ are $\Ainfty$-algebras, i.e. $\Ainfty$-categories
with a single object $\bullet$, $\leftidx{_\bullet}M$
is e.g. the $\B$-module which corresponds to $B^\B_\infty M$, 
and the functor $\A \rightarrow \noddinfB$ acts on the morphism
spaces by the $\Ainfty$-morphism 
$\A \rightarrow \homm_{\noddinfB}(\leftidx{_\bullet}M,
\leftidx{_\bullet}M)$ which
corresponds to the action map 
$A \xrightarrow{\action_M} \eend_{\infbar B}(B^B_\infty M)$. 

In case when $M$ is the diagonal bimodule $\A$, this yields
the \em Yoneda embedding \rm $\A \hookrightarrow \noddinfA$.
The modules $\bigl\{\leftidx{_a}\A\bigr\}_{a \in \A}$ 
are the \em representable \rm $\A$-modules. Explicitly, 
the graded $k_\A$-module underlying $\leftidx{_a}\A$ 
is $\homm_\A(-,a)$ and its $\Ainfty$-module structure
is given by the $\Ainfty$-operations $m_i$ of $\A$. 

An $\A$-module $E$ is \em free \rm if 
it is isomorphic to a direct sum of shifts of representable modules. 
An $\A$-module $E$ is \em semi-free \rm if it admits an ascending
filtration whose quotients are free modules.  

\subsection{The derived category of an $\Ainfty$-category} 
\label{section-the-derived-category-of-an-Ainfty-category}

Let $\A$ be an $\Ainfty$-category. A morphism of $\Ainfty$-modules
is a quasi-isomorphism if and only if it is a homotopy equivalence
\cite[Prop. 2.4.1.1]{Lefevre-SurLesAInftyCategories}. Thus
all quasi-isomorphisms are already invertible in $H^0(\noddinfA)$, 
and there is no need to formally invert them to construct 
the derived category of $\A$. Instead, however, we have 
to throw away certain $\A$-modules. For example, given  
an associative unital algebra, for its derived category 
as an $\Ainfty$-algebra to coincide with its usual derived category
we only want to consider its $\Ainfty$-modules which are 
quasi-isomorphic to its ordinary, unital modules. It turns out 
that we need to impose a similar sort of condition 
even when dealing with an arbitrary $\Ainfty$-category. 

We recall various notions of unitality for
$\Ainfty$-categories and their modules. Let $\A$ be an
$\Ainfty$-category. We say that $\A$ is \em strictly unital \rm if it
is equipped with a unit $\eta: k_\A \rightarrow \A$ such that $m_2(\eta
\otimes \id_\A) = m_2(\id_\A \otimes \eta) = \id_\A$ and $m_i(\id_\A
\otimes \dots \otimes \id_\A \otimes \eta \otimes \id_\A \otimes \dots
\otimes \id_\A) = 0$ for all $i \neq 2$. We say that $\A$ 
is \em homologically unital \rm if $H^*(\A)$ is a unital graded
category. Finally, we say that $\A$ is \em $H$-unital \rm 
if its non-augmented bar construction $\infbarnaug \A$ is acyclic. 
These notions are related as 
\begin{equation}
\label{eqn-strictly-unitary-to-homolog-unitary-to-H-unitary}
\text{ strictly unitary } 
\subset
\text{ homologically unital }
\subset
\text{ $H$-unital }. 
\end{equation}
The first inclusion is clear, and the second is due to 
\cite[Prop. 4.1.2.7]{Lefevre-SurLesAInftyCategories}.

Let $E \in \noddinfA$. If $\A$ is strictly unital, we say 
that $E$ is \em strictly unital \rm if $m_2(\id_M \otimes \eta) =
\id_M$ and $m_i(\id_M \otimes \id_\A
\otimes \dots \otimes \id_\A \otimes \eta \otimes \id_\A \otimes \dots
\otimes \id_\A) = 0$ for all $i \geq 3$. If $\A$ is homologically
unital, we say that $E$ is \em homologically unital \rm if $H^*(E)$
is a unital graded module over $H^*(\A)$. Finally, for any $\A$
we say that $E$ is \em $H$-unital \rm if its bar construction 
$\infbar E$ is acyclic. Note that the bar construction of $\A$
considered as a right module over itself coincides with its
non-augmented bar construction as an $\Ainfty$-category. Thus 
$\A$ is an $H$-unital $\Ainfty$-category if and only if $\A$
is an $H$-unital $\A$-module. 

Let $\noddinfhuA$ be the full subcategory of $\noddinfA$ 
consisting of $H$-unital modules. 
We define the \em derived category $D(\A)$ \rm of $\A$ to be $H^0(\noddinfhuA)$. 
When $\A$ is a DG-category, we denote by $D_\infty(\A)$ the derived
category of right $\Ainfty$ $\A$-modules as defined above. 
This is to distinguish it from the derived category $D(\A)$ of right DG
$\A$-modules as defined in \S\ref{section-preliminaries-on-DG-categories}. 

If $\A$ is $H$-unital, then for any 
$E \in \noddinfA$ the following conditions are equivalent:
\begin{enumerate}
\item 
\label{item-E-is-homotopic-to-semi-free}
$E$ is homotopic to a semi-free module.

\item
\label{item-E-is-in-triaA}
$E$ lies in the smallest full subcategory of $H^0(\noddinfA)$ which 
is triangulated, cocomplete, closed under isomorphisms, and contains 
the representable modules.

\item
\label{item-E-is-H-unital}
$E$ is \em $H$-unital\rm, that is --- its bar-construction 
$\infbar E$ is acyclic. 
\end{enumerate}
The equivalence of \eqref{item-E-is-homotopic-to-semi-free} 
and \eqref{item-E-is-in-triaA} is straightforward, 
while that of \eqref{item-E-is-in-triaA} and 
\eqref{item-E-is-H-unital} 
is due to \cite[Prop. 4.1.2.10]{Lefevre-SurLesAInftyCategories}. 

If $\A$ is strictly unital, 
by \cite[Prop. 4.1.3.7]{Lefevre-SurLesAInftyCategories} 
the equivalent conditions above are further equivalent to:
\begin{enumerate}
\setcounter{enumi}{3}
\item 
\label{item-E-is-homologically-unital}
$E$ is \em homologically unital\rm, that is --- 
$H^*(E)$ is a unital graded $H^*(\A)$-module. 
\end{enumerate}

Thus, for a strictly unital $\A$ we have a chain of inclusions
\begin{align*}
\moddinfA \hookrightarrow \noddinfuA \hookrightarrow 
\noddinfhuA \hookrightarrow \noddinfA
\end{align*}
where $\noddinfuA$ is the full subcategory consisting of strictly
unital modules, and $\moddinfA$ is its non-full subcategory of
strictly unital modules and strictly unital morphisms between them. 
The first two inclusions are quasi-equivalences, and thus 
$\moddinfA$ and $\noddinfuA$ are alternative DG-enhancements of 
$D(\A)$.

The derived categories of $\Ainfty$-bimodules are defined similarly and similar 
considerations apply.

\subsection{Tensor and Hom functors for bimodules}
\label{section-tensor-and-hom-functors-for-Ainfty-bimodules}

Let $\A$, $\B$, and $\C$ be $\Ainfty$-categories. 
Let $M \in \noddinf \AbimB$, and $N \in \noddinf \BbimC$. 
We define the $\Ainfty$-tensor product
$M \inftimes_\B N$ to be the $\Ainfty$ $\AbimC$-bimodule
whose bar construction is the (shifted) cotensor product of DG-comodules
\begin{align}
\label{eqn-bar-construction-of-ainfty-tensor-product}
\infbar M \otimes_{\infbar \B} \infbar N[-1]. 
\end{align}

Explicitly, the underlying graded $k_\A$-$k_\C$-bimodule is 
\begin{align}
M \otimes_k (\infbar \B) \otimes_k N 
\end{align}
and its $\Ainfty$ $\AbimC$-bimodule structure 
consists of the differential 
$$ m_{0,0} =  - d_{\infbar M} \otimes \id - \id \otimes d_{\infbar N} +
\id \otimes d_{\infbar \B} \otimes \id $$
and of commuting $\A$ and $\C$ actions 
induced from those on $M$ and $N$ respectively: 
\begin{align*}
m_{p,r} \bigl((a_1 \otimes \dots \otimes a_p) \otimes  m \otimes ( b_1 \otimes \dots \otimes b_q) \otimes n \otimes (c_1 \otimes \dots \otimes c_r) \bigr)
\end{align*}
equals
\begin{align}
\label{eqn-Ainfty-structure-on-Ainfty-tensor-product-explicit}
\begin{cases}
0 
&\quad \text{ if }  p, r \neq 0 \\
\bigoplus_{i = 0}^{q} (-1)^? m^M_{p,i}\bigl(a_1 \otimes \dots \otimes
a_p \otimes  m \otimes b_1 \otimes \dots \otimes b_i \bigr) \otimes
(b_{i+1} \otimes \dots \otimes b_q) \otimes n
&\quad \text{ if }  p \neq 0, r = 0
\\
\bigoplus_{i=0}^{q} (-1)^? m \otimes (b_1 \otimes \dots \otimes b_i) \otimes 
m^N_{q-i,r} \bigl(b_{i+1} \otimes \dots \otimes  b_{q} \otimes n \otimes c_1 \otimes \dots \otimes c_r \bigr)
&\quad \text{ if }  p = 0, r \neq 0 
\end{cases}
\end{align}
with the signs dictated by \eqref{eqn-bar-construction-of-ainfty-tensor-product}.

Let now $M \xrightarrow{f} M'$ be a morphism in $\noddinf \AbimB$ 
and $N \xrightarrow{g} N'$ to be a morphism in $\noddinf \BbimC$. 
Define the morphism 
$$ M \inftimes_\B N \xrightarrow{ f \otimes g} M' \inftimes_\B N' $$
to be the morphism in $\noddinf \AbimC$ which corresponds to the
DG-bicomodule morphism 
$$ \infbar M \otimes_{\infbar \B} \infbar N \xrightarrow{f \otimes g} 
\infbar M' \otimes_{\infbar \B} \infbar N'.$$
Explicitly, $f \otimes g$ sends
\begin{align*}
(a_1 \otimes \dots \otimes a_p) \otimes  m \otimes ( b_1 \otimes \dots \otimes b_q) \otimes n \otimes (c_1 \otimes \dots \otimes c_r) 
\end{align*}
to
\begin{small}
\begin{align*}
\bigoplus_{ 0  \leq i \leq j \leq q} (-1)^? f_{p,i}\bigl(a_1 \otimes \dots \otimes
a_p \otimes  m \otimes b_1 \otimes \dots \otimes b_i \bigr) \otimes
(b_{i+1} \otimes \dots \otimes b_j) \otimes
g_{q-j, r} 
\bigl(b_{j+1} \otimes \dots \otimes b_q \otimes  n \otimes c_1 \otimes \dots \otimes c_r \bigr).
\end{align*}
\end{small}

We thus obtain a DG-functor:
\begin{align}
(-) \inftimes_\B (-)\colon 
\noddinf \AbimB \otimes_k \noddinf \BbimC \longrightarrow \noddinf \AbimC.
\end{align}
Note that $f \otimes \id$ is $\C$-strict: 
$(f \otimes \id)_{i,j} = 0$ if $j > 0$. 
It follows that for any $M \in \noddinf \AbimB$ the functor
$$ (-) \inftimes_\A M\colon \noddinf \A \longrightarrow \noddinf \B $$
filters through the non-full subcategory 
$\noddinfstr \B \subset \noddinf \B$ consisting of all
$\B$-modules and strict $\Ainfty$-morphisms between them. 

If the category $\B$ is a DG-category, then the above defined
$\Ainfty$-tensor product over $\B$ is different from the usual
DG-tensor product over $\B$ and we need to differentiate the two notions: 

\begin{defn}
\label{defn-dg-tensor-product-of-some-Ainfty-bimodules}
Let $\A$ and $\C$ be $\Ainfty$-categories and let $\B$ be a DG-category. 
Let $M \in \noddinf \AbimB$ and $N \in \noddinf \BbimC$ be such that their
partial bar-constructions $B^\A_\infty M$ and $B^\C_\infty N$  are
DG-modules over $\B$. In other words, $m^M_{i,j} = 0$ if $j \geq 2$ and $m^N_{i,j} = 0$ if $i \geq 2$.

The \em DG-tensor product \rm $M \otimes_\B N$ is the $\Ainfty$ 
$\AbimC$-bimodule which corresponds to the free DG 
$B_\infty \A$-$B_\infty \C$ bicomodule 
obtained as the DG-tensor product of the partial bar constructions 
of $M$ and $N$:
\begin{align}
\label{eqn-dg-tensor-product-of-some-Ainfty-bimodules}
B^\A_\infty M \otimes_\B B^\C_\infty M. 
\end{align}
Explicitly, the underlying DG 
$k_\A$-$k_\C$-bimodule 
is $M \otimes_\B N$
and the commuting $\A$ and $\C$ $\Ainfty$-actions 
given by  
\begin{align*}
m^{M \otimes_\B N}_{p,r}(a_1, \dots, a_p, m \otimes n, c_1, \dots, c_r ) = 
\begin{cases}
0 & p,r \neq 0 \\
(-1)^? m \otimes m^N_{0,r}(n,c_1, \dots, c_r) 
& p = 0  \\
(-1)^? m^M_{p,0}(a_1,\dots, a_p,m) \otimes n
& r = 0 
\end{cases}
\end{align*}
with the signs dictated by 
\eqref{eqn-dg-tensor-product-of-some-Ainfty-bimodules}. 
\end{defn}

In particular, for any $\Ainfty$-categories $\A$, $\B$, and $\C$ and
any $M \in \noddinf \AbimB$, and $N \in \noddinf \BbimC$ denote 
by $M \otimes_k N$ the above construction applied to $M$ and $N$ 
considered as $\A$-$k$ and $k$-$\C$ bimodules, respectively. In other
words, we simply forget the $\B$-module structure on $M$ and $N$,
tensor them as DG $k_\B$ modules, and then define the commuting $\A$ 
and $\C$ $\Ainfty$-actions on the result. 

A particularly useful application of this construction is to tensor 
with the diagonal bimodule. 
Let $\A$ be an $\Ainfty$-category and $N$ be a DG $k_{\A}$-module. 
The DG-tensor product $N \otimes_k \A$ can be considered as 
the $\A$-module generated by $N$ over $\A$. Explicitly,  
it has $N \otimes_k \A$ as the underlying DG $k$-module and 
for each $p \geq 2$ we have
$$ m^{N \otimes_k \A}_{p}(n \otimes a, a_1, \dots, a_{p-1} ) = 
(-1)^? n \otimes m_{p}(a,a_1, \dots, a_{p-1}) \quad \quad 
n \otimes a \in N \otimes_k \A,\; a_i \in \A $$
with appropriate signs. 

\begin{lemma}
\label{lemma-over-a-field-N-otimes_k-A-is-semi-free}
Let $\A$ be an $\Ainfty$-category and let $N$ be a DG $k_{\A}$-module. 
The $\A$-module $N \otimes_k \A$ admits a filtration of length two whose 
quotients are free modules. In particular, $N \otimes_k \A$ is semi-free. 
\end{lemma}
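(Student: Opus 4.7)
The plan is to build the filtration by splitting off the cycles of $N$. Let $Z \subseteq N$ denote the DG sub-$k_\A$-module of cycles, i.e.\ $Z_a = \ker\bigl(d\colon N_a \to N_a\bigr)$ for each $a \in \A$. By construction the differential on $Z$ vanishes, and since $d(N) \subseteq Z$ the induced differential on $N/Z$ also vanishes. This gives a short exact sequence $0 \to Z \to N \to N/Z \to 0$ of DG $k_\A$-modules whose outer terms have zero differential. Because $k$ is a field, every graded $k_\A$-module is flat, so the sequence $0 \to Z \otimes_k \A \to N \otimes_k \A \to (N/Z) \otimes_k \A \to 0$ is exact as a sequence of graded $k_\A$-modules. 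This is my candidate for the length-two filtration.

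Next I would check that $Z \otimes_k \A$ is actually an $\Ainfty$ $\A$-submodule of $N \otimes_k \A$. By the explicit formula preceding the lemma, the operation $m_p^{N \otimes_k \A}$ for $p \geq 2$ acts purely on the right factor, sending $(n \otimes a, a_1, \dots, a_{p-1})$ to $\pm\, n \otimes m_p(a,a_1,\dots,a_{p-1})$, which stays in $Z \otimes_k \A$ whenever $n \in Z$; similarly $m_1$ on $z \otimes a$ with $z \in Z$ reduces to $\pm\, z \otimes da$. The quotient $\Ainfty$-structure on $(N/Z) \otimes_k \A$ then coincides with the structure produced by applying the same formula to the DG $k_\A$-module $N/Z$ in place of $N$.

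The final step is the observation that for any DG $k_\A$-module $L$ with zero differential, $L \otimes_k \A$ is a free $\A$-module. Since $k$ is a field, each $L_a$ is a graded $k$-vector space, so choosing a homogeneous basis $\{z^a_i\}_{i \in I_a}$ with $d^a_i := \deg z^a_i$ gives an isomorphism $L \cong \bigoplus_{a,i} k[-d^a_i]$ of DG $k_\A$-modules. Tensoring with $\A$ yields
\[
L \otimes_k \A \;\cong\; \bigoplus_{a \in \A}\bigoplus_{i \in I_a} \leftidx{_a}{\A}[-d^a_i],
\]
and the formula for the $\Ainfty$-action shows that this identification intertwines the structure on $L \otimes_k \A$ with the Yoneda $\A$-action on each representable summand. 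Applying this to $L = Z$ and $L = N/Z$ completes the argument. I do not expect a real obstacle: the only place where the hypotheses genuinely enter is the use of $k$ being a field, both to guarantee flatness when forming the short exact sequence and to pick homogeneous bases of $Z$ and of $N/Z$, which is precisely the phenomenon that fails over a general base ring as flagged in the introduction.
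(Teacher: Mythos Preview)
Your proof is correct, but it takes a different route from the paper's. The paper decomposes $N$ non-canonically as $H^*(N) \oplus \bigoplus_i C_i$, where each $C_i = (\img d_i \xrightarrow{\id} \img d_i)$ is an acyclic two-term complex; then $H^*(N)\otimes_k\A$ is free and each $C_i\otimes_k\A$ has a length-two filtration by degree, and one sums these filtrations. You instead use the \emph{canonical} short exact sequence $0 \to Z \to N \to N/Z \to 0$ with $Z = \ker d$, observing that both ends have zero differential and hence tensor to free modules. Your approach is cleaner: it avoids the non-canonical splitting and gives the filtration directly, at the modest cost of having to verify that $Z\otimes_k\A$ is genuinely an $A_\infty$-submodule with the expected quotient (which you do correctly, since the higher $m_p$ act only on the $\A$-factor and $m_1$ preserves $Z\otimes_k\A$ because $dZ=0$). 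The paper's decomposition, on the other hand, makes the role of the field hypothesis slightly more visible via the explicit splitting of $N$ into cohomology plus contractible pieces.
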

\begin{proof}
Suppose first that $N$ is a graded $k_{\A}$-module considered 
as a DG-module with zero differential. 
Then $N \otimes_k \A$ is a free $\A$-module, as it is isomorphic to
$$ \bigoplus_{a \in \A, i \in \mathbb{Z}} (N_a)_i \otimes_k \aA [-i].$$ 
On the other hand, if $N$ is a DG-module bounded from above, then
$N \otimes_k \A$ is semi-free as 
it admits a filtration whose quotients are $N_i \otimes_k \A$. In
particular, if all $N_i$ vanish for $i \notin [a,b]$ for some 
$a,b \in \mathbb{Z}$, then $N \otimes_k \A$ admits a filtration 
of length $b - a + 1$ whose factors are free. 

Finally, since $k$ is a field, we can (non-canonically) decompose 
DG $k_\A$-module $N$ as a direct sum of its graded 
cohomology module $H^*(N)$ and 
acyclic DG-modules $\img d_i \rightarrow \img d_i$ 
concentrated in degrees $i$ and $i+1$.  Therefore, the $\A$-module $N \otimes_k \A$ splits into a direct sum of a free module and the modules which each admit a filtration of length $2$ whose quotients are free. The desired assertion follows. 
\end{proof}

Now let $L \in \noddinf \DbimB$, and $M \in \noddinf \AbimB$. 
We define the $\Ainfty$-Hom bimodule $\infhom_{\B}(L,M)$  
as follows. The underlying graded $k_\A$-$k_\D$-bimodule is
\begin{align}
\label{eqn-graded-bimodule-underlying-Ainfty-Hom}
\homm_{\infbar \B}(\infbarB L, \infbarB M). 
\end{align}
It has a natural structure of a DG-bimodule over DG-categories
$\eend_{\infbar \B}(\infbarB M)$
and $\eend_{\infbar \B}(\infbarB L)$. 
Using the $\A$- and $\D$-action functors
we restrict this to an $\Ainfty$ $\AbimD$-bimodule structure, cf. 
\cite[\S6.2]{Keller-IntroductionToAInfinityAlgebrasAndModules}. 

Explicitly, this bimodule structure consists of
the standard differential 
\begin{align}
m_{0,0}(\alpha) = d_{\infbarB F} \circ \alpha - (-1)^{|\alpha|} \alpha
\circ d_{\infbarB E}
\end{align}
and of commuting $\A$ and $\D$ actions: for any $\alpha \in
\homm_{\infbar \B}(\infbarB E, \infbarB F)$ we have  
\begin{align*}
m_{p,r} \bigl((a_1 \otimes \dots \otimes a_p) \otimes 
\alpha \otimes (d_1 \otimes \dots \otimes d_r) \bigr)
= 
\begin{cases}
0 
&\quad \text{ if }  p, r \neq 0 \\
(-1)^? \action_M(a_1 \otimes \dots \otimes a_p) \circ \alpha
&\quad \text{ if }  r = 0
\\
(-1)^? \alpha \circ \action_M(d_1 \otimes \dots \otimes d_r) \bigr)
&\quad \text{ if }  p = 0.
\end{cases}
\end{align*}
where the signs are dictated by the definition of the restriction 
functor in 
\cite[\S6.2]{Keller-IntroductionToAInfinityAlgebrasAndModules}. 

Let now further $N \in \noddinf \CbimB$. It can be readily 
checked that the \em composition map \rm 
\begin{align}
\label{eqn-Ainfty-DG-cobimodule-composition-map}
\infbar \big(\infhom_{\B}\left(M, N\right)\big)
\otimes_{\infbar \A} 
\infbar \big(\infhom_{\B}\left(L,M\right)\big)  
\xrightarrow{\composition} 
\infbar \big(\infhom_{\infbar \B}\left(L,N\right)\big)
\end{align}
defined by 
\begin{small}
\begin{align*}
\quad &
\infbar \C \otimes_k 
\homm_{\infbar \B}\left(\infbarB M, \infbarB N\right)
\otimes_k \infbar \A \otimes_k 
\homm_{\infbar \B}\left(\infbarB L, \infbarB M\right)
\otimes_k \infbar \D 
\xrightarrow{\id^{\otimes 2} \otimes \tau \otimes \id^{\otimes 2}}
\\
\rightarrow \quad &
\infbar \C \otimes_k 
\homm_{\infbar \B}\left(\infbarB M, \infbarB N\right)
\otimes_k
\homm_{\infbar \B}\left(\infbarB L, \infbarB M\right)
\otimes_k \infbar \D 
\xrightarrow{\id \otimes \composition \otimes \id}
\\
\rightarrow \quad &
\infbar \C \otimes_k 
\homm_{\infbar \B}\left(\infbarB L, \infbarB N\right)
\otimes_k \infbar \D 
\end{align*}
commutes with the differentials. It defines therefore 
in $\noddinfCD$ the \em composition map \rm  
\begin{align}
\label{eqn-Ainfty-composition-map}
\infhom_B(M,N) \inftimes_\A \infhom(L,M) \xrightarrow{\composition}
\infhom_B(L,N). 
\end{align}
\end{small}

Let now $L' \xrightarrow{f} L$ and $M \xrightarrow{g} M'$
be morphisms in $\noddinf \DbimB$ and $\noddinf \AbimB$, respectively. 
Define the morphism 
$$\infhom_\B(L,M) \xrightarrow{g \circ (-) \circ f} \infhom_\B(L',M')$$ 
in $\noddinf \AbimD$ by the DG bicomodule morphism 
\begin{align*} 
& \infbar \big(  \infhom_\B\left(L,M\right) \big) \simeq 
\infbar \A
\otimes_{\infbar \A}
\infbar \big(  \infhom_\B\left(L,M\right) \big)
\otimes_{\infbar \D}
\infbar \D 
\xrightarrow{
g_\A
\otimes \id \otimes
f_\D
} \\
\rightarrow \quad &
\infbar \big(  \infhom_\B\left(M,M'\right) \big)
\otimes_{\infbar \A}
\infbar \big(  \infhom_\B\left(L,M\right) \big)
\otimes_{\infbar \D}
\infbar \big(  \infhom_\B\left(L',L\right) \big)
\xrightarrow{\composition}
\infbar \big(  \infhom_\B\left(L',M'\right) \big).
\end{align*}
Explicitly, for any $\alpha \in
\homm_{\infbar \B}(\infbarB L, \infbarB M)$ the map  
$(f \circ - \circ g)_{p,r}$ sends
\begin{align*}
(a_1 \otimes \dots \otimes a_p) \otimes 
\alpha \otimes (d_1 \otimes \dots \otimes d_r) 
\end{align*}
to the map 
\begin{align*}
(-1)^?
\big(
g_{p, \bullet} \left(a_1 \otimes \dots \otimes a_p \otimes -\right)
\otimes \id
\big) 
\circ 
\Delta 
\circ 
\alpha
\circ  
\left( 
f_{r, \bullet}\left(d_1 \otimes \dots \otimes d_r \otimes -\right)
\otimes \id 
\right) 
\circ \Delta 
\end{align*}
in $\homm_{\infbar \B}(\infbarB L', \infbarB M')$. 
Here $\Delta$ denotes, as usual, the comodule comultiplications. 

We thus obtain a DG-functor
\begin{align}
\infhom_{\B}(-,-) \colon (\noddinf \CbimB)^\opp \otimes \noddinf \AbimB
\rightarrow \noddinf \AbimC
\end{align}
and, similar to the above, for any $M \in \noddinf \AbimB$ the functor
$$ \infhom_\B(M,-)\colon \noddinfB \longrightarrow \noddinfA $$ 
filters through
$\noddinfstr \A \subset \noddinf \A$.  

We then have the usual Tensor-Hom adjunction: for every 
$M \in \noddinf \AbimB$ the functors
\begin{align}
(-) \inftimes_\A M\colon \noddinf \CbimA \rightarrow \noddinf \CbimB \\
\infhom_\B(M,-) \colon \noddinf \CbimB \rightarrow \noddinf \CbimA 
\end{align}
are left and right adjoint to each other, respectively. Same holds
for the functors $M \inftimes_\B (-)$ and $\infhom_{\Aopp}(M,-)$. 

Let $M \in \noddinf \AbimB$. The DG $k_\A$-$k_\A$-bimodule underlying 
$\infhom_{\B}(M,M)$ has an algebra structure given by composition, and
thus defines a DG-category with the same object set as $\A$. 
By definition, this DG-category can be naturally identified 
with the DG-category $\eend_{\infbar \B}(\infbarB M)$.
On the other hand, it can be identified with the image
of the functor $\A \rightarrow \noddinfB$ defined by $M$. 
Indeed, the assignment $a \mapsto \leftidx{_a}M$ gives 
a fully faithful inclusion
$\infhom_{\B}(M,M) \hookrightarrow \noddinfB$, and 
the functor $\A \rightarrow \noddinfB$
decomposes as 
\begin{align}
\A \xrightarrow{\action_M} 
\infhom_{\B}(M,M) \hookrightarrow \noddinfB. 
\end{align}
Here we write $\action_M$ for the composition 
$\A \xrightarrow{\action_M} \eend_{\infbar \B}(\infbarB M) 
\simeq \infhom_{\B}(M,M)$. 

\subsection{A functorial semi-free resolution for $\noddinfhuA$}
\label{section-functorial-semi-free-resolution-for-noddinfhuA}
 
To our knowledge, the material presented in this section is original 
to this paper. 

Let $\A$ be an $\Ainfty$-category and let $E \in \noddinfA$. 
Consider the $\Ainfty$ $\A$-module $E \inftimes_\A \A$. The corresponding
DG $\infbar \A$-comodule is 
\begin{align}
\label{eqn-DG-comodule-corresponding-to-E-inftimes-A}
\infbar E \otimes_{\infbar \A} \infbarbim \A \; [-1].
\end{align}
As a graded $\infbar \A$-comodule  
\eqref{eqn-DG-comodule-corresponding-to-E-inftimes-A} is isomorphic to
$$ E \otimes_k \infbar \A \otimes_k \A[1] \otimes_k \infbar \A $$
which decomposes as 
$$ \bigoplus_{i \geq 0} E \otimes_k (\A[1])^{\otimes i} 
\otimes_k \A [1] \otimes_k \infbar \A.$$

Observe that  the component of the differential of 
\eqref{eqn-DG-comodule-corresponding-to-E-inftimes-A} which goes
from its $i$-th summand to its $j$-th summand is zero if $j > i$. It follows that this differential decomposes into:
\begin{enumerate}
\item For each $i \geq 0$ a 
degree $1$ square zero $\infbar \A$-coderivation
\begin{align}
\label{eqn-E-A^otimes-i-coderivation} 
E \otimes_k (\A[1])^{\otimes i} \otimes_k \A [1] \otimes_k \infbar \A
\longrightarrow 
E \otimes_k (\A[1])^{\otimes i} \otimes_k \A [1] \otimes_k \infbar \A
\end{align}
\item 
For each $i > j \geq 0$ a degree $1$ 
graded $\infbar \A$-comodule morphism
\begin{align}
\label{eqn-E-A^otimes-i-to-E-A^otimes-j-morphism} 
E \otimes_k (\A[1])^{\otimes i} \otimes_k \A [1] \otimes_k \infbar \A
\longrightarrow 
E \otimes_k (\A[1])^{\otimes j} \otimes_k \A [1] \otimes_k \infbar \A. 
\end{align}
\end{enumerate}

For any $i > 0$ write $E \otimes_k \A^{\otimes i}$ for the $\Ainfty$ 
$\A$-module 
$$ \bigl(E \otimes_k \A^{\otimes (i-1)}\bigr) \otimes_k \A $$
in the sense of Definition 
\ref{defn-dg-tensor-product-of-some-Ainfty-bimodules}. 
The corresponding DG $\infbar \A$-comodule is the graded 
$\infbar \A$-comodule 
$$ E \otimes_k \A^{\otimes (i-1)} \otimes_k \A[1] \otimes_k \infbar \A $$ 
whose differential is (the shift of) 
the coderivation \eqref{eqn-E-A^otimes-i-coderivation}. 

\begin{defn}
\label{defn-Ainfty-morphism-E-A^i-to-E-A^j}
For any $i > 0$ define a degree $1 - i$ morphism 
\begin{align}
\label{eqn-Ainfty-morphism-E-A^i-to-E} 
E \otimes_k \A^{\otimes i} 
\longrightarrow 
E  
\end{align}
in $\noddinf \A$ by the graded $k_\A$-module maps  
$$ E \otimes_k \A^{\otimes i} \otimes_k \A^{\otimes n}
\xrightarrow{m_{n+i+1}} E. $$ 

For any $i$ and $j$ with $i > j > 0$ define a degree $i-j+1$ morphism 
\begin{align}
\label{eqn-Ainfty-morphism-E-A^i-to-E-A^j} 
E \otimes_k \A^{\otimes i} 
\longrightarrow 
E \otimes_k \A^{\otimes j}
\end{align}
in $\noddinf \A$ by the (shift of the) graded $\infbar \A$-comodule morphism
$\eqref{eqn-E-A^otimes-i-to-E-A^otimes-j-morphism}$. 
\end{defn}

Explicitly, \eqref{eqn-Ainfty-morphism-E-A^i-to-E-A^j} is defined by
the maps
\begin{align*}
f_{n + 1} \colon 
E \otimes_k \A^{\otimes (i - 1)} \otimes_k \A \otimes_k \A^{\otimes n}
\longrightarrow 
E \otimes_k \A^{\otimes (j - 1)} \otimes_k \A 
\end{align*}
where 
\begin{align*}
f_1 = \sum_{\underset{r \geq 0, s \geq 0}{r + 1 + s = j + 1}} 
(-1)^{?} \; \id^{\otimes r} \otimes m_{i + 1 - r - s} \otimes 
\id^{\otimes s}. 
\end{align*}
and for any $n \geq 1$ 
\begin{align*}
f_{n+1} = (-1)^{?} \; \id^{\otimes j} \otimes m_{i - j + 1 + n}. 
\end{align*}

\begin{lemma}
\label{lemma-E-inftimesA-A-is-semi-free}
Let $\A$ be an $\Ainfty$-category. The functor 
$$ (-) \inftimes_{\A} \A \colon \noddinfA \rightarrow \noddinfA $$
filters through the full subcategory $\semi-free^{\strict}(\A)
\subset \noddinfA$ consisting of semi-free modules and strict
$\Ainfty$-morphisms between them. 
\end{lemma}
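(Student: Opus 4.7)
The plan is to establish the lemma in two steps: first, that $E \inftimes_\A \A$ is semi-free for every $E \in \noddinfA$; and second, that for every morphism $f \colon E \rightarrow F$ in $\noddinfA$ the induced morphism $f \otimes \id_\A$ is strict. The first step is a semi-free filtration argument exploiting the lower-triangular shape of the differential described just before Definition \ref{defn-Ainfty-morphism-E-A^i-to-E-A^j}, while the second is a short unravelling of the explicit formula for $f \otimes g$ from \S\ref{section-tensor-and-hom-functors-for-Ainfty-bimodules}.

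For the first step I would start from the decomposition
$$ E \inftimes_\A \A \;\simeq\; \bigoplus_{i \geq 0} E \otimes_k (\A[1])^{\otimes i} \otimes_k \A[1] \otimes_k \infbar \A $$
noted in the preamble, and observe that its $i$-th summand endowed with the coderivation \eqref{eqn-E-A^otimes-i-coderivation} is by construction the bar construction of the $\Ainfty$ $\A$-module $E \otimes_k \A^{\otimes (i+1)}$ in the sense of Definition \ref{defn-dg-tensor-product-of-some-Ainfty-bimodules}. Since the off-diagonal components of the differential on $E \inftimes_\A \A$ only go from the $i$-th summand to the $j$-th summand with $j < i$, the ascending chain
$$ F_n \;:=\; \bigoplus_{i \leq n} E \otimes_k (\A[1])^{\otimes i} \otimes_k \A[1] \otimes_k \infbar \A $$
is a filtration of $E \inftimes_\A \A$ by $\Ainfty$-submodules, with subquotient $F_n/F_{n-1}$ isomorphic to the $\Ainfty$ $\A$-module $E \otimes_k \A^{\otimes (n+1)}$. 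Writing the latter as $\bigl(E \otimes_k \A^{\otimes n}\bigr) \otimes_k \A$ and invoking Lemma \ref{lemma-over-a-field-N-otimes_k-A-is-semi-free} with $N = E \otimes_k \A^{\otimes n}$ shows that each quotient admits a length-two filtration with free quotients; refining $\{F_n\}$ accordingly yields the required semi-free filtration of $E \inftimes_\A \A$.

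For the second step I would substitute $g = \id_\A$ into the explicit formula for $f \otimes g$ given in \S\ref{section-tensor-and-hom-functors-for-Ainfty-bimodules}. The identity on the diagonal bimodule is strict in both slots, i.e.\ $(\id_\A)_{s, t} = 0$ unless $s = t = 0$, where it is the identity. With this, the only surviving summands in the defining formula have $j = q$ and $r = 0$, so $(f \otimes \id_\A)_{0, r} = 0$ for every $r \geq 1$. Under the translation between the bimodule indexing and the right-module indexing this says $(f \otimes \id_\A)_i = 0$ for $i \geq 2$, which is exactly the strictness of $f \otimes \id_\A$ as a morphism in $\noddinfA$.

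The principal obstacle lies in the first step, in making the identification of $F_n/F_{n-1}$ with the bar construction of $E \otimes_k \A^{\otimes (n+1)}$ fully precise: one must match the coderivation \eqref{eqn-E-A^otimes-i-coderivation} term by term with the operations $m_{p,r}$ prescribed by Definition \ref{defn-dg-tensor-product-of-some-Ainfty-bimodules}, including signs, under the standard shift identification $E \otimes_k \A^{\otimes (i+1)}[1] \simeq E[-i] \otimes_k (\A[1])^{\otimes (i+1)}$. This is routine but tedious; both sides inherit their Koszul signs from the same cotensor product $\infbar E \otimes_{\infbar \A} \infbarbim \A$, so the comparison ultimately reduces to unwinding that identification.
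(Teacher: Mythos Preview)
Your proposal is correct and follows essentially the same route as the paper: the paper packages your filtration $\{F_n\}$ as the filtration-by-subcomplexes of the bounded-above one-sided twisted complex \eqref{eqn-universal-semi-free-resolution} (whose objects are exactly your subquotients $E \otimes_k \A^{\otimes (n+1)}$), and then invokes Lemma~\ref{lemma-over-a-field-N-otimes_k-A-is-semi-free} just as you do; for strictness the paper simply cites the general observation from \S\ref{section-tensor-and-hom-functors-for-Ainfty-bimodules} that $f \otimes \id$ is $\C$-strict, which is the same computation you spell out.
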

\begin{proof}
As explained in \S\ref{section-tensor-and-hom-functors-for-Ainfty-bimodules}
for any $M \in \noddinf \AbimB$ the functor 
$$(-) \inftimes_{\A} M \colon \noddinf \A \longrightarrow \noddinf \B $$ 
filters through $\noddinfstr \B$. It remains to show that for
any $E \in \noddinfA$ the module $E \inftimes_\A \A$ is semi-free. 

Recall the decomposition of the differential on the DG comodule 
corresponding to $E \inftimes_{\A} \A$ discussed prior to and employed in 
Definition \ref{defn-Ainfty-morphism-E-A^i-to-E-A^j}. It follows tautologically
that $E \inftimes_{\A} \A$ is isomorphic to the convolution of 
the twisted complex 
\begin{align}
\label{eqn-universal-semi-free-resolution}
\xymatrix{ 
\ar@{-->}@/^4ex/[rr]^<<<<<<<<<<<{}
\ar@{..>}@/^6ex/[rrr]^<<<<<<<<<<<{}
&
\dots 
\ar[r]
\ar@{..>}@/^6ex/[rrr]^<<<<<<<<<<<{}
\ar@{-->}@/^4ex/[rr]^<<<<<<<<<<<{}
&
{E \otimes_k \A^{\otimes 3}}
\ar[r]
\ar@{-->}@/^4ex/[rr]^<<<<<<<<<<<{}
&
{E \otimes_k \A^{\otimes 2}}
\ar[r]
&
\underset{\degzero}{E \otimes_k \A}
}
\end{align}
whose differentials are 
the $\Ainfty$-morphisms \eqref{eqn-Ainfty-morphism-E-A^i-to-E-A^j}.

Thus it suffices to show that the convolution of \eqref{eqn-universal-semi-free-resolution} is semi-free. As the twisted complex \eqref{eqn-universal-semi-free-resolution} is bounded from above and one-sided, its convolution admits an exhaustive filtration whose quotients are (the shifts of) its objects, the modules $E \otimes_k \A^{\otimes i}$.  On the other hand, since $k$ is a field, by Lemma \ref{lemma-over-a-field-N-otimes_k-A-is-semi-free} each of the modules $E \otimes_k \A^{\otimes i}$ in \eqref{eqn-universal-semi-free-resolution} admits a filtration of length $2$ whose quotients are free modules.  We thus obtain an exhaustive filtration on the convolution of $E \otimes_k \A^{\otimes i}$ whose quotients are free modules, as desired. 
\end{proof}

\begin{cor}
\label{cor-inftimes-factors-through-strictlyunital/DG-modules}
Let $\A$ be a strictly unital $\Ainfty$-category (resp. a DG-category). 
The functor 
$$ (-) \inftimes_{\A} \A \colon \noddinfA \rightarrow \noddinf \A $$
filters through the full subcategory of $\noddinfstr \A$ consisting
of strictly unital (resp. DG) modules. 
\end{cor}
\bf NB: \rm When $\A$ is a DG-category, strict $\Ainfty$-morphisms between 
DG-modules are simply the DG-morphisms, so 
the subcategory of $\noddinfstr \A$ consisting of DG-modules is
canically isomorphic to the usual DG-category $\modA$ of
DG-modules over $\A$. 

For any $E \in \noddinfA$ there is a map of twisted complexes from 
\eqref{eqn-universal-semi-free-resolution} to $E$ concentrated in
degree $0$ whose individual components are the maps 
$E \otimes_k \A^{\otimes k} \rightarrow E$ defined in 
\eqref{eqn-Ainfty-morphism-E-A^i-to-E}:
\begin{align}
\label{eqn-universal-semi-free-resolution-morphism-to-Id}
\vcenter{
\xymatrix{ 
\ar@{-->}@/^4ex/[rr]^<<<<<<<<<<<{}
\ar@{..>}@/^6ex/[rrr]^<<<<<<<<<<<{}
&
\dots 
\ar[r]
\ar@{..>}@/^6ex/[rrr]^<<<<<<<<<<<{}
\ar@{-->}@/^4ex/[rr]^<<<<<<<<<<<{}
\ar[drrr]
&
{E \otimes_k \A^{\otimes 3}}
\ar[r]
\ar@{-->}@/^4ex/[rr]^<<<<<<<<<<<{}
\ar[drr]
&
{E \otimes_k \A^{\otimes 2}}
\ar[r]
\ar[dr]
&
\underset{\degzero}{E \otimes_k \A} 
\ar[d]
\\
& & & & 
\underset{\degzero}{E}.
}
}
\end{align}
It can be readily checked that this map is closed of degree $0$.  
As per the proof of Lemma \ref{lemma-E-inftimesA-A-is-semi-free}, 
the convolution of the top complex is isomorphic to $E \inftimes_\A \A$. 
We can therefore define:
\begin{defn}
Let $\A$ be an $\Ainfty$-category. Define a natural transformation  
\begin{align}
\label{eqn-natural-transformation-E-inftimesA-A-to-E}
(-) \inftimes_{\A} \A \longrightarrow \id. 
\end{align}
by setting for each $E \in \noddinfA$ the corresponding 
morphism $E \inftimes_\A \A \rightarrow E$ to be the convolution 
of \eqref{eqn-universal-semi-free-resolution-morphism-to-Id}. 
\end{defn}
This was defined in different terms in 
\cite[Lemme 4.1.1.6]{Lefevre-SurLesAInftyCategories} for
strictly unital modules. 

\begin{prps}
\label{prps-E-inftimesA-A-to-E-is-quasiiso-iff-bar-complex-is-acyclic}
For any $E \in \noddinfA$ the morphism 
$E \inftimes \A 
\xrightarrow{\eqref{eqn-natural-transformation-E-inftimesA-A-to-E}}
E$ is a quasi-isomorphism if and only if $\infbar E$ is acyclic. 
\end{prps}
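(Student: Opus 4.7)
The plan is to identify the mapping cone of $E \inftimes_\A \A \xrightarrow{\eqref{eqn-natural-transformation-E-inftimesA-A-to-E}} E$, as a DG $k$-module, with $\infbar E [-1]$. Since a morphism is a quasi-isomorphism iff its cone is acyclic, this identification is equivalent to the proposition.

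The morphism \eqref{eqn-natural-transformation-E-inftimesA-A-to-E} is the convolution of the map of twisted complexes \eqref{eqn-universal-semi-free-resolution-morphism-to-Id}. Its cone is therefore the convolution of the one-sided twisted complex
\begin{equation*}
\ldots \longrightarrow E \otimes_k \A^{\otimes 3} \longrightarrow E \otimes_k \A^{\otimes 2} \longrightarrow E \otimes_k \A \longrightarrow E
\end{equation*}
obtained by adjoining $E$ to \eqref{eqn-universal-semi-free-resolution}, with the extra twisted differentials into the new term supplied by the morphisms \eqref{eqn-Ainfty-morphism-E-A^i-to-E}. As a graded $k$-module the convolution is $\bigoplus_{n \geq 0} E \otimes_k \A^{\otimes n}[n]$, which agrees on the nose with $\infbar E [-1] = E \otimes_k \infbar \A$.

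The total differential on the cone decomposes into three types of contributions: (i) the internal differentials of each $E \otimes_k \A^{\otimes n}$, acting by $d_E$ on $E$ and by $d_\A$ on each individual $\A$-factor; (ii) the cross-differentials $E \otimes_k \A^{\otimes i} \to E \otimes_k \A^{\otimes j}$ for $i > j \geq 1$ from Definition~\ref{defn-Ainfty-morphism-E-A^i-to-E-A^j}, which encode the $\Ainfty$-operations $m_p$ of $\A$ applied to a consecutive block of $a$-factors; (iii) the final maps $E \otimes_k \A^{\otimes i} \to E$ from \eqref{eqn-Ainfty-morphism-E-A^i-to-E}, encoding the $\Ainfty$-module action $m^E_p$ applied to $e$ together with a prefix of $a$-factors. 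These are precisely the three kinds of terms in the bar coderivation on $\infbar E = E[1] \otimes_k \infbar \A$ that lifts the $\Ainfty$-$\A$-module structure of $E$ to a square-zero coderivation.

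The only technical obstacle is matching Koszul signs between the two conventions. The cleanest way to handle it is to work at the level of DG $\infbar \A$-comodules: by \eqref{eqn-bar-construction-of-ainfty-tensor-product} the bar construction of $E \inftimes_\A \A$ is $\infbar E \otimes_{\infbar \A} \infbarbim \A [-1]$, and the morphism $E \inftimes_\A \A \to E$ is induced by the canonical augmentation $\infbarbim \A \to \A$. The cone of the latter inside the category of DG $\infbar \A$-$\infbar \A$-bicomodules is the shifted extended bar complex, and cotensoring with $\infbar E$ over $\infbar \A$ produces precisely $\infbar E [-1]$, with all signs automatically correct. Once this identification is established, $\cone(E \inftimes_\A \A \to E)$ is acyclic iff $\infbar E$ is, which is exactly the statement of the proposition.
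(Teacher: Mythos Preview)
Your proposal is correct and follows essentially the same approach as the paper: identify the cone of the morphism, as a DG $k_\A$-module, with (a shift of) $\infbar E$, whence the equivalence of acyclicity. The paper's proof simply asserts this identification in one sentence, while you spell out the graded decomposition and the three types of terms in the differential; your comodule-level repackaging at the end is a nice alternative for handling signs, though the phrase ``canonical augmentation $\infbarbim \A \to \A$'' could be stated more carefully (the target should really be the diagonal bicomodule $\infbar \A$ so that cotensoring over $\infbar \A$ makes sense).
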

\begin{proof}
The morphism 
$E \inftimes \A 
\xrightarrow{\eqref{eqn-natural-transformation-E-inftimesA-A-to-E}}
E$
is induced by the twisted complex morphism 
\eqref{eqn-universal-semi-free-resolution-morphism-to-Id}. 
It can be readily checked that the convolution of the total complex 
of \eqref{eqn-universal-semi-free-resolution-morphism-to-Id}
is an $\Ainfty$ $\A$-module whose underlying DG $k_\A$-module
is the same as that of $\infbar E$. The claim now follows.  
\end{proof}

Recall, as discussed in 
\S\ref{section-the-derived-category-of-an-Ainfty-category}, 
the full subcategory $\noddinfhuA \subset \noddinfA$ consisting of
$H$-unital modules. These are, equivalently,
the modules whose bar construction is acyclic and the
modules homotopic to semifree modules. We therefore obtain:
\begin{cor}
\label{cor-functorial-semi-free-resolution-for-noddinfhu}
The natural transformation
\eqref{eqn-natural-transformation-E-inftimesA-A-to-E}
is a functorial semi-free resolution for $\noddinfhuA$. If, moreover,
$\A$ is strictly unital (resp. DG), then this resolution  
is also a strictly unital (resp. DG) resolution.  
\end{cor}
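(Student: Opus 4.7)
The plan is to assemble this corollary from the three technical facts already established in the section, so essentially no new computation is required; the statement is a packaging result.

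First, I would verify functoriality and semi-freeness. The functor $(-)\inftimes_\A \A$ is a DG-functor by construction in \S\ref{section-tensor-and-hom-functors-for-Ainfty-bimodules}, and the natural transformation \eqref{eqn-natural-transformation-E-inftimesA-A-to-E} is defined by convolving the morphism of twisted complexes \eqref{eqn-universal-semi-free-resolution-morphism-to-Id}, whose naturality in $E$ is immediate from the fact that its components are built from the $\Ainfty$-structure maps $m_{n+i+1}$ of $E$ (and hence are natural with respect to $\Ainfty$-morphisms of $E$). Semi-freeness of $E \inftimes_\A \A$ for every $E \in \noddinfA$ is exactly Lemma \ref{lemma-E-inftimesA-A-is-semi-free}, so this step requires no further argument.

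Next I would establish that \eqref{eqn-natural-transformation-E-inftimesA-A-to-E} is a quasi-isomorphism precisely on $\noddinfhuA$. By Proposition \ref{prps-E-inftimesA-A-to-E-is-quasiiso-iff-bar-complex-is-acyclic}, the morphism $E \inftimes_\A \A \to E$ is a quasi-isomorphism if and only if $\infbar E$ is acyclic, i.e.\ $E$ is $H$-unitary. By the equivalence \eqref{item-E-is-in-noddinfhu} $\Leftrightarrow$ \eqref{item-E-is-H-unitary} recalled from \cite[Prop.~4.1.2.10]{Lefevre-SurLesAInftyCategories} in \S\ref{section-the-derived-category-of-an-Ainfty-category}, this is exactly the condition $E \in \noddinfhuA$. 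Combined with the previous step, this shows that on $\noddinfhuA$ we have a functorial semi-free resolution.

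Finally, I would handle the strictly unital and DG refinements. These are immediate from Corollary \ref{cor-inftimes-factors-through-strictlyunital/DG-modules}, which already states that when $\A$ is strictly unital (resp.\ a DG-category) the functor $(-)\inftimes_\A \A$ lands in strictly unital (resp.\ DG) modules; combining with the fact that on $\noddinfhuA$ the natural transformation to $\id$ is a quasi-isomorphism yields the refined resolution.

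The only point that requires even minor care is verifying that the morphism in \eqref{eqn-natural-transformation-E-inftimesA-A-to-E} is indeed a natural transformation of DG-functors, but since both sides are defined via the bar/cobar formalism on $\infbar E$ and the components of the twisted-complex morphism are the universal $\Ainfty$-structure components, naturality is formal. No step in the argument is a genuine obstacle — the work has already been done in Lemma \ref{lemma-E-inftimesA-A-is-semi-free}, Proposition \ref{prps-E-inftimesA-A-to-E-is-quasiiso-iff-bar-complex-is-acyclic}, and Corollary \ref{cor-inftimes-factors-through-strictlyunital/DG-modules}, and the corollary is a one-line consequence.
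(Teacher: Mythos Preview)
Your proposal is correct and matches the paper's approach: the corollary is stated without proof in the paper, as it follows immediately from Lemma~\ref{lemma-E-inftimesA-A-is-semi-free}, Proposition~\ref{prps-E-inftimesA-A-to-E-is-quasiiso-iff-bar-complex-is-acyclic}, the equivalence \eqref{item-E-is-in-noddinfhu}$\Leftrightarrow$\eqref{item-E-is-H-unitary} in \S\ref{section-the-derived-category-of-an-Ainfty-category}, and Corollary~\ref{cor-inftimes-factors-through-strictlyunital/DG-modules}, exactly as you outline.
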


We note that 
Prop.~\ref{prps-E-inftimesA-A-to-E-is-quasiiso-iff-bar-complex-is-acyclic}
generalises and simplifies the proofs of several results in  
\cite{Lefevre-SurLesAInftyCategories}, \em 
Chapitre 4\rm, e.g. the proof that every module whose bar construction
is acyclic is homologically unital. 

\subsection{The bar complex} 
\label{section-the-bar-complex}

In this section, we give an account of the \em bar complex\rm, the
notion which lies at the technical heart of this paper. It is obtained
from the bar construction on a DG category $\A$, but the key point is
that the resulting object is considered in the monoidal category
$(\AmodA, \otimes_\A, \A)$ of $\AbimA$-bimodules, as opposed to the
monoidal category $(\kmodk$, $\otimes_k, k)$ of DG $k$-$k$-bimodules.
To this extent, we provide below an alternative construction which
works purely in terms of the former monoidal category and is an
instance of a more general notion of a \em twisted tensor algebra\rm. 

Let $\A$ be a DG category. As per
\S\ref{section-Ainfty-algebras-modules-bimodules}, the bar
construction on $\A$ is the graded $k$-$k$-bimodule $\bigoplus_{i \geq
1} \A^{i}[i]$ with the structure of a (non-unital) coalgebra in the
monoidal category $(\kmodk$, $\otimes_k, k)$ of DG $k$-$k$-bimodules.
This structure consists of a differential and a comultiplication. The
differential, together with the natural left and right actions of $\A$
by composition,  makes $\bigoplus_{i \geq 1} \A^{i}[i]$ into a DG
$\A$-$\A$-bimodule. The comultiplication map lifts to define on this
bimodule a non-unital coalgebra structure in the monoidal category
$(\AmodA, \otimes_\A, \A)$.

Our first point of interest is its shift by one to the right. The resulting DG $\AmodA$-bimodule has the following natural description in the language of the twisted complexes:

\begin{defn}
Let $\A$ be a DG category. Define the \em extended bar complex \rm  $\tildeA \in \AmodA$  to be the convolution of the following twisted complex of $\AbimA$ bimodules
\begin{equation}
\label{eqn-extended-bar-complex} 
\begin{tikzcd}[column sep={2.5cm}]
\dots 
\ar{r}
&
\A \otimes_k \A \otimes_k \A
\ar{r}{-\id \otimes \eqref{eqn-canonical-map-Aotimes_kA-to-A} + \eqref{eqn-canonical-map-Aotimes_kA-to-A}  \otimes \id}
&
\A \otimes_k \A
\ar{r}{-\eqref{eqn-canonical-map-Aotimes_kA-to-A}}
& 
\underset{\degzero}{\A}
\end{tikzcd}
\end{equation}
whose differentials $\A^{\otimes(n+1)} \rightarrow \A^{\otimes n}$
are given by 
$$\sum_{i=0}^{n-1} (-1)^{i+1} \id^{\otimes(i)} \otimes \eqref{eqn-canonical-map-Aotimes_kA-to-A} \otimes\id^{\otimes(n-i-1)}$$ 
and all the higher differentials are zero. 
\end{defn}

The $\AbimA$-bimodule $\tildeA$ is well-known to be acyclic, since as
a $k$-$k$ DG-bimodule it admits a contracting homotopy of degree $-1$ whose components are the maps $\A^{\otimes n} \rightarrow \A^{\otimes (n+1)}$
defined by
$$ a_1 \otimes \dots \otimes a_n \mapsto 1 \otimes a_1 \otimes \dots \otimes a_n.$$

Thus the twisted complex \eqref{eqn-extended-bar-complex} yields a resolution of the diagonal bimodule $\A$ by what is known as the \em bar complex \rm:
\begin{defn}
Let $\A$ be a DG category. The \em bar complex \rm  $\barA \in \AmodA$ 
is the convolution of the twisted complex of free $\AbimA$ bimodules
\begin{equation}
\label{eqn-bar-complex} 
\begin{tikzcd}[column sep={2.5cm}]
\dots 
\ar{r}
&
\A \otimes_k \A \otimes_k \A
\ar{r}{\id \otimes \eqref{eqn-canonical-map-Aotimes_kA-to-A} - \eqref{eqn-canonical-map-Aotimes_kA-to-A}  \otimes \id}
&
\underset{\degzero}{\A \otimes_k \A}
\end{tikzcd}
\end{equation}
whose differentials $\A^{\otimes(n+1)} \rightarrow \A^{\otimes n}$
are given by 
\begin{align}
\label{eqn-differentials-in-barA-twisted-complex}
\sum_{i=0}^{n-1} (-1)^i \id^{\otimes(i)} \otimes \eqref{eqn-canonical-map-Aotimes_kA-to-A} \otimes\id^{\otimes(n-i-1)}
\end{align}
and all the higher differentials are zero. 

Explicitly, the underlying graded $\AbimA$-bimodule of $\barA$ is 
$\bigoplus_{n \geq 2} \A^{\otimes n}[n-2]$ and its differential 
$d_{\barA}$ sends any 
$$ a_1 \otimes \dots \otimes a_n \in \A^{\otimes n}[n-2]$$
to the sum of 
\begin{align}
\label{eqn-natural-part-of-the-differential-on-barA}
\sum_{i = 1}^n (-1)^{n+\sum^{i-1}_{j=1} \deg(a_j)} a_1 \otimes \dots \otimes da_i \otimes \dots \otimes a_n,
\end{align}
which comes from the natural differential on $\A^{\otimes n}$ and 
\begin{align}
\label{twisted-natural-part-of-the-differential-on-barA}
\sum_{i = 1}^n (-1)^{i-1} a_1 \otimes \dots \otimes a_i a_{i+1} \otimes \dots \otimes a_n, 
\end{align}
which comes from the differential in the twisted complex \eqref{eqn-bar-complex}. 
\end{defn}

Since the complex \eqref{eqn-bar-complex} is bounded from above and each of its terms is free, $\barA$ is semi-free. It also comes with a canonical projection to $\A$:
\begin{defn}
Define the canonical projection
\begin{align}
	\tau \colon \barA \rightarrow \A
\end{align}
to be the convolution of the following map of the twisted complexes:
\begin{align}
\label{eqn-bar-complex-natural-map-tau} 
\vcenter{
\xymatrix{ 
\dots 
\ar[r]
& 
\A^{\otimes (n+1)} 
\ar[r]
& 
\A^{\otimes n} 
\ar[r]
& 
\dots 
\ar[r]
&
\A \otimes_k \A \otimes_k \A
\ar[r]
&
\underset{\degzero}{\A \otimes_k \A}
\ar[d]^{\eqref{eqn-canonical-map-Aotimes_kA-to-A}}
\\
& & & & & 
\underset{\degzero}{\A}.
}
}
\end{align} 
Explicitly, it is the map 
\begin{align*}
	a_1 \otimes \dots \otimes a_n \mapsto 
	\begin{cases}
		a_1 a_2 & \quad \quad n = 2 \\
		0 & \quad \quad \text{ otherwise.}
	\end{cases}
\end{align*}
\end{defn}

By definition of $\tau$, the convolution of 
\begin{align}
\label{eqn-twisted-complex-barA-into-A}
\barA \xrightarrow{\tau} \underset{\degzero}{\A}. 
\end{align}
is equal to the convolution of the total complex of \eqref{eqn-bar-complex-natural-map-tau}, that is --- to $\tildeA$. Since the latter is acyclic, $\tau$ is a quasi-isomorphism, and thus $\barA$ is a canonical semi-free resolution of the diagonal bimodule $\A$. 
 
The extended bar complex $\tildeA$ admits a structure of an algebra in the monoidal category  $(\AmodA, \otimes_\A, \A)$. It comes from a general construction which we now describe. This construction itself is an instance of the cobar construction on a curved $\Ainfty$-coalgebra, cf. \cite[\S7.4]{Positselski-TwoKindsOfDerivedCategoriesKoszulDualityAndComoduleContramoduleCorrespondence}. However,  it is a degenerate case where the comultiplication and the higher operations are all all zero, leaving only the cocurvature and the differential. It is worth it therefore to give a direct definition:
 
 \begin{defn}
 Let $\A$ be a DG category. Let $H \in \AmodA$ and $\sigma: H \rightarrow \A$ be a closed map of degree $0$. The \em $\sigma$-twisted tensor algebra \rm $T_\sigma(H)$ of $H$ is the convolution of the twisted complex
 \begin{equation}
\label{eqn-twisted-tensor-algebra} 
\begin{tikzcd}[column sep={1.5cm}]
\dots 
\ar{r}
&
H \otimes_\A H 
\ar{r}{\id \otimes \sigma - \sigma \otimes \id}
&
H 
\ar{r}{\sigma}
&
\underset{\degzero}{\A}
\end{tikzcd}
\end{equation}
whose differentials $H^{\otimes n} \rightarrow H^{\otimes (n-1)}$
are given by 
$$\sum_{i=0}^{n-1} (-1)^i \id^{\otimes(i)} \otimes \sigma \otimes\id^{\otimes(n-i-1)}$$ 
and all the higher differentials are zero. 

In other words, as a graded $\A$-$\A$ bimodule $(T_\sigma, m,e)$ is just the tensor
algebra $\oplus_{i \geq 0} H^{\otimes i}[i]$, but the natural
differential on the latter is modified using the map $\sigma$, whence
the word ``twisted'' in our choice of the name. 

Define further
\begin{align*}
e\colon \A \rightarrow T_\sigma(H)	
\end{align*}
to be the canonical inclusion, and the map
\begin{align*}
m\colon T_\sigma(H) \otimes_\A T_\sigma(H) \mapsto T_\sigma(H)
\end{align*}
by the natural left and right actions of $\A$ on each $H^{\otimes}[n]$ and by the sign-twisting isomorphisms
\begin{align*}
H^{\otimes p}[p] \otimes_\A H^{\otimes q}[q] &\rightarrow H^{\otimes(p+q)}[p+q]\\
\left(h_1 \otimes \dots \otimes h_p\right) \otimes \left(h_{p+1} \otimes \dots h_{p+q}\right) 
& \mapsto 
(-1)^{q \sum_{i=1}^p \deg(h_i)}
h_1 \otimes \dots \otimes h_p \otimes h_{p+1} \otimes \dots h_{p+q}.
\end{align*}
The latter come from the signless associativity isomorphisms  of $\otimes_\A$ using the sign-twisting identifications $H^{\otimes p}[p] \simeq (H[1])^{\otimes p}$, cf. \cite[\S1.1.1]{Lefevre-SurLesAInftyCategories}.
\end{defn} 

\begin{lemma}
\label{lemma-twisted-tensor-algebra-is-an-algebra}
The triple $(T_\sigma(H), m, e)$ is a unital algebra in the monoidal category
$(\AmodA, \otimes_\A, \A)$. 
\end{lemma}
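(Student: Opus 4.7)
The plan is to verify the three axioms of a unital algebra in $(\AmodA, \otimes_\A, \A)$: namely, that $m$ is a closed bimodule morphism of degree $0$, that $m$ is associative, and that $e$ is a two-sided unit for $m$.

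My first move would be to eliminate sign bookkeeping by reindexing via the Koszul-style identifications $H^{\otimes n}[n] \simeq (H[1])^{\otimes n}$. Under these, the underlying graded $\AbimA$-bimodule of $T_\sigma(H)$ becomes $\bigoplus_{n \geq 0} (H[1])^{\otimes n}$, the map $e$ is the inclusion of the $n=0$ summand, and the sign-twisting multiplication formula defining $m$ becomes honest concatenation of tensor words induced by the associativity isomorphisms of $\otimes_\A$. With this reinterpretation, both the unitality axiom (iii) and the associativity axiom (ii) become tautological, since concatenation in a tensor algebra is manifestly unital and associative. This is the standard reason one shifts: it turns the signed multiplication on $\bigoplus H^{\otimes n}[n]$ into signless concatenation on $\bigoplus (H[1])^{\otimes n}$, cf.~\cite[\S1.1.1]{Lefevre-SurLesAInftyCategories} as already cited.

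Axiom (i), that $m$ is closed of degree $0$, is where the real work is. The degree is immediate from the formula. For closedness, observe that the total differential on $T_\sigma(H)$ decomposes as $d_{\text{nat}} + d_{\text{tw}}$, where $d_{\text{nat}}$ is induced from the natural differentials on the tensor factors $H$ in $H^{\otimes n}$, while $d_{\text{tw}}$ is the alternating sum $\sum_{i=0}^{n-1} (-1)^i \id^{\otimes i} \otimes \sigma \otimes \id^{\otimes(n-i-1)}$ coming from the twisted-complex structure \eqref{eqn-twisted-tensor-algebra}. After passing to $(H[1])^{\otimes n}$, $d_{\text{nat}}$ becomes the standard tensor-product differential on a tensor power of the complex $H[1]$, which is automatically a graded derivation of concatenation. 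Similarly, $\sigma$ becomes a degree $+1$ morphism $H[1] \to \A$, and $d_{\text{tw}}$ becomes the sum of its applications to each tensor slot with standard Koszul signs; this is again manifestly a graded derivation of concatenation. Since $m$ is evidently an $\AbimA$-bimodule map (the left and right $\A$-actions only touch the outermost tensor factors and so commute with concatenation), this establishes $d \circ m = m \circ d$.

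I expect the only genuine obstacle to be bookkeeping: namely, checking carefully that the Koszul identifications $H^{\otimes n}[n] \simeq (H[1])^{\otimes n}$ really do carry the signed multiplication formula defining $m$ to signless concatenation, and that they transport the two parts $d_{\text{nat}}$ and $d_{\text{tw}}$ of the twisted-complex differential into the standard tensor differential and a standard sign-graded derivation on the tensor algebra of $H[1]$ respectively. Once these translations are verified, the three axioms follow formally as explained above.
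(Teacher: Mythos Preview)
Your proof is correct, but it takes a genuinely different route from the paper. The paper does not verify the algebra axioms directly; instead it observes that $T_\sigma(H)$ is precisely the output of the cobar construction applied to $H[2]$ equipped with the degenerate curved $\Ainfty$-coalgebra structure whose only nonzero operations are the cocurvature $\sigma$ and the differential. Since the cobar construction on any curved $\Ainfty$-coalgebra is known to produce a unital DG algebra, the result follows by reference.

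Your direct verification via the shift $H^{\otimes n}[n] \simeq (H[1])^{\otimes n}$ is more elementary and self-contained: it requires no prior knowledge of the cobar construction or curved $\Ainfty$-coalgebras, and it makes transparent exactly why the differential is a derivation of the multiplication. The paper's approach, by contrast, buys conceptual context --- it situates $T_\sigma(H)$ inside a general family of constructions and explains why the definition is the ``right'' one --- but at the cost of invoking machinery that a reader may not have ready. In practice your argument is what one would actually write out if asked to unpack the paper's reference, and indeed the paper's own definition of $m$ already invokes the same sign-cancelling identifications $(H[1])^{\otimes n}$ that you use, so your approach meshes seamlessly with the surrounding text.
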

\begin{proof}
It is easy to check that it is precisely the unital algebra obtained
via the cobar construction from the curved $\Ainfty$-coalgebra
structure given on $H[2]$ by the cocurvature $\sigma$, the natural
differential, and with the comultiplication and all the higher
operations being zero. Indeed, the data of a curved
$\Ainfty$-coalgebra is the most general way to define the differential
on the tensor algebra of a graded module in order to obtain a DG
algebra, see \cite[\S7.4]{Positselski-TwoKindsOfDerivedCategoriesKoszulDualityAndComoduleContramoduleCorrespondence}
for further details. 
\end{proof}
The extended bar complex can be viewed as a twisted tensor algebra in the following way:
\begin{defn}
Define the degree $0$ map 
\begin{align}
\label{eqn-extended-bar-complex-multiplication}
m\colon \tildeA \otimes_\A \tildeA \rightarrow \tildeA
\end{align}
by 
$$ \left( a_1 \otimes \dots \otimes a_p \right) 
\otimes_\A (a_{p+1} \otimes \dots \otimes a_{p+q})
\mapsto 
(-1)^{(q-1) \sum_{i=1}^p \deg(a_i)} 
a_1 \otimes \dots \otimes a_p a_{p+1} \otimes \dots \otimes a_{p+q}
$$
and let 
$$ e\colon \A \hookrightarrow \tildeA $$ 
be the canonical inclusion. 
\end{defn}
\begin{cor}
The triple $(\tildeA, m, e)$ is a unital algebra in the monoidal category
$(\AmodA, \otimes_\A, \A)$. 
\end{cor}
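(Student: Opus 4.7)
The plan is to recognise $\tildeA$ as an instance of the twisted tensor algebra construction of Lemma \ref{lemma-twisted-tensor-algebra-is-an-algebra}, and then transport the algebra structure through this identification. Specifically, I would take $H = \A \otimes_k \A$ as an object of $\AmodA$, with $\A$ acting on the outermost factors, and let $\sigma\colon H \to \A$ be the canonical multiplication map \eqref{eqn-canonical-map-Aotimes_kA-to-A}. It is closed of degree zero, so Lemma \ref{lemma-twisted-tensor-algebra-is-an-algebra} produces a unital algebra $T_\sigma(H)$ in $(\AmodA, \otimes_\A, \A)$. It then suffices to exhibit an isomorphism of unital algebras from $T_\sigma(H)$ onto $(\tildeA, m, e)$.

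On underlying graded bimodules the isomorphism is the obvious one coming from the canonical $\A \otimes_\A \A \simeq \A$, which gives $H^{\otimes_\A n} \simeq \A^{\otimes_k(n+1)}$ for each $n \geq 0$, so that $\bigoplus_{n \geq 0} H^{\otimes_\A n}[n] \simeq \bigoplus_{n \geq 1} \A^{\otimes_k n}[n-1]$ term by term. Under this identification, the twisted differential of $T_\sigma(H)$ becomes the differential of \eqref{eqn-extended-bar-complex} (up to replacing $\sigma$ with $-\sigma$, which leaves Lemma \ref{lemma-twisted-tensor-algebra-is-an-algebra} applicable and absorbs the discrepancy between the $(-1)^i$ and $(-1)^{i+1}$ of the two formulas). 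The unit of $T_\sigma(H)$ is by construction the canonical inclusion $\A = H^{\otimes_\A 0}[0] \hookrightarrow T_\sigma(H)$, which is exactly $e$. For the multiplication, the Koszul sign on $H^{\otimes_\A p}[p] \otimes_\A H^{\otimes_\A q}[q] \to H^{\otimes_\A(p+q)}[p+q]$ is $(-1)^{q \sum_{i=1}^p \deg(h_i)}$; under the identification, an element $a_1 \otimes \dots \otimes a_p \in \A^{\otimes p}$ sits inside $H^{\otimes_\A(p-1)}[p-1]$ with total $H$-degree $\sum_{i=1}^p \deg(a_i)$, so multiplying by an element of $H^{\otimes_\A(q-1)}[q-1]$ contributes the sign $(-1)^{(q-1) \sum_{i=1}^p \deg(a_i)}$, which is exactly the sign in the definition of $m$. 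The underlying concatenation of tensors corresponds, via $\A \otimes_\A \A \simeq \A$, to the merging of the adjacent $\A$-factors $a_p \otimes a_{p+1} \mapsto a_p a_{p+1}$, again matching $m$.

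The main obstacle, as always in this kind of identification, is the careful bookkeeping of Koszul signs together with the shift in indexing caused by the fact that the number of $H$-factors is one less than the number of $\A$-factors (each internal $\otimes_\A$ collapses two $\A$-tensor slots into one). Once these reconciliations are made, the unital algebra axioms for $(\tildeA, m, e)$ transfer automatically from Lemma \ref{lemma-twisted-tensor-algebra-is-an-algebra}, proving the corollary.
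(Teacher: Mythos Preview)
Your proposal is correct and follows exactly the approach of the paper, which in fact proves the corollary in a single sentence: ``Follows from Lemma~\ref{lemma-twisted-tensor-algebra-is-an-algebra} by setting $H = \A \otimes_k \A$ and $\sigma$ to be the map $\A \otimes_k \A \xrightarrow{\eqref{eqn-canonical-map-Aotimes_kA-to-A}} \A$.'' Your additional bookkeeping on the sign conventions and the index shift $H^{\otimes_\A n} \simeq \A^{\otimes_k(n+1)}$ is a correct elaboration of that one line.
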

\begin{proof}
Follows from the Lemma \ref{lemma-twisted-tensor-algebra-is-an-algebra} 
by setting $H = \A \otimes_k \A$ and $\sigma$ to be the map 
$\A \otimes_k \A \xrightarrow{\eqref{eqn-canonical-map-Aotimes_kA-to-A}} \A$. 
\end{proof}

We now decompose the mulplication map on the extended bar complex $\tildeA$ into components pertaining to $\barA$ and to $\A$. 
The bimodule $\tildeA$ is not just isomorphic but equal to 
the convolution of the twisted complex
$(\barA \xrightarrow{\tau} \underset{\degzero}{\A})$ , these are merely
two different descriptions of the same differential 
on $\bigoplus_{i \geq 1} \A^{i}[i-1]$. 
Therefore, by the formula for the tensor product of twisted complexes
\cite[Lemma 3.4]{AnnoLogvinenko-SphericalDGFunctors},
the convolution of the twisted complex 
\begin{equation}
\label{eqn-tensor-square-of-barA-to-A-as-a-complex}
\barA \otimes_\A \barA 
\xrightarrow{
\left(\begin{smallmatrix} 
\id \otimes \tau \\ - \tau \otimes \id 
\end{smallmatrix}\right)
}
\barA \otimes_\A \A \oplus \A \otimes_\A \barA 
\xrightarrow{
\left(\begin{smallmatrix} 
- \tau & - \tau 
\end{smallmatrix}\right)
}
\A \otimes_\A \A
\end{equation}
is isomorphic to $\tildeA \otimes_\A \tildeA$ via
a sign-twisting isomorphism 
$$
\left(a_1 \otimes \dots \otimes a_p \right) \otimes_\A
\left(a_{p+1} \otimes \dots \otimes a_{p+q}\right) 
\mapsto 
\begin{cases}
\left(a_1 \otimes \dots \otimes a_p \right) \otimes_\A
\left(a_{p+1} \right) 
& 
\quad q = 1,
\\
(-1)^{\deg(a_1)}
\left(a_1 \right) \otimes_\A
\left(a_{p+1} \otimes \dots \otimes a_{p+q} \right) 
& 
\quad p = 1,
\\
-1^{p + \sum_{i=1}^p \deg(a_i)}
\left(a_1 \otimes \dots \otimes a_p \right) \otimes_\A
\left(a_{p+1} \otimes \dots \otimes a_{p+q}\right) 
& 
\quad p, q > 1.
\end{cases}
$$

The multiplication map \eqref {eqn-extended-bar-complex-multiplication} is a closed, degree zero map. Composing it with the isomorphism above
and applying the natural isomorphisms \eqref{eqn-DG-A-otimes-M-to-M-isomorphism} and  \eqref{eqn-DG-M-otimes-B-to-M-isomorphism},  we obtain the closed, degree zero map of twisted complexes 
\begin{equation}
\label{eqn-extended-bar-complex-multiplication-as-map-of-twisted-complexes} 
\begin{tikzcd}[column sep={2.5cm},ampersand replacement=\&]
\barA \otimes_\A \barA
\ar{dr}{\mu}
\ar{r}{\left(\begin{smallmatrix} 
\id \otimes \tau \\ - \tau \otimes \id 
\end{smallmatrix}\right)}
\&
\barA \oplus \barA
\ar{d}{
\left(\begin{smallmatrix} 
\id & \id 
\end{smallmatrix}\right)
}
\ar{r}{
\left(\begin{smallmatrix} 
- \tau & - \tau 
\end{smallmatrix}\right)
}
\&
\underset{\degzero}{\A}
\ar{d}{\id}
\\
\&
\barA 
\ar{r}{- \tau}
\&
\underset{\degzero}{\A}
\end{tikzcd}
\end{equation}
where the map $\mu$ is defined as follows. By 
\cite[Lemma 3.4(1)]{AnnoLogvinenko-SphericalDGFunctors} we 
can identify $\barA \otimes_\A \barA$ with the convolution of
the twisted complex 
\begin{align}
\label{eqn-twisted-complex-barA-otimes-barA}
\dots
\rightarrow
\left(
\A^{\otimes 3} \otimes_\A \A^{\otimes 2} \bigoplus
\A^{\otimes 2} \otimes_\A \A^{\otimes 3}
\right)
\rightarrow
\underset{\degzero}{\A^{\otimes 2} \otimes_\A \A^{\otimes 2}}
\end{align}
whose degree zero differentials are defined on each  
$\A^{\otimes p} \otimes_\A \A^{\otimes q}$
by
$$ \sum
\eqref{eqn-differentials-in-barA-twisted-complex} \otimes \id + 
(-1)^p \id \otimes \eqref{eqn-differentials-in-barA-twisted-complex}
$$
and whose higher differentials are all zero. 

\begin{defn}
\label{defn-degree-minus-one-map-mu}
Let $\A$ be a DG-category. Define the degree $-1$ map 
\begin{align}
\label{eqn-degree-minus-one-multiplication-in-bar-complex}
\mu\colon \barA \otimes_\A \barA  \rightarrow \barA
\end{align}
in $\AmodA$ to be the map induced by the degree $-1$ map from the twisted complex
\eqref{eqn-twisted-complex-barA-otimes-barA} 
to the twisted complex
\eqref{eqn-bar-complex}
whose only components are the degree zero maps 
$\bigoplus_{n = p+q} \A^{\otimes p} \otimes_\A \A^{\otimes q} \rightarrow \A^{n-1}$ given by
\begin{align}
\label{eqn-mu-defined-as-a-map-of-twisted-complexes}
\left(a_1 \otimes \dots \otimes a_p \right) \otimes_\A
\left(a_{p+1} \otimes \dots \otimes a_{p+q}\right)
\mapsto (-1)^{p} a_1 \otimes \dots \otimes a_p a_{p+1} \otimes \dots \otimes a_{p+q}. 
\end{align}
\end{defn}
We note that the identification of 
$\barA \otimes_\A \barA$ with the convolution of \eqref{eqn-twisted-complex-barA-otimes-barA}
given in  \cite[Lemma 3.4(1)]{AnnoLogvinenko-SphericalDGFunctors} involves a sign-twisting isomorphism. 
Consequently, the explicit formula for $\mu$ as a map in $\AmodA$ is the formula 
\eqref{eqn-mu-defined-as-a-map-of-twisted-complexes} with an extra sign twist $q\sum_{i=1}^p \deg(a_i)$. 

We have immediately:
\begin{lemma}
\label{lemma-d-mu-equals-tau-id-minus-id-tau}
Let $\A$ be a DG-category. Then 
\begin{enumerate}
\item 
\label{item-d-mu-equals-tau-id-minus-id-tau}
$d\mu = \tau \otimes \id - \id \otimes \tau$, 
\item 
\label{item-composition-tau-circ-mu}
$\tau \circ \mu = 0$. 
\end{enumerate}	
in $\AmodA$. 
\end{lemma}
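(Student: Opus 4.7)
The plan is to derive both parts from the algebra structure on the extended bar complex $\tildeA$ established in the preceding corollary, together with the explicit description of its multiplication $m$ as the convolution of the morphism of twisted complexes \eqref{eqn-extended-bar-complex-multiplication-as-map-of-twisted-complexes}, whose only nontrivial off-diagonal component is $\mu$.

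For part \eqref{item-composition-tau-circ-mu} the argument is by tensor-degree counting. On each summand $\A^{\otimes p}[p-2] \otimes_\A \A^{\otimes q}[q-2]$ of $\barA \otimes_\A \barA$ we have $p, q \geq 2$, and the formula \eqref{eqn-mu-defined-as-a-map-of-twisted-complexes} shows that $\mu$ maps this summand into $\A^{\otimes(p+q-1)}[p+q-3] \subseteq \barA$ with $p+q-1 \geq 3$. Since $\tau$ is supported on the single summand $\A^{\otimes 2}[0]$, the composition $\tau \circ \mu$ vanishes identically.

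For part \eqref{item-d-mu-equals-tau-id-minus-id-tau} I would use the fact that $m$ is closed (being the multiplication of a DG algebra), which is equivalent to the morphism \eqref{eqn-extended-bar-complex-multiplication-as-map-of-twisted-complexes} being a closed morphism of twisted complexes. Unpacking this closedness for its degree $-1$ component $\mu$, which goes from position $-2$ in the source twisted complex to position $-1$ in the target, the nonzero contributions to the $(\barA \otimes_\A \barA, \barA)$-component of the total differential are: first, pre-composing with the source differential $\barA \otimes_\A \barA \to \barA \oplus \barA$ (whose two components are $\id \otimes \tau$ and $-\tau \otimes \id$) and then applying the identity $(\id, \id)\colon \barA \oplus \barA \to \barA$, which produces $\id \otimes \tau - \tau \otimes \id$; and second, post-composing $\mu$ with the target differential $-\tau\colon \barA \to \A$, which vanishes by \eqref{item-composition-tau-circ-mu}. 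The closedness equation then collapses to $d\mu = \tau \otimes \id - \id \otimes \tau$, up to an overall sign set by the convention for the differential of a morphism of twisted complexes.

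The most delicate point is sign bookkeeping, as both the twisted-complex description \eqref{eqn-twisted-complex-barA-otimes-barA} of $\barA \otimes_\A \barA$ and the formula for $\mu$ in $\AmodA$ involve the sign twist of \cite[Lemma~3.4(1)]{AnnoLogvinenko-SphericalDGFunctors}. As a sanity check I would test the equation on the element $(a_1 \otimes a_2) \otimes_\A (a_3 \otimes a_4)$ with all $a_i$ closed. A direct computation gives $d\mu = a_1 a_2 a_3 \otimes a_4 - a_1 \otimes a_2 a_3 a_4$ in $\A^{\otimes 3}[1] \subset \barA$, and $(\tau \otimes \id - \id \otimes \tau)$ produces the same expression after applying the natural isomorphisms $\A \otimes_\A \barA \simeq \barA \simeq \barA \otimes_\A \A$. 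This confirms both the identity and the sign.
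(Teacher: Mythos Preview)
Your approach is essentially the paper's one-line proof: both identities follow from \eqref{eqn-extended-bar-complex-multiplication-as-map-of-twisted-complexes} being a closed degree-zero morphism of twisted complexes, which you have correctly unpacked. One small correction: the term $(-\tau)\circ\mu$ you list does not contribute to the $(\barA\otimes_\A\barA,\;\barA)$-component of the differential of that morphism --- it lands in $\A$, the degree-$0$ part of the target --- so part \eqref{item-d-mu-equals-tau-id-minus-id-tau} comes out of closedness at the $(-2,-1)$-component without invoking \eqref{item-composition-tau-circ-mu}; rather, the vanishing of $(-\tau)\circ\mu$ is precisely the closedness condition at the $(-2,0)$-component, which is how the paper obtains \eqref{item-composition-tau-circ-mu} as well (your direct tensor-degree argument is of course also fine).
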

\begin{proof}
This follows immediately from \eqref {eqn-extended-bar-complex-multiplication-as-map-of-twisted-complexes} being a closed, degree zero map of twisted complexes. 
\end{proof}

The bar-complex $\barA$ has a  natural coalgebra structure in $(\AmodA, \otimes_\A, \A)$ which is defined as follows: 

\begin{defn}
Define the comultiplication 
\begin{align}
\barA \xrightarrow{\Delta} \barA \otimes_\A  \barA,
\end{align}	
to be the map induced by the degree $0$ map from the twisted complex
\eqref{eqn-bar-complex}
to the twisted complex 
\eqref{eqn-twisted-complex-barA-otimes-barA} whose only components
are the degree zero maps 
$\A^{\otimes n} \rightarrow \bigoplus_{n = p+q} \A^{\otimes(p+1)} \otimes_\A \A^{\otimes(q+1)}$
given by
\begin{align}
\label{eqn-formula-for-delta-as-a-map-of-twisted-complexes}
a_1 \otimes \dots \otimes a_n & \mapsto
\sum_{p =1}^{n-1} (a_1 \otimes \dots \otimes a_p \otimes 1) 
\otimes_{\A} 
(1 \otimes a_{p+1} \otimes \dots \otimes a_{p+q}). 
\end{align}
\end{defn}

As explained for the map $\mu$, the explicit formula for $\Delta$ as a map in $\AmodA$ 
is the formula \eqref{eqn-formula-for-delta-as-a-map-of-twisted-complexes}  
with the extra  sign twist $(-1)^{(n-i+1)\sum_{k=1}^{i-1} \deg(a_k)}$. 

\begin{prps}
\label{prps-coalgebra-structure-on-the-bar-complex}
The triple $(\barA, \Delta, \tau)$ is a unital coalgebra in the monoidal category $(\AmodA, \otimes_\A, \A)$. 
\end{prps}
\begin{proof}
With the definitions above it is a straightforward verification on the level of  twisted complexes over $\AmodA$.
\end{proof}

\section{Bar category of modules $\modbarA$}
\label{section-bar-category-of-modules}

Let $X$ be a scheme of finite type over $k$. By \cite{BondalVanDenBergh-GeneratorsAndRepresentabilityOfFunctorsInCommutativeAndNoncommutativeGeometry}
the category $D_{qc}(X)$ admits a compact generator. Hence 
$D_{qc}(X) \simeq D(\A)$ for a DG-algebra $\A$ which is
the endomorphism algebra of (an $h$-injective resolution of) such 
a generator. Similarly, by \cite{Lunts-CategoricalResolutionOfSingularities}
the category $D(X)$ admits a classical generator and we have 
$D(X) \simeq D_c(\B)$ for the endomorphism DG-algebra $\B$ of such
generator. Moreover, the generator can be chosen in such a way that
$\B$ is smooth. See \cite[\S 4]{AnnoLogvinenko-SphericalDGFunctors} for
 a detailed exposition, as well as generalities on DG-enhancements.

This reduces DG-enhancing derived categories of algebraic varieties 
to DG-enhancing derived categories of DG-modules over DG-algebras or, 
more generally, DG-categories.
Let $\A$ and $\B$ be DG-categories and let 
$$ D(\A) \xrightarrow{f} D(\B) $$ be a DG-enhanceable functor. 
Recall that $f$ is said to be \em continuous \rm if it commutes
with infinite direct sums.  
By 
\cite[Theorem 7.2]{Toen-TheHomotopyTheoryOfDGCategoriesAndDerivedMoritaTheory}
every DG-enhanceable continuous functor $D(\A) \rightarrow D(\B)$ 
is of the form $(-) \ldertimes M$ for some bimodule $M \in D(\AbimB)$. 
It follows that $D(\AbimB)$ can be identified with the 
triangulated category of DG-enhanceable continuous functors 
$D(\A) \rightarrow D(\B)$. This furthermore identifies 
the subcategory $D^{\Bperf}(\AbimB)$ 
with the triangulated category of DG-enhanceable functors
$D_c(\A) \rightarrow D_c(\B)$. This reduces DG-enhancing exact
functors between the derived categories of algebraic varieties to 
DG-enhancing the derived categories of bimodules, cf. \cite{LuntsSchnurer-NewEnhancementsOfDerivedCategoriesOfCoherentSheavesAndApplications}. 

Let $\A$ be a DG-category and let $\modA$ be the DG-category 
of $\A$-modules. There are two enhancements commonly used in 
the literature for $D(\A)$: the full subcategory $\hprojA$ of
the $h$-projective modules in $\modA$, and the Drinfield quotient 
$\modA / \acyc(\A)$ where $\acyc(\A)$ is the full subcategory of 
acyclic modules. Neither turned out
to be suitable for our purposes. The problem with the Drinfeld quotient 
is that its morphisms are inconvenient to work with explicitly.  
The problem with $\hprojA$ manifests itself when working with
bimodules. The diagonal bimodule $\A$, which corresponds to the
identity functor $D(\A) \rightarrow D(\A)$, is not in general $h$-projective.
Hence every construction involving the identity functor has to be
$h$-projectively resolved by e.g.~tensoring with the bar complex. This
leads to many formulas becoming vastly more complicated than they
should be, cf.~\cite{AnnoLogvinenko-SphericalDGFunctors}.

We propose a different DG-enhancement framework for the derived
categories of DG-categories. We think it is more suitable for 
identifying the derived categories of DG-bimodules with triangulated
categories of DG-enhanceable functors as described above. Let $\A$
be a DG-category. The proposed enhancement of $D(\A)$ admits 
two different descriptions. 

\subsection{DG-modules with $\Ainfty$-morphisms between them}
\label{section-DG-modules-with-Ainfty-morphisms-between-them}

The first one is in the language of $\Ainfty$-categories and modules. 
The enhancement we want is the full subcategory 
of the DG-category $\noddinfA$ of $\Ainfty$ $\A$-modules 
which consists of DG $\A$-modules. We denote this subcategory 
by $\noddinfdgA$. Note that 
the subcategory $(\noddinfstr \A)_{dg} \subset \noddinfA$
which consists of DG $\A$-modules and strict $\Ainfty$-morphisms 
between them can be canonically identified with the usual DG-category 
$\modA$ of DG $\A$-modules. Consider the chain of subcategory inclusions  
\begin{align}
\label{eqn-Ainfty-module-subcategory-inclusions}
\sfA \hookrightarrow \modA \hookrightarrow 
\noddinfdgA \hookrightarrow \noddinfhuA.
\end{align}
In$\noddinfhuA$ all quasi-isomorphisms are homotopy equivalences, 
and thus the functorial resolution $(-) \inftimes_\A \A$ of $\noddinfhuA$ into 
$\sfA$ established 
in Cor.~\ref{cor-functorial-semi-free-resolution-for-noddinfhu}
ensures that every full subcategory of $\noddinfhuA$ which 
contains $\sfA$ is quasi-equivalent to $\sfA$.  We thus obtain:

\begin{prps}
Let $\A$ be a DG category.  The natural inclusions 
$$ \sfA \hookrightarrow \noddinfdgA \hookrightarrow \noddinfhuA $$ are  quasi-equivalences. In particular, the induced equivalences $$ 
D(\A) \simeq H^0(\noddinfdgA) \simeq D_\infty(\A) $$
make $\noddinfdgA$ and $\noddinfhuA$ into DG-enhancements of $D(\A)$. 
\end{prps}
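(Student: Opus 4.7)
The plan is to exploit the functorial resolution $(-) \inftimes_\A \A$ supplied by Corollary~\ref{cor-functorial-semi-free-resolution-for-noddinfhu}, together with the fact that for $\Ainfty$-modules every quasi-isomorphism is a homotopy equivalence. Since all three categories are full DG-subcategories of $\noddinfA$, each inclusion automatically induces quasi-isomorphisms on $\homm$-complexes, so only essential surjectivity on $H^0$ needs to be verified.

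For the inclusion $\noddinfdgA \hookrightarrow \noddinfhuA$, take $E \in \noddinfhuA$. By Corollary~\ref{cor-inftimes-factors-through-strictlyunital/DG-modules} (applied in the DG case), the module $E \inftimes_\A \A$ is itself a DG-module, hence lies in $\noddinfdgA$. By Proposition~\ref{prps-E-inftimesA-A-to-E-is-quasiiso-iff-bar-complex-is-acyclic}, the natural map $E \inftimes_\A \A \to E$ is a quasi-isomorphism in $\noddinfhuA$. But in $\noddinfA$ every quasi-isomorphism is a homotopy equivalence, so this map is a homotopy equivalence, and hence an isomorphism in $H^0(\noddinfhuA)$. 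Thus $E$ is isomorphic in $H^0(\noddinfhuA)$ to an object of $\noddinfdgA$, and the inclusion is essentially surjective on $H^0$.

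For the inclusion $\sfA \hookrightarrow \noddinfdgA$, take $E \in \noddinfdgA$. Lemma~\ref{lemma-E-inftimesA-A-is-semi-free} gives that $E \inftimes_\A \A$ is semi-free, so it already lies in $\sfA$. Again the map $E \inftimes_\A \A \to E$ is a quasi-isomorphism in $\noddinfhuA$, hence a homotopy equivalence; since $\noddinfdgA$ is a full subcategory this homotopy equivalence is realised in $\noddinfdgA$, so $E$ is isomorphic in $H^0(\noddinfdgA)$ to the object $E \inftimes_\A \A$ of $\sfA$. Combining the two steps yields the desired chain of quasi-equivalences, and passing to $H^0$ gives the equivalences $H^0(\sfA) \simeq H^0(\noddinfdgA) \simeq D_\infty(\A)$. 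The identification $H^0(\sfA) \simeq D(\A)$ is standard, since semi-free modules are $h$-projective.

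No single step is especially hard: the main conceptual content has already been packaged into Corollary~\ref{cor-functorial-semi-free-resolution-for-noddinfhu} and Proposition~\ref{prps-E-inftimesA-A-to-E-is-quasiiso-iff-bar-complex-is-acyclic}. The only point requiring care is the observation that a homotopy equivalence between objects of a full DG-subcategory, realised ambiently in $\noddinfhuA$, is automatically realised inside the subcategory; this is what allows both essential surjectivity statements to be deduced uniformly from the single resolution $(-) \inftimes_\A \A$.
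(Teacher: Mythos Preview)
Your proposal is correct and follows essentially the same approach as the paper: the paper's argument (given in the paragraph immediately preceding the proposition rather than as a formal proof) observes that the functorial semi-free resolution $(-)\inftimes_\A\A$ of Corollary~\ref{cor-functorial-semi-free-resolution-for-noddinfhu}, together with the fact that all quasi-isomorphisms in $\noddinfhuA$ are homotopy equivalences, forces every full subcategory of $\noddinfhuA$ containing $\sfA$ to be quasi-equivalent to $\sfA$. Your write-up simply unpacks this observation into the two separate inclusions and makes the essential surjectivity step explicit.
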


For any DG-bimodule $M \in \AmodB$ the adjoint functors 
$(-) \inftimes_\A M$ and $\infhom_\B(M,-)$ restrict
from $\noddinfA \leftrightarrow \noddinfB$ to 
$\noddinfdgA \leftrightarrow \noddinfdgB$. We thus
have the usual Tensor-Hom adjunction for the categories
$\noddinfdg$. 

\subsection{The category $\modbarA$}
\label{section-category-modbar}

The second description is a direct one in the language of DG-modules. 
While less conceptual, it significantly simplifies the computations
involved and allows one to avoid having to deal with the sign conventions
for $\Ainfty$-categories and modules. It builds on the ideas  
introduced in \cite[\S6.6]{Keller-DerivingDGCategories} where it was
applied to a set of compact generators of $D(\A)$ to obtain 
a Morita enhancement.  We apply it to the whole of $\modA$ instead:

\begin{defn}
\label{defn-bar-category-of-modules}
Let $\A$ be a DG-category. Define the \em bar category of modules \rm
$\modbarA$ as follows:
\begin{itemize}
\item 
The object set of $\modbarA$ is the same as that of $\modA$:
DG-modules over $\A$.  
\item For any $E,F \in \modA$ set
\begin{align}
\label{eqn-hom-sets-in-modbar-A} 
\homm_{\modbarA}(E,F) = \homm_\A(E \otimes_\A \barA, F)
\end{align}
and write $\barhom_\A(E,F)$ to denote this $\homm$-complex. 
\item For any $E \in \modA$ set $\id_E \in \barhom_\A(E,E)$
to be the element given by 
\begin{align}
\label{eqn-identity-in-modbar-A} 
E \otimes_\A \barA \xrightarrow{\id \otimes \tau} E \otimes_\A \A
\xrightarrow{\sim} E
\end{align}
where $\tau \colon \barA \rightarrow \A$ is the counit of $\barA$ as the coalgebra in $\AmodA$,  cf.~\S\ref{section-preliminaries-on-DG-categories}. 
\item For any $E,F,G \in \modA$ define the composition map 
\begin{align}
\label{eqn-modbar-composition-map}
\barhom_\A(F,G) \otimes_k \barhom_\A(E,F)
\longrightarrow 
\barhom_\A(E,G)
\end{align}
by setting for any $E \otimes_\A \barA \xrightarrow{\alpha} F$
and $F \otimes_\A \barA \xrightarrow{\beta} G$ 
the composition of the corresponding elements 
to be the element given by
\begin{align}
\label{eqn-composition-in-modbar-A} 
E \otimes_\A \barA \xrightarrow{\id \otimes \Delta}
E \otimes_\A \barA \otimes_\A \barA
\xrightarrow{\alpha \otimes \id}
F \otimes_\A \barA 
\xrightarrow{\beta}
G.
\end{align}
where $\Delta\colon \barA \rightarrow \barA \otimes_\A \barA$ is the comultiplication of $\barA$ as the coalgebra in $\AmodA$,  cf.~\S\ref{section-preliminaries-on-DG-categories}.
\end{itemize}

Let $\A$ and $\B$ be DG-categories. We define the bimodule category 
$\AmodbarB$ similarly, but with 
\begin{align}
\barhom_{\AbimB}(M,N) = \homm_{\AbimB}(\barA \otimes_\A M \otimes_\B
\barB, N) \quad \quad \forall M,N \in \AmodB, 
\end{align}
with $\id_M \in \barhom_{\AbimB}(M,M)$ being the element given by
\begin{equation*}
\barA \otimes_\A M \otimes_\B \barB \xrightarrow{\tau \otimes \id
\otimes \tau} M,
\end{equation*}
and with the composition of the $\AmodbarB$ morphisms 
$M \rightarrow N$ and $L \rightarrow M$  corresponding to 
$$ \beta \colon \barA \otimes_\A M \otimes_\B \barB \rightarrow N
\quad \text{ and } \quad
 \alpha \colon \barA \otimes_\A L \otimes_\B \barB \rightarrow M $$
being the $\AmodbarB$ morphism $L \rightarrow N$ corresponding to
$$ 
\barA \otimes_\A L \otimes_\B \barB
\xrightarrow{\Delta \otimes \id \otimes \Delta}
\barA \otimes_\A \barA \otimes_\A L \otimes_\B \barB \otimes_\B \barB
\xrightarrow{\id \otimes \alpha \otimes \id }
\barA \otimes_\A M \otimes_\B \barB 
\xrightarrow{\beta}
N.
$$
\end{defn}

\begin{prps}
\label{prps-embedding-of-modA-into-modbarA}
Let $\A$ be a DG-category. We have a (non-full) inclusion
\begin{align}
\label{eqn-embedding-of-modA-into-modbarA}
\Upsilon\colon \modA \hookrightarrow \modbarA
\end{align}
which is the identity on objects. 
\end{prps}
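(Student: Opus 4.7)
The plan is to define $\iota\colon \modA \to \modbarA$ as the identity on objects, and on morphism complexes as the map
\begin{align*}
\iota\colon \homm_\A(E,F) &\to \barhom_\A(E,F) = \homm_\A(E\otimes_\A\barA, F) \\
f &\mapsto f \circ \bigl(E \otimes_\A \barA \xrightarrow{\id \otimes \tau} E \otimes_\A \A \xrightarrow{\sim} E\bigr).
\end{align*}
Equivalently, $\iota(f)$ is the result of pre-composing $f$ (viewed as living on $E$) with the closed degree zero map $(\id \otimes \tau) \circ (\text{action iso})^{-1}$ from $E \otimes_\A \barA$ to $E$. Because the action iso and $\tau\colon \barA \to \A$ are both closed of degree $0$, pre-composition preserves differentials, so $\iota$ is a chain map on each morphism complex. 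Moreover $\iota(\id_E)$ is exactly the element declared to be $\id_E$ in $\modbarA$ by \eqref{eqn-identity-in-modbar-A}.

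The main thing to verify is compatibility with composition. Given $E \xrightarrow{f} F \xrightarrow{g} G$, the composition $\iota(g)\cdot\iota(f)$ in $\modbarA$ is by \eqref{eqn-composition-in-modbar-A} the composite
\begin{align*}
E \otimes_\A \barA
\xrightarrow{\id \otimes \Delta}
E \otimes_\A \barA \otimes_\A \barA
\xrightarrow{\iota(f) \otimes \id}
F \otimes_\A \barA
\xrightarrow{\iota(g)} G.
\end{align*}
Unwinding the definition of $\iota(f)$ and $\iota(g)$ and using the $\A$-bilinearity of $\tau$, this simplifies to $(g\circ f)$ pre-composed with the map $E \otimes_\A \barA \to E$ given (in Sweedler notation for $\Delta$) by $e \otimes b \mapsto e \cdot \tau(b_{(1)}) \cdot \tau(b_{(2)})$. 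The counit axiom $(\tau \otimes \id)\circ\Delta = \id_{\barA}$, applied under the left $\A$-linear map $\tau$, yields $\sum \tau(b_{(1)})\,\tau(b_{(2)}) = \tau(b)$ in $\A$. Hence the composite reduces to $(g\circ f)\circ(\id\otimes\tau)$ after identification $E\otimes_\A\A \simeq E$, which is exactly $\iota(g\circ f)$.

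For faithfulness of $\iota$, suppose $\iota(f) = 0$. Then $f(e \cdot \tau(b)) = 0$ for every $e\in E$ and $b \in \barA$. Since $\tau\colon \A\otimes_k\A \to \A$ surjects onto $\A$ via multiplication and $E$ is a unital $\A$-module, choosing $b = \id_x \otimes a$ shows $f(e\cdot a) = 0$ for all $a$, in particular for $a = \id$, giving $f = 0$. Thus $\iota$ is an injective (non-surjective) DG-functor: non-surjective because $\barhom_\A(E,F)$ detects the entire bar complex $\barA = \bigoplus_{n\geq 2}\A^{\otimes n}[n-2]$, whereas the image of $\iota$ only sees the degenerate part where we first project to $\A$ via $\tau$; in particular, maps out of the higher tensor components $\A^{\otimes n}[n-2]$ for $n\geq 3$ are not in the image.

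The only non-routine point is the composition check, whose content is essentially the coassociativity/counit compatibility packaged as the identity $m_\A \circ (\tau\otimes\tau)\circ\Delta = \tau$; this was essentially verified already in \S\ref{section-the-bar-complex} when establishing that $\tau$ is the counit of the coalgebra $(\barA, \Delta, \tau)$, and no further delicate computation is needed.
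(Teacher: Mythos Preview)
Your proof is correct and follows essentially the same route as the paper's: the functor is defined by precomposition with $\id\otimes\tau$, identities and injectivity follow from surjectivity of $\tau$, and the composition check reduces to the coalgebra identity $(\tau\otimes\tau)\circ\Delta = \tau$ (which you deduce from the counit axiom via Sweedler notation, while the paper states it directly). Your added remark on non-fullness is a welcome elaboration not in the paper's proof.
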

\begin{proof}
Define $\Upsilon$ to be the identity on
objects and for any $E,F \in \modA$ define
$$ \Upsilon_{E,F}\colon 
\homm_\A(E,F) \rightarrow
\barhom_\A(E,F)$$
to be the map which sends any $\alpha \in \homm_\A(E,F)$ to 
the morphism in $\modbarA$ defined by 
\begin{align*}
E \otimes_\A \barA  \xrightarrow{\alpha \otimes \tau} 
F \otimes_\A \A
\xrightarrow{\sim} F. 
\end{align*}

The map $\Upsilon_{E,F}$ is injective as it can be rewritten 
as the pre-composition of $\alpha$ with the map 
\eqref{eqn-identity-in-modbar-A} which is surjective. 
This also shows that it sends $\id$ in 
$\modA$ to $\id$ in $\modbarA$. 
It remains to check that $\Upsilon_{E,F}$ is compatible with compositions. 
Let $G \in \modA$ and let $\beta \in \homm_\A(F,G)$.  
By definition, the composition $\Upsilon_{F,G}(\beta) \circ
\Upsilon_{E,F}(\alpha)$ in $\modbarA$ 
is the element of $\barhom_{\A}(E,G)$ defined by
\begin{align*}
E \otimes_\A \barA \xrightarrow{\id \otimes \Delta}
E \otimes_\A \barA \otimes_\A \barA 
\xrightarrow{\alpha \otimes \tau \otimes \id}
F \otimes_\A \barA 
\xrightarrow{\beta \otimes \tau}
G 
\end{align*}
where we suppress the isomorphisms $(-)\otimes_\A \A \simeq \id_\A$. 
By functoriality of tensor product, this simplifies to 
$$ E \otimes_\A \bar\A \xrightarrow{\left(\beta \circ \alpha\right)
\otimes \left((\tau \otimes \tau) \circ \Delta\right)} G. $$
Since $(\tau \otimes \tau) \circ \Delta = \tau$ the above equals
$\Upsilon_{E,G}(\beta \circ \alpha)$, as desired. 
\end{proof}

The inclusion $\Upsilon$ is 
a special case of a more general identification which relates 
this section to \S\ref{section-DG-modules-with-Ainfty-morphisms-between-them}:

\begin{defn}
\label{defn-modbarA-to-noddinfdgA-isomorphism}
Let $\A$ be a DG-category. Define a DG-functor 
\begin{align}
\label{eqn-modbarA-to-noddinfdgA-isomorphism}
\Psi\colon \modbarA \rightarrow \noddinfdgA 
\end{align}
by setting it to be the identity on the objects, and
for any $E,F \in \modbarA$ setting 
\begin{align}
\label{eqn-hombar-to-hominf-isomorphism}
\Psi(-)\colon \barhom_\A(E,F) \rightarrow \infhom_\A(E,F)
\end{align}
to be the following map. By definition 
$\barhom_\A(E,F) = \homm_\A(E \otimes_\A \barA, F)$, 
and the module $E \otimes_\A \barA$ is isomorphic
to the convolution of the twisted complex 
\begin{align}
\label{eqn-E-otimes-barA-as-a-twisted-complex}
\xymatrix{ 
\dots 
\ar[r]
&
E \otimes_k \A^{\otimes 3}
\ar[r]
&
E \otimes_k \A^{\otimes 2}
\ar[r]
&
\underset{\degzero}{E \otimes_k \A} 
}
\end{align}
with the degree $0$ differentials 
$$
E \otimes \A^{\otimes n + 1} 
\xrightarrow{\sum_{i = 0}^n (-1)^{i} \id^{\otimes i} \otimes m_2
\otimes \id^{ \otimes (n - i)}}
E \otimes \A^{\otimes n}
$$
where $m_2$ denotes either the composition map 
$\A \otimes_k \A \rightarrow \A$ 
or the action map $E \otimes_k \A \rightarrow E$, as appropriate.
Since $\modA$ is strongly pre-triangulated, the DG complex 
$\homm_\A(E \otimes_\A \barA, F)$ is isomorphic to 
the DG complex of twisted complex morphisms 
\begin{equation}
\label{eqn-map-E-otimes-barA-to-F-as-a-twisted-complex-map}
\begin{tikzcd}
\dots 
\ar{r}
\ar{drrr}[description]{\dots}
&
E \otimes_k \A^{\otimes 3}
\ar{r}
\ar{drr}[description]{\alpha_2}
&
E \otimes_k \A^{\otimes 2}
\ar{r}
\ar{dr}[description]{\alpha_1}
&
\underset{\degzero}{E \otimes_k \A} 
\ar{d}[description]{\alpha_0}
\\
& & &
F.
\end{tikzcd}
\end{equation}
The degree $i$ part of this DG complex comprises all 
$\left\{\alpha_n \right\}_{n \geq 0}$ with $\alpha_n \in
\homm^{i-n}_\A(E\otimes_k \A^{\otimes(n + 1)}, F)$.  
Since each $E \otimes_k \A^{\otimes(n+1)}$ is a free $\A$-module
generated by the $k_\A$-module $E \otimes_k \A^{\otimes
n}$, such collections $\left\{\alpha_n \right\}_{n \geq 0}$
are in bijection with 
$\left\{\alpha'_n \right\}_{n \geq 0}$ with 
$\alpha'_n \in \homm^{i-n}_k(E\otimes_k \A^{\otimes n}, F)$, 
and hence with the elements of $\infhom_\A(E,F)$. 

Thus, define the action \eqref{eqn-hombar-to-hominf-isomorphism}
of $\Psi$ on morphism complexes to be the composition of 
the bijective map which sends the elements of $\barhom_\A(E,F)$ to 
the twisted complex morphisms 
\eqref{eqn-map-E-otimes-barA-to-F-as-a-twisted-complex-map} 
with the bijective map which sends the latter to the elements of 
of $\infhom_\A(E,F)$. The resulting map clearly respects the 
degrees, and checking that it commutes with the differentials
is a straightforward verification, comparing the definition
of the differential for morphisms of twisted complexes given in e.g. 
\cite[\S3.1]{AnnoLogvinenko-SphericalDGFunctors} or
\cite[\S1]{BondalKapranov-EnhancedTriangulatedCategories}
with that of the differential for $\A_\infty$-module morphisms given 
in e.g.~\cite[\S5.2]{Lefevre-SurLesAInftyCategories}. 
\end{defn}

\begin{prps}
\label{prps-modbarA-to-noddinfdgA-isomorphism}
Let $\A$ be a DG-category. The DG-functor $\Psi$ 
of Definition \ref{defn-modbarA-to-noddinfdgA-isomorphism}
is an isomorphism  
$$ \modbarA \simeq \noddinfdgA $$
of DG-categories which identifies 
$\Upsilon\colon \modA \hookrightarrow \modbarA$ with 
$\modA = \left(\noddinfA\right)_{\text{dg}}^{\text{strict}} 
\hookrightarrow \noddinfdgA$. 
\end{prps}
\begin{proof}
By construction, $\Psi$ is identity on objects and bijective  
on morphism complexes. Hence it is an isomorphism of DG categories. 
For the last assertion, let $\alpha \in \homm_{\A}(E,F)$. The
corresponding element of $\barhom_\A(E,F)$ is the composition 
of $E \otimes_\A \barA \xrightarrow{\id \otimes \tau} E$ with 
$E \xrightarrow{\alpha} F$. On the level of twisted complexes, 
the former map consists of a single component 
$E \otimes_k \A \xrightarrow{\action} E$. The composition 
consists therefore of a single component $E \otimes_k \A
\xrightarrow{\alpha \circ \action} F$. The
corresponding $k_\A$-module morphism is $E \xrightarrow{\alpha} F$. 
We conclude that the resulting collection of $k_\A$-module morphisms 
$\left\{E \otimes_k \A^{\otimes n} \rightarrow F\right\}_{n \geq 0}$
consists of a single non-zero component: $E \xrightarrow{\alpha} F$. 
This defines the strict $\Ainfty$-morphism $E \rightarrow F$ 
corresponding to $\alpha$, as required. 
\end{proof}

\begin{cor}
\label{cor-canonical-category-isomorphism-D(A)-to-H^0-modbar-A}
Let $\A$ be a DG category. There is a canonical category isomorphism 
\begin{align}
\label{eqn-D(A)-to-H^0-modbar-A-isomorphism}
\Theta: D(\A) \xrightarrow{\sim} H^0\left(\modbarA\right)	
\end{align}
giving $\modbarA$ the structure of a DG-enhancement of $D(\A)$. 
\end{cor}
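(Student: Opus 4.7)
The plan is to assemble the corollary as a formal composition of two results already proved in the excerpt. First, the proposition immediately preceding \S\ref{section-category-modbar} establishes that the inclusions
$$\sfA \hookrightarrow \noddinfdgA \hookrightarrow \noddinfhuA$$
are quasi-equivalences, which yields the canonical equivalence $D(\A) \simeq H^0(\noddinfdgA)$. Second, Prop.~\ref{prps-modbarA-to-noddinfdgA-isomorphism} gives a canonical isomorphism of DG-categories $\modbarA \xrightarrow{\sim} \noddinfdgA$ which is the identity on objects. Passing to $H^0$ and composing, I obtain the canonical isomorphism
$$D(\A) \xrightarrow{\sim} H^0(\noddinfdgA) \xrightarrow{\sim} H^0(\modbarA).$$

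To justify the word ``canonical,'' I would note that both constituent isomorphisms are constructed without any choices: the isomorphism of Prop.~\ref{prps-modbarA-to-noddinfdgA-isomorphism} is determined by the explicit identification of twisted-complex morphisms \eqref{eqn-map-E-otimes-barA-to-F-as-a-twisted-complex-map} with collections of $k_\A$-module maps, and the equivalence $D(\A) \simeq H^0(\noddinfdgA)$ is induced by the functorial semi-free resolution $(-) \inftimes_\A \A$ constructed in Cor.~\ref{cor-functorial-semi-free-resolution-for-noddinfhu}. In particular, the composition is compatible with the embedding $\modA \hookrightarrow \modbarA$ from Prop.~\ref{prps-embedding-of-modA-into-modbarA}: it sends the class of a DG-module $E$ to itself, and a genuine DG-morphism $\alpha\colon E \rightarrow F$ to its image under the canonical localisation $H^0(\modA) \rightarrow D(\A)$, as follows from the last assertion of Prop.~\ref{prps-modbarA-to-noddinfdgA-isomorphism}.

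Finally, to conclude that $\modbarA$ is a DG-enhancement of $D(\A)$, it suffices to observe that $\modbarA$ is pretriangulated (which will be shown separately, but follows from the isomorphism with $\noddinfdgA$ since $\noddinfA$, and hence its full subcategory of DG-modules, is pretriangulated) and that $H^0(\modbarA) \simeq D(\A)$ as triangulated categories. There is essentially no obstacle here: the entire argument is a routine concatenation of the isomorphism $\modbarA \simeq \noddinfdgA$ with the already-established DG-enhancement status of $\noddinfdgA$, so the ``proof'' of the corollary is a one-line citation of these two facts.
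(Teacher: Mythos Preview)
Your argument is correct in spirit and cites the right ingredients, but it takes a slightly different route from the paper and leaves a small gap concerning the word \emph{isomorphism}. You compose the equivalence $D(\A) \simeq H^0(\noddinfdgA)$ from the proposition in \S\ref{section-DG-modules-with-Ainfty-morphisms-between-them} with the isomorphism $H^0(\noddinfdgA) \simeq H^0(\modbarA)$ from Prop.~\ref{prps-modbarA-to-noddinfdgA-isomorphism}. This certainly gives a canonical \emph{equivalence}, but the corollary asserts a category \emph{isomorphism}, i.e.\ a functor bijective on objects. The equivalence $D(\A) \simeq H^0(\noddinfdgA)$ in that earlier proposition was obtained via the quasi-equivalence $\sfA \hookrightarrow \noddinfdgA$, and as stated there it is only an equivalence; you would need an extra sentence arguing that both categories have the same object set (DG $\A$-modules) and that the composite is the identity on objects.

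The paper's proof sidesteps this by working directly with the Verdier quotient: since acyclic modules become zero in $H^0(\modbarA)$ (via the isomorphism with $\noddinfdgA$), the universal property of the Verdier quotient $D(\A) = H^0(\modA)/H^0(\acyc\A)$ factors the inclusion $H^0(\modA) \hookrightarrow H^0(\modbarA)$ canonically through $D(\A)$. The resulting functor $D(\A) \to H^0(\modbarA)$ is the identity on objects by construction, so once it is an equivalence it is automatically an isomorphism. Your approach and the paper's are essentially equivalent in content; the paper's framing simply makes the identity-on-objects property, and hence the isomorphism claim, transparent.
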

\begin{proof}
In $H^0\noddinfdgA$ every acyclic module is isomorphic
to zero. By Prop.~\ref{prps-modbarA-to-noddinfdgA-isomorphism} it 
is also true of $H^0(\modbarA)$. As 
$D(\A) = H^0(\modA)/H^0\left(\acyc\A\right)$ the universal property of Verdier
quotient ensures that the inclusion
$$ H^0(\Upsilon)\colon H^0(\modA) \rightarrow H^0(\modbarA) $$ 
factors uniquely into the canonical projection 
$H^0(\modA) \rightarrow D(\A)$ and the functor we define to be $\Theta$:
\begin{align}
\label{eqn-modbarA-iso-DG-enhancement-of-DA}
H^0(\modA) 
\rightarrow 
D(\A)
\xrightarrow{\Theta}
H^0(\modbarA).
\end{align}

To see that $\Theta$ is an isomorphism of categories, 
precompose $H^0(\Upsilon)$ with $H^0(\hprojA) \hookrightarrow H^0(\modA)$, 
where $\hprojA \subset \modA$ is the full subcategory of 
$h$-projective modules. On the morphism complexes $\Upsilon$ is the 
pre-composition with the quasi-isomorphism $E \otimes_\A \barA
\rightarrow E$ defined in \eqref{eqn-identity-in-modbar-A}. In both 
$H^0(\hprojA)$ and $H^0(\modbarA)$ quasi-isomorphisms become
isomorphisms. Thus the resulting functor 
$H^0(\hprojA) \rightarrow H^0(\modbarA)$ is fully faithful. 
It is furthermore an equivalence, since any module $E$ is isomorphic
in $H^0(\modbarA)$ to the $h$-projective module $E \otimes_\A \barA$ . 
Since the composition 
$$ H^0(\hprojA) \hookrightarrow H^0(\modA) \rightarrow D(\A) $$
is well-known to be an equivalence, we conclude that $\Theta$ is also
one. 
\end{proof}

\begin{lemma}
Let $\A$ be a DG-category and let $E \xrightarrow{\alpha} F$ be the $\modbarA$ 
morphism defined by a $\modA$ morphism $E \otimes_\A \barA
\xrightarrow{\alpha'} F$.
The category isomorphism $\Theta$ of 
Cor.~\ref{cor-canonical-category-isomorphism-D(A)-to-H^0-modbar-A}
identifies $\alpha$ with the $D(\A)$ morphism
\begin{align}
\label{eqn-E-to-F-in-D(A)-which-corresponds-to-a-modbar-morphism}
E \xrightarrow{\id \otimes \tau^{-1}} E \otimes_\A \barA 
\xrightarrow{\alpha'} F.
\end{align}
Here $\tau^{-1}$ is the formal inverse of 
the quasi-isomorphism $\barA \xrightarrow{\tau} \A$. 
\end{lemma}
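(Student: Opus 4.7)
The plan is to use Corollary~\ref{cor-canonical-category-isomorphism-D(A)-to-H^0-modbar-A}, which characterises the isomorphism $\iota\colon D(\A) \xrightarrow{\sim} H^0(\modbarA)$ by the property that $\iota \circ \pi$ equals the functor induced by the non-full inclusion $\modA \hookrightarrow \modbarA$ of Proposition~\ref{prps-embedding-of-modA-into-modbarA}, where $\pi\colon H^0(\modA) \to D(\A)$ is the canonical projection. Write $\phi := \id_E \otimes \tau\colon E \otimes_\A \barA \to E$ in $\modA$. Since $\tau$ is a quasi-isomorphism of $\AbimA$-bimodules and $\barA$ is semi-free, in particular $h$-flat, as a left $\A$-module, $\phi$ itself is a quasi-isomorphism in $\modA$, so $\pi(\phi)$ is invertible in $D(\A)$ and identifies with the formal inverse $\tau^{-1}$ in the statement. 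Writing $[\phi]_{\modbarA}$ and $[\alpha']_{\modbarA}$ for the images of $\phi$ and $\alpha'$ under the inclusion, the characterisation of $\iota$ gives $[\phi]_{\modbarA} = \iota(\pi(\phi))$, which is invertible in $H^0(\modbarA)$, and $[\alpha']_{\modbarA} = \iota(\pi(\alpha'))$.

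The lemma thus reduces to the identity $\alpha \circ [\phi]_{\modbarA} = [\alpha']_{\modbarA}$ in $H^0(\modbarA)$: given this, one has $\alpha = [\alpha']_{\modbarA} \circ [\phi]_{\modbarA}^{-1}$ in $H^0(\modbarA)$, and applying $\iota^{-1}$ yields $\pi(\alpha') \circ \pi(\phi)^{-1}$, which is the right-hand side of \eqref{eqn-E-to-F-in-D(A)-which-corresponds-to-a-modbar-morphism}. I would then compute both sides of this identity as elements of $\barhom_\A(E \otimes_\A \barA, F) = \homm_\A((E \otimes_\A \barA) \otimes_\A \barA, F)$ by unfolding the composition formula of Definition~\ref{defn-bar-category-of-modules} and the inclusion formula of Proposition~\ref{prps-embedding-of-modA-into-modbarA}. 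A direct computation shows $[\alpha']_{\modbarA} = \alpha' \circ (\id_E \otimes M_B)$ and $\alpha \circ [\phi]_{\modbarA} = \alpha' \circ (\id_E \otimes M_A)$ for $\AbimA$-bimodule maps $M_A, M_B \colon \barA \otimes_\A \barA \to \barA$, where $M_B = \id \otimes \tau$ via $\barA \otimes_\A \A \cong \barA$, and $M_A(b_1 \otimes b_2) = \sum (\tau(b_1)\tau(b_{2a})) \cdot b_{2b}$ in Sweedler notation for $\Delta(b_2)$. Using left $\A$-linearity and the counit identity $(\tau \otimes \id) \circ \Delta = \id$ applied to the second factor, $M_A$ collapses to $\tau \otimes \id$ via $\A \otimes_\A \barA \cong \barA$.

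At this point part (\ref{item-d-mu-equals-tau-id-minus-id-tau}) of Lemma~\ref{lemma-d-mu-equals-tau-id-minus-id-tau} applies directly: the canonical degree $-1$ map $\mu\colon \barA \otimes_\A \barA \to \barA$ of Definition~\ref{defn-degree-minus-one-map-mu} satisfies $d\mu = \tau \otimes \id - \id \otimes \tau = M_A - M_B$. Hence $\alpha' \circ (\id_E \otimes \mu)$ is a degree $-1$ element of $\homm_\A((E \otimes_\A \barA) \otimes_\A \barA, F)$ whose differential is $\alpha' \circ (\id_E \otimes (M_A - M_B)) = \alpha \circ [\phi]_{\modbarA} - [\alpha']_{\modbarA}$, supplying the required chain homotopy and closing the argument. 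The only real obstacle is the computational step of identifying $M_A$ and $M_B$ with $\tau \otimes \id$ and $\id \otimes \tau$ via the coalgebra counit axioms; once those identifications are in place, the paper's own Lemma~\ref{lemma-d-mu-equals-tau-id-minus-id-tau} delivers the homotopy for free.
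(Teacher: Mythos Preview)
Your proposal is correct and follows essentially the same approach as the paper's own proof: both reduce the claim to the identity $\alpha \circ [\phi]_{\modbarA} = [\alpha']_{\modbarA}$ in $H^0(\modbarA)$, compute the two underlying $\modA$ maps $E \otimes_\A \barA \otimes_\A \barA \to F$ as $\alpha' \circ (\id \otimes \tau \otimes \id)$ and $\alpha' \circ (\id \otimes \id \otimes \tau)$ respectively, and then invoke Lemma~\ref{lemma-d-mu-equals-tau-id-minus-id-tau} to obtain the homotopy via $\mu$. Your Sweedler-notation computation of $M_A$ is just a slightly more explicit unpacking of the same simplification $(\tau \otimes \id)\circ\Delta = \id$ that the paper uses implicitly.
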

\begin{proof}
It suffices to show that $\Theta(\alpha')$ is 
the composition $\alpha \circ \Theta(\id \otimes \tau)$ in $H^0(\modbarA)$. 
As $\Theta(\alpha') = H^0(\Upsilon)(\alpha')$, we conclude
it is the morphism defined by the $\modA$ morphism 
$$ E \otimes_\A \barA \otimes_\A \barA \xrightarrow{\alpha' \circ (\id \otimes \tau)} F. $$

On the other hand,  the image of $E \otimes_\A \barA \xrightarrow{\id \otimes \tau} E$
in $\modbarA$ is defined by the $\modA$ morphism 
$$ E \otimes_\A \barA \otimes_\A \barA \xrightarrow{\id \otimes \tau \otimes \tau} E.$$
Its $\modbarA$ composition with $\alpha$ is therefore defined by the $\modA$ morphism
$$ E \otimes_\A \barA \otimes_\A \barA \xrightarrow{\alpha' \circ (\id \otimes \tau \otimes \id)} F.$$

The claim now follows, since the map $\barA \otimes_\A \barA \xrightarrow{\tau \otimes \id - \id \otimes \tau} \barA$
is null-homotopic.  One choice for the contracting homotopy can be
found in Lemma \ref{lemma-d-mu-equals-tau-id-minus-id-tau}.
\end{proof}

\begin{cor}
\label{cor-criterion-for-modbar-morphism-to-be-homotopy-equivalence}
Let $\A$ be a DG-category.
A morphism $E \rightarrow F$ in $\modbarA$ 
is a homotopy equivalence if and only if
the corresponding $\modA$ morphism $E \otimes_\A \barA \rightarrow F$ is a quasi-isomorphism.
\end{cor}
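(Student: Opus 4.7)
The plan is to chase the morphism $\alpha$ through the identifications established in the two preceding results. First I would recall that, by definition, $\alpha$ is a homotopy equivalence in $\modbarA$ precisely when its class in $H^0(\modbarA)$ is an isomorphism. Then I would invoke Corollary~\ref{cor-canonical-category-isomorphism-D(A)-to-H^0-modbar-A} to transport this question to the derived category: $\alpha$ is a homotopy equivalence if and only if its image under the canonical isomorphism $D(\A) \xrightarrow{\sim} H^0(\modbarA)$ is an isomorphism.

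Next I would apply the immediately preceding lemma, which identifies the image of $\alpha$ in $D(\A)$ with the composition
\begin{align*}
E \xrightarrow{\id \otimes \tau^{-1}} E \otimes_\A \barA \xrightarrow{\alpha'} F.
\end{align*}
Since $\barA \xrightarrow{\tau} \A$ is a quasi-isomorphism (shown in \S\ref{section-the-bar-complex}) and the canonical map $E \otimes_\A \A \xrightarrow{\sim} E$ is an isomorphism in $\modA$, the morphism $\id \otimes \tau^{-1}$ is an isomorphism in $D(\A)$. Therefore the above composition is an isomorphism in $D(\A)$ if and only if $\alpha'$ is.

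Finally, since $\alpha'$ is a genuine morphism in $\modA$, it becomes an isomorphism in $D(\A) = H^0(\modA)/H^0(\acyc \A)$ if and only if it is a quasi-isomorphism. Chaining these equivalences yields the claim. No step here is a real obstacle once the previous lemma is in hand; the only thing to be careful about is the direction of $\tau^{-1}$ and the observation that $\tau$ being a quasi-isomorphism is exactly what makes $\id \otimes \tau$ a quasi-isomorphism, so that its formal inverse is well-defined in $D(\A)$.
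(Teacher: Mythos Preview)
Your proof is correct and is exactly the intended deduction: the paper states this corollary without proof, as it follows immediately from the preceding lemma together with Corollary~\ref{cor-canonical-category-isomorphism-D(A)-to-H^0-modbar-A}. One minor remark: the reason $\id \otimes \tau$ is a quasi-isomorphism is not merely that $\tau$ is, but that the cone $\tildeA$ of $\tau$ is null-homotopic as a left $\A$-module (via the contracting homotopy inserting $1$ on the right), so $E \otimes_\A \tildeA$ is null-homotopic as well; this is implicit in the paper's treatment of the bar complex as a semi-free resolution.
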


We next furnish the categories $\modbar$ with 
adjoint bifunctors which are enhancements of 
the derived bifunctors $(-) \ldertimes_\B (-)$ and
$\rder\homm_B(-,-)$.

\begin{defn}
\label{defn-bar-tensor-bifunctor}
Let $\A$, $\B$, and $\C$ be DG-categories. Define the functor 
\begin{align}
\bartimes_\B \colon 
\AmodbarB \otimes_k \BmodbarC \rightarrow \AmodbarC 
\end{align}
by setting
\begin{align*}
M \bartimes_\B N = M \otimes_\B \barB \otimes_\B N 
\quad\quad
\forall\; M \in \AmodbarB, \; N \in \BmodbarC. 
\end{align*}
Furthermore, for any 
$\alpha \in \barhom_{\AbimB}(M,M')$ and 
$\beta \in \barhom_{\BbimC}(N,N')$ define 
$$ M \bartimes_\B N \xrightarrow{ \alpha \bartimes \beta }
M' \bartimes_\B N' $$
to be the morphism corresponding to
\begin{align*}
\barA \otimes_\A M \otimes_\B \barB \otimes_\B N \otimes_\C \barC
\xrightarrow{\id^{\otimes 2} \otimes \Delta^2 \otimes \id^{\otimes 2}}
\barA \otimes_\A M \otimes_\B \barB \otimes_\B \barB \otimes_\B \barB \otimes_\B 
N \otimes_\C \barC
\xrightarrow{\alpha \otimes \id \otimes \beta}
M' \otimes_\B \barB \otimes_\B N'.
\end{align*}

Explicitly, we have 
$$ a \otimes m \otimes b \otimes n \otimes c \mapsto
\sum
(-1)^{\deg(\beta) \deg(a \otimes m \otimes b_{(1)} \otimes b_{(2)})}
\alpha(a \otimes m \otimes b_{(1)}) \otimes b_{(2)} \otimes
\beta(b_{(3)} \otimes n \otimes c). 
$$
Here and below we use Sweedler's notation for comultiplications and 
coactions: the sum above runs over all the summands 
$b_{(1)} \otimes b_{(2)} \otimes b_{(3)}$ of $\Delta^2(b)$. More 
generally, given a coalgebra element denoted by e.g. letter $b$
a sum involving expressions $b_{(1)}, \dots, b_{(k)}$ means that
the sum is taken over all summands of $\Delta^k(b)$ with each $b_{(i)}$ 
denoting the $i$-th factor of each summand. 
\end{defn}

\begin{defn}
\label{defn-bar-hom-bifunctor}
Let $\A$, $\B$, and $\C$ be DG-categories. Define the functor 
\begin{align}
\barhom_\B(-,-)\colon 
\AmodbarB \otimes_k (\CmodbarB)^{\opp} \rightarrow \AmodbarC 
\end{align}
by setting
\begin{align*}
\barhom_\B\left(M,N\right) = \homm_\B\left(M \otimes_\B \barB, N\right) 
\quad\quad
\forall\; M \in \CmodbarB, \; N \in \AmodbarB. 
\end{align*}
Furthermore, for any 
$\alpha \in \barhom_{\CbimB}(M',M)$ and 
$\beta \in \barhom_{\AbimB}(N,N')$ define 
$$ \barhom_{\B}\left(M,N\right) 
\xrightarrow{\beta \circ (-) \circ \alpha}
\barhom_{\B}\left(M', N'\right)$$
by the $\AmodC$ map 
\begin{align}
\label{eqn-A-and-C-action-on-infhom_B}	
& \barA \otimes_\A  
\homm_{\B}\left(M \otimes_\B \barB ,N\right) 
\otimes_\C \barC \xrightarrow{\beta_\A \otimes \id \otimes \alpha_\C} 
\\
\nonumber
\rightarrow \quad 
& \homm_{\B}\left(N \otimes_\B \barB, N'\right) 
\otimes_\A  
\homm_{\B}\left(M \otimes_\B \barB, N\right) 
\otimes_\C
\homm_{\B}\left(M' \otimes_\B \barB, M\right) 
\xrightarrow{\eqref{eqn-modbar-composition-map}} 
\\
\nonumber
\rightarrow \quad 
& 
\homm_{\B}\left(M' \otimes_\B \barB, N'\right). 
\end{align}
Here 
$\barA \xrightarrow{\beta_\B} \homm(N \otimes_\B \barB, N')$
and 
$\barC \xrightarrow{\alpha_\C} \homm(M' \otimes_\B \barB, M)$
are the right adjoints of the $\AmodB$ and $\CmodB$ morphisms 
$$\barA \otimes_\A N \otimes_\B \barB \rightarrow  N', $$
$$\barC \otimes_\C M'\otimes_\B \barB \rightarrow M $$
which correspond to $\beta$ and $\alpha$.  

Explicitly, the map $\eqref{eqn-A-and-C-action-on-infhom_B}$ takes any
$a \otimes \gamma \otimes c$ to the map 
$$ 
m' \otimes b 
\mapsto 
\sum
(-1)^{\deg(\alpha) (\deg(a) + \deg(\gamma))}
\beta\left(a \otimes \gamma\left(\alpha\left(c \otimes m' \otimes
b_{(1)}\right) \otimes b_{(2)} \right) \otimes b_{(3)} \right).
$$

We define similarly the functor
\begin{align}
\barhom_{\Bopp}(-,-)\colon 
\BmodbarC \otimes_k (\BmodbarA)^{\opp} \rightarrow \AmodbarC. 
\end{align}
\end{defn}

\begin{defn}
Let $\A$, $\B$, $\C$, and $\D$ be DG-categories. For any 
$M \in \AmodbarB$, $L \in \CmodbarB$, and $N \in \DmodbarB$
define the \em composition map \rm in $\DmodbarC$ 
\begin{align}
\label{eqn-composition-map-in-modbar} 
{\composition}\colon 
\barhom_\B(M,N) \bartimes_\A \barhom_\B(L,M)
\rightarrow \barhom_\B(L,N) 
\end{align}
by the corresponding $\DmodC$ map 
\begin{align}
\nonumber
\barD \otimes_\D \homm_\B(M \otimes_\B \barB,N) \otimes_\A \barA \otimes_\A 
\homm_\B(L \otimes_\B \barB,M) \otimes_\C \barC
\xrightarrow{\tau \otimes \id \otimes \tau \otimes \id \otimes \tau}
\\
\label{eqn-composition-map-in-modbar-underlying-mod-map} 
\rightarrow 
\homm_\B(M \otimes_\B \barB,N) \otimes_\A \homm_\B(L \otimes_\B \barB,M)
\xrightarrow{ \eqref{eqn-modbar-composition-map} }
\homm_\B(L \otimes_\B \barB,N).
\end{align}

For  any 
$M \in \BmodbarA$, $L \in \BmodbarC$, and $N \in \BmodbarD$
define similarly $\CmodbarD$ map 
\begin{align}
\label{eqn-opp-composition-map-in-modbar} 
{\composition}\colon \barhom_{\Bopp}(L,M) \bartimes_\A \barhom_{\Bopp}(M,N)
\xrightarrow{\composition} \barhom_{\Bopp}(L,N). 
\end{align}
\end{defn}
When working with bimodules where two different categories 
act on the left and on the right, there can be no confusion 
over whether we mean the left or the right action of either category.
In such case, we write $\homm_\A$ for $\homm_{\Aopp}$, for 
the purposes of brevity. 

\begin{prps}
\label{prps-bartimes-and-barhom-are-identified-with-derived-versions}

Let $\A$ and $\B$ be DG-categories and let $M \in \AmodB$. The isomorphism
$\Theta$ of Cor.~\ref{cor-canonical-category-isomorphism-D(A)-to-H^0-modbar-A}
identifies the functors $(-) \ldertimes_\A M$ and
$\rder\homm_\B\left(M,-\right)$ with the functors 
$H^0\left((-) \bartimes_\A M\right)$ and
$H^0\left(\barhom_\B\left(M,-\right)\right)$. Similarly, 
$\Theta$ identifies $M \ldertimes_\B (-)$ and
$\rder\homm_\A\left(M,-\right)$
with $H^0\left(M \bartimes_\B (-) \right)$ and
$H^0\left(\barhom_\A\left(M,-\right)\right)$. 
\end{prps}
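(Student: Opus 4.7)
The plan is to leverage two basic resolution facts. First, as established in Section \ref{section-the-bar-complex}, the canonical projection $\tau\colon \barA \to \A$ is a quasi-isomorphism and $\barA$ is semi-free as an $\A$-$\A$-bimodule. Second, and crucially for what follows, $\barA$ is also semi-free as a right (respectively left) $\A$-module on its own, since the twisted complex \eqref{eqn-bar-complex} consists of bimodules $\A \otimes_k \A^{\otimes(n-1)} \otimes_k \A$, each of which is free on each side separately.

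From these I would deduce the two resolution statements that drive the proof. For any $E \in \modA$, the map $E \otimes_\A \barA \xrightarrow{\id \otimes \tau} E$ is a quasi-isomorphism and $E \otimes_\A \barA$ is semi-free; both follow directly from the description of $E \otimes_\A \barA$ as the convolution of the bounded-above, one-sided twisted complex \eqref{eqn-E-otimes-barA-as-a-twisted-complex} of free modules. Analogously, for any $M \in \AmodB$, the map $M \otimes_\B \barB \to M$ is a quasi-isomorphism and $M \otimes_\B \barB$ is semi-free as a right $\B$-module, with the symmetric statement for $\barA \otimes_\A M$ on the left.

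Given these facts, the identification at the level of objects is immediate: $E \bartimes_\A M = (E \otimes_\A \barA) \otimes_\A M$ is the underived tensor product of $M$ with a semi-free resolution of $E$, which by definition represents $E \ldertimes_\A M$ in $D(\B)$. Likewise, $\barhom_\B(M,F) = \homm_\B(M \otimes_\B \barB, F)$ is the underived Hom out of a semi-free (hence h-projective) resolution of $M$ as a right $\B$-module, and so represents $\rder\homm_\B(M,F)$ in $D(\A)$. The corresponding assertions for $M \bartimes_\B (-)$ and $\barhom_\A(M,-)$ follow by the symmetric argument on the other side.

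The remaining task is naturality in morphisms, and this is where the main obstacle lies: one must verify that the comparison diagrams commute in $H^0$ on the nose, not merely up to the contracting homotopies such as $\mu$ from Lemma \ref{lemma-d-mu-equals-tau-id-minus-id-tau}. Concretely, given a class $[\alpha] \in H^0 \barhom_\A(E,F)$, the lemma preceding Corollary \ref{cor-criterion-for-modbar-morphism-to-be-homotopy-equivalence} identifies it with the $D(\A)$-morphism \eqref{eqn-E-to-F-in-D(A)-which-corresponds-to-a-modbar-morphism}. I would unwind Definition \ref{defn-bar-tensor-bifunctor} for $\alpha \bartimes \id_M$ and show that the intervening comultiplication $\Delta$ together with the tautological component of $\id_M$ collapse via $\tau$ to yield, up to the natural quasi-isomorphism $\id \otimes \tau$, the map $(E \otimes_\A \barA) \otimes_\A M \xrightarrow{\alpha \otimes \id} F \otimes_\A M$, which is precisely the derived-tensor image of \eqref{eqn-E-to-F-in-D(A)-which-corresponds-to-a-modbar-morphism}. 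The analogous check for $\barhom_\B(M,-)$ uses Definition \ref{defn-bar-hom-bifunctor} together with the observation that precomposition with the resolution quasi-isomorphism $M \otimes_\B \barB \to M$ intertwines $\barhom_\B(M,-)$ with the underived $\homm_\B(-,-)$ applied to this resolution; the sign and $\Delta$-bookkeeping is the only genuinely delicate part of the argument.
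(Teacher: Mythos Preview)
Your proposal is correct, and on objects it coincides with the paper's argument: both use that $E\bartimes_\A M = (E\otimes_\A\barA)\otimes_\A M$ computes the derived tensor product because $\barA$ is a semi-free resolution of the diagonal, and similarly for $\barhom_\B$.

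The difference lies in how naturality is handled. You treat it as the hard part and propose to chase the identification of morphisms via the lemma preceding Corollary~\ref{cor-criterion-for-modbar-morphism-to-be-homotopy-equivalence}, then unwind Definitions~\ref{defn-bar-tensor-bifunctor} and~\ref{defn-bar-hom-bifunctor} to check that the $\Delta$'s and $\tau$'s collapse correctly up to homotopy. This works, but the paper sidesteps all of it. The paper observes that at the DG level the square
\[
\begin{tikzcd}
\CmodA \ar{rr}{(-)\otimes_\A\barA\otimes_\A M} \ar{d}[']{\eqref{eqn-embedding-of-modA-into-modbarA}} && \CmodB \ar{d}{\eqref{eqn-embedding-of-modA-into-modbarA}} \\
\CmodbarA \ar{rr}{(-)\bartimes_\A M} && \CmodbarB
\end{tikzcd}
\]
commutes strictly (this is immediate from the definitions, since on strict DG morphisms $\bartimes$ reduces to ordinary $\otimes$). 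Passing to $H^0$, the top functor kills acyclics (because $\barA\otimes_\A M$ is $\A$-$h$-projective) and hence descends to $D(\CbimA)\to D(\CbimB)$; the factorisation~\eqref{eqn-modbarA-iso-DG-enhancement-of-DA} then forces this descended functor to coincide with $H^0((-)\bartimes_\A M)$. Since the same descended functor is by construction $(-)\ldertimes_\A M$, the identification follows without ever touching bar-category morphisms directly. Your route is more hands-on and makes the morphism-level content explicit; the paper's buys economy by reducing everything to the strict DG subcategory and the universal property of the Verdier quotient.
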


\begin{proof}
We only prove the assertion for $(-) \ldertimes_\A M$ and 
$(-) \bartimes_\A M$, the others are proved similarly. 

For any DG-category $\C$ the following square commutes:
\begin{align*}
\xymatrix{
\CmodA  
\ar[rr]^{(-) \otimes_\A \barA \otimes_\A M}
\ar[d]_{\Upsilon}
& &
\CmodB 
\ar[d]^{\Upsilon}
\\
\CmodbarA 
\ar[rr]^{(-) \bartimes_\A M}
& &
\CmodbarB. 
}
\end{align*}
Since $\barA \otimes_\A M$ is an $\A$-$h$-projective resolution of $M$, the functor 
$H^0\left((-) \otimes_\A \barA \otimes_\A M\right)$  descends to the functor 
$(-) \ldertimes_\A M\colon D(\CbimA) \rightarrow D(\CbimB)$. 
The factorisation 
\eqref{eqn-modbarA-iso-DG-enhancement-of-DA}
then implies that this functor is identified by $\Theta$ with 
the functor $H^0\left((-) \bartimes_\A M\right)$ . 
\end{proof}

\begin{prps}
\label{prps-bartensor-and-barhom-are-identified-with-Ainfty-tensor-and-hom}
Let $\A$ and $\B$ be DG-categories.   
The isomorphism $\Psi$
of Prop.~\ref{prps-modbarA-to-noddinfdgA-isomorphism} 
identifies the bifunctors $(-) \bartimes_\B (-)$, $\barhom_\A(-,-)$,
and $\barhom_\B(-,-)$ with the bifunctors 
$(-) \inftimes_\B (-)$, $\infhom_\B(-,-)$, and $\infhom_\A(-,-)$. 
\end{prps}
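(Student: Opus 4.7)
The plan is to extend the proof of Proposition \ref{prps-modbarA-to-noddinfdgA-isomorphism} by constructing explicit identifications on the level of the underlying graded bimodules and then verifying that all structural data (differentials, left and right actions, and the functoriality in both variables) match. First, for $M \in \AmodB$ and $N \in \BmodC$, recall that $\barB$ is the convolution of the twisted complex \eqref{eqn-bar-complex}. Applying $M \otimes_\B (-) \otimes_\B N$ termwise and using $M \otimes_\B \B^{\otimes n} \otimes_\B N \simeq M \otimes_k \B^{\otimes(n-2)} \otimes_k N$ identifies the graded $k_\A$-$k_\C$-bimodule underlying $M \bartimes_\B N$ with $\bigoplus_{m \geq 0} M \otimes_k \B^{\otimes m} \otimes_k N[m] = M \otimes_k \infbar\B \otimes_k N$, which is precisely the underlying graded bimodule of $M \inftimes_\B N$.

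Next I would verify that this identification intertwines the differentials and the $\A$- and $\C$-actions. The summands of $d_{M\bartimes_\B N}$ coming from the differentials on $M$, $N$, and $\B$ produce the $m_{0,0}$ term of $M \inftimes_\B N$, while the summands coming from \eqref{eqn-differentials-in-barA-twisted-complex} reproduce the internal $\B$-composition terms of $m_{0,0}$. The left action of $\A$ on $M \bartimes_\B N$ is simply the action on the first factor $M$, matching \eqref{eqn-Ainfty-structure-on-Ainfty-tensor-product-explicit} in the case $p \neq 0$, $r = 0$, because $M$ is a DG-module and all its higher $\Ainfty$-action maps vanish; the right $\C$-action is handled symmetrically.

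Third, for functoriality I would compute, for $\alpha \in \barhom_{\AbimB}(M,M')$ and $\beta \in \barhom_{\BbimC}(N,N')$, the image of $\alpha\bartimes\beta$ under \eqref{eqn-modbarA-to-noddinfdgA-isomorphism}. Using the description from the proof of Proposition \ref{prps-modbarA-to-noddinfdgA-isomorphism} of $\modbarA$-morphisms as collections $\{E \otimes_k \A^{\otimes n} \to F\}_{n \geq 0}$ (and analogously for bimodules), unpacking the definition of $\alpha \bartimes \beta$ via $\id^{\otimes 2} \otimes \Delta^2 \otimes \id^{\otimes 2}$ reduces to matching iterated comultiplications of $\barB$ with the bicomodule splitting of $\infbar M \otimes_{\infbar \B} \infbar N$ used to define $f \otimes g$ in Section \ref{section-tensor-and-hom-functors-for-Ainfty-bimodules}; this is a direct combinatorial comparison. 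For $\barhom_\B(M,N) = \homm_\B(M \otimes_\B \barB, N)$ one argues entirely in parallel: the twisted-complex decomposition identifies it with collections $\{M \otimes_k \B^{\otimes n} \to N\}_{n \geq 0}$, hence with $\homm_{\infbar\B}(\infbarB M, \infbarB N)$, and the $\A$- and $\C$-actions defined by \eqref{eqn-A-and-C-action-on-infhom_B} translate termwise into those induced by the $\A$- and $\C$-action $\Ainfty$-morphisms \eqref{eqn-A-infty-bimodule-left-action-map} and \eqref{eqn-A-infty-bimodule-right-action-map}. The case of $\barhom_\A$ versus $\infhom_\A$ is symmetric.

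The main obstacle is not conceptual but notational: keeping track of the Koszul signs introduced by the shifts $\B^{\otimes n}[n-2]$ and the shift $[-1]$ in \eqref{eqn-bar-construction-of-ainfty-tensor-product}, and comparing the explicit sign rules from Definitions \ref{defn-bar-tensor-bifunctor} and \ref{defn-bar-hom-bifunctor} to the signs in \eqref{eqn-Ainfty-structure-on-Ainfty-tensor-product-explicit}. Both sets of signs are ultimately dictated by the same Koszul convention applied to the same underlying suspension and tensor identifications, so they must agree; the verification is a sign-bookkeeping exercise entirely analogous to the proof of Proposition \ref{prps-modbarA-to-noddinfdgA-isomorphism}.
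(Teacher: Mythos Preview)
Your proposal is correct and is precisely the ``straightforward verification'' that the paper invokes as its entire proof; you have simply spelled out the steps that the authors leave implicit.
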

\begin{proof}
	Straightforward verification.
\end{proof}
In view of
Propositions \ref{prps-bartensor-and-barhom-are-identified-with-Ainfty-tensor-and-hom} and \ref{prps-modbarA-to-noddinfdgA-isomorphism}
we could deduce the Tensor-Hom adjunction for $\modbar$
from the Tensor-Hom adjunction for $\Ainfty$-modules
\cite[\em Lemme 4.1.1.4]{Lefevre-SurLesAInftyCategories}. 
However, it is more convenient to prove this adjunction directly 
in $\modbar$ by exhibiting explicit formulas for its unit and counit:

\begin{prps}
\label{prps-bartimes-and-barhom-are-dg-adjoint}
Let $\A$, $\B$, and $\C$ be DG-categories and let $M \in \AmodbarB$.
\begin{enumerate}
\item 
\label{item-bartimesA-M-and-HomB-M-adjunction}
The functors $(-) \bartimes_\A M$ and $\barhom_\B(M,-)$
are left and right adjoint functors $\CmodbarA \leftrightarrow
\CmodbarB$. The unit and the counit of the adjunction are the maps
\begin{align}
\label{eqn-bartimes-M-adjunction-unit}
E \xrightarrow{\mlt} 
\barhom_\B(M,E \bartimes_\A M) \quad \quad \forall\;
E \in \CmodbarA, \\
\label{eqn-bartimes-M-adjunction-counit}
\barhom_\B(M,F) \bartimes_\A M 
\xrightarrow{\ev} F 
\quad \quad \forall\; F \in \CmodbarB 
\end{align}
in $\CmodbarA$ and $\CmodbarB$ which correspond to the $\CmodA$ and
$\CmodB$ maps
\begin{small}
\begin{align}
\label{eqn-bartimes-M-adjunction-unit-underlying-map-in-Mod}
\barC \otimes_\C E \otimes_\A \barA 
\xrightarrow{\tau \otimes \mlt}
\homm_\B(M \otimes_\B \barB, E \otimes_\A \barA \otimes_\A M \otimes_\B \barB) 
\xrightarrow{(\id^{\otimes 3} \otimes \tau) \circ (-) }
\homm_\B(M \otimes_\B \barB, E \otimes_\A \barA \otimes_\A M), 
\\
\label{eqn-bartimes-M-adjunction-counit-underlying-map-in-Mod}
\barC \otimes_\C \homm_\B(M \otimes_\B \barB,F) \otimes_\A \barA \otimes_\A M \otimes_\B \barB
\xrightarrow{\tau \otimes \id \otimes \tau \otimes \id^{\otimes 2}}
\homm_\B(M \otimes_\B \barB ,F) \otimes_\A M \otimes_\B \barB
\xrightarrow{\ev}
F.
\end{align} 
\end{small}

\item 
\label{item-M-bartimesB-and-HomA-M-adjunction}
The functors $M \bartimes_\B (-)$ and $\barhom_\A(M,-)$ are 
left and right adjoint functors $\BmodbarC \leftrightarrow \AmodbarC$. 
The unit and the counit of the adjunction are given by the maps 
\begin{small}
\begin{align}
\label{eqn-M-bartimes-adjunction-unit}
E \xrightarrow{\mlt} \barhom_\A(M,M \bartimes_\B E) \quad \quad \forall\;
E \in \BmodbarC, \\
\label{eqn-M-bartimes-adjunction-counit}
M \bartimes_\B \barhom_\A(M,F) \xrightarrow{\ev} F \quad \quad \forall\;
F \in \AmodbarC
\end{align}
in $\BmodbarC$ and $\AmodbarC$ which correspond to the 
in $\BmodC$ and $\AmodC$ maps
\begin{align}
\label{eqn-M-bartimes-adjunction-unit-underlying-map-in-Mod}
\barB \otimes_\B E \otimes_\C \barC
\xrightarrow{\mlt \otimes \tau}
\homm_\A(\barA \otimes_\A M, \barA \otimes_\A M \otimes_\B \barB \otimes_\B E), 
\xrightarrow{(\tau \otimes \id^{\otimes 3}) \circ (-) }
\homm_\A(\barA \otimes_\A M, M \otimes_\B \barB \otimes_\B E), 
\\
\label{eqn-M-bartimes-adjunction-counit-underlying-map-in-Mod}
\barA \otimes_\A M \otimes_\B \barB \otimes_\B \homm_\A(\barA \otimes_\A M, F) 
\otimes_\barC \barC
\xrightarrow{\id^{\otimes 2} \otimes \tau \otimes \id \otimes \tau} 
\barA \otimes_\A M \otimes_\B \homm_\A(\barA \otimes_\A M, F) 
\xrightarrow{\ev} 
F.
\end{align}
\end{small}
\end{enumerate}
\end{prps}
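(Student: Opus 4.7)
The plan is to prove part \eqref{item-bartimesA-M-and-HomB-M-adjunction} directly, by checking that the proposed unit \eqref{eqn-bartimes-M-adjunction-unit} and counit \eqref{eqn-bartimes-M-adjunction-counit} are closed, degree zero natural transformations satisfying the two triangle identities. Part \eqref{item-M-bartimesB-and-HomA-M-adjunction} then follows by the symmetric argument, using \eqref{eqn-adjunction-counit-for-M-otimes-(-)}-\eqref{eqn-adjunction-unit-for-M-otimes-(-)} in place of \eqref{eqn-adjunction-counit-for-(-)-otimes-M}-\eqref{eqn-adjunction-unit-for-(-)-otimes-M}.

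First, I would unpack the definitions: for $E \in \CmodbarA$ and $F \in \CmodbarB$,
\begin{align*}
\homm_{\CmodbarB}(E \bartimes_\A M, F) &= \homm_{\CbimB}\bigl(\barC \otimes_\C E \otimes_\A \barA \otimes_\A M \otimes_\B \barB,\ F\bigr), \\
\homm_{\CmodbarA}\bigl(E, \barhom_\B(M, F)\bigr) &= \homm_{\CbimA}\bigl(\barC \otimes_\C E \otimes_\A \barA,\ \homm_\B(M \otimes_\B \barB, F)\bigr),
\end{align*}
and the standard DG Tensor-Hom adjunction \eqref{eqn-adjunction-counit-for-(-)-otimes-M}-\eqref{eqn-adjunction-unit-for-(-)-otimes-M} identifies these two complexes. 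Closedness of the explicit Mod-level maps \eqref{eqn-bartimes-M-adjunction-unit-underlying-map-in-Mod} and \eqref{eqn-bartimes-M-adjunction-counit-underlying-map-in-Mod} then follows from closedness of $\tau$, $\mlt$, and $\ev$ together with the Leibniz rule, so they give well-defined morphisms \eqref{eqn-bartimes-M-adjunction-unit} and \eqref{eqn-bartimes-M-adjunction-counit} in $\CmodbarA$ and $\CmodbarB$. Naturality in $E$ and $F$ reduces, via the $\bartimes$-composition \eqref{eqn-composition-in-modbar-A} and the $\barhom$-action \eqref{eqn-A-and-C-action-on-infhom_B}, to naturality of $\tau$, $\mlt$, $\ev$ combined with coassociativity of $\Delta$.

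The heart of the proof is the two triangle identities
\[
\ev_{E \bartimes_\A M} \circ \bigl(\mlt_E \bartimes_\A \id_M\bigr) = \id_{E \bartimes_\A M},
\qquad
\barhom_\B(M, \ev_F) \circ \mlt_{\barhom_\B(M,F)} = \id_{\barhom_\B(M,F)}.
\]
Expanding both sides using Definitions~\ref{defn-bar-tensor-bifunctor} and \ref{defn-bar-hom-bifunctor}, each identity becomes an equality of Mod-level maps on a tensor product involving several copies of $\barA$, $\barB$, $\barC$, decorated with $\Delta$'s, $\tau$'s and the ordinary DG unit/counit \eqref{eqn-adjunction-counit-for-(-)-otimes-M}-\eqref{eqn-adjunction-unit-for-(-)-otimes-M}. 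Each collapses to the identity via the coalgebra counit axioms $(\tau \otimes \id)\circ\Delta = \id = (\id \otimes \tau)\circ\Delta$ for $\barA$ (and analogously for $\barB$, $\barC$) combined with the standard Mod-level Tensor-Hom triangle identities.

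The main obstacle is purely computational: tracking which instance of $\tau$ or $\Delta$ acts on which tensor factor, as well as the signs produced by $\mlt$ in \eqref{eqn-adjunction-unit-for-(-)-otimes-M}. A conceptual alternative, which can serve as a cross-check, is to invoke Propositions~\ref{prps-modbarA-to-noddinfdgA-isomorphism} and \ref{prps-bartensor-and-barhom-are-identified-with-Ainfty-tensor-and-hom} to transport the adjunction from the $\Ainfty$-Tensor-Hom adjunction of \cite[Lemme 4.1.1.4]{Lefevre-SurLesAInftyCategories}; however, the direct verification is preferable because the explicit formulas \eqref{eqn-bartimes-M-adjunction-unit}-\eqref{eqn-bartimes-M-adjunction-counit} themselves are what will be used throughout the remainder of the paper.
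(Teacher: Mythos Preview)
Your proposal is correct and follows essentially the same approach as the paper: the paper's proof also reduces directly to the two triangle identities, writes out the underlying $\CmodB$ composition explicitly using the definition of composition in $\modbar$, and collapses it to the identity via the coalgebra counit axiom $(\tau \otimes \id)\circ\Delta = \id$ together with the ordinary Tensor-Hom triangle identity. The only difference is emphasis --- the paper skips the closedness and naturality checks you mention and goes straight to computing one triangle identity in full detail, leaving the other as ``works out very similarly''.
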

\begin{proof}
To prove the assertion \eqref{item-bartimesA-M-and-HomB-M-adjunction} it
suffices to show that for any $E \in \CmodbarA$ and $F \in \CmodbarB$ 
\begin{align}
\label{item-modbar-Tensor-Hom-adjucntion-F-FRF-F}
E \bartimes_\A M 
\xrightarrow{\mlt \bartimes \id}
\barhom_\B(M,E \bartimes_\A M) \bartimes_\A M 
\xrightarrow{\ev}
E \bartimes_\A M 
\\
\label{item-modbar-Tensor-Hom-adjucntion-R-RFR-R}
\barhom_\B(M,F) 
\xrightarrow{\mlt}
\barhom_\B(M, \barhom_\B(M,F) \bartimes_\A M) 
\xrightarrow{\ev \circ (-)}
\barhom_\B(M,F) 
\end{align}
are identity morphisms. We only demonstrate this for 
\eqref{item-modbar-Tensor-Hom-adjucntion-F-FRF-F}, as 
\eqref{item-modbar-Tensor-Hom-adjucntion-R-RFR-R} works out very
similarly. 

By definition of the composition in $\modbarB$, 
\eqref{item-modbar-Tensor-Hom-adjucntion-F-FRF-F} corresponds
to the $\modB$ map
\begin{small}
\begin{align*}
& \barC \otimes_\C E \otimes_\A \barA \otimes_\A M \otimes_\B \barB 
\xrightarrow{\Delta \otimes \id \otimes \Delta \otimes \id^{\otimes 2}} 
\barC \otimes_\C \barC \otimes_\C E \otimes_\A \barA \otimes_\A \barA \otimes_\A M \otimes_\B \barB  
\xrightarrow{\id \otimes \tau \otimes \mlt \otimes \id^{ \otimes 3}} 
\\
\rightarrow 
&\barC \otimes_\C \homm_\B(M \otimes_\B \barB, E \otimes_\A \barA \otimes_\A M
\otimes_\B \barB) \otimes_\A \barA \otimes_\A M \otimes_\B \barB  
\xrightarrow{\id \otimes \left( (\tau \otimes \id^{\otimes 3}) \circ (-) \right)
\otimes \id^{\otimes 3}}
\\
\rightarrow 
& \barC \otimes_\C \homm_\B(M \otimes_\B \barB, E \otimes_\A \barA \otimes_\A M) 
\otimes_\A \barA \otimes_\A M \otimes_\B \barB
\xrightarrow{\tau \otimes \id \otimes \tau \otimes \id^{\otimes 2}}
\\
\rightarrow 
& \homm_\B(M \otimes_\B \barB, E \otimes_\A \barA \otimes_\A M) \otimes_\A M \otimes_\B \barB
\xrightarrow{\ev}
E \otimes_\A \barA \otimes_\A M. 
\end{align*}
\end{small}
By functoriality of the tensor product the above composition equals
\begin{small}
\begin{align*}
& \barC \otimes_\C E \otimes_\A \barA \otimes_\A M \otimes_\B \barB 
\xrightarrow{\Delta \otimes \id \otimes \Delta \otimes \id^{\otimes 2}} 
\barC \otimes_\C \barC \otimes_\C E \otimes_\A \barA \otimes_\A \barA \otimes_\A M \otimes_\B \barB  
\xrightarrow{\tau \otimes \id^{\otimes 3} \otimes \tau \otimes \id^{\otimes 2}} 
\\
\rightarrow 
& \barC \otimes_\C E \otimes_\A \barA \otimes_\A M \otimes_\B \barB 
\xrightarrow{\tau \otimes \mlt \otimes \id^{ \otimes 2}} 
\homm_\B(M \otimes_\B \barB, E \otimes_\A \barA \otimes_\A M
\otimes_\B \barB) \otimes_\A M \otimes_\B \barB  
\xrightarrow{\left( (\id^{\otimes 3} \otimes \tau) \circ (-) \right) \otimes \id^{\otimes 2}}
\\
\rightarrow 
& \homm_\B(M \otimes_\B \barB, E \otimes_\A \barA \otimes_\A M) \otimes_\A M \otimes_\B \barB
\xrightarrow{\ev}
E \otimes_\A \barA \otimes_\A M. 
\end{align*}
\end{small}
By Prop.~\ref{prps-coalgebra-structure-on-the-bar-complex}  the maps 
$\tau \otimes \id$ and $\id \otimes \tau$ are left inverses to $\Delta$, thus the first two maps compose to $\id$. 
On the other hand, the last two maps compose to $(\id^{\otimes 3}
\otimes \tau) \circ \ev$. By the Tensor-Hom adjunction for 
$M \otimes_\B \barB$ the total composition is therefore 
$\tau \otimes \id^{\otimes 3} \otimes \tau$. 
The corresponding map in $\modbarB$ is $\id$, as desired. 

The assertion \eqref{item-M-bartimesB-and-HomA-M-adjunction} is
settled similarly. 
\end{proof}

It is worth writing out the maps defining the units and the counits of 
these two adjunctions explicitly. The compositions  
\eqref{eqn-bartimes-M-adjunction-unit-underlying-map-in-Mod}, 
\eqref{eqn-bartimes-M-adjunction-counit-underlying-map-in-Mod}, 
\eqref{eqn-M-bartimes-adjunction-unit-underlying-map-in-Mod}, 
and \eqref{eqn-M-bartimes-adjunction-counit-underlying-map-in-Mod}
are the maps 
\begin{align*}
c \otimes e \otimes a & \mapsto \left 
(\tau(c) e \otimes a \otimes - \right) \circ \left( \id \otimes \tau \right)
\\
c \otimes \alpha \otimes a \otimes m \otimes b 
&\mapsto 
\tau(c) \alpha(\tau(a).m \otimes b)
\\
b \otimes e \otimes c
& \mapsto 
\left 
((-1)^{\deg(-)\left(\deg(b) + \deg(e)\right)} (-) \otimes b \otimes e
\tau(c) \right) \circ \left( \tau \otimes \id \right)
\\
a \otimes m \otimes b \otimes \alpha \otimes c
&\mapsto 
(-1)^{\deg(\alpha)\left(\deg(a) + \deg(m) + \deg(b)\right)}\alpha(a
\otimes m.\tau(b))\tau(c).
\end{align*}

To sum up, we have a DG-enhancement framework which to every DG-category
$\A$ associates an enhancement $\modbarA$ of its derived category
$D(\A)$. These enhancements $\modbar$ admit genuinely adjoint 
(in each argument) bifunctors $(-) \bartimes_{\bullet} (-)$ and 
$\barhom_{\bullet}(-,-)$.

%

\subsection{On non-invertibility of the semi-free resolution $\A\bartimes M \rightarrow M$}
\label{section-on-non-invertibility-of-A-bartimes-M-to-M}

Recall the semi-free resolution 
$$ \A \inftimes_\A M
\xrightarrow{\eqref{eqn-natural-transformation-E-inftimesA-A-to-E}}
M
$$
discussed in
\S\ref{section-functorial-semi-free-resolution-for-noddinfhuA}. 
Consider moreover its right adjoint 
with respect to $\A \inftimes_\A (-)$:
$$ M
\longrightarrow 
\infhom_\A(\A,M).
$$

The category isomorphism $\Psi$ of 
Prop.~\ref{prps-modbarA-to-noddinfdgA-isomorphism} 
identifies these with the maps
\begin{align}
\label{eqn-A-bartimes-M-to-M}
\A \bartimes_\A M \longrightarrow M 
\\
\label{eqn-M-to-barhom-A-M}
M \longrightarrow \barhom_\A(\A,M) 
\end{align}
defined by the $\AmodB$ maps
\begin{align*}
\barA \otimes_\A \barA \otimes_\A M \otimes_\B \barB 
\xrightarrow{\tau \otimes \tau \otimes \id \otimes \tau} 
M 
\\
\barA \otimes_\A M \otimes_\B \barB \xrightarrow{\tau \otimes \id \otimes \tau}
M \xrightarrow{\quad\eqref{eqn-DG-M-to-hom-A-M-isomorphism}\quad} \homm_\A(\A,M) \xrightarrow{(-)\circ \tau}
\homm_\A(\barA,M).
\end{align*}
We therefore see that the $\AmodbarB$ maps 
$\eqref{eqn-A-bartimes-M-to-M}$
and 
$\eqref{eqn-M-to-barhom-A-M}$
are the analogues of 
the canonical $\AmodB$ isomorphisms 
$\A \otimes_\A M \xrightarrow{\eqref{eqn-DG-A-otimes-M-to-M-isomorphism}} M$ 
and 
$M \xrightarrow{\eqref{eqn-DG-M-to-hom-A-M-isomorphism}} \homm_\A(\A,M)$. 
Indeed, they induce the same isomorphisms $\A \ldertimes_\A M \simeq M$ and
$M \simeq \rder\homm_\A(\A,M)$ in the derived category $D(\AbimB)$
as \eqref{eqn-DG-A-otimes-M-to-M-isomorphism} and 
\eqref{eqn-DG-M-to-hom-A-M-isomorphism}. 

\begin{center}
\fbox{
\begin{minipage}{0.9\textwidth}
\em The biggest drawback of the categories $\modbar$ is that 
the maps $\eqref{eqn-A-bartimes-M-to-M}$ and $\eqref{eqn-M-to-barhom-A-M}$
are not themselves isomorphisms, 
like \eqref{eqn-DG-A-otimes-M-to-M-isomorphism} and 
\eqref{eqn-DG-M-to-hom-A-M-isomorphism}, but merely homotopy equivalences. 
\rm 
\end{minipage}
}
\end{center}
\vspace{0.1cm}

In this section, we show that this can be controlled. 
The maps $\eqref{eqn-A-bartimes-M-to-M}$ and $\eqref{eqn-M-to-barhom-A-M}$
have natural semi-inverses. These are genuine inverses on one side,
but only homotopy inverses on the other. However, the arising
higher homotopies are induced by endomorphisms of the bar complex
and thus independent of $M$. 

To put this into context, recall 
\cite[\S 3.7]{Drinfeld-DGQuotientsOfDGCategories}, 
\cite{Tabuada-UneStructureDeCategorieDeModelesDeQuillenSurLaCategorieDesDG-Categories}, 
\cite[Appendix A]{AnnoLogvinenko-SphericalDGFunctors} that for any
DG-category $\A$ and any objects $x,y \in \A$ we can (non-canonically) 
complete any homotopy equivalence 
\begin{equation}
\label{eqn-quiver-presentation-of-single-morphism-category}
\begin{tikzcd}
x
\ar[bend left=20]{rr}{\beta}
& &
y
\end{tikzcd}
\end{equation}
to the following system of morphisms and relations between them. 
The dotted arrows denote the morphisms of degree $-1$ and the dashed arrow
the morphism of degree $-2$:
\begin{equation}
\label{eqn-quiver-presentation-of-generating-cofibration-category}
\begin{minipage}[c][1in][c]{1.5in}
$d \theta_x = \alpha \circ \beta - \id_x,$ \\
$d \theta_y = \id_y - \beta \circ \alpha,$ \\
$d \alpha = d \beta = 0,$ \\
$d \phi = - \beta \circ \theta_x - \theta_y \circ \beta.$ 
\end{minipage}
\quad\quad
\begin{tikzcd}[column sep={2cm},row sep={1.5cm}] 
x
\ar[bend left=20]{rr}{\beta}
\ar[bend left=50, dashed]{rr}{\phi}
\ar[out=-150, in=150,loop,distance=6em, dotted]{}{\theta_x}
& &
y
\ar[bend left=20]{ll}{\alpha}
\ar[out=30, in=-30,loop,distance=6em, dotted]{}{\theta_y}
\end{tikzcd}
\end{equation}
In other words, we can find:
\begin{itemize}
\item a homotopy inverse $\alpha$ of $\beta$, 
\item a degree $-1$ homotopy $\theta_x$ from $\alpha \circ \beta$ to $\id_x$ 
\item a degree $-1$ homotopy $\theta_y$ from $\beta \circ \alpha$ to $\id_y$ 
\item a degree $-2$ homotopy $\phi$ from $\beta \circ \theta_x$ to 
$\theta_y \circ \beta$. 
\end{itemize}
The key assertion here is that we can choose 
$\theta_x$ and $\theta_y$ so that $\phi$ exists.  

It turns out that in the case of homotopy equivalences 
$\eqref{eqn-A-bartimes-M-to-M}$ and $\eqref{eqn-M-to-barhom-A-M}$
we can do quite a bit better than
\eqref{eqn-quiver-presentation-of-generating-cofibration-category}. 
Firstly, they admit natural one-sided inverses:

\begin{defn}
Let $M \in \AmodbarB$. Define the maps
\begin{align}
\label{eqn-M-to-A-bartimes-M}
M \longrightarrow \A \bartimes_\A M \\
\label{eqn-barhom-A-M-to-M}
\barhom_\A(\A,M) \longrightarrow M 
\end{align}
in $\AmodbarB$ by the $\AmodB$ maps
\begin{align*}
\barA \otimes_\A M \otimes_\B \barB \xrightarrow{\id \otimes \tau} \barA
\otimes_\A M
\\ 
\barA \otimes_\A \homm_\A(\barA,M) \otimes_\B \barB 
\xrightarrow{\id \otimes \tau}
\barA \otimes_\A \homm_\A(\barA,M)
\xrightarrow{\ev}
M.
\end{align*}
\end{defn}

It can be readily checked that $\eqref{eqn-M-to-A-bartimes-M}$ is
a right inverse to $\eqref{eqn-A-bartimes-M-to-M}$, while
$\eqref{eqn-barhom-A-M-to-M}$ is a left inverse
to $\eqref{eqn-M-to-barhom-A-M}$. We can apply these to give 
a more natural description of Tensor-Hom adjunction counits, 
and to show action maps to be instances of Tensor-Hom adjunction units: 

\begin{lemma}
\label{lemma-explicit-description-of-units-counits-of-Tensor-Hom-adjunction}
Let $\A$, $\B$, and $\C$ be DG-categories and let $M \in \AmodbarB$.  

The composition 
\begin{small}
\begin{align}
\barhom_\B(M,-) \bartimes_\A M 
\xrightarrow{\id \bartimes \eqref{eqn-M-to-barhom-A-M}} 
\barhom_\B(M,-) \bartimes_\A \barhom_\B(\A,M)
\xrightarrow{\composition} 
\barhom_\B(\A,-) 
\xrightarrow{\eqref{eqn-barhom-A-M-to-M}}
\id_{\CmodbarB}
\end{align}
\end{small}
is the counit of the $\left( (-) \bartimes_\A M, \ 
\barhom_\B(M,-)\right)$ adjunction. The counit of 
the $\left( M \otimes_\B (-), \ \barhom_\A(M,-) \right)$ adjunction 
admits an analogous description.  
\end{lemma}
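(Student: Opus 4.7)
The plan is to verify the identity by direct computation, unwinding each morphism in the composition to a map of the underlying DG $\CmodB$-modules and matching the result with the explicit counit formula from Proposition~\ref{prps-bartimes-and-barhom-are-dg-adjoint}. Recall that the counit $\barhom_\B(M,F) \bartimes_\A M \xrightarrow{\ev} F$ corresponds under the identification \eqref{eqn-hom-sets-in-modbar-A} to the map \eqref{eqn-bartimes-M-adjunction-counit-underlying-map-in-Mod} in $\CmodB$; explicitly this sends $c \otimes \alpha \otimes a \otimes m \otimes b$ to $\tau(c) \cdot \alpha\bigl(\tau(a) \cdot m \otimes b\bigr)$.

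Next, I would unfold each factor of the three-step composition on the right side of the claimed identity. The first factor $\id \bartimes \eqref{eqn-M-to-barhom-A-M}$ is computed via Definition~\ref{defn-bar-tensor-bifunctor}: it inserts a double comultiplication $\Delta^2$ on the middle bar factor, applies the identity of $\barhom_\B(M,-)$ (i.e.~postcomposition with $\tau$ by \eqref{eqn-identity-in-modbar-A}) to one of the resulting pieces, and feeds another into the defining formula of \eqref{eqn-M-to-barhom-A-M}. The second factor $\composition$ is the $\modbar$-composition map, whose underlying $\CmodB$-realisation \eqref{eqn-composition-map-in-modbar-underlying-mod-map} first projects the outer bars via $\tau$ and then applies the $\homm_\B$-level composition \eqref{eqn-modbar-composition-map}. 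The third factor $\eqref{eqn-barhom-A-M-to-M}$ likewise projects the outer bars and then evaluates on the bar complex. Composing the three maps and repeatedly using the coalgebra counit axioms $(\tau \otimes \id)\circ\Delta = (\id \otimes \tau)\circ \Delta = \id$ collapses the $\Delta^2$'s introduced at step one against the $\tau$-projections introduced at steps two and three, leaving exactly the single evaluation of $\alpha$ at $\tau(a)\cdot m \otimes b$ scaled by $\tau(c)$, which is the counit formula above.

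The analogous statement for the counit of $\bigl(M \bartimes_\B (-), \barhom_\A(M,-)\bigr)$ follows by the mirror computation using the left-handed evaluation of \eqref{eqn-adjunction-counit-for-M-otimes-(-)}; the only additional bookkeeping is the extra sign twist $(-1)^{\deg(m)\deg(\alpha)}$. The main obstacle throughout is the combinatorial control of the several nested bar complexes (the outer $\barC$ and $\barB$, the central $\barA$, and the bar factors implicit in the $\barhom_\B$-terms) together with the sign-twisting isomorphisms built into the bar construction \cite[\S1.1.1]{Lefevre-SurLesAInftyCategories}. Once a consistent convention for tensor-factor ordering is fixed, the verification becomes a mechanical application of the counit axiom with no further conceptual ingredients beyond the formulas already established.
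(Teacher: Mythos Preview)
Your proposal is correct and takes essentially the same approach as the paper, which simply records ``Direct verification.'' You have usefully spelled out what that verification actually entails: unwinding the three composants to their underlying $\CmodB$-maps and collapsing the inserted $\Delta$'s against the $\tau$'s via the counit axiom to recover the explicit formula \eqref{eqn-bartimes-M-adjunction-counit-underlying-map-in-Mod}.
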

\begin{proof}
Direct verification. 
\end{proof}

\begin{lemma} Let $\A$ and $\B$ be DG-categories and let 
$M \in \AmodbarB$. The compositions 
\begin{align*}
\A \xrightarrow{\mlt} \barhom_\B(M, \A \bartimes_\A M ) 
\xrightarrow{\eqref{eqn-A-bartimes-M-to-M}}
\barhom_\B(M, M) 
\\
\B \xrightarrow{\mlt} \barhom_\A(M,M \bartimes_\B \B)
\xrightarrow{\eqref{eqn-A-bartimes-M-to-M}}
\barhom_\A(M,M) 
\end{align*}
are the maps
\begin{align}
\A \xrightarrow{\action} \barhom_\B(M,M) \\
\B \xrightarrow{\action} \barhom_\A(M,M)
\end{align}
in $\AmodbarA$ and $\BmodbarB$ induced 
by the corresponding action maps. 
\end{lemma}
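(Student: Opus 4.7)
The plan is to unwind the definitions of both sides as $\AmodA$-morphisms out of $\barA \otimes_\A \A \otimes_\A \barA$ and verify they agree.

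First, I describe the target map. By Proposition~\ref{prps-embedding-of-modA-into-modbarA}, the morphism $\A \xrightarrow{\action} \barhom_\B(M,M)$ is the image under the inclusion $\eqref{eqn-embedding-of-modA-into-modbarA}$ of the $\AmodA$-morphism $\A \to \homm_\B(M,M)$ defined by the $\A$-action on $M$. Unwinding that inclusion, the corresponding $\AmodA$-map
\[
\barA \otimes_\A \A \otimes_\A \barA \;\longrightarrow\; \homm_\B(M \otimes_\B \barB,\, M)
\]
sends $a_1 \otimes a \otimes a_2$ to the $\B$-module map
$m \otimes b \mapsto \tau(a_1) \cdot a \cdot \tau(a_2) \cdot m \cdot \tau(b)$.

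Second, I compute the composition on the left-hand side. By naturality of the Tensor-Hom adjunction of Proposition~\ref{prps-bartimes-and-barhom-are-dg-adjoint}, the composition $\barhom_\B(M,\eqref{eqn-A-bartimes-M-to-M}) \circ \mlt$ is precisely the image of the $\AmodbarB$-morphism $\A\bartimes_\A M \xrightarrow{\eqref{eqn-A-bartimes-M-to-M}} M$ under the adjunction isomorphism
\[
\homm_{\AmodbarB}(\A\bartimes_\A M,\, M) \;\simeq\; \homm_{\AmodbarA}(\A,\, \barhom_\B(M,M)).
\]
I would then unwind this adjoint explicitly, using formula \eqref{eqn-bartimes-M-adjunction-unit-underlying-map-in-Mod} for the unit $\mlt$, the composition formula \eqref{eqn-composition-map-in-modbar-underlying-mod-map}, and the defining expression $\tau \otimes \tau \otimes \id \otimes \tau$ for $\eqref{eqn-A-bartimes-M-to-M}$.

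Finally, I would verify that the two $\AmodA$-maps agree. The calculation reduces to the counit identities $(\tau \otimes \id) \circ \Delta = \id = (\id \otimes \tau) \circ \Delta$ for the coalgebra $\barA$, which collapse the iterated comultiplication-counit pattern appearing in the adjoint formula down to the map described in the first step. The second assertion follows symmetrically, applied to the adjunction $\bigl(M\bartimes_\B(-),\, \barhom_\A(M,-)\bigr)$ in place of $\bigl((-)\bartimes_\A M,\, \barhom_\B(M,-)\bigr)$. I do not expect any real obstacle: once the adjoint characterisation is invoked, the verification is routine bookkeeping, and no sign issues arise since all maps involved are closed of degree zero.
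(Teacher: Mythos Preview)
Your approach is correct and is exactly what the paper does: its proof reads simply ``Direct verification.'' Your unwinding via the adjoint characterisation and the coalgebra counit identities is the natural way to carry out that verification.
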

\begin{proof}
Direct verification. 
\end{proof}

In order to define degree $-1$ and $-2$ homotopies as per 
\eqref{eqn-quiver-presentation-of-generating-cofibration-category} 
we need to introduce certain natural endomorphisms of the bar complex. 
In Prop.~\ref{prps-coalgebra-structure-on-the-bar-complex} we have shown that
 $\tau \otimes \id$ and $\id \otimes \tau$ are left inverses to $\Delta$.  Since $\tau$
 is a quasi-isomorphism, the maps $\tau \otimes \id$ and $\id \otimes \tau$ are 
 also quasi-isomorphisms and thus homotopy equivalences. Hence so is $\Delta$, 
 and the following morphism, which composes to zero with $\Delta$, is a boundary:
\begin{align}
\barA \otimes_\A \barA \xrightarrow{ \tau \otimes \id - \id \otimes \tau } 
\barA.
\end{align}
In \S\ref{section-the-bar-complex} we have produced, out of a natural algebra structure on the extended bar complex, a degree $-1$ map 
$$ \mu\colon \barA \otimes_\A \barA \mapsto \barA $$
which lifts this boundary, i.e. $d \mu = \tau \otimes \id - \id \otimes \tau$, and which satisfies $ \tau \circ \mu = 0$.  We next look at the compositions of this lift $\mu$ with the comultiplication $\Delta$.  For this, we need the following definition: 

\begin{defn}
\label{defn-lambda_k-the-insertion-of-k-1s-map}
Let $\A$ be a DG-category and let $k \in \mathbb{Z}_{\geq 0}$. 
Define the \em insertion of $k$ 1s \rm map  
\begin{align}
\label{eqn-lambda_k-the-insertion-of-k-1s-map}
\lambda_{k}\colon \barA \rightarrow \barA 
\end{align}
by the degree $-k$ map from the twisted complex 
\eqref{eqn-bar-complex} to itself which sends any 
$a_1 \otimes \dots \otimes a_n \in \A^{\otimes n}$ to 
\begin{align*}
\sum_{i_1 + i_2 + \dots + i_{k+1} = n} \;
&(-1)^{k i_1 + (k-1) i_2 + \dots + 1 i_{k} + k} \\ 
& a_1 \otimes \dots \otimes a_{i_1} \otimes 1 \otimes 
a_{i_1 + 1} \otimes \dots \otimes a_{i_1 + i_2} \otimes 1 \otimes 
\cdots\cdots\cdots
\otimes 1 \otimes a_{i_1 + \dots + i_k + 1} \otimes \dots \otimes a_n.
\end{align*}
\end{defn}

We have established in \S\ref{section-the-bar-complex}
that $\barA$ has a natural structure of coalgebra.
In particular, its comultiplication map $\Delta$ is coassociative. 
We therefore  write $\Delta^k$ for the unique
map $\barA \rightarrow \barA^{\otimes(k+1)}$ which 
is a composition of $k$ applications of $\Delta$. 

\begin{prps}
\label{prps-properties-of-mu-and-lambda_k}
\begin{enumerate} 
\item 
\label{item-composition-Delta-circ-mu}
The composition $\barA \otimes_\A \barA \xrightarrow{\mu} 
\barA \xrightarrow{\Delta} \barA \otimes_\A \barA$ equals the sum
$$
(\id\otimes\mu)\circ(\Delta\otimes\id) +
(\mu\otimes\id)\circ(\id\otimes\Delta). $$
\item 
\label{item-composition-mu-circ-Delta}
The composition $\barA \xrightarrow{\Delta} \barA \otimes_\A \barA
\xrightarrow{\mu} \barA$ is the map $\lambda_1$. 
\item 
\label{item-composition-mu^k-circ-Delta^k-left}
For any $k \geq 0$ the map $\lambda_k$ equals the composition 
\begin{align}
\barA \xrightarrow{\Delta^k} 
\barA^{\otimes(k + 1)}
\xrightarrow{\id^{\otimes(k-1)} \otimes \mu} 
\barA^{\otimes k}
\xrightarrow{\id^{\otimes(k-2)} \otimes \mu} 
\barA^{\otimes (k-1)}
\rightarrow 
\dots
\rightarrow
\barA \otimes_\A \barA 
\xrightarrow{\mu} 
\barA. 
\end{align}
\item 
\label{item-composition-mu^k-circ-Delta^k-right}
For any $k \geq 0$ the map $(-1)^{\frac{k(k-1)}{2}}\lambda_k$ equals the composition 
\begin{align}
\barA \xrightarrow{\Delta^k} 
\barA^{\otimes(k + 1)}
\xrightarrow{\mu\otimes\id^{\otimes(k-1)}} 
\barA^{\otimes k}
\xrightarrow{\mu\otimes\id^{\otimes(k-2)}} 
\barA^{\otimes (k-1)}
\rightarrow 
\dots
\rightarrow
\barA \otimes_\A \barA 
\xrightarrow{\mu} 
\barA.
\end{align}
\item
\label{item-recursive-formula-for-lambda-k}
For any $k \geq 1$ the map $\lambda_k$ equals the compositions 
$\lambda_k = \mu \circ (\id \otimes \lambda_{k-1}) \circ \Delta=(-1)^{k+1} \mu \circ (\lambda_{k-1} \otimes \id) \circ \Delta$. 
\item 
\label{item-differential-of-the-map-lambda-k}
For any $k \geq 1$ the map $d\lambda_k$ equals 
$\lambda_{k-1}$ if $k$ is even, and $0$ if $k$ is odd. 
\end{enumerate}
\end{prps}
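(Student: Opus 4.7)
The plan is to establish the six properties approximately in the order (1), (2), (5), (3), (4), (6), with the key technical inputs being Lemma~\ref{lemma-d-mu-equals-tau-id-minus-id-tau} (which provides $d\mu = \tau \otimes \id - \id \otimes \tau$ and $\tau \circ \mu = 0$), the coassociativity and counit axioms of the coalgebra $\barA$, and the explicit twisted-complex formulas for $\mu$ and $\Delta$.

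For (1), the assertion is that $\mu$ is a coderivation for $\Delta$. I would verify this directly from the twisted-complex formulas \eqref{eqn-mu-defined-as-a-map-of-twisted-complexes} and \eqref{eqn-formula-for-delta-as-a-map-of-twisted-complexes}: $\Delta$ splits a tensor word, $\mu$ merges two adjacent factors across a given cut, and post-composing $\mu$ with $\Delta$ yields two families of terms, according to whether the new cut introduced by $\Delta$ falls strictly to the left or strictly to the right of the merged position. These reproduce the $(\mu \otimes \id) \circ (\id \otimes \Delta)$ and $(\id \otimes \mu) \circ (\Delta \otimes \id)$ summands respectively, with signs determined by the $[n-2]$ shift on $\barA$. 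Property (2) is a direct calculation: $\Delta(a_1 \otimes \dots \otimes a_n)$ is a sum over $p = 1, \dots, n-1$ of splits with a dangling pair $1 \otimes 1$ at the cut, and $\mu$ merges the two $1$'s into a single $1$ inserted between positions $p$ and $p+1$. Summing over $p$ gives exactly $\lambda_1$.

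From (2), property (5) follows by unwinding the composition $\mu \circ (\id \otimes \lambda_{k-1}) \circ \Delta$: one splits the input, leaves the left half untouched, inserts $k-1$ ones in the interior of the right half, and then performs a final merge at the boundary, thereby inserting a $k$-th $1$. Comparing with Definition~\ref{defn-lambda_k-the-insertion-of-k-1s-map}, this is $\lambda_k$. The second equality in (5) is obtained symmetrically; the sign $(-1)^{k+1}$ arises from commuting $\mu$ (degree $-1$) past $\lambda_{k-1}$ (degree $-(k-1)$) and from the orientation of the insertions. Properties (3) and (4) then follow from (5) by induction on $k$ together with the coassociativity $\Delta^k = (\id \otimes \Delta^{k-1}) \circ \Delta = (\Delta^{k-1} \otimes \id) \circ \Delta$. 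The accumulated sign $(-1)^{k(k-1)/2}$ in (4) is $\sum_{j = 1}^{k-1} j = k(k-1)/2$, collected from one factor of $(-1)^{j+1}$ at each step of the right-handed recursion.

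The main obstacle is (6). Using (5) and the Leibniz rule with $|\mu| = -1$,
\[
d\lambda_k = (d\mu) \circ (\id \otimes \lambda_{k-1}) \circ \Delta - \mu \circ (\id \otimes d\lambda_{k-1}) \circ \Delta.
\]
Substituting $d\mu = \tau \otimes \id - \id \otimes \tau$ and using the counit axiom $(\tau \otimes \id) \circ \Delta = \id$, the first summand simplifies to $\lambda_{k-1} - (\id \otimes (\tau \circ \lambda_{k-1})) \circ \Delta$. For $k \geq 2$, the composition $\tau \circ \lambda_{k-1}$ vanishes because $\lambda_{k-1}$ has image in $\A^{\otimes m}$ with $m \geq 2k - 1 \geq 3$, while $\tau$ is supported on $\A^{\otimes 2}$. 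Hence $d\lambda_k = \lambda_{k-1} - \mu \circ (\id \otimes d\lambda_{k-1}) \circ \Delta$. By induction: when $k - 1$ is odd, $d\lambda_{k-1} = 0$ and $d\lambda_k = \lambda_{k-1}$, which is the even-$k$ case; when $k - 1$ is even, $d\lambda_{k-1} = \lambda_{k-2}$ and the recursion (5) identifies the second term with $\lambda_{k-1}$, yielding $d\lambda_k = 0$, which is the odd-$k$ case. The base case is $d\lambda_1 = d\mu \circ \Delta = (\tau \otimes \id - \id \otimes \tau) \circ \Delta = \id - \id = 0$, consistent with the odd parity $k = 1$. The principal technical difficulty throughout is the careful tracking of Koszul signs from the Leibniz rule and the grading shifts $[n - 2]$ on $\barA$, which I expect to conspire with the explicit signs in Definition~\ref{defn-lambda_k-the-insertion-of-k-1s-map} to make the alternation in (6) come out cleanly.
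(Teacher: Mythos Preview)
Your proposal is correct and follows essentially the same strategy as the paper's proof: direct verification of (1) and (2) on the level of the underlying twisted complexes, and the same induction for (6) using (5), the formula $d\mu = \tau\otimes\id - \id\otimes\tau$, and the counit axiom. The only cosmetic differences are that the paper proves (3) and (4) by direct computation (declared analogous to (2)) and then derives (5) from (3) via coassociativity rather than the reverse, and in (6) the paper kills the unwanted term by expanding $\lambda_n$ via (5) and invoking $\tau\circ\mu = 0$, whereas your observation that $\tau\circ\lambda_{k-1} = 0$ for $k \geq 2$ achieves the same thing slightly more directly.
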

\begin{proof}
As explained in \S\ref{section-the-bar-complex} the bimodules $\barA$ and $\barA \otimes_\A \barA$ 
can be identified with the convolutions of the twisted complexes 
\eqref{eqn-bar-complex}
and
\eqref{eqn-twisted-complex-barA-otimes-barA}. 
The maps $\tau$, $\mu$ and $\delta$ were then defined on the level of these twisted complexes in 
$\pretriag(\AmodA)$. We therefore perform all the computations in this proof with these maps  
in $\pretriag(\AmodA)$. The reason for doing this, as explained in  \S\ref{section-the-bar-complex}, 
is that the signs in the formulas become significantly simpler on the level of twisted complexes. 

\eqref{item-composition-Delta-circ-mu}: \;
For any $\left(a_1\otimes\ldots\otimes a_n\right)
\otimes_\A \left( b_1\otimes\ldots\otimes b_m \right)$ 
in the twisted complex \eqref{eqn-twisted-complex-barA-otimes-barA}
its image under $\Delta \circ \mu$ is:
\begin{align*}
&(-1)^n\sum\limits_{i=1}^{n-1} (a_1\otimes\ldots\otimes a_i\otimes 1)\otimes (1\otimes\ldots\otimes a_nb_1 \otimes\ldots\otimes b_m) +\\
+ &(-1)^n\sum\limits_{i=1}^{m-1} (a_1\otimes\ldots\otimes a_nb_1\otimes\ldots\otimes b_i\otimes 1)\otimes (1\otimes b_{i+1}\otimes\ldots\otimes b_m).
\end{align*}
Its image under $(\id \otimes \mu) \circ (\Delta \otimes \id)$ is 
\begin{align*}
\sum\limits_{i=1}^{n-1} (-1)^{i+1}(-1)^{n-i+1}(a_1\otimes\ldots\otimes a_i\otimes 1)\otimes (1\otimes\ldots\otimes a_nb_1 \otimes\ldots\otimes b_m)  
\end{align*}
where the sign $(-1)^{i+1}$ comes from the definition of a tensor
product of two maps applied to $\id \otimes \mu$, while the 
sign $(-1)^{n-i+1}$ from comes the definition of $\mu$. 
Finally, its image under 
$(\mu \otimes \id) \circ (\id \otimes \Delta)$ is
\begin{align*}
\sum\limits_{i=1}^{m-1} (-1)^n (a_1\otimes\ldots\otimes a_nb_1\otimes\ldots\otimes b_i\otimes 1)\otimes (1\otimes b_{i+1}\otimes\ldots\otimes b_m).
\end{align*}
where the sign $(-1)^n$ comes from the definition of $\mu$. 
The desired result now follows. 

\eqref{item-composition-mu-circ-Delta}: \; For any $a_1 \otimes
\dots \otimes a_n \in \barA$ we have 
\begin{align*}
& \mu \left(\Delta\left(a_1 \otimes \dots \otimes a_n\right)\right) = \\
= \quad &\mu\left(\sum_{i=1}^{n-1} 
\left(a_1 \otimes \dots \otimes a_i \otimes 1\right)
\otimes_\A 
\left(1 \otimes a_{i+1} \otimes \dots \otimes a_n\right)
\right) = \\
=\quad  &\sum_{i=1}^{n-1} (-1)^{i + 1}
a_1 \otimes \dots \otimes a_i \otimes 1
\otimes a_{i+1} \otimes \dots \otimes a_n = \lambda_1(a_1 \otimes
\dots \otimes a_n). 
\end{align*}

\eqref{item-composition-mu^k-circ-Delta^k-left}, \eqref{item-composition-mu^k-circ-Delta^k-right}: \; 
This follows by a direct computation analogous 
to the one for \eqref{item-composition-mu-circ-Delta}. 

\eqref{item-recursive-formula-for-lambda-k}: \;
By the assertion \eqref{item-composition-mu^k-circ-Delta^k-left} we have 
\begin{align*}
\lambda_k  
= & \mu \circ (\id \otimes \mu) \circ \dots \circ
(\id^{\otimes(k-1)} \otimes \mu) \circ \Delta^k  =  \\
= & 
\mu 
\circ 
\left(\id \otimes \mu\right)
\circ 
\dots 
\circ
\left(\id^{\otimes(k-1)} \otimes \mu\right) 
\circ 
\left(\id^{\otimes(k-1)} \otimes \Delta\right)
\circ 
\dots 
\circ 
\left(\id \otimes \Delta\right)
\circ 
\Delta = 
\\
= 
&
\mu 
\circ 
\Big(
\id \otimes 
\Bigg(
\mu
\circ 
\dots 
\circ
\left(\id^{\otimes(k-2)} \otimes \mu\right) 
\circ 
\left(\id^{\otimes(k-2)} \otimes \Delta\right)
\circ 
\dots 
\circ 
\Delta
\Bigg)
\Big)
\circ 
\Delta = \\
= & \mu \circ \Big( \id \otimes \lambda_{k-1} \Big) \circ \Delta. 
\end{align*}
The second part is proven similarly using the assertion \eqref{item-composition-mu^k-circ-Delta^k-right}.

\eqref{item-differential-of-the-map-lambda-k}:\;
By the assertion 
\eqref{item-composition-mu-circ-Delta}
we have 
\begin{align*}
d\lambda_1 = d(\mu \circ \Delta) = (d\mu) \circ \Delta 
= (\tau \otimes \id - \id \otimes \tau ) \circ \Delta = \id - \id =
0. 
\end{align*}
Suppose now we have proved our claim for $k = n > 1$. Then 
by the assertion 
\eqref{item-recursive-formula-for-lambda-k}
we have 
\begin{align*}
d\lambda_{n+1} = &d\Big(\mu \circ (\id \otimes \lambda_n) \circ \Delta\Big) 
= d\mu \circ (\id \otimes \lambda_n) \circ \Delta - 
\mu \circ (\id \otimes d\lambda_n) \circ \Delta = \\
= &\Big(\tau \otimes \id - \id \otimes \tau\Big) 
\circ 
\Big(\id \otimes \left( \mu \circ (\id \otimes \lambda_{n-1}) \circ \Delta
\right)\Big)
\circ \Delta
- \mu \circ \Big(\id \otimes d\lambda_n\Big) \circ \Delta = 
\\ 
=
& \Big( \mu \circ (\id \otimes \lambda_{n-1}) \circ \Delta \Big)
\circ \Big(\tau \otimes \id\Big) \circ \Delta
-
\Big(\id \otimes \left(\tau \circ \mu \circ (\id \otimes
\lambda_{n-1}) \circ \Delta \right)\Big) \circ \Delta
- \mu \circ \Big(\id \otimes d\lambda_n\Big) \circ \Delta. 
\end{align*}
Since $\tau \circ \mu = 0$ by 
the Lemma \ref{lemma-d-mu-equals-tau-id-minus-id-tau}\eqref{item-composition-tau-circ-mu} and since 
$(\tau \otimes \id) \circ \Delta = \id$ the above further equals
\begin{align*}
\mu \circ (\id \otimes \lambda_{n-1}) \circ \Delta 
- \mu \circ (\id \otimes d\lambda_n) \circ \Delta. 
\end{align*}
Since this is zero when $d\lambda_n = \lambda_{n-1}$ and $\lambda_n$
when $d\lambda_n = 0$, the desired assertion for $k = n+1$ follows. 
\end{proof}

Having established these properties of the maps $\mu$ and $\lambda_k$, 
we can now proceed to our main objective: 

\begin{defn}
\label{defn-alpha-beta_k-theta}
Let $\A$ and $\B$ be DG-categories and let $M \in \AmodbarB$. 
Define the maps 
\begin{align}
\label{eqn-beta_k-M-to-A-bartimes-M-map}
M &\xrightarrow{\beta_k} \A \bartimes_\A M 
\\
\label{eqn-theta-degree-minus-1-A-bartimes-M-to-A-bartimes-M-map}
\A \bartimes_\A M & \xrightarrow{\theta} \A \bartimes_\A M 
\\
\label{eqn-alpha-A-bartimes-M-to-A-map}
\A \bartimes_\A M &\xrightarrow{\alpha} M
\end{align}
of degree $-k$, $-1$, and $0$, respectively, in $\AmodbarB$ 
by the corresponding $\AmodB$ maps
\begin{align*}
\barA \otimes_\A \left( M \right) \otimes_\B \barB
\;&\xrightarrow{ \lambda_k \otimes \id \otimes \tau }\;
\barA \otimes_\A M \\
\barA \otimes_\A \left( \barA \otimes_\A M \right) \otimes_\B \barB
\;&\xrightarrow{\mu \otimes \id \otimes \tau}\;
\barA \otimes_\A M
\\
\barA \otimes_\A \left( \barA \otimes_\A M \right) \otimes_\B \barB
\;&\xrightarrow{\tau \otimes \tau \otimes \id \otimes \tau}\;
M.
\end{align*}
\bf NB: \rm  
The map $\alpha$ is the canonical map 
$\eqref{eqn-A-bartimes-M-to-M}$, while $\beta_0$ is its left inverse
$\eqref{eqn-M-to-A-bartimes-M}$. 
\end{defn}

\begin{prps}
\label{prop-alpha-beta_k-theta-relations}
Let $\A$ and $\B$ be DG-categories and let $M \in \AmodbarB$. 
We have:
\begin{enumerate}
\item \label{item-d(theta)} 
$d \theta = \id - \beta_0 \circ \alpha$. 
\item \label{item-alpha-circ-beta_0}
$ 0 = \alpha \circ \beta_0 - \id$. 
\item \label{item-beta_k}
$\beta_k = \theta^k \circ \beta_0$. 
\item \label{item-alpha-circ-beta_k}
For any $k \geq 1$ we have $\alpha \circ \beta_k = 0$. 
\item \label{item-d(beta_k)}
For any $k \geq 1$ the map $d \beta_{k}$ equals $0$ when $k$ is odd,
and $\beta_{k-1}$ when $k$ is even. 
\end{enumerate}
\end{prps}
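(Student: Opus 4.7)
All five assertions reduce to identities of $\AmodB$-morphisms via the representation of an $\AmodbarB$-morphism $X \to Y$ as an $\AmodB$-map $\barA \otimes_\A X \otimes_\B \barB \to Y$. Recall that in this representation the composition of $\phi_1 \colon \barA \otimes_\A X \otimes_\B \barB \to Y$ with $\phi_2 \colon \barA \otimes_\A Y \otimes_\B \barB \to Z$ is given by
\[
\barA \otimes_\A X \otimes_\B \barB \xrightarrow{\Delta \otimes \id \otimes \Delta} \barA^{\otimes 2} \otimes_\A X \otimes_\B \barB^{\otimes 2} \xrightarrow{\id \otimes \phi_1 \otimes \id} \barA \otimes_\A Y \otimes_\B \barB \xrightarrow{\phi_2} Z.
\]
My plan is to unwind each claimed identity into such a calculation and verify it by a couple of applications of the counit identities $(\id \otimes \tau) \circ \Delta = (\tau \otimes \id) \circ \Delta = \id$ and $(\tau \otimes \tau) \circ \Delta = \tau$ for $\barA$ and $\barB$, together with the relations for $\mu$ and $\lambda_k$ recorded in Lemma~\ref{lemma-d-mu-equals-tau-id-minus-id-tau} and Proposition~\ref{prps-properties-of-mu-and-lambda_k}.

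For (\ref{item-alpha-circ-beta_0}), unwinding $\alpha \circ \beta_0$ with $\alpha$ represented by $\tau \otimes \tau \otimes \id \otimes \tau$ and $\beta_0$ by $\id \otimes \id \otimes \tau$, and then applying $(\tau \otimes \tau) \circ \Delta = \tau$ on the $\barA$ and $\barB$ factors, collapses the composite to $\tau \otimes \id \otimes \tau$, which is exactly $\id_M$. For (\ref{item-d(theta)}), the analogous unwinding of $\beta_0 \circ \alpha$ uses $(\id \otimes \tau) \circ \Delta = \id$ on one $\barA$ factor and $(\tau \otimes \tau) \circ \Delta = \tau$ on $\barB$ to produce the representative $\id \otimes \tau \otimes \id \otimes \tau$; since $\id_{\A \bartimes_\A M}$ is represented by $\tau \otimes \id \otimes \id \otimes \tau$, the difference is $(\tau \otimes \id - \id \otimes \tau) \otimes \id \otimes \tau$, which by Lemma~\ref{lemma-d-mu-equals-tau-id-minus-id-tau} equals $d\mu \otimes \id \otimes \tau = d(\mu \otimes \id \otimes \tau) = d\theta$.

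For (\ref{item-beta_k}) I proceed by induction on $k$; the case $k = 0$ is trivial, and unwinding $\theta \circ \beta_{k-1}$ by the same technique yields the representative $\bigl(\mu \circ (\id \otimes \lambda_{k-1}) \circ \Delta\bigr) \otimes \id \otimes \tau = \lambda_k \otimes \id \otimes \tau = \beta_k$, the first equality being the recursive formula in Proposition~\ref{prps-properties-of-mu-and-lambda_k}. For (\ref{item-alpha-circ-beta_k}), the corresponding unwinding produces a term $\tau \circ \lambda_k$; since $\lambda_k$ with $k \geq 1$ sends each summand $\A^{\otimes n}[n - 2]$ of $\barA$ into $\A^{\otimes (n + k)}$ whereas $\tau$ is supported on $\A^{\otimes 2}$, this vanishes. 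Finally (\ref{item-d(beta_k)}) is immediate: $d\beta_k = d(\lambda_k \otimes \id \otimes \tau) = d\lambda_k \otimes \id \otimes \tau$ since $\id$ and $\tau$ are closed, and the formula for $d\lambda_k$ is in Proposition~\ref{prps-properties-of-mu-and-lambda_k}. The only genuine bookkeeping subtlety throughout is keeping track of which copy of $\barA$ in $\barA \otimes_\A \barA \otimes_\A M \otimes_\B \barB$ comes from the bar-hom representation versus the internal structure of $\A \bartimes_\A M = \A \otimes_\A \barA \otimes_\A M$; once that is sorted, every identity follows from one or two applications of counitality together with the cited facts.
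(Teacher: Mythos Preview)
Your proof is correct and follows essentially the same approach as the paper: both reduce each identity to the properties of $\mu$, $\tau$, $\Delta$, and $\lambda_k$ established in Lemma~\ref{lemma-d-mu-equals-tau-id-minus-id-tau} and Proposition~\ref{prps-properties-of-mu-and-lambda_k}. The paper's proof only spells out the computation for part~\eqref{item-d(theta)} and declares the rest to follow similarly; your write-up is more explicit (e.g.\ the inductive use of the recursion $\lambda_k = \mu \circ (\id \otimes \lambda_{k-1}) \circ \Delta$ for~\eqref{item-beta_k}, and the support argument $\tau \circ \lambda_k = 0$ for~\eqref{item-alpha-circ-beta_k}), but there is no genuine methodological difference.
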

\begin{proof}
These follow immediately from the properties of the maps 
$\mu$ and $\lambda_k$ established in 
Lemma \ref{lemma-d-mu-equals-tau-id-minus-id-tau}
and
Prop.~\ref{prps-properties-of-mu-and-lambda_k}. 
For example, by the definition of the map $\theta$ and by  
Lemma \ref{lemma-d-mu-equals-tau-id-minus-id-tau} the map $d\theta$ 
corresponds to 
\begin{align*}
d\left(\mu \otimes \id \otimes \tau\right) = \left(d\mu\right) \otimes \id \otimes \tau =  \tau \otimes \id  \otimes \id \otimes \tau - \id \otimes \tau \otimes \id \otimes \tau.
\end{align*}

The map 
\begin{align*}
	\barA \otimes_\A \left( \barA \otimes_\A M \right) \otimes_\B \barB
\;\xrightarrow{\tau \otimes \id \otimes \id \otimes \tau}\;
\barA \otimes_\A M 
\end{align*}
is the map in $\AmodB$ which corresponds to the 
$\AmodbarB$ identity map
$\A \bartimes_\A M \xrightarrow{\;\id\;} \A \bartimes_\A M$.
On the other hand, 
the map $\id \otimes \tau \otimes \id \otimes \tau$ is readily checked 
to be the map in $\AmodB$ which corresponds to 
$\beta_0 \circ \alpha$ in $\AmodbarB$. 
The assertion \eqref{item-d(theta)} follows.
\end{proof}

\begin{prps}
\label{prps-alpha-beta-theta}
Let $\A$ and $\B$ be DG-categories and let $M \in \AmodbarB$. 
Let $\alpha$, $\beta_0$, and $\theta$ be the maps, respectively, 
introduced in Definition \ref{defn-alpha-beta_k-theta}: 
\begin{equation}
\label{eqn-quiver-presentation-of-alpha-beta-theta}
\begin{tikzcd}[column sep={2cm},row sep={1.5cm}] 
M
\ar[bend left=20]{rr}{\beta_0}
& &
\A \bartimes_\A M
\ar[bend left=20]{ll}{\alpha}
\ar[out=30, in=-30,loop,distance=6em, dotted]{}{\theta}
\end{tikzcd}
\end{equation}

The sub-DG-category of $\AmodbarB$ generated by $\alpha, \beta_0$ and 
$\theta$ is the free DG-category generated by those elements modulo the
following relations:
\begin{enumerate}
\item \label{item-d(alpha)-and-d(beta)-are-zero}
$d\alpha = d\beta_0 = 0$, 
\item $d\theta = \id - \beta_0 \circ \alpha$, 
\item $0 =  \alpha \circ \beta_0 - \id$, 
\item \label{item-alpha-theta-is-zero}
$\alpha \circ \theta = 0$. 
\end{enumerate}
\end{prps}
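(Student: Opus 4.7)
The plan is to construct the DG-functor $\Phi$ from the free DG-category $\mathcal{F}$ generated by $\alpha$, $\beta_0$, $\theta$ modulo the four stated relations onto the sub-DG-category of $\AmodbarB$ they generate, and to show that $\Phi$ is an isomorphism. By Prop.~\ref{prop-alpha-beta_k-theta-relations}, all four relations hold in $\AmodbarB$, so such a $\Phi$ exists and is surjective. The task then reduces to proving that $\Phi$ is injective on each morphism space.

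First, normal forms for morphisms in $\mathcal{F}$ are extracted. The relations $\alpha \circ \beta_0 = \id$ and $\alpha \circ \theta = 0$ allow any composable word in the generators to be reduced: consecutive $\alpha \circ \beta_0$ pairs collapse to the identity, while any word containing the subword $\alpha \circ \theta$ vanishes. A straightforward rewriting argument then shows that the four hom-spaces of $\mathcal{F}$ are spanned as follows: $\homm_\mathcal{F}(M,M)$ by $\{\id_M\}$; $\homm_\mathcal{F}(\A \bartimes_\A M, M)$ by $\{\alpha\}$; $\homm_\mathcal{F}(M, \A \bartimes_\A M)$ by $\{\theta^n \circ \beta_0 \mid n \geq 0\}$; and $\homm_\mathcal{F}(\A \bartimes_\A M, \A \bartimes_\A M)$ by $\{\theta^n \mid n \geq 0\} \cup \{\theta^n \circ \beta_0 \circ \alpha \mid n \geq 0\}$.

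Next, it suffices to verify that the images of these spanning sets under $\Phi$ are linearly independent in $\AmodbarB$; both injectivity of $\Phi$ and the fact that these sets constitute $k$-bases of $\mathcal{F}$ then follow at once. By Prop.~\ref{prop-alpha-beta_k-theta-relations}(3), $\Phi(\theta^n \circ \beta_0) = \beta_n$ is represented in $\AmodB$ by $\lambda_n \otimes \id_M \otimes \tau$. Iterating the composition formula \eqref{eqn-composition-in-modbar-A} shows that $\Phi(\theta^n)$ is represented by a map $f_n \otimes \id_M \otimes \tau$ for a certain degree $-n$ map $f_n\colon \barA \otimes_\A \barA \to \barA$ assembled from $\mu$ and $\Delta$, and that $\Phi(\theta^n \circ \beta_0 \circ \alpha)$ is represented by $\lambda_n \otimes \tau \otimes \id_M \otimes \tau$; the extra $\tau$ arises from the direct description of $\beta_0 \circ \alpha$ furnished by the composition formula together with the counit axiom $(\id \otimes \tau) \circ \Delta = \id$. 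Since $\lambda_n$ and $f_n$ have degree $-n$, morphisms corresponding to distinct values of $n$ lie in different graded components and are trivially linearly independent.

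The main obstacle is the remaining step: proving that within a fixed degree $-n$, the morphisms $\Phi(\theta^n)$ and $\Phi(\theta^n \circ \beta_0 \circ \alpha)$ are linearly independent. This will be handled by evaluating both $\AmodB$-representatives on inputs of the form $a \otimes x_1 \otimes m \otimes b \in \barA \otimes_\A \barA \otimes_\A M \otimes_\B \barB$ with $x_1 \in \A^{\otimes k}[k-2] \subset \barA$ for some $k \neq 2$, so that $\tau(x_1) = 0$. Such an input is annihilated by $\Phi(\theta^n \circ \beta_0 \circ \alpha)$, while $\Phi(\theta^n)$, being obtained by $\mu$-iteration acting on $a \otimes x_1$ with no $\tau$ ever applied to the middle factor, is generically nonzero. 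Linear independence within each degree follows, and combined with the previous steps this yields injectivity of $\Phi$ on every hom-space, completing the proof.
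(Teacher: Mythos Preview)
Your proposal is correct and follows essentially the same approach as the paper: both invoke Prop.~\ref{prop-alpha-beta_k-theta-relations} to show the relations hold, reduce to the same normal-form spanning sets, and use the degree grading to reduce linear independence to the pairs $\theta^n$ versus $\theta^n\circ\beta_0\circ\alpha$ in each fixed degree. The only substantive difference is in that final step: the paper computes the explicit underlying map for $\theta^k$ as $(-1)^{k-1}\mu\circ(\lambda_{k-1}\otimes\id)$ and observes directly that it is independent from $\lambda_k\otimes\tau$, whereas you argue by evaluation on inputs with $\tau(x_1)=0$ --- both arguments are valid and equally short.
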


\bf NB: \rm The relations in Prop.~\ref{prps-alpha-beta-theta}
can be obtained by taking those in 
\eqref{eqn-quiver-presentation-of-generating-cofibration-category}
and demanding further that $\theta_x = 0$, $\alpha \circ \theta_y = 0$, 
and $\phi = - \theta_y^2 \circ \beta$.  
\begin{proof}
It follows from 
Prop.~\ref{prop-alpha-beta_k-theta-relations}
that the relations
\eqref{item-d(alpha)-and-d(beta)-are-zero}-\eqref{item-alpha-theta-is-zero}
do hold. It remains to show that no other relations are necessary. 
This is equivalent to showing that:
\begin{enumerate}
\item \label{item-homs-from-M-to-A-bartimes-M}
$\theta^k \beta_0$ for $k \geq 0$ are linearly independent elements of $\barhom_{\AbimB}(M, \A \bartimes_\A M)$. 
\item \label{item-homs-from-A-bartimes-M-to-A-bartimes-M}
$\id$, $\theta^k$, $\theta^k \circ \beta_0 \circ \alpha$ for all $k \geq 0$ 
are linearly independent elements of 
$\barhom_{\AbimB}(\A \bartimes_\A M, \A \bartimes_\A M)$. 
\end{enumerate}
For \eqref{item-homs-from-M-to-A-bartimes-M} first note that 
each $\theta^k \circ \beta_0$ is of degree $-k$. For degree reasons, it is
enough therefore to show that each is non-zero.  For this, we describe the maps 
$\theta^k \circ \beta_0$ explicitly. By Prop.~\ref{prop-alpha-beta_k-theta-relations} 
we have $\theta^k \circ \beta_0 = \beta_k$. Thus 
it is induced by the map $\barA \xrightarrow{\lambda_k} \barA$
of Definition \ref{defn-lambda_k-the-insertion-of-k-1s-map}
which inserts $k$ $1s$ into an element of $\barA$.

For \eqref{item-homs-from-A-bartimes-M-to-A-bartimes-M}, we similarly
note that for each $k \geq 1$ the maps $\theta^k$ and 
$\theta^k \circ \beta_0 \circ \alpha$ have degree $-k$.  
For degree resons, it is enough to show for each $k \geq 0$ 
that the maps $\theta^k$ and $\theta^k \circ \beta_0 \circ \alpha$
are linearly independent. This is clear since they are induced by the maps
$$ \barA \otimes_\A \barA 
\xrightarrow{(-1)^{k-1} \mu \circ (\lambda_{k-1} \otimes \id)}
\barA, $$
$$ \barA \otimes_\A \barA \xrightarrow{\lambda_k \otimes \tau} \barA, $$
respectively.
\end{proof}

Similarly, we have: 

\begin{defn}
\label{defn-gamma-delta_k-kappa}
Let $\A$ and $\B$ be DG-categories and let $M \in \AmodbarB$. 
Define the maps 
\begin{align}
\label{eqn-delta_k-barhom-A-M-to-M}
\barhom_\A(\A,M) &\xrightarrow{\delta_k} M 
\\
\label{eqn-kappa-degree-minus-1-barhom-A-M-barhom-A-M-map}
\barhom_\A(\A,M) & \xrightarrow{\kappa} \barhom_\A(\A,M) 
\\
\label{eqn-gamma-M-to-barhom-A-M-map}
M &\xrightarrow{\gamma} \barhom_\A(\A,M)
\end{align}
of degree $-k$, $-1$, and $0$, respectively, in $\AmodbarB$ 
by the corresponding $\AmodB$ maps
\begin{align*}
\barA \otimes_\A \left( \homm_\A\left(\barA, M\right) \right) \otimes_\B \barB
\;
\xrightarrow{
\ev \circ \left( \lambda_k \otimes \id \otimes \tau \right) 
} \;
M
\\
\barA \otimes_\A \left( \homm_\A\left(\barA, M\right) \right) \otimes_\B \barB
\;
\xrightarrow{
\composition \circ 
\left(  \left(\left(\mu \circ (-)\right)  \circ \mlt \right) \otimes \id
\otimes \tau \right)
} \;
\homm_\A\left(\barA, M\right)
\\
\barA \otimes_\A \left( M\right) \otimes_\B \barB
\xrightarrow{ 
\left( (-) \circ \tau \right) 
\circ \left( \tau \otimes \eqref{eqn-DG-M-to-hom-A-M-isomorphism} \otimes
\tau \right) }
\homm_\A\left(\barA, M\right) \end{align*}
where $\left(\mu \circ (-)\right)  \circ \mlt$ denotes the composition 
$\barA \xrightarrow{\mlt} \homm_\A(\barA, \barA \otimes_\A \barA) 
\xrightarrow{\mu \circ (-)} \homm_\A(\barA, \barA)$. 
\end{defn}

\bf NB: \rm The map $\gamma$ is the canonical map
$\eqref{eqn-M-to-barhom-A-M}$, while $\delta_0$ is its left inverse
$\eqref{eqn-barhom-A-M-to-M}$. 

\begin{prps}
\label{prop-gamma-delta_k-kappa-relations}
Let $\A$ and $\B$ be DG-categories and let $M \in \AmodbarB$. 
We have:
\begin{enumerate}
\item \label{item-d(kappa)} 
$d \kappa = \gamma \circ \delta_0 - \id$. 
\item \label{item-delta_0-circ-gamma}
$ 0 =  \id - \delta_0 \circ \gamma$. 
\item \label{item-delta_k}
$\delta_k = \delta_0 \circ \kappa^k$. 
\item \label{item-delta_k-circ-gamma}
For any $k \geq 1$ we have $\delta_k \circ \gamma = 0$. 
\item \label{item-d(delta_k)}
For any $k \geq 1$ the map $d \delta_{k}$ equals $0$ when $k$ is odd,
and $\delta_{k-1}$ when $k$ is even. 
\end{enumerate}
\end{prps}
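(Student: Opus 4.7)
The plan is to verify each of the five relations by expanding definitions down to the corresponding underlying $\AmodB$-maps, exactly as in the proof of Prop.~\ref{prop-alpha-beta_k-theta-relations}, and then invoking the identities for $\mu$ and $\lambda_k$ established in Lemma~\ref{lemma-d-mu-equals-tau-id-minus-id-tau} and Prop.~\ref{prps-properties-of-mu-and-lambda_k}. The key observation is that each of $\gamma,\delta_k,\kappa$ is the Tensor-Hom dual of $\beta_0,\beta_k,\theta$: where the latter act by tensoring $\lambda_k$ or $\mu$ into the leftmost $\barA$-slot of $\barA \otimes_\A M \otimes_\B \barB$, the former act by precomposition with those same maps inside $\homm_\A(\barA,-)$. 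Consequently every identity below has the same shape as its counterpart in Prop.~\ref{prop-alpha-beta_k-theta-relations}, only assembled through evaluation rather than tensoring.

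First I would dispatch items (2) and (1). For (2), unwinding $\gamma$ shows that it sends $m$ (modulo the outer $\tau$'s on the bar factors) to the element of $\homm_\A(\barA,M)$ of the form $a\mapsto m.\tau(a)$; then $\delta_0$, which evaluates via $\lambda_0 = \id$, recovers $m$. For (1) I apply the Leibniz rule to $\kappa$. The only non-closed ingredient in $\kappa$ is $\mu$, so $d\kappa$ equals $\kappa$ with $\mu$ replaced by $d\mu = \tau\otimes \id - \id\otimes \tau$ (using Lemma~\ref{lemma-d-mu-equals-tau-id-minus-id-tau}\eqref{item-d-mu-equals-tau-id-minus-id-tau}). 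The $\tau\otimes \id$ component collapses to $-\id$ because the composition $\barA \xrightarrow{\mlt}\homm_\A(\barA, \barA\otimes_\A\barA)\xrightarrow{(\tau\otimes\id)\circ(-)}\homm_\A(\barA,\barA)$ is the identity; the $-\id\otimes \tau$ component, after using $(\id\otimes\tau)\circ\Delta = \id$, is precisely $\gamma\circ\delta_0$ by Lemma~\ref{lemma-explicit-description-of-units-counits-of-Tensor-Hom-adjunction}-type identifications.

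Next, item (3) is by induction on $k$ using the recursion $\lambda_k=\mu\circ(\id\otimes \lambda_{k-1})\circ\Delta$ of Prop.~\ref{prps-properties-of-mu-and-lambda_k}\eqref{item-recursive-formula-for-lambda-k}: one application of $\kappa$ inserts a single $\mu\circ(-)\circ\mlt$ inside the $\homm_\A(\barA,-)$ slot, so $k$ applications accumulate to the $k$-fold composition described in Prop.~\ref{prps-properties-of-mu-and-lambda_k}\eqref{item-composition-mu^k-circ-Delta^k-left}, which equals $\lambda_k$, and applying $\delta_0$ then yields the formula $\ev\circ(\lambda_k\otimes\id\otimes\tau)$ defining $\delta_k$. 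For item (4), observe that $\gamma$ factors through the inclusion $\homm_\A(\A,M)\hookrightarrow\homm_\A(\barA,M)$ coming from precomposition with $\tau$; composing $\delta_k$ with $\gamma$ for $k\geq 1$ therefore forces a factor $\tau\circ\mu$ to appear (via the description of $\kappa^k$ from item (3)), and this vanishes by Lemma~\ref{lemma-d-mu-equals-tau-id-minus-id-tau}\eqref{item-composition-tau-circ-mu}. Finally, item (5) is immediate from Prop.~\ref{prps-properties-of-mu-and-lambda_k}\eqref{item-differential-of-the-map-lambda-k}: since $\delta_k$ is linearly assembled from $\lambda_k$ via evaluation and a closed map, its differential is obtained by replacing $\lambda_k$ with $d\lambda_k$, giving $\delta_{k-1}$ for $k$ even and $0$ for $k$ odd.

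The main source of tedium will be sign bookkeeping, chiefly from the Koszul sign in \eqref{eqn-DG-M-to-hom-A-M-isomorphism} and from the Tensor-Hom adjunction used implicitly in Def.~\ref{defn-gamma-delta_k-kappa}; however these signs match up with those on the $\alpha,\beta,\theta$ side by construction and do not affect the validity of the stated identities. I do not anticipate any conceptual obstacle beyond this, and item (1) is the only computation that requires genuine care, as one must correctly identify both summands of $d\mu$ with $-\id$ and $\gamma\circ\delta_0$ respectively.
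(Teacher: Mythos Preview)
Your proposal is correct and follows essentially the same approach as the paper: the paper's own proof is the single sentence ``Analogous to the proof of Prop.~\ref{prop-alpha-beta_k-theta-relations}'', and you are carrying out precisely that analogy, reducing each identity to the properties of $\mu$ and $\lambda_k$ from Lemma~\ref{lemma-d-mu-equals-tau-id-minus-id-tau} and Prop.~\ref{prps-properties-of-mu-and-lambda_k}. Your cautionary remark about sign bookkeeping in item~(1) is apt---the identification of which summand of $d\mu$ yields $-\id$ versus $\gamma\circ\delta_0$ depends on the conventions for $\mlt$ and $\ev$ on left modules and should be checked carefully---but the strategy is sound and nothing further is needed.
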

\begin{proof}
Analogous to the proof of
Prop.~\ref{prop-alpha-beta_k-theta-relations}. 
\end{proof}

\begin{prps}
\label{prps-gamma-delta-kappa}
Let $\A$ and $\B$ be DG-categories and let $M \in \AmodbarB$. 
Let $\gamma$, $\delta_0$, and $\kappa$ be the maps, respectively, 
introduced in Definition \ref{defn-gamma-delta_k-kappa}: 
\begin{equation}
\label{eqn-quiver-presentation-of-gamma-delta-kappa}
\begin{tikzcd}[column sep={2cm},row sep={1.5cm}] 
\barhom_\A(\A,M) 
\ar[bend left=20]{rr}{\delta_0}
\ar[out=-150, in=150,loop,distance=6em, dotted]{}{\kappa}
& &
M
\ar[bend left=20]{ll}{\gamma}
\end{tikzcd}
\end{equation}

The sub-DG-category of $\AmodbarB$ generated by $\gamma$, $\delta_0$, and 
$\kappa$ is the free DG-category generated by those elements modulo the
following relations:
\begin{enumerate}
\item \label{item-d(gamma)-and-d(delta)-are-zero}
$d\gamma = d\delta_0 = 0$, 
\item $d\kappa = \gamma \circ \delta_0 - \id$, 
\item $0 = \id - \delta_0 \circ \gamma$, 
\item \label{item-kappa-gamma-is-zero}
$\kappa \circ \gamma = 0$. 
\end{enumerate}
\end{prps}
\begin{proof}
Analogous to the proof of Prop.~\ref{prps-alpha-beta-theta}.
\end{proof}

Finally, the results in this section have been related so far to
the left action of $\A$ on $M$. For the right action of $\B$ on $M$ we need 
to define the maps as follows: the maps
\begin{align}
\label{eqn-beta_k-M-to-M-bartimes-B-map}
M &\xrightarrow{\beta_k}  M \bartimes_\B \B 
\\
\label{eqn-theta-degree-minus-1-M-bartimes-B-to--M-bartimes-B-map}
M \bartimes_\B \B & \xrightarrow{\theta} M \bartimes_\B \B
\\
\label{eqn-alpha-M-bartimes-B-to-M-map}
M \bartimes_\B \B &\xrightarrow{\alpha} M
\end{align}
of degree $-k$, $-1$, and $0$, are defined, respectively, in $\AmodbarB$ 
by the corresponding $\AmodB$ maps
\begin{align*}
\barA \otimes_\A \left( M \right) \otimes_\B \barB
\;&\xrightarrow{ (-1)^{\frac{k(k+1)}{2}} \tau \otimes \id \otimes \lambda_k }\;
M \otimes_\B \barB \\
\barA \otimes_\A \left(  M \otimes_\B \barB \right) \otimes_\B \barB
\;&\xrightarrow{-\tau \otimes \id \otimes \mu}\;
M \otimes_\B \barB
\\
\barA \otimes_\A \left(   M \otimes_\B \barB \right) \otimes_\B \barB
\;&\xrightarrow{\tau \otimes \id \otimes \tau \otimes \tau}\;
M,
\end{align*}
and the maps
\begin{align}
\label{eqn-delta_k-barhom-B-M-to-M}
\barhom_\B(\B,M) &\xrightarrow{\delta_k} M 
\\
\label{eqn-kappa-degree-minus-1-barhom-B-M-barhom-B-M-map}
\barhom_\B(\B,M) & \xrightarrow{\kappa} \barhom_\B(\B,M) 
\\
\label{eqn-gamma-M-to-barhom-B-M-map}
M &\xrightarrow{\gamma} \barhom_\B(\B,M)
\end{align}
of degree $-k$, $-1$, and $0$, are defined, respectively, in $\AmodbarB$ 
by the corresponding $\AmodB$ maps
\begin{align*}
\barA \otimes_\A \left( \homm_\B\left(\barB, M\right) \right) \otimes_\B \barB
\;
\xrightarrow{
(-1)^{\frac{k(k+1)}{2}} \ev \circ \left(\tau \otimes \id \otimes \lambda_k \right) 
} \;
M
\\
\barA \otimes_\A \left( \homm_\B\left(\barB, M\right) \right) \otimes_\B \barB
\;
\xrightarrow{
-\composition \circ 
\left(  \tau \otimes \id
\otimes  \left(\left(\mu \circ (-)\right)  \circ \mlt \right) \right)
} \;
\homm_\B\left(\barB, M\right)
\\
\barA \otimes_\A \left( M\right) \otimes_\B \barB
\xrightarrow{ 
\left( (-) \circ \tau \right) 
\circ \left( \tau \otimes \eqref{eqn-DG-M-to-hom-A-M-isomorphism} \otimes
\tau \right) }
\homm_\B\left(\barB, M\right) .
\end{align*}
Then Propositions \ref{prop-alpha-beta_k-theta-relations} and \ref{prps-alpha-beta-theta} hold for these versions of the maps $\alpha$, $\beta_k$, and $\theta$, 
and Propositions \ref{prop-gamma-delta_k-kappa-relations} and \ref{prps-gamma-delta-kappa} hold for these versions of the maps $\gamma$, $\delta_k$, and $\kappa$,
as well as for their left counterparts.

Setting $M$ to be $\B$ in 
$$ \B \bartimes_\B M \xrightarrow{\alpha_M} M $$
$$ M \bartimes_\B \B \xrightarrow{\alpha_M} M $$
we obtain two maps
$ \B \bartimes_\B \B \rightarrow \B$.
They are, in fact, equal since they both 
correspond to the $\BmodB$ map
\begin{align}
\label{eqn-alpha^l_B-and-alpha^r_B}
\barB \otimes_\B \barB \otimes_\B \barB \xrightarrow{\tau^{\otimes 3}} \B.
\end{align}
We generally denote this map by $\alpha_\B$, but occasionally use $\alpha^l_\B$ and $\alpha^r_\B$
to stress it to be the instance of the former or of the latter map, respectively. Similarly for the analogous 
map  $\A \bartimes_\A  \A \rightarrow \A$ which we denote by $\alpha_\A$, or occasionally $\alpha^l_\A$ or 
$\alpha^r_\A$.

On the other hand,  setting $M$ to be $\B$ in 
$$  M \xrightarrow{\beta_M} \B \bartimes_\B M$$
$$ M  \xrightarrow{\beta_M} M \bartimes_\B \B$$
we obtain two maps
$\B \rightarrow \B \bartimes_\B \B$ which are not the same and which we denote by $\beta^l_\B$ and $\beta^r_\B$, respectively. They are, however, homotopic:
\begin{defn}
\label{defn-beta^r_B-minus-beta^l_B-boundary-lift}
Let $\B$ be a DG-category. Define the degree $-1$ map 
\begin{align}
\pi_\B\colon \B \rightarrow \B \bartimes_\B \B  
\end{align}
in $\BmodbarB$ to be the map corresponding to the $\BmodB$ map $\barB \otimes \barB \xrightarrow{\mu} \barB$. 
\end{defn}
\begin{lemma}
Let $\B$ be a DG-category. We have 
$$ d \pi_\B = \beta^r_\B - \beta^l_\B.$$	
\end{lemma}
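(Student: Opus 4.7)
The plan is to translate the identity into one between $\BbimB$-bimodule maps on the bar complex, where it will follow immediately from Lemma~\ref{lemma-d-mu-equals-tau-id-minus-id-tau}.

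Morphisms $\B \to \B \bartimes_\B \B$ in $\BmodbarB$ are, by definition, elements of $\homm_{\BbimB}(\barB \otimes_\B \B \otimes_\B \barB,\, \B \otimes_\B \barB \otimes_\B \B)$. Using the canonical isomorphisms $\B \otimes_\B (-) \simeq (-)$ and $(-) \otimes_\B \B \simeq (-)$ from \eqref{eqn-DG-A-otimes-M-to-M-isomorphism} and \eqref{eqn-DG-M-otimes-B-to-M-isomorphism}, this $\homm$-space can be identified with $\homm_{\BbimB}(\barB \otimes_\B \barB,\, \barB)$. I would first verify, by unwinding the definitions, that under this identification the three morphisms $\pi_\B$, $\beta^l_\B$, and $\beta^r_\B$ correspond, respectively, to $\mu$, $\id \otimes \tau$, and $\tau \otimes \id$. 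For $\pi_\B$ this is tautological from Definition~\ref{defn-beta^r_B-minus-beta^l_B-boundary-lift}. For $\beta^l_\B$ the defining $\BmodB$-map from Definition~\ref{defn-alpha-beta_k-theta} (with $\A = M = \B$ and $k = 0$) is $\lambda_0 \otimes \id \otimes \tau = \id \otimes \id \otimes \tau$; collapsing the middle $\B$ factor yields $\id \otimes \tau$. For $\beta^r_\B$ the sign $(-1)^{k(k+1)/2}$ is $+1$ at $k = 0$, and the same collapse applied to $\tau \otimes \id \otimes \lambda_0 = \tau \otimes \id \otimes \id$ yields $\tau \otimes \id$.

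Having made these identifications, the claim $d\pi_\B = \beta^r_\B - \beta^l_\B$ reduces to the equation $d\mu = \tau \otimes \id - \id \otimes \tau$, which is exactly Lemma~\ref{lemma-d-mu-equals-tau-id-minus-id-tau}\eqref{item-d-mu-equals-tau-id-minus-id-tau}. I do not anticipate any substantive obstacle: the only real work is the careful bookkeeping of the canonical isomorphisms and the observation that at $k = 0$ both the sign $(-1)^{k(k+1)/2}$ and the insertion map $\lambda_0$ are trivial, so the formulas for $\beta^r_\B$ and $\beta^l_\B$ really do give the two one-sided contractions of $\mu$ that appear in the formula for $d\mu$.
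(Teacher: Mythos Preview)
Your proposal is correct and follows essentially the same approach as the paper's proof: both identify $\beta^l_\B$, $\beta^r_\B$, and $\pi_\B$ with the $\BmodB$ maps $\id \otimes \tau$, $\tau \otimes \id$, and $\mu$ on $\barB \otimes_\B \barB$, and then invoke Lemma~\ref{lemma-d-mu-equals-tau-id-minus-id-tau}\eqref{item-d-mu-equals-tau-id-minus-id-tau}. Your version is slightly more explicit in unwinding the definitions (noting $\lambda_0 = \id$ and that the sign $(-1)^{k(k+1)/2}$ is trivial at $k=0$), but the argument is the same.
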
 
\begin{proof}
The maps $\beta^l_\B$ and $\beta^r_\B$ correspond to $\BmodB$ maps $\barB \otimes_\B \barB \rightarrow \barB$ 
given by $\id \otimes \tau$ and $\tau \otimes \id$.  The desired assertion 
now follows from Lemma $\ref{lemma-d-mu-equals-tau-id-minus-id-tau}\eqref{item-d-mu-equals-tau-id-minus-id-tau}$
where the map $\tau \otimes \id - \id \otimes \tau$ was established 
to be the differential of the degree $-1$ map $\mu$. 
\end{proof}

\subsection{Functoriality of $\alpha$ and $\beta$}

We next consider the functorial properties of the maps $\alpha$ and
$\beta$. The latter give rise to genuine natural transformations 
$\id \rightarrow \A \bartimes_\A (-)$ and $\id \rightarrow (-) \bartimes_\B \B$, 
while the former --- only to homotopy ones. We also compute
the commutators of the corresponding squares for the non-closed maps $\theta$: 

\begin{prps}
\label{prps-alpha-and-tensoring-f-with-identity}
Let  $\A$ and $\B$ be DG-categories and let $M \xrightarrow{f} N$ be a morphism in $\AmodbarB$. 
\begin{enumerate}
\item 
\label{item-compositions-of-id-bartimes-f-and-f-bartimes-id-with-alpha-and-beta}
The compositions
\begin{align}
M \xrightarrow{\beta_M} \A \bartimes_\A  M \xrightarrow{\id \bartimes f} \A \bartimes_\A N \xrightarrow{\alpha_N} N \\
M \xrightarrow{\beta_M} M \bartimes_\B \B \xrightarrow{f \bartimes \id} N \bartimes_\B \B \xrightarrow{\alpha_N} N
\end{align}
both equal $M \xrightarrow{f} N$.

\item 
\label{item-the-commutators-of-f-and-alpha}
The squares 
\begin{equation}
\label{eqn-the-commutative-squares-for-f-and-alpha}
\begin{tikzcd}
\A \bartimes_\A M
\ar{r}{\id \bartimes f}
\ar{d}{\alpha_M}
&
\A \bartimes_\A N
\ar{d}{\alpha_N}
\\
M
\ar{r}{f}
&
N
\end{tikzcd}
\quad \text{ and } \quad 
\begin{tikzcd}
M \bartimes_\B \B
\ar{r}{f \bartimes \id}
\ar{d}{\alpha_M}
&
N \bartimes_\B \B
\ar{d}{\alpha_N}
\\
M
\ar{r}{f}
&
N
\end{tikzcd}
\end{equation}
have the commutators
\begin{align}
\label{eqn-the-commutator-of-f-and-alpha-A}
\alpha_N  \circ (\id \bartimes f) - f \circ \alpha_ M =
(-1)^{\deg(f)} \left( d\bigl(\alpha_N \circ (\id \bartimes f) \circ \theta_M\bigr) - 
\alpha_N \circ (\id \bartimes df) \circ \theta_M\right),
\\
\label{eqn-the-commutator-of-f-and-alpha-B}
\alpha_N  \circ (f \bartimes \id) - f \circ \alpha_ M = (-1)^{\deg(f)} \left( d\bigl(\alpha_N \circ (f \bartimes \id) \circ \theta_M\bigr) - 
\alpha_N \circ (df \bartimes \id) \circ \theta_M\right).
\end{align}
\item 
\label{item-f-and-beta-commute}
The following squares commute:
\begin{equation}
\label{eqn-commutative-squares-for-f-and-beta}
\begin{tikzcd}
M
\ar{r}{f}
\ar{d}{\beta_M}
&
N
\ar{d}{\beta_N}
\\
\A \bartimes_\A M
\ar{r}{\id \bartimes f}
&
\A \bartimes_\A N
\end{tikzcd}
\quad \text{ and } \quad
\begin{tikzcd}
M
\ar{r}{f}
\ar{d}{\beta_M}
&
N
\ar{d}{\beta_N}
\\
M \bartimes_\B \B
\ar{r}{f \bartimes \id}
&
N \bartimes_\B \B. 
\end{tikzcd}
\end{equation}

\item 
\label{item-the-commutators-of-f-and-theta}
The squares 
\begin{equation}
\label{eqn-the-commutative-squares-for-f-and-theta}
\begin{tikzcd}
\A \bartimes_\A M
\ar{r}{\id \bartimes f}
\ar{d}{\theta_M}
&
\A \bartimes_\A N
\ar{d}{\theta_N}
\\
\A \bartimes_\A M
\ar{r}{\id \bartimes f}
&
\A \bartimes_\A N
\end{tikzcd}
\quad \text{ and } \quad 
\begin{tikzcd}
M \bartimes_\B \B
\ar{r}{f \bartimes \id}
\ar{d}{\theta_M}
&
N \bartimes_\B \B
\ar{d}{\theta_N}
\\
M \bartimes_\B \B
\ar{r}{f \bartimes \id}
&
N \bartimes_\B \B
\end{tikzcd}
\end{equation}
have the commutators
\begin{align}
\label{eqn-the-commutator-of-f-and-theta-A}
\theta_N  \circ (\id \bartimes f) - (\id \bartimes f) \circ \theta_M =
- \beta_N \circ \alpha_N \circ (\id \bartimes f) \circ \theta_M,
\\
\label{eqn-the-commutator-of-f-and-theta-B}
\theta_N  \circ (f \bartimes \id) - (f \bartimes \id) \circ \theta_M =
- \beta_N \circ \alpha_N \circ (f \bartimes \id) \circ \theta_M.
\end{align}
\end{enumerate}
\end{prps}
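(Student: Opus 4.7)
\textbf{The plan is} to translate each identity in $\AmodbarB$ into an identity of $\AmodB$ maps via the definition
\[
\barhom_{\AbimB}(E,F) = \homm_{\AbimB}(\barA \otimes_\A E \otimes_\B \barB, F)
\]
and the composition formula \eqref{eqn-composition-in-modbar-A}, and then reduce each to coalgebra identities on the bar complexes. Throughout, write $\bar{f} \colon \barA \otimes_\A M \otimes_\B \barB \to N$ for the $\AmodB$ map corresponding to $f$. The main tools are: counitality $(\tau \otimes \id)\Delta = (\id \otimes \tau)\Delta = \id$; coassociativity of $\Delta$; the formulas $d\mu = \tau \otimes \id - \id \otimes \tau$ and $\tau \circ \mu = 0$ from Lemma~\ref{lemma-d-mu-equals-tau-id-minus-id-tau}; and the key identity
\[
\Delta \circ \mu = (\id \otimes \mu) \circ (\Delta \otimes \id) + (\mu \otimes \id) \circ (\id \otimes \Delta)
\]
from Proposition~\ref{prps-properties-of-mu-and-lambda_k}\eqref{item-composition-Delta-circ-mu}.

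Assertions (1) and (3) are direct verifications. After unwinding $\id_\A \bartimes f$ via Definition~\ref{defn-bar-tensor-bifunctor} and applying $\A \otimes_\A X \cong X$, one computes
\[
\overline{\id \bartimes f}(x \otimes y \otimes m \otimes z) = \sum \pm \bigl(\tau(x) \cdot y_{(1)}\bigr) \otimes \bar{f}(y_{(2)} \otimes m \otimes z),
\]
with $\Delta(y) = \sum y_{(1)} \otimes y_{(2)}$; (1) then follows because the remaining $\tau$ from $\alpha_N$ and the $\tau$ from $\beta_M$ collapse the two resulting applications of $\Delta$ by counitality, leaving $\bar{f}$. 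The argument for (3) is analogous with no $\alpha$ present. For (2), apply the Leibniz rule to $\alpha_N \circ (\id \bartimes f) \circ \theta_M$: using $d\alpha_N = 0$, the DG-functoriality $d(\id \bartimes f) = \id \bartimes df$, and $d\theta_M = \id - \beta_0 \circ \alpha_M$ from Proposition~\ref{prop-alpha-beta_k-theta-relations}\eqref{item-d(theta)}, one obtains
\begin{align*}
d\bigl(\alpha_N \circ (\id \bartimes f) \circ \theta_M\bigr)
&= \alpha_N \circ (\id \bartimes df) \circ \theta_M \\
&\quad {}+ (-1)^{\deg(f)}\bigl(\alpha_N \circ (\id \bartimes f) - \alpha_N \circ (\id \bartimes f) \circ \beta_0 \circ \alpha_M\bigr).
\end{align*}
Substituting (1) to replace $\alpha_N \circ (\id \bartimes f) \circ \beta_0$ by $f$ and rearranging yields~\eqref{eqn-the-commutator-of-f-and-alpha-A}; identity~\eqref{eqn-the-commutator-of-f-and-alpha-B} is obtained identically on the $\B$-side.

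The main obstacle is (4), where the commutator must equal $-\beta_N \circ \alpha_N \circ (\id \bartimes f) \circ \theta_M$ \emph{on the nose}, not merely up to a boundary. My plan is to compute both sides explicitly as $\AmodB$ maps. Combining the formula above for $\overline{\id \bartimes f}$ with $\overline{\theta_N} = \mu \otimes \id \otimes \tau$ and applying counitality, one obtains
\[
\overline{\theta_N \circ (\id \bartimes f)}(x \otimes y \otimes m \otimes z) = \sum \pm \mu(x \otimes y_{(1)}) \otimes \bar{f}(y_{(2)} \otimes m \otimes z).
\]
The reverse composition $\overline{(\id \bartimes f) \circ \theta_M}$ requires expanding $\Delta(\mu(x_{(2)} \otimes y))$, and Proposition~\ref{prps-properties-of-mu-and-lambda_k}\eqref{item-composition-Delta-circ-mu} splits this into two summands: the second reproduces $\overline{\theta_N \circ (\id \bartimes f)}$ exactly, while the first, after further applications of counitality and the bimodule property of $\mu$, yields $\sum \pm x_{(1)} \otimes \bar{f}\bigl(\mu(x_{(2)} \otimes y) \otimes m \otimes z\bigr)$. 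A parallel unwinding of $-\overline{\beta_N \circ \alpha_N \circ (\id \bartimes f) \circ \theta_M}$ produces the same expression, proving~\eqref{eqn-the-commutator-of-f-and-theta-A}; the $f \bartimes \id$ case is dual, using the analogous coalgebra identity on $\barB$. The most delicate point is the sign tracking, which must follow the conventions fixed in Definitions~\ref{defn-bar-tensor-bifunctor} and~\ref{defn-alpha-beta_k-theta}.
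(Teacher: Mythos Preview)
Your proposal is correct and follows essentially the same approach as the paper. For parts (1) and (3) you unwind the compositions and reduce to counitality of $\Delta$; for part (2) you combine part (1) with $d\theta_M = \id - \beta_0 \circ \alpha_M$ via the Leibniz rule (the paper does the same manipulation in reverse order, first substituting part (1) and then factoring through $d\theta_M$); and for part (4) you reduce, exactly as the paper does, to the identity $\Delta \circ \mu = (\id\otimes\mu)\circ(\Delta\otimes\id) + (\mu\otimes\id)\circ(\id\otimes\Delta)$ from Proposition~\ref{prps-properties-of-mu-and-lambda_k}\eqref{item-composition-Delta-circ-mu}.
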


\begin{proof}
\bf \eqref{item-compositions-of-id-bartimes-f-and-f-bartimes-id-with-alpha-and-beta}: \rm
We only prove the first equality, the second one is proved similarly. By the definitions of the morphisms involved
the composition
$$  M \xrightarrow{\beta_M} \A \bartimes_\A  M \xrightarrow{\id
\bartimes f} \A \bartimes_\A N \xrightarrow{\alpha_N} N $$
in $\AmodbarB$ corresponds to the composition 
\begin{small}
\begin{align*}
& \barA \otimes M \otimes \barB
\xrightarrow{\Delta^2 \otimes \id \otimes \Delta^2}
\barA \otimes \barA \otimes \left( \barA \otimes M \otimes \barB \right) \otimes \barB \otimes \barB
\xrightarrow{\id^{\otimes 5} \otimes \tau \otimes \id^{\otimes 2}}
\barA \otimes \left( \barA \otimes \barA \otimes M \otimes \barB \right)  \otimes \barB
\xrightarrow{\id^{\otimes 2} \otimes \Delta \otimes \id^{\otimes 3}} \\
\rightarrow &
\barA \otimes \left( \barA \otimes \barA \otimes \barA \otimes M \otimes \barB \right)  \otimes \barB
\xrightarrow{\id \otimes \tau \otimes f \otimes \id}
\barA \otimes \barA \otimes N \otimes \barB
\xrightarrow{\tau^{\otimes 2} \otimes \id \otimes \tau}
N
\end{align*}
\end{small}
in $\AmodB$. The latter
simplifies via the identity $\tau \circ \Delta = \id$ and the functoriality 
of $\otimes$ to
$$ \barA \otimes M \otimes \barB \xrightarrow{f} N $$
as required.

\bf \eqref{item-the-commutators-of-f-and-alpha}: \rm
We only treat the left square in 
\eqref{eqn-the-commutative-squares-for-f-and-theta}, 
the right one is treated similarly. 
By \eqref{item-compositions-of-id-bartimes-f-and-f-bartimes-id-with-alpha-and-beta} we have
\begin{align*}
\alpha_N  \circ (\id \bartimes f) - f \circ \alpha_ M = 
\alpha_N  \circ (\id \bartimes f) - \alpha_N \circ 
(\id \bartimes f) \circ \beta_M \circ  \alpha_ M.
\end{align*}

By Prop.\ref{prop-alpha-beta_k-theta-relations} 
we have $d\theta_M = \id - \beta_M \circ \alpha_M$ and thus 
\begin{align*}
\alpha_N  \circ (\id \bartimes f) - \alpha_N \circ 
(\id \bartimes f) \circ \beta_M \circ  \alpha_ M =
\alpha_N  \circ (\id \bartimes f) \circ (\id - \beta_M \circ \alpha_M) =
\alpha_N  \circ (\id \bartimes f) \circ d\theta_M
\end{align*}
whence the assertion
\eqref{eqn-the-commutator-of-f-and-alpha-A} readily follows. 

\bf
\eqref{item-f-and-beta-commute}: \rm 
We only prove that the right square in 
\eqref{eqn-commutative-squares-for-f-and-beta}
commutes, as the proof for the left square is similar. By the
definition of the morphisms involved the compositions
$$ M \xrightarrow{f} N \xrightarrow{\beta_N}  N \bartimes_\B \B $$ 
$$ M \xrightarrow{\beta_M} M \bartimes \B \xrightarrow{f \bartimes \id}
N \bartimes \B $$
in $\AmodbarB$ correspond to the compositions
$$ \barA \otimes M \otimes \barB 
\xrightarrow{\Delta \otimes \id \otimes \Delta}
\barA \otimes \left(\barA \otimes M \otimes \barB\right) \otimes \barB
\xrightarrow{\id \otimes f \otimes \id}
\barA \otimes N \otimes \barB
\xrightarrow{\tau \otimes \id \otimes \id}
N \otimes \barB $$
$$ \barA \otimes M \otimes \barB 
\xrightarrow{\Delta \otimes \id \otimes \Delta}
\barA \otimes \left(\barA \otimes M \otimes \barB\right) \otimes \barB
\xrightarrow{\tau \otimes \id \otimes \id}
\barA \otimes M \otimes \barB \otimes \barB
\xrightarrow{\id^{\otimes 2} \otimes \Delta \otimes \tau}
\barA \otimes M \otimes \barB \otimes \barB
\xrightarrow{f \otimes \id}
N \otimes \barB$$
in $\AmodB$. Both these $\AmodB$ compositions simplify to 
$$
\barA \otimes M \otimes \barB 
\xrightarrow{\id \otimes \id \otimes \Delta}
\barA \otimes M \otimes \barB \otimes \barB
\xrightarrow{f \otimes \id}
N \otimes \barB, 
$$
as required.

\eqref{item-the-commutators-of-f-and-theta}:\;
We only treat the left square in 
\eqref{eqn-the-commutative-squares-for-f-and-theta}, 
the right one is treated similarly. 
Consider the maps 
$$\A \bartimes M \longrightarrow \A \bartimes N$$
given by the compositions 
$(\id \bartimes f) \circ \theta_M$, 
$\theta_N  \circ (\id \bartimes f)$, 
and $\beta_N \circ \alpha_N \circ (\id \bartimes f) \circ \theta_M$
in $\AmodbarB$. 
After simplification, the corresponding $\AmodB$ maps 
$$ \barA \otimes \barA \otimes M \otimes \barB \longrightarrow \barA \otimes N$$
are given by the compositions
$$ \barA \otimes \barA \otimes M \otimes \barB
\xrightarrow{(\dots) \otimes \id \otimes \tau}
\barA \otimes \barA \otimes M \otimes \barB
\xrightarrow{\id \otimes f}
\barA \otimes N $$
where $(\dots)$ denotes the maps 
$\Delta \circ \mu$, 
$(\mu \otimes \id) \otimes (\id \otimes \Delta)$,
$(\id \otimes \mu) \otimes (\Delta \otimes \id)$, respectively. 
The desired result now follows from 
Prop.~\ref{prps-properties-of-mu-and-lambda_k}\eqref{item-composition-Delta-circ-mu}. 
\end{proof}

\begin{defn}
Let $\A$ and $\B$ be DG-categories and let $f\colon M \rightarrow N$ be a
morphism in $\AmodbarB$. We say that $f$ is a 
\em fiberwise $\modAopp$ morphism \rm if its 
fiber $M_b \rightarrow N_b$ over each $b \in \B$ lies in 
$\Upsilon(\modAopp)$, where $\Upsilon$ is the inclusion 
of Prop.~\ref{prps-embedding-of-modA-into-modbarA}. 
Similarly, $f$ is a \em fiberwise $\modB$ morphism \rm if its fiber
$\leftidx{_a}{M} \rightarrow \leftidx{_a}{N}$ over each $a \in \A$ 
lies in $\Upsilon(\modB)$. 
\end{defn}

\begin{cor}
\label{cor-the-square-commutes-if-f-is-a-fiberwise-nonbar-morphism}
Let $\A$ and $\B$ be DG-categories and let $f\colon M \rightarrow N$ 
be a morphism in $\AmodbarB$. 
If $f$ is a fiberwise $\modAopp$ (resp. fiberwise $\modB$) morphism,  
the left (resp. right) squares 
\eqref{eqn-the-commutative-squares-for-f-and-alpha}
and \eqref{eqn-the-commutative-squares-for-f-and-theta} commute. 
\end{cor}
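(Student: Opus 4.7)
The plan is to reduce both commutativity statements to the single identity
\[
\alpha_N \circ (\id \bartimes f) \circ \theta_M = 0 \qquad (\star)
\]
in $\AmodbarB$, and then to establish $(\star)$ by direct computation. Granting $(\star)$, the $\theta$-square commutes immediately since \eqref{eqn-the-commutator-of-f-and-theta-A} expresses its discrepancy precisely as $-\beta_N \circ \alpha_N \circ (\id \bartimes f) \circ \theta_M$. For the $\alpha$-square, formula \eqref{eqn-the-commutator-of-f-and-alpha-A} reduces the commutativity to $d\bigl(\alpha_N \circ (\id \bartimes f) \circ \theta_M\bigr) - \alpha_N \circ (\id \bartimes df) \circ \theta_M = 0$; the first summand vanishes by $(\star)$, and the second by $(\star)$ applied to $df$, which is again fiberwise $\modAopp$ because the embedding \eqref{eqn-embedding-of-modA-into-modbarA} is a closed map and hence this property is preserved by the differential.

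The key structural observation for $(\star)$ is that $f$ is a fiberwise $\modAopp$ morphism if and only if its corresponding $\AmodB$-map $\phi\colon \barA \otimes_\A M \otimes_\B \barB \to N$ factors as $\phi = \psi \circ (\tau \otimes \id_M \otimes \id_{\barB})$ for some $\AbimB$-bimodule map $\psi\colon M \otimes_\B \barB \to N$. I plan to verify this by applying the description of the embedding \eqref{eqn-embedding-of-modA-into-modbarA} fiberwise over each $b \in \B$; the global $\AbimB$-linearity of $\psi$ is then forced by that of $\phi$ together with the surjectivity of $\tau$.

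To prove $(\star)$ itself I would unwind the triple composition as an explicit $\AmodB$-map $\barA \otimes_\A \barA \otimes_\A M \otimes_\B \barB \to N$ by iterated use of the composition formula \eqref{eqn-composition-in-modbar-A}, together with the underlying module maps of $\alpha_N$ and $\theta_M$ from Def.~\ref{defn-alpha-beta_k-theta} and of $\id \bartimes f$ from Def.~\ref{defn-bar-tensor-bifunctor}. After substituting $\phi = \psi \circ (\tau \otimes \id \otimes \id)$ and applying the counital coalgebra identities, the expression evaluated at $a \otimes b \otimes m \otimes c$ should collapse to a formula containing the subterm $\tau\bigl(\mu(a^{(2)} \otimes b)\bigr)$ where $\Delta(a) = a^{(1)} \otimes a^{(2)}$. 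Invoking $\tau \circ \mu = 0$ from Lemma~\ref{lemma-d-mu-equals-tau-id-minus-id-tau}\eqref{item-composition-tau-circ-mu} then forces this subterm, and hence the whole expression, to vanish, establishing $(\star)$. The fiberwise $\modB$ case for the right squares in \eqref{eqn-the-commutative-squares-for-f-and-alpha} and \eqref{eqn-the-commutative-squares-for-f-and-theta} will be handled by the symmetric argument using the analogous factorisation on the $\barB$-side. The main obstacle throughout is the careful bookkeeping of comultiplications, counits, and sign conventions so that the $\tau \circ \mu$ factor appears in the expected place.
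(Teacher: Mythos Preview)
Your proposal is correct and follows essentially the same approach as the paper: reduce both squares to the vanishing of $\alpha_N \circ (\id \bartimes f) \circ \theta_M$, use that a fiberwise $\modAopp$ morphism has underlying map factoring through $\tau \otimes \id \otimes \id$, and conclude via $\tau \circ \mu = 0$. The only cosmetic difference is that the paper computes the underlying $\AmodB$ map of $\alpha_N \circ (\id \bartimes f) \circ \theta_M$ directly as the composition $f \circ (\mu \otimes \id \otimes \id)$ at the level of maps rather than element-wise with Sweedler notation, which makes the appearance of $\tau \circ \mu$ immediate and avoids the sign bookkeeping you anticipate.
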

\begin{proof}
We only treat the left squares in 
\eqref{eqn-the-commutative-squares-for-f-and-alpha}
and \eqref{eqn-the-commutative-squares-for-f-and-theta}, 
the right squares are treated similarly. 

Writing out and simplifying the $\AmodB$ morphism corresponding to 
$$ \A \bartimes M \xrightarrow{\theta_M} \A \bartimes M \xrightarrow{\id \bartimes f} \A \bartimes N \xrightarrow{\alpha_N}
N $$
in a fashion similar to the one employed in the proof of Prop.~\ref{prps-alpha-and-tensoring-f-with-identity}\eqref{item-the-commutators-of-f-and-alpha} we obtain 
\begin{align}
\label{eqn-commutator-of-id-bartimes-f-and-f-explicit}
\barA \otimes \barA \otimes M \otimes \barB \xrightarrow{\mu \otimes \id \otimes \id}
 \barA \otimes M \otimes \barB \xrightarrow{f} N.
\end{align}

Let now $f$ be a fiberwise $\modA$ morphism. Then 
$\barA \otimes M \otimes \barB \xrightarrow{f} N$
factors through $$\barA \otimes M \otimes \barB \xrightarrow{\tau
\otimes \id \otimes \id} M \otimes \barB,$$ and therefore the composition
\eqref{eqn-commutator-of-id-bartimes-f-and-f-explicit}  
factors through $(\tau \circ \mu) \otimes \id \otimes \id$. 
Since $\tau \circ \mu  = 0$ we conclude that 
$\alpha_N \circ (\id \bartimes f) \circ \theta_M = 0$. Furthermore, 
if $f$ is a fiberwise $\modA$ morphism, then so is $df$, and therefore $\alpha_N \circ (\id \bartimes df) \circ \theta_M = 0$. 
It now follows by Prop.~\ref{prps-alpha-and-tensoring-f-with-identity}
that the left squares in 
\eqref{eqn-the-commutative-squares-for-f-and-alpha}
and \eqref{eqn-the-commutative-squares-for-f-and-theta}
commute. 
\end{proof}

\subsection{Dualisation}
\label{section-dualisation}

In this section we look at the dualising functors for bar categories
of bimodules:

\begin{defn} 
\label{defn-dualising-functors-in-modbar}
Let $\A$ and $\B$ be DG-categories. Define the 
\em dualising functors \rm $\AmodbarB \rightarrow \left(\BmodbarA\right)^\opp$ 
\begin{align*}
(-)^\barA \overset{\text{def}}{=} \barhom_\A(-,\A) \\
(-)^\barB \overset{\text{def}}{=} \barhom_\B(-,\B). 
\end{align*}
\end{defn}

By Tensor-Hom adjunction we have for any $M \in \AmodbarB$ and $N \in \BmodbarA$
\begin{align}
\barhom_{(\BmodbarA)^\opp}(M^{\barA}, N) \simeq 
\barhom_{\AmodbarA}(M \bartimes_\B N, \A) \simeq
\barhom_{\AmodbarB}(M, N^{\barA}) 
\end{align}
It follows that the functor 
$$(-)^\barA\colon \left(\BmodbarA\right)^\opp \rightarrow  \AmodbarB $$
is left adjoint to the functor 
$$(-)^\barA\colon \AmodbarB \rightarrow  \left(\BmodbarA\right)^\opp. $$
The adjunction unit is the natural transformation
\begin{align}
\label{eqn-double-A-dual-natural-transformation}
\id_{\AmodbarB} \longrightarrow (-)^{\barA\barA}
\end{align}
of endofunctors of $\AmodbarB$ defined on every $M \in \AmodbarB$
by the right adjoint of the evaluation map 
$$ M \bartimes_\B \barhom_\A\left(M,\A\right) \xrightarrow{\ev} \A $$
with respect to the functor $(-) \bartimes_\B \barhom_\A\left(M,\A\right)$. 
The adjunction counit is the natural transformation 
$$ (-)^{\barA\barA} \rightarrow \id_{\left(\BmodbarA\right)^\opp} $$
which corresponds to the natural transformation $\id_{\BmodbarA} \rightarrow (-)^{\barA\barA}$ 
defined in the same way as \eqref{eqn-double-A-dual-natural-transformation}. 

We define similarly natural transformations
\begin{align}
\label{eqn-double-B-dual-natural-transformation}
\id_{\bullet} \longrightarrow (-)^{\barB\barB}
\end{align}
which give the unit and the counit of the analogous adjunction of $(-)^\barB$ with itself. 

\begin{lemma}
\label{lemma-double-duals-and-duals-are-equivalences}
Let $\A$ and $\B$ be DG-categories. 
\begin{enumerate}
\item 
\label{item-AB-double-dualisation-isomorphism-on-AB-perfect-bimodules}
The natural transformations 
\eqref{eqn-double-A-dual-natural-transformation}
and
\eqref{eqn-double-B-dual-natural-transformation}
are homotopy equivalences on $\A$- and $\B$-perfect bimodules, respectively. 

\item 
\label{item-AB-dualisation-functors-are-quasi-equivalences-on-AB-perfect-bimodules}
The functors $(-)^\barA$ and $(-)^\barB$ restrict to 
quasi-equivalences
\begin{align}
\label{eqn-Adualisation-equivalence}
\left((\AmodbarB)^{\Aperf}\right)^{\opp} 
\xrightarrow{(-)^{\barA}} (\BmodbarA)^{\Aperf},
\\
\label{eqn-Bdualisation-equivalence}
\left((\AmodbarB)^{\Bperf}\right)^{\opp} 
\xrightarrow{(-)^{\barB}} (\BmodbarA)^{\Bperf},
\end{align}
\end{enumerate}
\end{lemma}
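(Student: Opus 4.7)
The plan is to prove assertion (1) first and then deduce (2) formally. I treat the $\barA$-case throughout; the $\barB$-case is entirely symmetric.

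For (1), the strategy is to reduce the statement to a classical fact about derived categories via the identifications already established in the paper. By Prop.~\ref{prps-bartimes-and-barhom-are-identified-with-derived-versions}, the canonical isomorphism $D(\AbimB) \simeq H^0(\AmodbarB)$ of Cor.~\ref{cor-canonical-category-isomorphism-D(A)-to-H^0-modbar-A} identifies $H^0(\barhom_\A(-,\A))$ with the derived functor $\rder\homm_\A(-,\A)$. The key first step is to verify that this identification sends the natural transformation $\id \to (-)^{\barA\barA}$ of Def.~\ref{defn-dualising-functors-in-modbar} to the classical derived double-dualisation map $M \to \rder\homm_\A(\rder\homm_\A(M,\A),\A)$. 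Both arise as the right adjoint of an evaluation pairing, and the Tensor-Hom adjunction unit given explicitly in Prop.~\ref{prps-bartimes-and-barhom-are-dg-adjoint} enhances its derived counterpart because $\barA$ is a semi-free (hence $h$-projective) resolution of $\A$; unwinding these identifications makes the compatibility transparent.

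Once the identification is in place, (1) reduces to the classical fact that the derived double-dualisation is an isomorphism on $\A$-perfect modules. This is standard for left $\A$-modules (the class of modules for which it holds is thick and contains the representables, hence all perfect modules), and extends to $\AbimB$-bimodules by checking fiberwise at each $b \in \B$. This gives that $M \to M^{\barA\barA}$ is a quasi-isomorphism in $\modbar$ whenever $M$ is $\A$-perfect, and Cor.~\ref{cor-criterion-for-modbar-morphism-to-be-homotopy-equivalence} then upgrades this quasi-isomorphism to a genuine homotopy equivalence in $\modbar$.

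For (2), we observe that both the unit and the counit of the self-adjunction $(-)^\barA \dashv (-)^\barA$ (between $(\AmodbarB)^\opp$ and $\BmodbarA$) are given, by construction, by the natural transformation $\id \to (-)^{\barA\barA}$ applied to one of the two sides. By (1), both restrict to homotopy equivalences on $\A$-perfect bimodules. A DG-adjoint pair whose unit and counit are homotopy equivalences on full subcategories yields mutually inverse quasi-equivalences between those subcategories, giving the quasi-equivalence \eqref{eqn-Adualisation-equivalence}. The main obstacle is the compatibility check in the first step: one must carefully trace through the definitions of $\bartimes_\B$, $\barhom_\A$, and the Tensor-Hom unit \eqref{eqn-bartimes-M-adjunction-unit} to confirm that at the level of $H^0$ they recover the standard derived double-dualisation pairing and not some twisted or shifted variant.
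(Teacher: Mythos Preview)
Your proposal is correct and follows essentially the same route as the paper: reduce to the classical double-dualisation statement for perfect DG-modules by working fiberwise, then invoke Cor.~\ref{cor-criterion-for-modbar-morphism-to-be-homotopy-equivalence} to pass from quasi-isomorphism of the underlying map to homotopy equivalence in $\modbar$; part (2) is deduced from (1) exactly as you describe. The only difference is presentational: where you argue abstractly via the identification of Prop.~\ref{prps-bartimes-and-barhom-are-identified-with-derived-versions} and flag the compatibility of the two double-dual transformations as the thing to check, the paper instead writes out the explicit $\modB$ map underlying ${}_aM \to ({}_aM)^{\barB\barB}$ and recognises it directly as the classical natural transformation $\id \to (-)^{\B\B}$ applied to the $h$-projective perfect module ${}_aM \otimes_\B \barB$, composed with $(-)\circ(\id\otimes\tau)$ --- so the compatibility check you identify as the ``main obstacle'' is precisely what the paper carries out by hand.
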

\begin{proof}
\eqref{item-AB-double-dualisation-isomorphism-on-AB-perfect-bimodules}:
We proceed by reduction to a similar result for ordinary categories 
of DG-bimodules proved in \cite[\S2]{AnnoLogvinenko-SphericalDGFunctors}. 
Let $M \in \modbarB$ and let $a \in \A$. 
Since $(M^\barB)_a = (\aM)^\barB$, it is clear that 
the fiber over $a$ of 
\begin{align}
\label{eqn-double-B-dual-natural-transformation-of-bimodules-applied-to-M}
M \xrightarrow{\eqref{eqn-double-B-dual-natural-transformation}}
M^{\barB\barB} 
\end{align}
is the analogous natural transformation
\eqref{eqn-double-B-dual-natural-transformation}
of endofunctors of $\modbarB$ applied to $\aM$. 

It follows from the description of the adjunction unit 
\eqref{eqn-M-bartimes-adjunction-unit} that the $\modbarB$ map  
$$ \aM \xrightarrow{\eqref{eqn-double-B-dual-natural-transformation}} 
(\aM)^{\barB\barB} $$
is defined by the $\modB$ map 
\begin{align}
\label{eqn-modB-map-underlying-double-dual-nat-transform-in-modbarB}
& \aM \otimes_\B \barB \xrightarrow{\mlt} 
\homm_\B\left(
\left(\aM \otimes_\B \barB\right)^\B, \,
\aM \otimes_\B \barB \otimes_\B \left(\aM \otimes_\B \barB\right)^\B
\right) 
\xrightarrow{\ev \circ (-)} \\
\nonumber
\rightarrow \quad &
\homm_\B\left(
\left(\aM \otimes_\B \barB\right)^\B, \B \right)
\xrightarrow{(-) \circ (\id \otimes \tau)}
\homm_\B\left(
\left(\aM \otimes_\B \barB\right)^\B \otimes_\B \barB, 
\B \right).
\end{align}
If $M$ is $\B$-perfect, $\aM$ is perfect. On the other hand, $\barB$ is 
h-projective and both left and right perfect. 
By \cite[Prop. 2.5 and 2.14]{AnnoLogvinenko-SphericalDGFunctors}
when tensoring an $h$-projective (resp. perfect) bimodule with 
a bimodule which is $h$-projective (resp. perfect) 
on the side not involved in the tensor product the result is $h$-projective
(resp. perfect). It follows 
that $\aM \otimes_\B \barB$ is $h$-projective and perfect, and hence 
so is $\left(\aM \otimes_\B \barB\right)^\B$. 
Thus the last map in the composition above is a quasi-isomorphism. On 
the other hand, the first two maps define a natural transformation 
$\id \rightarrow (-)^{\B\B}$ of endofunctors of $\B$. It is
an isomorphism on representables, and hence a quasi-isomorphism
on all $h$-projective and perfect modules,
cf.~\cite[\S2.2]{AnnoLogvinenko-SphericalDGFunctors}. 
In particular, it is a quasi-isomorphism on $\aM \otimes_\B \barB$.  
Thus \eqref{eqn-modB-map-underlying-double-dual-nat-transform-in-modbarB}
is a quasi-isomorphism. 

We conclude that for a $\B$-perfect $M$ the $\AmodB$ map which defines 
\eqref{eqn-double-B-dual-natural-transformation-of-bimodules-applied-to-M}
is a quasi-isomorphism, since its every fiber over $\A$ is.  
It follows from
Cor.~\ref{cor-criterion-for-modbar-morphism-to-be-homotopy-equivalence}
that \eqref{eqn-double-B-dual-natural-transformation-of-bimodules-applied-to-M} 
itself is a homotopy equivalence, as desired. 

A similar argument shows that \eqref{eqn-double-A-dual-natural-transformation}
is a homotopy equivalence on $\A$-perfect bimodules. 

\eqref{item-AB-dualisation-functors-are-quasi-equivalences-on-AB-perfect-bimodules}:
 This follows from \eqref{item-AB-double-dualisation-isomorphism-on-AB-perfect-bimodules}
 since both the units and the counits of the adjunctions of $(-)^\barA$ and $(-)^\barB$
 with themselves were shown to be homotopy equivalences on the corresponding subcategories. 
\end{proof}

The following is 
the $\modbar$ analogue of the map \eqref{eqn-bringing-a-factor-into-the-hom}:
\begin{defn}
Let $\A$, $\B$, $\C$, and $\D$ be DG-categories. 
Let $M \in \AmodbarB$, $N \in \DmodbarB$, and $L \in \CmodbarA$. 

Define the $\CmodbarD$ map 
\begin{align}
\label{eqn-bringing-a-factor-into-the-barhom} 
L \bartimes_\A \barhom_\B(N,M) \longrightarrow \barhom_\B(N,L \bartimes_\A M) 
\end{align}
as the composition 
\begin{align*}
L \bartimes_\A \barhom_\B(N,M) 
\xrightarrow{\mlt \bartimes \id}
\barhom_\B(M, L \bartimes_\A M)
\bartimes_\A
\barhom_\B(N,M)
\xrightarrow{\composition}
\barhom_\B(N, L \bartimes_\A M).
\end{align*}
\end{defn}

\begin{lemma}
\label{lemma-bringing-a-factor-into-the-barhom-is-a-homotopy-equivalence}
Let $\A$, $\B$, $\C$, and $\D$ be DG-categories and let 
$M \in \AmodB$, $N \in \DmodB$, and $L \in \CmodA$. 
The $\CmodbarD$ map 
$$ 
L \bartimes_\A \barhom_\B(N,M)
\xrightarrow{\eqref{eqn-bringing-a-factor-into-the-barhom}}
\barhom_\B(N,L \bartimes_\A M)
$$ 
is a homotopy equivalence when $N$ is $\B$-perfect or $L$ is 
$\A$-perfect. 
\end{lemma}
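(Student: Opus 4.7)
The plan is to use Cor.~\ref{cor-criterion-for-modbar-morphism-to-be-homotopy-equivalence}, in its evident bimodule version, to reduce the claim to the analogous result for the classical map \eqref{eqn-bringing-a-factor-into-the-hom}. By that criterion, a morphism $X \to Y$ in $\CmodbarD$ is a homotopy equivalence if and only if the corresponding $\CmodD$-morphism $\barC \otimes_\C X \otimes_\D \barD \to Y$ is a quasi-isomorphism. So everything reduces to checking that the $\CmodD$-morphism corresponding to \eqref{eqn-bringing-a-factor-into-the-barhom} is a quasi-isomorphism.

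First, I would unpack the definitions of $\bartimes$, $\barhom$, $\mlt$ and $\composition$ to write this underlying $\CmodD$-morphism explicitly. Its source is $\barC \otimes_\C L \otimes_\A \barA \otimes_\A \homm_\B(N \otimes_\B \barB, M) \otimes_\D \barD$ and its target is $\homm_\B(N \otimes_\B \barB,\, L \otimes_\A \barA \otimes_\A M)$. A direct computation identifies this morphism with the composition of: (i) the canonical $\tau$-contractions $\barC \otimes_\C (-) \xrightarrow{\tau \otimes \id} (-)$ on the left and $(-) \otimes_\D \barD \xrightarrow{\id \otimes \tau} (-)$ on the right; followed by (ii) the classical map \eqref{eqn-bringing-a-factor-into-the-hom} applied with $L' = L \otimes_\A \barA$ in $\CmodA$ and $N' = N \otimes_\B \barB$ in $\DmodB$.

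Second, I would show each factor is a quasi-isomorphism under the hypothesis. The $\tau$-contractions in (i) are quasi-isomorphisms unconditionally, since $\barC \to \C$ and $\barD \to \D$ are quasi-isomorphisms between bimodules that are semi-free (hence $h$-projective) on both sides, so tensoring with any module preserves them. For (ii), the classical result \cite[\S 2.2]{AnnoLogvinenko-SphericalDGFunctors} guarantees that \eqref{eqn-bringing-a-factor-into-the-hom} is a quasi-isomorphism whenever $L'$ is $\A$-perfect or $N'$ is $\B$-perfect. Since $\barA \to \A$ is a quasi-isomorphism of semi-free $\AbimA$-bimodules, the $\CbimA$-bimodule $L \otimes_\A \barA$ is quasi-isomorphic to $L$ in $D(\CbimA)$, and in particular inherits $\A$-perfectness from $L$; the analogous argument transfers $\B$-perfectness from $N$ to $N \otimes_\B \barB$. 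Either hypothesis thus propagates to the one required by the classical statement, completing the proof.

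The main obstacle will be the explicit unpacking in the first step: the map \eqref{eqn-bringing-a-factor-into-the-barhom} is the composition of $\mlt \bartimes \id$ with $\composition$, and each of these carries sign-twists coming from the shifts in the bar-construction together with multiple applications of $\Delta$. Producing a clean identification of the underlying $\CmodD$-morphism with a composition that factors through the \emph{unmodified} classical map \eqref{eqn-bringing-a-factor-into-the-hom} requires careful bookkeeping of these signs and of the interaction between $\tau$, $\Delta$, $\mlt$, and $\composition$ fixed in Definitions~\ref{defn-bar-tensor-bifunctor}--\ref{defn-bar-hom-bifunctor} and Prop.~\ref{prps-bartimes-and-barhom-are-dg-adjoint}.
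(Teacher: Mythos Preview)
Your proposal is correct and follows essentially the same route as the paper: unwind the definitions to identify the underlying $\CmodD$ map with the classical map \eqref{eqn-bringing-a-factor-into-the-hom} applied to $L' = L \otimes_\A \barA$ and $N' = N \otimes_\B \barB$, invoke the classical quasi-isomorphism criterion, and conclude via Cor.~\ref{cor-criterion-for-modbar-morphism-to-be-homotopy-equivalence}. The paper's version is terser --- it absorbs your step (i) (the $\tau$-contractions on $\barC$ and $\barD$) into the phrase ``the $\CmodD$ map defining \eqref{eqn-bringing-a-factor-into-the-barhom} is'' without writing them out, and it passes from the perfectness of $N$, $L$ to that of $N \otimes_\B \barB$, $L \otimes_\A \barA$ in a single sentence --- but the logic is identical.
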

\begin{proof}
It follows from the definitions of 
the adjunction unit \eqref{eqn-bartimes-M-adjunction-unit}
and the composition map \eqref{eqn-composition-map-in-modbar} 
that the $\CmodD$ map defining 
\eqref{eqn-bringing-a-factor-into-the-barhom}
is 
\begin{align*}
& L \otimes_\A \barA \otimes_\A \homm_\B(N \otimes_\B \barB, M) 
\xrightarrow{\mlt \otimes \id}
\homm_\B(M, L \otimes_\A \barA \otimes_\A M)
\otimes_\A \homm_\B(N \otimes_\B \barB, M) 
\xrightarrow{\composition} \\
\rightarrow \quad &
\homm_\B(N \otimes_\B \barB, L \otimes_\A \barA \otimes_\A M). 
\end{align*}
Thus it is an instance of the map
\eqref{eqn-bringing-a-factor-into-the-hom} and 
is therefore a quasi-isomorphism when 
$N \otimes_\B \barB \in \Bprfhpr$ or
$L \otimes_\A \barA \in \Aprfhpr$. 
Hence when $N$ is $\B$-perfect or $L$ is $\A$-perfect
the $\CmodD$ map defining \eqref{eqn-bringing-a-factor-into-the-barhom}
is a quasi-isomorphism, and by
Cor.~\ref{cor-criterion-for-modbar-morphism-to-be-homotopy-equivalence}
it follows that \eqref{eqn-bringing-a-factor-into-the-barhom} is a homotopy
equivalence, as desired.  
\end{proof}

\begin{defn}
\label{defn-the-natural-map-eta}
Let $\A$, $\B$, and $\C$ be DG-categories and let $M \in \AmodbarB$. 

Define the natural transformations
\begin{align}
\label{eqn-bring-in-map-otimes-M^B-to-Hom-M-M}
(-) \bartimes_\B M^\barB \xrightarrow{\eta_\B} \barhom_\B(M,-), 
\\
\label{eqn-bring-in-map-otimes-M^A-to-Hom-M-M}
M^\barA \otimes_\A (-) \xrightarrow{\eta_\A} \barhom_\A(M,-)
\end{align}
of functors $\CmodbarB \rightarrow \CmodbarA$ and 
$\AmodbarC \rightarrow \BmodbarC$, respectively, as
the compositions
$$ (-) \bartimes_\B \barhom_\B(M,\B)
\xrightarrow{\eqref{eqn-M-to-barhom-A-M} \bartimes \id}
\barhom_\B(\B, -) \bartimes_\B \barhom_\B(M,\B)
\xrightarrow{\composition} 
\barhom_\B(M,-), 
$$
$$ \barhom_\A(M,\A) \bartimes_\A (-)
\xrightarrow{\id \bartimes \eqref{eqn-M-to-barhom-A-M}}
\barhom_\A(M,\A) \bartimes_\A \barhom_\A(\A, -)
\xrightarrow{\composition} 
\barhom_\A(M,-). 
$$
\end{defn}

\begin{lemma}
\label{lemma-A-and-B-perfection-iff-the-evaluation-map-is-homotopy-equivalence}
Let $\A$ and $\B$ be DG-categories and let $M \in \AmodbarB$. 
\begin{enumerate}
\item 
\label{item-B-perfect-iff-evaluation-map-is-homotopy-equivalence}
$M$ is $\B$-perfect if and only if the map  
$$ (-) \bartimes_\B M^\barB \xrightarrow{\eta_\B} \barhom_\B(M,-) $$
is a homotopy equivalence of functors 
$\CmodbarB \rightarrow \CmodbarA$ for any DG-category $\C$. 
\item 
\label{item-A-perfect-iff-evaluation-map-is-homotopy-equivalence}
$M$ is $\A$-perfect if and only if the map  
$$ M^\barA \bartimes_\A (-) 
\xrightarrow{\eta_\A}
\barhom_\A(M,-) $$
is a homotopy equivalence of functors 
$\AmodbarC \rightarrow \BmodbarC$ for any DG-category $\C$. 
\end{enumerate}
\end{lemma}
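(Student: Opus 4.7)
Both parts of the lemma are parallel, so the plan is to treat only part~(1); part~(2) will follow by swapping the roles of the left $\A$- and right $\B$-actions together with the corresponding variants of $\alpha$, $\beta_0$, $\gamma$, and $\mlt$. Within part~(1), the forward direction will be reduced to Lemma~\ref{lemma-bringing-a-factor-into-the-barhom-is-a-homotopy-equivalence}, while the reverse direction will test compactness in $D(\B)$ fibrewise over~$\A$.

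For the forward direction, assuming $M$ is $\B$-perfect, I plan to factor $\eta_\B$ at each $L \in \CmodbarB$ as
\[
L \bartimes_\B M^{\barB}
\xrightarrow{\eqref{eqn-bringing-a-factor-into-the-barhom}}
\barhom_\B(M,\, L \bartimes_\B \B)
\xrightarrow{\barhom_\B(M,\, \alpha_L)}
\barhom_\B(M, L).
\]
The first arrow is a homotopy equivalence by Lemma~\ref{lemma-bringing-a-factor-into-the-barhom-is-a-homotopy-equivalence} applied with the ``$N$'' there equal to $M$ and the ``$M$'' there equal to $\B$; the second is one because $\alpha_L$ admits $\beta_0$ as a homotopy inverse by Prop.~\ref{prop-alpha-beta_k-theta-relations}, and $\barhom_\B(M, -)$ preserves homotopy equivalences as a DG-functor. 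To identify this composition with $\eta_\B$, I will unfold the bring-in map and use naturality of $\composition$ to rewrite it as $\composition \circ \bigl((\barhom_\B(\B, \alpha_L) \circ \mlt) \bartimes \id\bigr)$, and then observe that $\barhom_\B(\B, \alpha_L) \circ \mlt = \gamma_L$ as morphisms $L \to \barhom_\B(\B, L)$, both being the right adjoint of $\alpha_L \colon L \bartimes_\B \B \to L$ under the adjunction $\left((-) \bartimes_\B \B,\, \barhom_\B(\B, -)\right)$ of Prop.~\ref{prps-bartimes-and-barhom-are-dg-adjoint}.

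For the reverse direction, specialising to $\C = k$ yields that $L \bartimes_\B M^{\barB} \to \barhom_\B(M, L)$ is a homotopy equivalence for every right $\B$-bar-module $L$. By Cor.~\ref{cor-canonical-category-isomorphism-D(A)-to-H^0-modbar-A} and Prop.~\ref{prps-bartimes-and-barhom-are-identified-with-derived-versions}, this induces a natural isomorphism
\[
L \ldertimes_\B \rder\homm_\B(M, \B) \xrightarrow{\sim} \rder\homm_\B(M, L)
\]
in the derived category of left $\A$-modules. The left-hand side preserves arbitrary coproducts in $L$, so the right-hand side does as well; restricting to each $a \in \A$ then shows that $\rder\homm_\B(\aM, -)$ preserves coproducts in $D(\B)$, whence $\aM$ is compact and therefore perfect in $D(\B)$ (we work over a field, so the two notions coincide) for every $a \in \A$, and $M$ is $\B$-perfect. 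The main obstacle is the identification $\gamma_L = \barhom_\B(\B, \alpha_L) \circ \mlt$ in the forward direction, which although essentially a uniqueness-of-adjoints argument requires care in matching the two side-variants of $\alpha$ and $\gamma$ noted after~\eqref{eqn-alpha^l_B-and-alpha^r_B}.
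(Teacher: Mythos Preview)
Your proposal is correct and follows essentially the same route as the paper: in the forward direction you factor $\eta_\B$ through the bring-in map \eqref{eqn-bringing-a-factor-into-the-barhom} by identifying $\gamma_L$ with $\barhom_\B(\B,\alpha_L)\circ\mlt$ via adjunction, exactly as the paper does, and in the reverse direction you descend to the derived category via Prop.~\ref{prps-bartimes-and-barhom-are-identified-with-derived-versions} to conclude that $\rder\homm_\B(M,-)$ preserves coproducts. The only cosmetic difference is that the paper states the reverse implication in one line (``$\rder\homm_\B(M,-)$ commutes with infinite direct sums, i.e.\ $M$ is $\B$-perfect'') rather than passing explicitly through the fibres $\aM$, but your unpacking of this is of course fine.
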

\begin{proof}
We only give the proof for the assertion 
\eqref{item-B-perfect-iff-evaluation-map-is-homotopy-equivalence}, 
as the proof for 
\eqref{item-A-perfect-iff-evaluation-map-is-homotopy-equivalence}
is identical. 

Assume that $M$ is $\B$-perfect. Since 
$(-) \xrightarrow{\eqref{eqn-M-to-barhom-A-M}} \barhom_\B(\B, -)$
is the right adjoint of \eqref{eqn-A-bartimes-M-to-M}, it equals the composition 
$$ (-) \xrightarrow{\mlt} \barhom_\B\left(\B,(-) \bartimes_\B \B \right) 
\xrightarrow{\eqref{eqn-A-bartimes-M-to-M} \circ(-)} 
\barhom_\B\left(\B,-\right).$$
It follows that the map 
$$ (-) \bartimes_\B M^\barB \xrightarrow{\eta_\B} \barhom_\B(M,-) $$
equals the composition
\begin{scriptsize}
$$ (-) \bartimes_\B \barhom_\B(M,\B)
\xrightarrow{\mlt \bartimes \id}
\barhom_\B(\B, (-) \bartimes_\B \B) \bartimes_\B \barhom_\B(M,\B)
\xrightarrow{\eqref{eqn-A-bartimes-M-to-M} \circ(-)}
\barhom_\B(\B, -) \bartimes_\B \barhom_\B(M,\B)
\xrightarrow{\composition} 
\barhom_\B(M,-)
$$
\end{scriptsize}
and hence, by the definition of the map 
\eqref{eqn-bringing-a-factor-into-the-barhom}, to
$$ (-) \bartimes_\B M^\barB
\xrightarrow{\eqref{eqn-bringing-a-factor-into-the-barhom}} 
\barhom_\B(M,(-) \bartimes_\B \B) 
\xrightarrow{\eqref{eqn-A-bartimes-M-to-M} \circ(-)} 
\barhom_\B(M,-). 
$$
The first map in this composition is a homotopy equivalence by 
Lemma
\ref{lemma-bringing-a-factor-into-the-barhom-is-a-homotopy-equivalence}, 
and the second one is a homotopy equivalence since \eqref{eqn-A-bartimes-M-to-M}
is. We conclude that $\eta_\B$ is also a homotopy equivalence, as desired. 

Conversely, assume that
$$ (-) \bartimes_\B M^\barB \xrightarrow{\eta_\B} \barhom_\B(M,-) $$
is a homotopy equivalence on all of $\modbarB$. By
Prop.~\ref{prps-bartimes-and-barhom-are-identified-with-derived-versions}
it follows that 
$$ (-) \ldertimes_\B \MddB \simeq \rder\homm_\B(M,-). $$
Thus $\rder\homm_\B(M,-)$ commutes with infinite direct sums, 
i.e. $M$ is $\B$-perfect. 
\end{proof}

\begin{lemma}
Let $\A$, $\B$, and $\C$ be DG-categories. 
Let $M \in \AmodbarB$ and $N \in \BmodbarC$. 

If $M$ is $\B$-perfect, there is a $\CmodbarA$ homotopy equivalence
\begin{equation}
\label{eqn-right-dual-of-tensor-product}
N^\barC \bartimes_\B M^\barB \longrightarrow  (M\bartimes_\B N)^\barC.
\end{equation}
If $N$ is $\B$-perfect, there is a $\CmodbarA$ homotopy equivalence 
\begin{equation}
\label{eqn-left-dual-of-tensor-product}
N^\barB \bartimes_\B M^\barA \longrightarrow (M\bartimes_\B N)^\barA.
\end{equation}
\end{lemma}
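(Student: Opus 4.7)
The plan is to exhibit both homotopy equivalences as compositions of a strict Tensor-Hom isomorphism of bimodules with a perfection equivalence $\eta$ from Definition \ref{defn-the-natural-map-eta}. For the first assertion, I would begin by establishing the natural isomorphism in $\CmodbarA$
\[
(M \bartimes_\B N)^\barC \;\cong\; \barhom_\B(M, N^\barC),
\]
which is the bimodule-level incarnation of Tensor-Hom adjunction: unfolding the definitions of $\bartimes_\B$, $\barhom_\B$, and $(-)^\barC$ identifies both sides with the same DG $\homm$-complex $\homm_\C(M \otimes_\B \barB \otimes_\B N \otimes_\C \barC,\C)$, and the $\C$- and $\A$-actions agree on the nose.

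Having this isomorphism, the construction of the map is immediate. Since $M$ is $\B$-perfect, Lemma \ref{lemma-A-and-B-perfection-iff-the-evaluation-map-is-homotopy-equivalence}\eqref{item-B-perfect-iff-evaluation-map-is-homotopy-equivalence} gives a homotopy equivalence
\[
\eta_\B\colon (-) \bartimes_\B M^\barB \;\longrightarrow\; \barhom_\B(M, -)
\]
of functors $\CmodbarB \to \CmodbarA$. Evaluating at $N^\barC \in \CmodbarB$ and post-composing with the adjunction isomorphism yields the sought homotopy equivalence $N^\barC \bartimes_\B M^\barB \to (M\bartimes_\B N)^\barC$.

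For the second assertion I would proceed by the symmetric argument, interchanging left and right. The analogous Tensor-Hom identity now takes the form $(M \bartimes_\B N)^\barA \cong \barhom_{\Bopp}(N, M^\barA)$, since both sides unfold to $\homm_\A(\barA \otimes_\A M \otimes_\B \barB \otimes_\B N,\A)$ with matching $\C$- and $\A$-bimodule structures. The left-sided version of Lemma \ref{lemma-A-and-B-perfection-iff-the-evaluation-map-is-homotopy-equivalence}, whose proof is identical to that of part \eqref{item-B-perfect-iff-evaluation-map-is-homotopy-equivalence}, provides a homotopy equivalence $N^\barB \bartimes_\B (-) \to \barhom_{\Bopp}(N, -)$ whenever $N$ is $\B$-perfect; evaluating at $M^\barA \in \BmodbarA$ and composing with the Tensor-Hom isomorphism gives the desired map.

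The only substantive point to verify in detail is the compatibility of the Tensor-Hom identification of $\homm$-complexes with the specific bimodule structures prescribed by Definitions \ref{defn-bar-tensor-bifunctor} and \ref{defn-bar-hom-bifunctor}. I expect no genuine obstacle here: these structures are all built from the coalgebra maps $\Delta$ and $\tau$ together with ordinary pre-composition/evaluation, and the sign conventions in $\bartimes$ and $\barhom$ were tailored precisely so that such adjunctions hold strictly. Once the compatibility is checked in one variable, the rest of both arguments is a purely formal consequence of the established Tensor-Hom adjunction of Proposition \ref{prps-bartimes-and-barhom-are-dg-adjoint} and the perfection criterion of Lemma \ref{lemma-A-and-B-perfection-iff-the-evaluation-map-is-homotopy-equivalence}.
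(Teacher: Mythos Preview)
Your proposal is correct and follows essentially the same approach as the paper: define the map as the composition of $\eta_\B$ (a homotopy equivalence by Lemma~\ref{lemma-A-and-B-perfection-iff-the-evaluation-map-is-homotopy-equivalence} when $M$ is $\B$-perfect) with the Tensor-Hom adjunction isomorphism $\barhom_\B(M, N^\barC) \cong \barhom_\C(M\bartimes_\B N, \C)$, and treat the second assertion symmetrically. The paper invokes the adjunction step without further comment, while you spell out which $\homm$-complexes are being identified; otherwise the arguments coincide.
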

\begin{proof}
Similarly to the proof of Lemma 2.12 in 
\cite{AnnoLogvinenko-SphericalDGFunctors},
define \eqref{eqn-right-dual-of-tensor-product}
to be the composition 
\begin{equation}
N^\barC\bartimes_\B M^\barB 
\xrightarrow{\eta_\B}
\barhom_\B (M, N^\barC)
\xrightarrow{\text{adjunction}}
\barhom_\C(M\bartimes_\B N,\C). 
\end{equation}
The first composant is a homotopy equivalence by 
Lemma
\ref{lemma-A-and-B-perfection-iff-the-evaluation-map-is-homotopy-equivalence}
and the second composant is the Tensor-Hom adjunction isomorphism. 
Thus \eqref{eqn-right-dual-of-tensor-product} is itself a homotopy
equivalence. 

We define \eqref{eqn-left-dual-of-tensor-product} similarly. 
\end{proof}

\subsection{Convolution functor for $\pretriag(\modbar)$}
\label{section-convolution-functor-for-pretriag-modbar}

Let $\A$ be a DG-category. It is well known that $\modA$ is a strongly
pretriangulated category, cf.
\cite{BondalKapranov-EnhancedTriangulatedCategories},
\cite[\S3.2]{AnnoLogvinenko-SphericalDGFunctors}. That is, the natural 
inclusion 
$$ \modA \hookrightarrow \pretriag \modA $$ 
is an equivalence of categories. 
Its quasi-inverse 
$$\pretriag \modA \xrightarrow{T} \modA$$
is the \em convolution functor\rm. Similarly, $\noddinfA$ is 
strongly pretriangulated and admits a convolution functor.
This is because the DG category of all DG $\infbar \A$-comodules is strongly pre-triangulated, 
and $\noddinfA$ is equivalent to its full subcategory consisting of the DG $\infbar \A$-comodules
which are free as graded comodules. On the level of graded DG-comodules the convolution functor 
is a direct sum with shifts. Thus if every object in the twisted complex is free as a graded comodule, so 
is its convolution. We thus obtain the convolution functor for $\noddinfA$, as required. 

The category $\modbarA$ is not strongly pretriangulated. It 
is however \em pretriangulated\rm. In other words, 
$$ \modbarA \hookrightarrow \pretriag \modbarA $$ 
is only a quasi-equivalence. This is readily seen via the
isomorphism $\modbarA \simeq \noddinfdgA$ of 
Prop.~\ref{prps-modbarA-to-noddinfdgA-isomorphism}. While $\noddinfA$
is strongly pretriangulated, its full subcategory $\noddinfdgA$ is not. 
This is because when we restrict the convolution functor 
$$ \pretriag \noddinfA \xrightarrow{T_\infty} \noddinfA $$
to $\pretriag \noddinfdgA$ its image doesn't restrict to
$\noddinfdgA$. Indeed, let $\left( E_i, \alpha_{ij}\right)$ 
be a twisted complex over $\noddinfdgA$. That is, $E_i$ are DG 
$\A$-modules and $\alpha_{ij}$ are $\Ainfty$-morphisms between them. 
Then, $T_\infty(E_i, \alpha_{ij})$ is the $\Ainfty$-module 
whose underlying DG-module is $T(E_i, \alpha_{ij})$
and whose higher $\Ainfty$-module structure is defined 
by the higher operations of the differentials $\alpha_{ij}$. This structure 
will not in general be trivial unless the higher operations are all zero, i.e. 
unless $\alpha_{ij}$ are regular DG morphisms. 

The fact that $T_\infty(\pretriag \noddinfdgA)$ doesn't land 
in $\noddinfdgA$ can be readily fixed by applying the semi-free
resolution $(-) \inftimes_\A \A$ of 
\S\ref{section-functorial-semi-free-resolution-for-noddinfhuA}. 
We then obtain a functor 
\begin{align}
\label{eqn-modbar-convolution-in-Ainfty-language}
\pretriag \noddinfdgA \xrightarrow{T_\infty} \noddinfhuA
\xrightarrow{(-) \inftimes_\A \A} \noddinfdgA
\end{align}
which is a quasi-equivalence because both of its composants are. 
It is also, by construction, a quasi-inverse of 
the natural inclusion 
$\noddinfdgA \rightarrow \pretriag \noddinfdgA $
and thus an analogue of a convolution functor. 
In this section, we translate these considerations to 
the case of $\modbarA$. 

Throughout this section, we adopt the following notation. 
Given a morphism $\alpha\colon E \rightarrow F$ in 
$\modbarA$ we denote by $\alphahat$ the underlying  
$\modA$ morphism $E \otimes_{\A} \barA \rightarrow  F$. 

Recall the natural inclusion 
$$ \modA \xrightarrow{\eqref{eqn-embedding-of-modA-into-modbarA}} \modbarA $$
of Prop.~\ref{prps-embedding-of-modA-into-modbarA}. 
It gets identified under the isomorphism $\modbarA \simeq \noddinfdgA$ of 
Prop.~\ref{prps-modbarA-to-noddinfdgA-isomorphism} with the inclusion 
$$ \modA =
\left(\noddinfA\right)_{\text{dg}}^{\text{strict}}
\hookrightarrow \noddinfdgA. $$ 
On the other hand, by 
Cor.~\ref{cor-inftimes-factors-through-strictlyunital/DG-modules}
the semi-free resolution 
$$
\noddinfhuA
\xrightarrow{ (-)\inftimes_\A \A }
\noddinfdgA
$$
factors through 
$\modA = \left(\noddinfA\right)_{\text{dg}}^{\text{strict}}$. 
As $(-)\inftimes_\A \A$ is identified with $(-) \bartimes_\A \A$ 
by the isomorphism $\modbarA \simeq \noddinfdgA$, it follows
that 
$$ \modbarA \xrightarrow{ (-) \bartimes_\A \A } \modbarA $$
factors as
\begin{align}
\label{eqn-factorisation-of-bartimes-A}
\modbarA 
\rightarrow
\modA
\xrightarrow{\eqref{eqn-embedding-of-modA-into-modbarA}} 
\modbarA. 
\end{align}

\begin{defn}
Define the functor 
\begin{align}
\modbarA \xrightarrow{\widetilde{(-)}} \modA
\end{align}
to be the first half of the factorisation of $(-) \bartimes_\A \A$ given in 
\eqref{eqn-factorisation-of-bartimes-A}. 
\end{defn}

\begin{lemma}
Let $E \in \modbarA$, then 
\begin{align}
\widetilde{E} = E \otimes_\A \barA.
\end{align}
Let $\alpha \in \homm_\modbarA(E,F)$, then 
\begin{align}
\label{eqn-the-underlying-morphism-of-the-tilde-functor}
\widetilde{\alpha}= 
\left(\alphahat \otimes \id \right) \circ \left(\id\otimes\Delta\right).
\end{align}
\end{lemma}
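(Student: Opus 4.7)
My plan is to verify both assertions by a direct unwinding of the definition of the functor $\widetilde{(-)}$ as the first half of the factorisation of $(-)\bartimes_\A \A$ through the inclusion $\modA \hookrightarrow \modbarA$ of Prop.~\ref{prps-embedding-of-modA-into-modbarA}.

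For the object-level statement, I would begin by observing that by Definition~\ref{defn-bar-tensor-bifunctor} we have
$E \bartimes_\A \A = E \otimes_\A \barA \otimes_\A \A,$
and the canonical right action isomorphism \eqref{eqn-DG-A-otimes-M-to-M-isomorphism} identifies this, as a right $\A$-module, with $E \otimes_\A \barA$. Since this is an honest isomorphism in $\modA$, it remains one after passing through the inclusion \eqref{eqn-embedding-of-modA-into-modbarA}, and hence it is natural to identify $\widetilde{E}$ with the DG $\A$-module $E\otimes_\A\barA$.

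For the morphism-level statement, the task is to compute the unique $\modA$-morphism $\widetilde\alpha$ whose image under \eqref{eqn-embedding-of-modA-into-modbarA}, after the canonical identifications above, agrees with $\alpha \bartimes \id_\A$ in $\modbarA$. I would first record that the element $\id_\A \in \barhom_{\AbimA}(\A,\A)$ is represented by the $\AmodA$ map $\widehat{\id_\A}\colon \barA \otimes_\A \A \otimes_\A \barA \to \A$ given by $\bar a \otimes a \otimes \bar a' \mapsto \tau(\bar a)\cdot a\cdot\tau(\bar a')$. Then, applying the formula from Definition~\ref{defn-bar-tensor-bifunctor} (where the sign is trivial because $\deg(\id_\A)=0$), the $\modA$-map underlying $\alpha\bartimes\id_\A$ is the composition
$E \otimes_\A \barA \otimes_\A \A \otimes_\A \barA
\xrightarrow{\id \otimes \Delta^2 \otimes \id}
E \otimes_\A \barA^{\otimes 3} \otimes_\A \A \otimes_\A \barA
\xrightarrow{\alphahat \otimes \id \otimes \widehat{\id_\A}}
F \otimes_\A \barA \otimes_\A \A.$
After using the canonical iso $\otimes_\A \A \simeq \id$ on source and target, this sends $e \otimes \bar a \otimes \bar a'$ to
$\sum_{(\bar a)} \alphahat(e\otimes \bar a^{(1)}) \otimes \bar a^{(2)}\cdot \tau(\bar a^{(3)})\cdot \tau(\bar a'),$
where Sweedler notation refers to $\Delta^2(\bar a)$.

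The key simplification is then coassociativity together with the counit property $(\id\otimes\tau)\circ\Delta=\id$, which collapse $\sum \bar a^{(1)} \otimes \bar a^{(2)}\tau(\bar a^{(3)})$ to $\Delta(\bar a)$. The above formula thus reduces to
$e \otimes \bar a \otimes \bar a' \mapsto \sum_{(\bar a)} \alphahat(e \otimes \bar a_{(1)}) \otimes \bar a_{(2)}\cdot \tau(\bar a'),$
which is precisely the map $((\alphahat \otimes \id)\circ(\id\otimes\Delta))\otimes \tau$, i.e.\ the image under \eqref{eqn-embedding-of-modA-into-modbarA} of the $\modA$-morphism $(\alphahat\otimes\id)\circ(\id\otimes\Delta)\colon E\otimes_\A\barA \to F\otimes_\A\barA$. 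This identifies $\widetilde\alpha$ as claimed. The main obstacle is purely bookkeeping: keeping track of how the $\Delta^2$ in Defn.~\ref{defn-bar-tensor-bifunctor} interacts with the two factors of $\tau$ coming from $\widehat{\id_\A}$ and from the inclusion \eqref{eqn-embedding-of-modA-into-modbarA}, and verifying that both sets of signs in Defn.~\ref{defn-bar-tensor-bifunctor} vanish because $\id_\A$ is of degree zero.
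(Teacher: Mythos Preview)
Your proposal is correct and follows essentially the same approach as the paper's proof: both unfold Definition~\ref{defn-bar-tensor-bifunctor} for $\alpha\bartimes\id_\A$, use $\widehat{\id_\A}=\tau\otimes\id\otimes\tau$, and simplify via the counit identity $(\id\otimes\tau)\circ\Delta=\id$ to recognise the result as the image under \eqref{eqn-embedding-of-modA-into-modbarA} of $(\alphahat\otimes\id)\circ(\id\otimes\Delta)$. The only cosmetic difference is that the paper writes the simplified map as $\id^{\otimes 2}\otimes\tau$ followed by $\id\otimes\Delta$ followed by $\alphahat\otimes\id$ (using that $\Delta$ is a right $\A$-module map to move $\tau(\bar a')$ past $\Delta$), whereas you keep the $\tau(\bar a')$ on the right throughout in Sweedler notation; these are the same map.
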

\begin{proof}
This follows directly from the definition of the $\bartimes$ bifunctor given in Definition \ref{defn-bar-tensor-bifunctor}. Note that it needs to be adjusted in an obvious way for having right modules, and not bimodules, in the left argument. 
 
The first assertion is immediate. For the second one, applying the definition to the morphisms $\alpha\colon E \rightarrow F$ and $\id_\A \colon \A \rightarrow \A$ we see that the underlying $\modA$ morphism of 
${\alpha \bartimes \id_\A}$ is the composition 
$$ E \otimes_\A \barA \otimes_\A \A \otimes_\A \barA
\xrightarrow{\id \otimes \Delta^{2} \otimes \id^{\otimes 2}}
E \otimes_\A \barA \otimes_\A \barA \otimes_\A \barA 
\otimes_\A \A \otimes_\A \barA 
\xrightarrow {\hat{\alpha} \otimes \id \otimes \hat{\id}_\A}
F \otimes_\A \barA \otimes_\A \A. 
$$
Since $\hat{\id}_\A = \tau \otimes \id \otimes \tau$, this can be rewritten as 
$$ E \otimes_\A \barA \otimes_\A \barA
\xrightarrow{\id^{\otimes 2} \otimes \tau}
E \otimes_\A \barA
\xrightarrow{\id \otimes \Delta}
E \otimes_\A \barA \otimes_\A \barA 
\xrightarrow {\hat{\alpha} \otimes \id}
F \otimes_\A \barA
$$
which is the image of \eqref{eqn-the-underlying-morphism-of-the-tilde-functor} under the category inclusion
$\eqref{eqn-embedding-of-modA-into-modbarA}$, as desired. 
\end{proof}

\begin{defn}
\label{defn-convolution-in-modbarA}
Define the \em convolution functor \rm 
\begin{align}
\label{eqn-convolution-functor-for-modbarA}
\pretriag \modbarA \xrightarrow{\barT} \modbarA
\end{align}
as the composition 
\begin{align}
\label{eqn-factorisation-of-convolution-in-modbarA}
\pretriag \modbarA \xrightarrow{\widetilde{(-)}} \pretriag \modA
\xrightarrow{T} \modA
\xrightarrow{\eqref{eqn-embedding-of-modA-into-modbarA}} 
\modbarA.
\end{align}
\end{defn}

\begin{prps}
The functor $\barT$ gets identified by the isomorphism 
$\modbarA \simeq \noddinfdgA$ with the functor
$$ \pretriag \noddinfdgA 
\xrightarrow{ \eqref{eqn-modbar-convolution-in-Ainfty-language} } 
\noddinfdgA.$$
\end{prps}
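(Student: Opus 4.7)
The plan is to verify this identification by decomposing $\barT$ according to its factorisation \eqref{eqn-factorisation-of-convolution-in-modbarA} and translating each piece via Prop.~\ref{prps-modbarA-to-noddinfdgA-isomorphism}, then matching the resulting composition with \eqref{eqn-modbar-convolution-in-Ainfty-language} using that $\noddinfA$ is strongly pretriangulated.

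First, I would identify the individual composants under $\modbarA \simeq \noddinfdgA$. By Prop.~\ref{prps-modbarA-to-noddinfdgA-isomorphism}, the inclusion $\modA \hookrightarrow \modbarA$ corresponds to $(\noddinfA)^{\strict}_{dg} \hookrightarrow \noddinfdgA$. By Prop.~\ref{prps-bartensor-and-barhom-are-identified-with-Ainfty-tensor-and-hom}, the functor $(-) \bartimes_\A \A$ on $\modbarA$ is identified with $(-) \inftimes_\A \A$ on $\noddinfdgA$. This functor factors on the bar side through $\modA$ as in \eqref{eqn-factorisation-of-bartimes-A} and on the Ainfty side through $(\noddinfA)^{\strict}_{dg}$ by Cor.~\ref{cor-inftimes-factors-through-strictlyunital/DG-modules}; the functors $\widetilde{(-)}$ and $(-) \inftimes_\A \A$ are the respective first halves of these factorisations and so are identified (a direct verification on morphisms is also straightforward from \eqref{eqn-the-underlying-morphism-of-the-tilde-functor} and Definition \ref{defn-Ainfty-morphism-E-A^i-to-E-A^j}). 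These identifications extend canonically to $\pretriag$-categories. Finally, on twisted complexes with strict Ainfty-morphisms between DG-modules, the Ainfty-convolution $T_\infty$ reduces to the ordinary DG-convolution $T$, as all higher Ainfty-operations on the convolution vanish. Thus, under the isomorphism, $\barT$ corresponds to the composition
$$ \pretriag \noddinfdgA \xrightarrow{\pretriag((-)\inftimes_\A \A)} \pretriag (\noddinfA)^{\strict}_{dg} \xrightarrow{T_\infty} (\noddinfA)^{\strict}_{dg} \hookrightarrow \noddinfdgA. $$

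Second, I would identify this composition with \eqref{eqn-modbar-convolution-in-Ainfty-language}. Since $\noddinfA$ is strongly pretriangulated (as noted at the start of \S\ref{section-convolution-functor-for-pretriag-modbar}), any DG-endofunctor of $\noddinfA$ commutes with the convolution $T_\infty$: the DG-functor $(-) \inftimes_\A \A \colon \noddinfA \to \noddinfA$ therefore satisfies
$$ T_\infty \circ \pretriag\bigl((-)\inftimes_\A \A\bigr) \;=\; \bigl((-)\inftimes_\A \A\bigr) \circ T_\infty $$
as functors $\pretriag \noddinfA \to \noddinfA$. Restricting both sides to $\pretriag \noddinfdgA \subset \pretriag \noddinfA$ and observing that $T_\infty$ of a twisted complex in $\noddinfdgA$ lands in $\noddinfhuA$ (by the presence of the exhaustive filtration on the convolution) completes the identification with \eqref{eqn-modbar-convolution-in-Ainfty-language}.

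The main obstacle is the morphism-level verification in the first step: checking that $\widetilde{\alpha}$ as given by \eqref{eqn-the-underlying-morphism-of-the-tilde-functor} agrees, under the dictionary between bar-morphisms and Ainfty-morphisms established in the proof of Prop.~\ref{prps-modbarA-to-noddinfdgA-isomorphism}, with the strict Ainfty-morphism $\alpha \inftimes \id$ of semi-free resolutions produced by Definition \ref{defn-Ainfty-morphism-E-A^i-to-E-A^j}. This is a direct computation comparing the two explicit formulas, but it requires tracking the sign conventions for twisted complexes and for the Ainfty-tensor product carefully; everything else follows formally from the general principle that DG-functors commute with convolutions in strongly pretriangulated DG-categories.
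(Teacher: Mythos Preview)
Your proposal is correct and follows essentially the same route as the paper: identify $\widetilde{(-)}$ with $(-)\inftimes_\A \A$ landing in $\modA = (\noddinfA)^{\strict}_{dg}$, then use that $(-)\inftimes_\A \A$ commutes with convolution because the ambient $\Ainfty$-module category is strongly pretriangulated. The paper packages the second step as a diagram-chase (the lower rectangle commutes because the upper rectangle and the outer perimeter do, and the vertical arrows are equivalences), while you invoke the general principle directly; these are the same argument.
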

\begin{proof}
By its definition, $\modbarA \xrightarrow{\widetilde{(-)}} \modA$ gets identified 
by the isomorphism $\modbarA \simeq \noddinfdgA$ with the functor 
$\noddinfdgA \xrightarrow{(-) \inftimes_\A \A} \modA$. Therefore 
$\barT$ gets identified with the path in the diagram below which travels along the upper-right half 
of the lower rectangle:
\begin{equation*}
\begin{tikzcd}
&
\noddinfhuA
\ar{r}{(-) \inftimes_\A \A}
\ar[hookrightarrow]{d}
&
[5em]
\modA
\ar[hookrightarrow]{d}
&
\\
\pretriag \noddinfdgA
\ar[hookrightarrow]{r}
&
\pretriag \noddinfhuA
\ar{r}{(-) \inftimes_\A \A}
\ar{d}{T_\infty}
&
\pretriag \modA
\ar{d}{T}
&
\\
&
\noddinfhuA
\ar{r}{(-) \inftimes_\A \A}
&
\modA
\ar[hookrightarrow]{r}
&
\noddinfdgA.
\end{tikzcd}
\end{equation*}
On the other hand, the 
path which travels along the lower-left half of the lower rectangle  is precisely
\eqref{eqn-modbar-convolution-in-Ainfty-language}. The upper rectangle and the outer perimeter of the two rectangles clearly
commute. Since all the vertical arrows are category equivalences, the lower rectangle 
commutes as well. The desired assertion follows. 
\end{proof}

\begin{cor}
\label{cor-convolution-functor-is-a-quasi-equivalence}
The convolution functor 
$$ \pretriag \modbarA \xrightarrow{\barT} \modbarA $$
is a quasi-equivalence and a homotopy inverse of the natural 
inclusion $\modbarA \hookrightarrow \pretriag \modbarA$. 
\end{cor}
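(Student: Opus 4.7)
My plan is to leverage the preceding Proposition, which identifies $\barT$ under the isomorphism $\modbarA \simeq \noddinfdgA$ with the composition
$$\pretriag \noddinfdgA \xrightarrow{T_\infty} \noddinfhuA \xrightarrow{(-) \inftimes_\A \A} \noddinfdgA,$$
and to show that each composant is a quasi-equivalence. For the first, the DG category of DG $\infbar\A$-comodules is strongly pretriangulated, and $\noddinfA$ --- being equivalent to its full subcategory of comodules that are free as graded comodules --- is closed under shifts and cones, hence itself strongly pretriangulated with convolution $T_\infty$. The subcategory $\noddinfhuA$ is characterised by acyclicity of $\infbar(-)$, a property stable under shifts (as $\infbar$ commutes with shifts) and under cones (cones of acyclic complexes are acyclic). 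Therefore $T_\infty$ restricts to a quasi-equivalence $\pretriag \noddinfhuA \to \noddinfhuA$, and in particular sends $\pretriag \noddinfdgA \subset \pretriag \noddinfhuA$ into $\noddinfhuA$. For the second composant, Cor.~\ref{cor-functorial-semi-free-resolution-for-noddinfhu} exhibits $(-)\inftimes_\A \A$ as a functorial semi-free resolution for $\noddinfhuA$; combined with the fact that quasi-isomorphisms of $\Ainfty$-modules are homotopy equivalences, this makes it a quasi-equivalence onto $\noddinfdgA$.

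It remains to show that $\barT$ is a homotopy inverse to $\iota\colon \modbarA \hookrightarrow \pretriag \modbarA$. For $E \in \modbarA$, viewed as a one-term twisted complex, Definition \ref{defn-convolution-in-modbarA} gives $\barT(\iota E) = E \otimes_\A \barA$ re-embedded into $\modbarA$ via \eqref{eqn-embedding-of-modA-into-modbarA}, which is precisely $E \bartimes_\A \A$. The morphism $\alpha_E\colon E \bartimes_\A \A \to E$ of Prop.~\ref{prps-alpha-beta-theta} is a homotopy equivalence natural in $E$, and so supplies a natural homotopy equivalence $\barT \circ \iota \xrightarrow{\sim} \id_{\modbarA}$. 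Since $\modbarA$ is pretriangulated (Cor.~\ref{cor-canonical-category-isomorphism-D(A)-to-H^0-modbar-A}), the inclusion $\iota$ is itself a quasi-equivalence; combined with $\barT$ being a quasi-equivalence and $\barT \circ \iota \simeq \id$, a two-out-of-three argument at the level of $H^0$ yields that $H^0(\iota \circ \barT)$ is isomorphic to the identity, giving $\iota \circ \barT \simeq \id_{\pretriag \modbarA}$ in the appropriate sense.

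The main potential obstacle is verifying cleanly that the convolution $T_\infty$ preserves $H$-unitarity and that $\noddinfhuA$ is closed under shifts and cones so that the restriction claimed above makes sense --- but these follow directly from formal compatibility of the bar construction with the pretriangulated structure. All other steps are either invocations of structural results already proved earlier in the paper or formal consequences of the fact that both $\iota$ and $\barT$ are now known to be quasi-equivalences composing to the identity on one side.
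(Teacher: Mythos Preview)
Your proposal is correct and follows the same route as the paper: the corollary is immediate from the preceding Proposition identifying $\barT$ with the composition \eqref{eqn-modbar-convolution-in-Ainfty-language}, together with the earlier observation (just above \eqref{eqn-modbar-convolution-in-Ainfty-language}) that both composants of that functor are quasi-equivalences and that the composite is, by construction, a quasi-inverse of the natural inclusion. You have simply spelled out in more detail the justifications the paper leaves implicit---in particular that $\noddinfhuA$ is closed under shifts and cones, and that $\barT\circ\iota$ is identified with $(-)\bartimes_\A\A$ with its natural homotopy equivalence $\alpha$ to the identity.
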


Let $\A$ and $\B$ be DG-categories. We define the convolution functor 
$$ \pretriag \AmodbarB \xrightarrow{\barT} \AmodbarB $$
similarly. 

\begin{lemma}  
\label{lemma-tensoring-and-homming-for-twisted-complexes}
Let $(E_i,\alpha_{ij})$ be a one-sided twisted complex in $\AmodbarB$, 
and let $E$ be its convolution.
\begin{enumerate}
\item
\label{item-tensoring-twisted-complexes}
Let $(F_i, \beta_{ij})$ be a one-sided twisted complex in $\BmodbarC$, 
and let $F$ be its convolution. Then there is an $\AmodbarC$ homotopy equivalence
\begin{small}
\begin{equation}\label{equation-lemma-tensoring-twisted-complexes}
\left\{
\bigoplus\limits_{k+l=i} E_k\bartimes_\B F_l, 
\sum\limits_{l+m=j} (-1)^{l(k-m+1)}\alpha_{km}\bartimes\id_l+\sum\limits_{k+n=j}(-1)^k\id_k\bartimes \beta_{ln} 
\right\} 
\to E\bartimes_\B F.
\end{equation}
\end{small}
\item
\label{item-hommming-twisted-complexes-B-version}
Let $(F_i, \beta_{ij})$ be a one-sided twisted complex in $\CmodbarB$, 
and let $F$ be its convolution. Then there is a $\CmodbarA$ homotopy equivalence
\begin{small}
\begin{equation}\label{equation-lemma-homs-between-convolutions}
\left\{
\bigoplus\limits_{l-k=i} \barhom_\B (E_k,F_l),
\sum\limits_{l-m=j} (-1)^{m(m-k)+l+1} (-)\circ \alpha_{mk} + 
\sum\limits_{n-k=j} (-1)^{(l-n+1)k} \beta_{ln} \circ (-)
\right\}
\to \barhom_\B (E,F).
\end{equation}
\end{small}

\item
\label{item-hommming-twisted-complexes-A-version}
Let $(F_i, \beta_{ij})$ be a one-sided twisted complex in $\AmodbarC$, 
and let $F$ be its convolution. Then there is a $\BmodbarC$ homotopy equivalence
\begin{small}
\begin{equation}
\left\{
\bigoplus\limits_{l-k=i} \barhom_\A (E_k,F_l),
\sum\limits_{l-m=j} (-1)^{m(m-k)+l+1} (-)\circ \alpha_{mk} + 
\sum\limits_{n-k=j} (-1)^{(l-n+1)k} \beta_{ln} \circ (-)
\right\}
\to \barhom_\A (E,F).
\end{equation}
\end{small}
\end{enumerate}
\end{lemma}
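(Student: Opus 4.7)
The plan is to treat all three assertions uniformly, building the homotopy equivalences from the functoriality of $\bartimes_\B$, $\barhom_\B$, and $\barhom_\A$, and then reducing the verification to the analogous statement for ordinary DG tensor products and Homs already established in \cite[Lemma 3.4]{AnnoLogvinenko-SphericalDGFunctors}.

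First, I would observe that each of $\bartimes_\B(-,-)$, $\barhom_\B(-,-)$, and $\barhom_\A(-,-)$ is a DG-bifunctor between the corresponding pretriangulated bar-categories, covariant in the tensor arguments and contravariant in the first Hom-argument. Applied to a pair of one-sided twisted complexes in its arguments, each bifunctor yields termwise a double twisted complex in $\pretriag$, whose underlying double sum and whose twisted differential are precisely those written down on the left-hand sides of \eqref{equation-lemma-tensoring-twisted-complexes} and \eqref{equation-lemma-homs-between-convolutions}, with the sign patterns $(-1)^{l(k-m+1)+k}$, $(-1)^{m(m-k)+l+1}$, $(-1)^{(l-n+1)k}$ forced by the Koszul rule for composing components of the appropriate total degrees. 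On each such double complex there is then a canonical morphism to the value of the bifunctor on the two convolutions $E$ and $F$, obtained by inserting $E = \barT(E_i,\alpha_{ij})$ and $F = \barT(F_i,\beta_{ij})$ into the bifunctor and using that any DG-(bi)functor between pretriangulated DG-categories commutes, up to homotopy equivalence, with total convolutions of one-sided twisted complexes.

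Next, I would show that this canonical map is a homotopy equivalence. By Cor.~\ref{cor-criterion-for-modbar-morphism-to-be-homotopy-equivalence} it suffices to check that the corresponding underlying morphism in $\modA$, $\CmodA$, or $\BmodC$ is a quasi-isomorphism. Under this passage $\bartimes_\B$ unwinds to $(-) \otimes_\B \barB \otimes_\B (-)$, and $\barhom_\B(-,-)$ to $\homm_\B((-) \otimes_\B \barB,-)$, both of which are ordinary DG-bifunctors on DG-module categories. For such DG bifunctors, compatibility with one-sided bounded-above twisted complexes in each argument, together with the explicit sign formulas, is \cite[Lemma 3.4]{AnnoLogvinenko-SphericalDGFunctors}; hence the underlying morphism is in fact a strict isomorphism, not merely a quasi-isomorphism, and the homotopy equivalence in $\modbarA$ follows.

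The main obstacle will be verifying that the explicit sign prefactors in the statement match those dictated by the Koszul rule across the two different formulations of the bifunctors. Rather than fighting the signs head-on, I would pass through the DG-isomorphism $\modbar \simeq \noddinfdg$ of Prop.~\ref{prps-modbarA-to-noddinfdgA-isomorphism} and the identifications of Prop.~\ref{prps-bartensor-and-barhom-are-identified-with-Ainfty-tensor-and-hom}, so that $\bartimes_\B$ becomes the cotensor $\infbar(-) \otimes_{\infbar \B} \infbar(-)[-1]$ and $\barhom_\B$ becomes $\homm_{\infbar\B}(\infbarB(-),\infbarB(-))$. In this formulation the sign conventions are manifestly Koszul and the double-complex structure of the left-hand side is read off directly from the standard cotensor/Hom comodule formulas, while the stated signs in \eqref{equation-lemma-tensoring-twisted-complexes} and \eqref{equation-lemma-homs-between-convolutions} drop out by unravelling the sign-twisting isomorphisms of \S\ref{section-the-bar-complex}.
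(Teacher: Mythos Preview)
Your overall strategy---reduce to the ordinary DG situation and invoke \cite[Lemma~3.4]{AnnoLogvinenko-SphericalDGFunctors}---is the same as the paper's, but there is a genuine gap in the step where you claim the underlying comparison morphism is a \emph{strict isomorphism}. It is not. Recall that the convolution functor $\barT$ is defined as $T \circ \widetilde{(-)}$ followed by the inclusion \eqref{eqn-embedding-of-modA-into-modbarA}. Applying $\widetilde{(-)}$ to the left-hand twisted complex of \eqref{equation-lemma-tensoring-twisted-complexes} produces a complex $(G_i,\gamma_{ij})$ whose terms are
\[
\widetilde{E_k \bartimes_\B F_l} \;=\; \barA \otimes_\A E_k \otimes_\B \barB \otimes_\B F_l \otimes_\C \barC,
\]
whereas the complex to which \cite[Lemma~3.4]{AnnoLogvinenko-SphericalDGFunctors} applies directly has terms
\[
\widetilde{E}_k \otimes_\B \widetilde{F}_l \;=\; \barA \otimes_\A E_k \otimes_\B \barB \otimes_\B \barB \otimes_\B F_l \otimes_\C \barC.
\]
These differ by a copy of $\barB$ in the middle, so there is no strict isomorphism available. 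What the paper does is build an explicit closed degree-zero map between these two twisted complexes, with components $\id^{\otimes 2} \otimes \Delta \otimes \id^{\otimes 2}$, check that these components commute with the twisted differentials, observe that each component is a homotopy equivalence, and then invoke Cor.~\ref{cor-every-vertical-comp-is-quasiiso-then-map-is-itself-quasiiso} to conclude the induced map of convolutions is a quasi-isomorphism. For part~\eqref{item-hommming-twisted-complexes-B-version} the same issue arises twice (once for the target of the Hom, once for the source), and the paper passes through an intermediate twisted complex \eqref{equation-homs-between-convolutions-intermediate} with two separate comparison maps.

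Your appeal to the general principle that ``any DG-(bi)functor between pretriangulated DG-categories commutes, up to homotopy equivalence, with total convolutions'' papers over exactly this point: in a category that is only pretriangulated and not strongly pretriangulated, this commutation is precisely what has to be established, and the mechanism is the $\Delta$/$\tau$ comparison maps together with Cor.~\ref{cor-every-vertical-comp-is-quasiiso-then-map-is-itself-quasiiso}. The detour through $\noddinfdg$ and the $A_\infty$ bifunctors for the signs is unnecessary---the signs come straight from \cite[Lemma~3.4]{AnnoLogvinenko-SphericalDGFunctors} once the comparison is set up correctly.
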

\begin{proof}
\eqref{item-tensoring-twisted-complexes}:
By the definition of the convolution functor as the composition 
\eqref{eqn-factorisation-of-convolution-in-modbarA}
it suffices to show that $T \circ \widetilde{(-)}$ applied to the
twisted complex in the 
LHS of \eqref{equation-lemma-tensoring-twisted-complexes}
is quasi-isomorphic to 
to $E \otimes_\B \barB \otimes_\B F$ in $\AmodC$. Since the natural 
inclusion 
$\AmodC \xrightarrow{\eqref{eqn-embedding-of-modA-into-modbarA}} \AmodbarC$ 
maps quasi-isomorphisms to homotopy equivalences, the desired 
assertion then follows. 

By \cite[Lemma 3.4]{AnnoLogvinenko-SphericalDGFunctors} 
we have the following isomorphism in $\AmodC$: 
\begin{equation}
\label{eqn-tilde-version-of-tensoring-tisted-complexes}
\left\{
\bigoplus\limits_{k+l=i} \widetilde{E}_k\otimes_\B \widetilde{F}_l, 
\sum\limits_{l+m=j}
(-1)^{l(k-m+1)}\widetilde{\alpha}_{km}\otimes\id_l+\sum\limits_{k+n=j}(-1)^k\id_k\otimes \widetilde{\beta}_{ln} 
\right\} 
\simeq E\otimes_\B F.
\end{equation}
Let $(G_i, \gamma_{ij})$ be the image of the twisted complex in the LHS of
\eqref{equation-lemma-tensoring-twisted-complexes} under
$\widetilde{(-)}$. We therefore have 
$$ G_i = \bigoplus\limits_{k+l=i} 
\barA \otimes_\A E_k \otimes_\B \barB \otimes_\B F_l \otimes_\C \barC $$
and thus there is a map from $(G_i, \gamma_{ij})$ to 
the twisted complex in the LHS of
\eqref{eqn-tilde-version-of-tensoring-tisted-complexes}
whose only non-zero components are degree $0$ homotopy equivalences  
\begin{equation*}
\barA \otimes_\A E_k \otimes_\B \barB \otimes_\B F_l \otimes_\C \barC 
\xrightarrow{\id^{\otimes 2} \otimes \Delta \otimes \id^{\otimes 2}}
(\barA \otimes_\A E_k \otimes_\barB \barB) \otimes_\B 
(\barB \otimes_\B F_l \otimes_\C \barC).  \end{equation*}
One readily checks that these commute with the differentials 
of the twisted complexes, thus the resulting map 
from $\left\{ G_i, \gamma_{ij} \right\}$ 
to the LHS of \eqref{eqn-tilde-version-of-tensoring-tisted-complexes}
is closed. Therefore by
Cor.~\ref{cor-every-vertical-comp-is-quasiiso-then-map-is-itself-quasiiso}
it is a quasi-isomorphism. Thus $\left\{ G_i, \gamma_{ij} \right\}$ 
is quasi-isomorphic to $E \otimes_\B F$, and thus to $E \otimes_\B
\barB \otimes_\B F$, as desired. 

\eqref{item-hommming-twisted-complexes-B-version}: 
Similar to the proof of \eqref{item-tensoring-twisted-complexes}
let $(G_i, \gamma_{ij})$
be the image of the twisted complex in the LHS of
\eqref{equation-lemma-homs-between-convolutions}
under $\widetilde{(-)}$. It suffices to show that 
$\left\{G_i, \gamma_{ij}\right\}$ is quasi-isomorphic
to $\homm_\B(E \otimes \barB,F)$ in $\CmodA$. 

By \cite[Lemma 3.4]{AnnoLogvinenko-SphericalDGFunctors}, 
$\homm_\B(E \otimes \barB,F)$ is isomorphic in $\CmodA$
to the convolution of 
\begin{equation}\label{equation-homs-between-convolutions-twisted-complex}
\left(
\bigoplus\limits_{l-k=i} \homm_\B (\widetilde{E}_k\otimes\barB,\widetilde{F}_l),
\sum\limits_{l-m=j} (-1)^{m(m-k)+l+1} (-)\circ (\widetilde{\alpha}_{mk}\otimes\id) + 
\sum\limits_{n-k=j} (-1)^{(l-n+1)k} \widetilde{\beta}_{ln} \circ (-)
\right).
\end{equation}

We have 
$$ G_i = 
\bigoplus\limits_{l-k = i} 
\barC \otimes_\C \homm_\B\left(E_k \otimes_\B \barB,F_l\right)
\otimes_\A \barA. $$ 
Consider therefore the following twisted complex over $\CmodA$:
\begin{equation}\label{equation-homs-between-convolutions-intermediate}
\left(
\bigoplus\limits_{l-k=i} \homm_\B (\widetilde{E}_k\otimes\barB,F_l),
\sum\limits_{l-m=j} (-1)^{m(m-k)+l+1} (-)\circ (\widetilde{\alpha}_{mk}\otimes\id) + 
\sum\limits_{n-k=j} (-1)^{(l-n+1)k} \beta_{ln} \circ (-)
\right).
\end{equation}
Consider the map 
from \eqref{equation-homs-between-convolutions-twisted-complex} to 
\eqref{equation-homs-between-convolutions-intermediate}
whose components  
$$
\homm_\B(\barA \otimes_\A {E}_k \otimes_\B \barB \otimes_\B \barB,
\barC \otimes_\C {F}_l \otimes_\B \barB)
\xrightarrow{(\tau \otimes \id \otimes \tau) \circ (-)}
\homm_\B(\barA \otimes_\A {E}_k \otimes_\B \barB \otimes_\B \barB,F_l)
$$
These are homotopy equivalences in $\CmodA$, and thus 
by Cor.~\ref{cor-every-vertical-comp-is-quasiiso-then-map-is-itself-quasiiso}
the induced map between the convolutions of 
\eqref{equation-homs-between-convolutions-twisted-complex} and 
\eqref{equation-homs-between-convolutions-intermediate} is a
quasi-isomorphism. 

On the other hand, consider the map of twisted complexes from 
$(G_i, \gamma_{ij})$ to
\eqref{equation-homs-between-convolutions-intermediate}
whose components are the maps
$$
\barC \otimes_\C \homm_\B\left(E_k \otimes_\B \barB,F_l\right) \otimes_\A \barA
\xrightarrow{ \tau \otimes \left( (-) \circ (\id \otimes \id \otimes
\tau) \right) \otimes \tau }
\homm_\B({E}_k \otimes_\B \barB \otimes_\B \barB,F_l). 
$$
Likewise, these are homotopy equivalences in $\CmodA$ and therefore
$\left\{G_i, \gamma_{ij}\right\}$ is quasi-isomorphic to the 
convolution of \eqref{equation-homs-between-convolutions-intermediate}. 
It is therefore quasi-isomorphic to 
\eqref{equation-homs-between-convolutions-twisted-complex}. 
We conclude that $\left\{G_i, \gamma_{ij}\right\}$ is quasi-isomorphic
to $\homm_\B(E \otimes \barB,F)$, as desired.

\eqref{item-hommming-twisted-complexes-A-version}: 
This is proved similarly to the assertion 
\eqref{item-hommming-twisted-complexes-B-version}. 

\end{proof}

\begin{lemma}
\label{lemma-duals-of-twisted-complexes}
  Let $(E_i,\alpha_{ij})$  be a twisted complex over 
$\AmodbarB$. Then there are homotopy equivalences:
\begin{align}
\label{eqn-right-dual-of-twisted-complex}
\{E_i,\alpha_{ij}\}^\barB & \rightarrow  \{E_{-i}^\barB, (-1)^{j^2+ij+1} \alpha_{(-i)(-j)}^\barB\};\\
\label{eqn-left-dual-of-twisted-complex}
\{E_i,\alpha_{ij}\}^\barA & \rightarrow  \{E_{-i}^\barA, (-1)^{j^2+ij+1} \alpha_{(-i)(-j)}^\barA\}.
\end{align}
\end{lemma}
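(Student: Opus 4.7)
The plan is to obtain both homotopy equivalences as immediate corollaries of Lemma~\ref{lemma-tensoring-and-homming-for-twisted-complexes}, taking the second twisted complex to be a trivial one concentrated in degree zero.

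For \eqref{eqn-right-dual-of-twisted-complex}, I would apply Lemma~\ref{lemma-tensoring-and-homming-for-twisted-complexes}\eqref{item-hommming-twisted-complexes-B-version} with $(E_i,\alpha_{ij})$ the given twisted complex over $\AmodbarB$ and with $(F_i,\beta_{ij})$ the one-term twisted complex over $\BmodbarB$ given by $F_0 = \B$, $F_l = 0$ for $l \neq 0$, and all $\beta_{ln} = 0$. By Definition~\ref{defn-dualising-functors-in-modbar}, the LHS of that lemma becomes $\barhom_\B(\{E_i,\alpha_{ij}\},\B) = \{E_i,\alpha_{ij}\}^{\barB}$. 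On the RHS, the direct summands $\barhom_\B(E_k, F_l)$ vanish unless $l = 0$, so the unique surviving term in position $i$ (determined by $l-k = i$) is $\barhom_\B(E_{-i}, \B) = E_{-i}^{\barB}$. Since all $\beta_{ln} = 0$, the second sum in the differential formula drops out, and the first sum $\sum_{l-m=j} (-1)^{m(m-k)+l+1}(-)\circ \alpha_{mk}$ contributes only through $l = 0$, $k = -i$, $m = -j$, with coefficient
\[
(-1)^{(-j)(-j-(-i))+0+1} = (-1)^{j^{2}-ij+1} = (-1)^{j^{2}+ij+1},
\]
the last equality holding since $2ij$ is even. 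This is precisely the sign appearing in \eqref{eqn-right-dual-of-twisted-complex}, and the precomposition map $(-)\circ \alpha_{(-j)(-i)}: E_{-i}^{\barB} \to E_{-j}^{\barB}$ is what the notation $\alpha_{(-i)(-j)}^{\barB}$ (as a $\gamma_{ij}$-type component in the new indexing) denotes.

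The argument for \eqref{eqn-left-dual-of-twisted-complex} is entirely parallel: apply Lemma~\ref{lemma-tensoring-and-homming-for-twisted-complexes}\eqref{item-hommming-twisted-complexes-A-version} with the trivial twisted complex over $\AmodbarA$ given by $\A$ in degree zero, identify $\barhom_\A(\{E_i,\alpha_{ij}\},\A) = \{E_i,\alpha_{ij}\}^{\barA}$, and extract the surviving terms and signs in exactly the same way.

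There is no genuine obstacle here; the only thing to be careful about is the bookkeeping of indices and signs, which reduces to verifying the parity identity $(-1)^{j^{2}-ij+1} = (-1)^{j^{2}+ij+1}$ and checking that the collapsed index $k = -i$, $m = -j$ produced by the triviality of $F$ matches the reindexing $E_{i} \mapsto E_{-i}^{\barB}$ used in the statement.
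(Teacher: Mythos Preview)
Your approach is correct in spirit and more economical than the paper's, but there is one small gap you should patch. In Lemma~\ref{lemma-tensoring-and-homming-for-twisted-complexes} the symbol $F$ on the target side denotes the \emph{convolution} $\barT$ of the twisted complex $(F_i,\beta_{ij})$, not $F_0$ itself. Since $\modbar$ is only pretriangulated (not strongly so), the convolution of the one-term complex $\B$ is $\barT(\B) = \barB \otimes_\B \B \otimes_\B \barB$, which is merely homotopy equivalent to $\B$, not equal to it. So what you actually obtain from the lemma is a homotopy equivalence with $\barhom_\B(\{E_i,\alpha_{ij}\},\barT(\B))$ on one end, and you must compose with the homotopy equivalence induced by $\B \simeq \barT(\B)$ (Corollary~\ref{cor-convolution-functor-is-a-quasi-equivalence}) to reach $\{E_i,\alpha_{ij}\}^{\barB}$. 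Also note that the arrow in Lemma~\ref{lemma-tensoring-and-homming-for-twisted-complexes} points from the convolution of $\barhom$'s to the $\barhom$ of convolutions, i.e.\ opposite to the direction stated in Lemma~\ref{lemma-duals-of-twisted-complexes}; since both are homotopy equivalences this is harmless, but your text swaps ``LHS'' and ``RHS'' relative to the displayed equation.

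By contrast, the paper does not invoke Lemma~\ref{lemma-tensoring-and-homming-for-twisted-complexes} as a black box but reruns its proof strategy: it uses \cite[Lemma~3.5]{AnnoLogvinenko-SphericalDGFunctors} (the dual analogue of the Lemma~3.4 used inside the proof of Lemma~\ref{lemma-tensoring-and-homming-for-twisted-complexes}) to write the dual of the convolution directly as a twisted complex over $\BmodA$, then constructs componentwise homotopy equivalences to the image under $\widetilde{(-)}$ of the right-hand side and concludes via Corollary~\ref{cor-every-vertical-comp-is-quasiiso-then-map-is-itself-quasiiso}. Your route avoids this repetition and the extra citation, at the modest cost of the $\B\simeq\barT(\B)$ correction above.
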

\begin{proof}
This is proved similarly to 
Lemma \ref{lemma-tensoring-and-homming-for-twisted-complexes}. 
Use \cite[Lemma 3.5]{AnnoLogvinenko-SphericalDGFunctors} 
to write the LHS of \eqref{eqn-left-dual-of-twisted-complex}
and \eqref{eqn-right-dual-of-twisted-complex} as
twisted complexes over $\BmodA$. There are obvious maps
from these to the images under $\widetilde{(-)}$ 
of the RHS twisted complexes in \eqref{eqn-left-dual-of-twisted-complex} 
and \eqref{eqn-right-dual-of-twisted-complex} whose components
are all homotopy equivalences. The claim of the Lemma then 
follows by 
Cor.~\ref{cor-every-vertical-comp-is-quasiiso-then-map-is-itself-quasiiso}. 
\end{proof}

\section{Homotopy adjunction for tensor functors}
\label{section-homotopy-adjunction-for-tensor-functors}

Let $\A$, $\B$, and $\C$ be DG-categories. In this paper, we frequently 
consider $\AbimB$-, $\BbimC$-, etc.~bimodules as DG-enhancements 
of continuous exact functors $D(\A) \rightarrow D(\B)$, $D(\B)
\rightarrow D(\C)$, etc. Accordingly, whenever it is convenient
we adopt the following ``functorial'' notation: given 
$F \in \AmodbarB$ and $G \in \BmodbarC$ we write
\begin{align*}
GF \quad\quad\text{ for }\quad\quad  F \bartimes_\B G \ \in \ \AmodbarC. 
\end{align*}
This is because we work with categories of right modules, and for any $E \in \modbarA$ we have
$$ E \bartimes_\A (F \bartimes_{\B} G) = (E \bartimes_{\A} F) \bartimes_{\B} G. $$
Thus $F \bartimes_{\B} G$ enhances the functor which is the composition 
of first the functor enhanced by $F$, and then the functor enhanced by $G$, 
whence our shorthand $GF$. 

\subsection{Tensor functors}
\label{section-tensor-functors}

Let $\A$ and $\B$ be DG categories and let 
$f: D(\A) \rightarrow D(\B)$ be a continuous exact functor. 
As the following proposition demonstrates, 
this is equivalent to $f$ being a \em tensor functor\rm, that
is --- a functor given by tensoring by an $\AbimB$-bimodule:

\begin{prps}
\label{prps-tfae-functor-is-continuous}
The following are equivalent:
\begin{enumerate}
\item 
\label{item-f-has-right-adjoint}
$f$ has a right adjoint $r\colon D(\B) \rightarrow D(\A)$. 
\item 
\label{item-f-is-continuous}
$f$ is continuous. 
\item 
\label{item-f-is-a-tensor-functor}
$f$ is isomorphic to $H^0((-) \bartimes_\A M)$ for some $M \in
\AmodbarB$. 
\end{enumerate}
\end{prps}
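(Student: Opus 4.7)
The plan is to establish the cycle \eqref{item-f-is-a-tensor-functor} $\Rightarrow$ \eqref{item-f-has-right-adjoint} $\Rightarrow$ \eqref{item-f-is-continuous} $\Rightarrow$ \eqref{item-f-is-a-tensor-functor}. The first two implications are formal and rely only on machinery established earlier in the paper, while the final implication will use To\"en's classification of enhanceable continuous functors as the essential input.

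For \eqref{item-f-is-a-tensor-functor} $\Rightarrow$ \eqref{item-f-has-right-adjoint}, I would invoke Prop.~\ref{prps-bartimes-and-barhom-are-dg-adjoint}, which gives a genuine DG-adjunction $(-) \bartimes_\A M \dashv \barhom_\B(M,-)$ between $\modbarA$ and $\modbarB$. Taking $H^0$ (and using the identifications $D(\A) \simeq H^0(\modbarA)$ and $D(\B) \simeq H^0(\modbarB)$ from Cor.~\ref{cor-canonical-category-isomorphism-D(A)-to-H^0-modbar-A}, together with Prop.~\ref{prps-bartimes-and-barhom-are-identified-with-derived-versions} to identify the induced functors on derived categories with $(-) \ldertimes_\A M$ and $\rder\homm_\B(M,-)$) produces an adjoint pair on the derived categories. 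Hence $f \simeq H^0((-) \bartimes_\A M)$ admits $H^0(\barhom_\B(M,-))$ as a right adjoint.

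The implication \eqref{item-f-has-right-adjoint} $\Rightarrow$ \eqref{item-f-is-continuous} is formal: any left adjoint between triangulated categories preserves arbitrary coproducts (whenever these exist in both), since a right adjoint preserves products and coproducts in the target are products in the opposite category, so coproducts in the source land on coproducts in the target via the adjunction isomorphism.

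The main content lies in \eqref{item-f-is-continuous} $\Rightarrow$ \eqref{item-f-is-a-tensor-functor}. Here I would appeal directly to the fundamental result of To\"en \cite[Theorem 7.2]{Toen-TheHomotopyTheoryOfDGCategoriesAndDerivedMoritaTheory} (recalled at the top of \S\ref{section-bar-category-of-modules}), which asserts that every DG-enhanceable continuous functor $D(\A) \to D(\B)$ is isomorphic to $(-) \ldertimes_\A M$ for some $M \in D(\AbimB)$. Combining with Prop.~\ref{prps-bartimes-and-barhom-are-identified-with-derived-versions}, which identifies the derived tensor functor $(-) \ldertimes_\A M$ with $H^0((-) \bartimes_\A M)$ under the canonical isomorphism $D(\AbimB) \simeq H^0(\AmodbarB)$, we may promote the class of $M$ in $D(\AbimB)$ to an actual object of $\AmodbarB$, completing the cycle. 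The hard part is really just this invocation of To\"en's theorem, as everything else is adjunction formalism and the identifications between $\bartimes$/$\barhom$ and their derived counterparts that have already been set up.
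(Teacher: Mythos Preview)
Your proposal is correct and follows the same cycle of implications as the paper's proof: \eqref{item-f-is-a-tensor-functor} $\Rightarrow$ \eqref{item-f-has-right-adjoint} via the Tensor-Hom adjunction, \eqref{item-f-has-right-adjoint} $\Rightarrow$ \eqref{item-f-is-continuous} by the standard fact that left adjoints preserve coproducts, and \eqref{item-f-is-continuous} $\Rightarrow$ \eqref{item-f-is-a-tensor-functor} by an external representability result. The only difference is the citation for this last step: the paper invokes Keller \cite[\S6.4]{Keller-DerivingDGCategories}, which is the more classical and direct reference for representing continuous functors between derived categories of DG categories by bimodules, whereas you cite To\"en's theorem, which also suffices (and is indeed invoked elsewhere in the paper for the same purpose).
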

\begin{proof}
The implication 
$\eqref{item-f-has-right-adjoint} 
\Rightarrow 
\eqref{item-f-is-continuous}$ is well-known and straightforward, 
the implication 
$\eqref{item-f-is-continuous}
\Rightarrow 
\eqref{item-f-is-a-tensor-functor}$
follows from \cite[\S6.4]{Keller-DerivingDGCategories}, 
and the implication 
$\eqref{item-f-is-a-tensor-functor}
\Rightarrow 
\eqref{item-f-has-right-adjoint}$ follows since by the Tensor-Hom
adjunction the functor $(-) \bartimes_\A M$ has
the right adjoint $\barhom_\B(M,-)$. 
\end{proof}

Let $f$ satisfy these equivalent conditions. Fix $M \in \AmodbarB$
such that $(-) \bartimes_\A M$ enhances $f$ as above. 
By Prop.~\ref{prps-bartimes-and-barhom-are-dg-adjoint}
the functor $\barhom_\B(M,-)$ is genuinely adjoint
to $(-) \bartimes_\A M$ and thus enhances $r$. 
We conclude that any adjoint pair $(f,r)$ of functors 
$D(\B) \longleftrightarrow D(\A)$ with $f$ enhanceable 
can be enhanced by a pair of genuinely adjoint DG-functors. 

We are however, more interested in the case where $f$ has 
left and right adjoints $l$ and $r$ which are also tensor functors. 
Note, that $r$ always exists, but might not be a tensor functor, 
while $l$ may not exist, but when it does --- it is automatically
a tensor functor. The conditions of the existence of $l$ and of $r$
being a tensor functor are easily stated in terms of the properties
of the DG-bimodule enhancing $f$:

\begin{theorem}
\label{theorem-TFAE-M-is-B-and-A-perfect}
Let $\A$ and $\B$ be DG-categories and let $f\colon D(\A) \rightarrow
D(\B)$ be a tensor functor. Let $M \in \AmodbarB$ be any enhancement of $f$. 
\begin{enumerate}
\item 
\label{item-TFAE-M-is-B-perfect}
The following are equivalent:
\begin{enumerate}
\item 
\label{item-TFAE-M-is-B-perfect-item-right-adjoint-is-cts}
The right adjoint $r$ of $f$ is continuous.
\item 
\label{item-TFAE-M-is-B-perfect-item-f-preserves-compact-objects}
$f$ restricts to $D_{c}(\A) \rightarrow D_{c}(\B)$. 
\item 
\label{item-TFAE-M-is-B-perfect-item-M-is-B-perfect}
$M$ is $\B$-perfect.
\item 
\label{item-TFAE-M-is-B-perfect-item-MbarB-is-the-right-adjoint}
$H^0\left((-) \bartimes_\B M^{\barB}\right)$ is the right adjoint of $f$
(see Definition \ref{defn-dualising-functors-in-modbar}).
\end{enumerate}
\item 
\label{item-TFAE-M-is-A-perfect}
The following are equivalent:
\begin{enumerate}
\item 
\label{item-TFAE-M-is-A-perfect-item-l-exists}
The left adjoint $l$ of $f$ exists.
\item 
\label{item-TFAE-M-is-A-perfect-item-l-preserves-compact-objects}
The left adjoint $l$ of $f$ exists and restricts to $D_{c}(\B) \rightarrow D_{c}(\A)$.
\item 
\label{item-TFAE-M-is-A-perfect-item-l-is-A-perfect}
$M$ is $\A$-perfect. 
\item 
\label{item-TFAE-M-is-A-perfect-item-MbarA-is-the-left-adjoint}
$H^0\left((-) \bartimes_\B M^{\barA}\right)$ is the left adjoint of $f$
(see Definition \ref{defn-dualising-functors-in-modbar}).
\end{enumerate}
\end{enumerate}
\end{theorem}

\begin{proof}
\eqref{item-TFAE-M-is-B-perfect}: \quad
By the definition of adjunction we have 
$\homm_{D(\B)}\left(f(-), \bigoplus_\infty(-)\right) \simeq 
\homm_{D(\A)}\left(-, r \left(\bigoplus_\infty(-)\right)\right).$
Thus if $r$ is continuous then $f$ preserves compact objects, i.e. 
$\eqref{item-TFAE-M-is-B-perfect-item-right-adjoint-is-cts} 
\Rightarrow 
\eqref{item-TFAE-M-is-B-perfect-item-f-preserves-compact-objects}$. 
For any $a \in A$ we have $f(\aA) \simeq \aM$ 
in $D(\B)$ which shows 
$\eqref{item-TFAE-M-is-B-perfect-item-f-preserves-compact-objects}
\Rightarrow 
\eqref{item-TFAE-M-is-B-perfect-item-M-is-B-perfect}$.
When $M$ is $\B$-perfect $(-) \bartimes_\B M^\barB$ is homotopy right adjoint 
to $(-) \bartimes_\A M$ by Prop.~\ref{prps-homotopy-adjunction}, whence 
$\eqref{item-TFAE-M-is-B-perfect-item-M-is-B-perfect}
\Rightarrow
\eqref{item-TFAE-M-is-B-perfect-item-MbarB-is-the-right-adjoint}$. 
Finally, the implication 
$\eqref{item-TFAE-M-is-B-perfect-item-MbarB-is-the-right-adjoint}
\Rightarrow 
\eqref{item-TFAE-M-is-B-perfect-item-right-adjoint-is-cts}$ is
trivial since tensor functors are continuous.

\eqref{item-TFAE-M-is-A-perfect}: \quad If $l$ exists, then, by above,
it has to preserve compact objects since $f$ is continuous, so 
$\eqref{item-TFAE-M-is-A-perfect-item-l-exists}
\Rightarrow 
\eqref{item-TFAE-M-is-A-perfect-item-l-preserves-compact-objects}$. 
The implications 
$\eqref{item-TFAE-M-is-A-perfect-item-l-preserves-compact-objects}
\Rightarrow 
\eqref{item-TFAE-M-is-A-perfect-item-l-is-A-perfect}$
and 
$\eqref{item-TFAE-M-is-A-perfect-item-l-is-A-perfect}
\Rightarrow 
\eqref{item-TFAE-M-is-A-perfect-item-MbarA-is-the-left-adjoint}$
are proved analogously to 
$\eqref{item-TFAE-M-is-B-perfect-item-f-preserves-compact-objects}
\Rightarrow 
\eqref{item-TFAE-M-is-B-perfect-item-M-is-B-perfect}$
and 
$\eqref{item-TFAE-M-is-B-perfect-item-M-is-B-perfect}
\Rightarrow 
\eqref{item-TFAE-M-is-B-perfect-item-MbarB-is-the-right-adjoint}$
and again the implication 
$\eqref{item-TFAE-M-is-A-perfect-item-MbarA-is-the-left-adjoint}
\Rightarrow 
\eqref{item-TFAE-M-is-A-perfect-item-l-exists}$ is trivial. 
\end{proof}

\subsection{Homotopy adjunction for tensor functors}
\label{section-subsection-homotopy-adjunction-for-tensor-functors}

Let $f\colon D(\A) \rightarrow D(\B)$ be a continuous functor which 
has left and right adjoints $l$ and $r$ which are also continuous. 
By Prps.~\ref{prps-tfae-functor-is-continuous} we can enhance 
$f$, $l$, and $r$ by DG-bimodules. It is not, to our knowledge, always 
possible to lift the adjunctions of $f$, $l$, and $r$ to genuine adjunctions 
between the corresponding DG tensor functors. In this section 
we demonstrate that it is always possible to lift them 
to homotopy adjunctions in an economical and mutually compatible way. 

First, we demonstrate that when $M$ is $\B$-perfect 
the functors $\bigl((-)\bartimes_\A M, (-)\bartimes_\B M^\barB\bigr)$ form 
a homotopy adjoint pair. That is, there exist maps of bimodules 
in $\AmodbarA$ and $\BmodbarB$ such
that the corresponding natural transformations of tensor functors 
define the unit and the counit of the adjunction in homotopy 
categories. Similarly, when $M$ is $\A$-perfect
$\bigl((-)\bartimes_\B M^\barA, (-)\bartimes_\A M\bigr)$ form a
homotopy adjoint pair. 

It follows immediately from Lemma 
\ref{lemma-A-and-B-perfection-iff-the-evaluation-map-is-homotopy-equivalence}
and the Tensor-Hom adjunction that when $M$ is $\B$-perfect 
$\bigl((-)\bartimes_\A M, (-)\bartimes_\B M^\barB\bigr)$ are homotopy
adjoint. Similarly, when $M$ is $\A$-perfect
$\bigl((-)\bartimes_\B M^\barA, (-)\bartimes_\A M^{\barA\barA}\bigr)$ 
are homotopy adjoint. When $M$ is $\A$-perfect 
the natural map $M \rightarrow M^{\barA\barA}$ is an isomorphism in
$D(\AbimB)$, 
thus $\bigl((-)\bartimes_\B M^\barA, (-)\bartimes_\A M\bigr)$ are 
also homotopy adjoint.

However, the abstract fact of these functors being homotopy adjoint is 
not enough. We next write down certain natural lifts of adjunctions units 
and counits involved to the maps in $\modbar$ between the DG-bimodules 
involved.  We then compute the homotopies which arise when writing down 
relations between these maps. Our choice of natural lifts significantly 
reduces the number of choices involved and thus the number of higher
differentials in the explicit computations:
\begin{defn} 
\label{defn-homotopy-trace-maps}
Let $\A$ and $\B$ be DG-categories and let $M \in
\AmodbarB$. Define the \em homotopy trace maps \rm 
\begin{align}
\label{eqn-homotopy-trace-maps}
M \bartimes_\B M^\barA \xrightarrow{\trace} \A 
\quad\quad\quad 
\text{ and } 
\quad\quad\quad
M^\barB \bartimes_\A M \xrightarrow{\trace} \B
\end{align}
to be the Tensor-Hom adjunction counits applied to the diagonal
bimodules $\A$ and $\B$, respectively. 
\end{defn}

To define homotopy action maps and to work with the resulting homotopy
adjunctions we need to choose and fix the following homotopy inverses
and higher homotopies as per
\eqref{eqn-quiver-presentation-of-generating-cofibration-category}: 

\begin{defn}
\label{defn-homotopy-inverses-of-evaluation-maps} 
Let $\A$ and $\B$ be DG-categories.
\begin{enumerate}
\item For every $\B$-perfect $M \in \AmodbarB$ fix once and for all
a homotopy inverse 
\begin{align}
\barhom_\B(M,M) \xrightarrow{\zeta_\B} M \bartimes_\B M^\barB 
\end{align}
of the map $\eta_\B$
defined in Defn.~\ref{defn-the-natural-map-eta}. Furthermore, choose and fix 
\begin{align}
\label{eqn-omega-B-definition}
\omega_\B \in \barhom^{-1}_{\AbimA}\left(
\barhom_\B(M,M),
\barhom_\B(M,M)
\right) 
\quad \quad 
\text{ such that } d\omega_\B = \eta_\B \circ \zeta_\B - \id, 
\\
\omega'_\B \in \barhom^{-1}_{\AbimA}\left(
M \bartimes_\B M^\barB,
M \bartimes_\B M^\barB
\right) 
\quad \quad 
\text{ such that } d\omega'_\B = \id - \zeta_\B \circ \eta_\B, 
\\
\phi_\B \in \barhom^{-2}_{\AbimA}\left(
M \bartimes_\B M^\barB,
\barhom_\B(M,M)
\right) 
\quad \quad 
\text{ such that } d\phi_\B = \omega_B \circ \eta + \eta \circ \omega'_\B  
\end{align}
\item For every $\A$-perfect $M \in \AmodbarB$ fix once and for all
a homotopy inverse 
\begin{align}
\barhom_\A(M,M) \xrightarrow{\zeta_\A} M^\barA \bartimes_\A M 
\end{align}
of the map $\eta_\A$
defined in Defn.~\ref{defn-the-natural-map-eta}. Furthermore, 
choose and fix $\omega_\A$, $\omega'_\A$ and $\phi_\A$ analogously. 
\end{enumerate}
\end{defn}

We now define the homotopy action maps:
\begin{defn}
\label{defn-homotopy-action-maps} 
Let $\A$ and $\B$ be DG-categories. 
For all $\B$-perfect (resp. $\A$-perfect) $M \in \AmodbarB$ 
define the \em homotopy $\A$-action (resp. $\B$-action) map \rm 
\begin{align}
\label{eqn-homotopy-A-action-map}
& \A \xrightarrow{\action} M \bartimes_\B M^\barB, \\
\label{eqn-homotopy-B-action-map}
\text{resp. }\; & \B \xrightarrow{\action} M^\barA \bartimes_\A M 
\end{align}
to be the composition
\begin{align}
& \A \xrightarrow{\action} \barhom_\B(M,M) \xrightarrow{\zeta_\B}
M \bartimes_\B M^\barB,  \\
\text{resp. }\; 
& \B \xrightarrow{\action} \barhom_\A(M,M) \xrightarrow{\zeta_\A}
M^\barA \bartimes_\A M.
\end{align}
\end{defn}

Finally, we define the maps whose boundaries we prove below to 
be the difference between our homotopy adjunctions and genuine ones:

\begin{defn}
\label{defn-higher-homotopies-of-homotopy-adjunctions}
\begin{enumerate}
\item 
Define $\chi_\B,\chi_\A \in \barhom^{-1}_{\AbimB}(M,M)$ to 
be the compositions
\begin{align}
M 
\xrightarrow{\action\bartimes\id} 
\barhom_\B(M,M)\bartimes_\A M
\xrightarrow{\omega_\B \bartimes \id}
\barhom_\B(M,M)\bartimes_\A M
\xrightarrow{\ev}
M,
\\
M 
\xrightarrow{\action\bartimes\id} 
M \bartimes_\B \barhom_\A(M,M)
\xrightarrow{\id \bartimes \omega_\A}
M \bartimes_\B \barhom_\A(M,M)
\xrightarrow{\ev}
M.
\end{align}
\item 
Define $\chi'_{\B} \in \barhom^{-1}_{\BbimA}(M^\barB,M^\barB)$
and 
$\chi'_{\A} \in \barhom^{-1}_{\BbimA}(M^\barA,M^\barA)$
to be the $\BmodA$ maps 
\begin{small}
\begin{align}
M^\barB 
\xrightarrow{\id\bartimes\action} 
M^\barB \bartimes_\A \barhom_\B(M,M)
\xrightarrow{\id\bartimes\omega_\B}
M^\barB \bartimes_\A \barhom_\B(M,M)
\xrightarrow{\composition}
M^\barB
\\
M^\barA 
\xrightarrow{\action\bartimes\id} 
\barhom_\A(M,M) \bartimes_\B M^\barA 
\xrightarrow{\omega_\A\bartimes\id}
\barhom_\A(M,M) \bartimes_\B M^\barA 
\xrightarrow{\composition}
M^\barA
\end{align}
\end{small}
\item 
Define $\xi'_B \in  \barhom^{-1}_{\BbimB}(M^\barB \bartimes_\A M ,M^\barB \bartimes_\A M)$  and
$\xi'_A \in \barhom^{-1}_{\AbimA}(M \bartimes_\B M^\barA ,M \bartimes_\B M^\barA)$  to be the maps
\begin{align}
M^\barB \bartimes_\A M 
\xrightarrow{\omega_\B(\id_M)^\barB \bartimes \id - \id \bartimes \omega_\B(\id_M)} 
M^\barB \bartimes_\A M, 
\\
M \bartimes_\B M^\barA 
\xrightarrow{\id \bartimes \omega_\A(\id_M)^\barA - \omega_\A(\id_M) \bartimes \id} 
M \bartimes_\B M^\barA,
\end{align}
where e.g. $\omega_\B(\id_M) \in \barhom^{-1}_\B(M,M)$ is the image of $\id_M$ under the 
map $\omega_\B$ defined in \eqref{eqn-omega-B-definition}. 
\end{enumerate}
\end{defn}

It is now convenient to adopt the functorial notation explained  
in the beginning of this section. Let $F$ denote the bimodule 
$M \in \AmodbarB)$ and $R$ and $L$ denote the bimodules 
$M^\barB$ and $M^\barA$ in $\BmodbarA$. 

We introduce further conventions regarding the diagonal bimodules 
$\A$ and $\B$:
\begin{itemize}
\item When they occur on their own, they are denoted as $\id_\A$ or
$\id_\B$, respectively.
\item When working with tensor products of several bimodules, we
suppress all appearance of diagonal bimodules in them by 
implicit use of the homotopy equivalences
$\alpha$ and $\beta$ defined in
\S\ref{section-on-non-invertibility-of-A-bartimes-M-to-M}. This
is analogous to implicit use of the equalities $\Phi \circ \id =
\id \circ \Phi = \Phi$ which exist for any functor $\Phi$.

\item More specifically, given a map whose source
is a diagonal bimodule we write it as applied ``in between'' 
two factors of a tensor product of bimodules. This means first 
applying $\beta$ to either of the two factors. It doesn't 
matter which  --- the resulting map is the same. 
The corresponding map in the ordinary module category applies 
$\Delta$ to the bar complex in the middle. 

For example, we write $ FR \xrightarrow{F\action R} FRFR $
to denote the composition
$$ M^\barB \bartimes_\A M 
\xrightarrow{\beta \bartimes \id} 
M^\barB \bartimes_\A \A \bartimes_\A M 
\xrightarrow{\id \bartimes \action \bartimes \id}
M^\barB \bartimes_\A M \bartimes_\B M^\barB \bartimes_\A M. $$
We could have used $\id \bartimes \beta$ instead of $\beta \bartimes
\id$ as they both correspond to the $\BmodB$ map 
$$ \barB \otimes_\B \homm_\B(M \otimes \barB, \B) \otimes_\A \barA \otimes_\A M \otimes_\B \barB
\xrightarrow{\tau \otimes \id \otimes \Delta \otimes \id \otimes \tau}
\homm_\B(M \otimes \barB, \B) \otimes_\A \barA \otimes_\A \barA
\otimes_\A M. $$

\item Similarly, when we apply a map whose target is a diagonal bimodule  
to a part of a tensor product of bimodules, we suppress this diagonal bimodule
in the resulting product.  This stands for using $\alpha$  on the product of this diagonal 
bimodule with one of its two possible neighbouring factors. When both are present 
we choose the left one. This choice matters: in the underlying ordinary 
module category it chooses which of the two bar complexes 
in the middle to contract with the map $\tau$.  

For example, we write $FLFR \xrightarrow{F\trace L} FR$ to denote the 
composition 
$$ M^\barB \bartimes_\A M \bartimes_\B M^\barA \bartimes_\A M
\xrightarrow{\id \bartimes \trace \bartimes \id}
M^\barB \bartimes_\A \A \bartimes_\A M
\xrightarrow{\alpha \bartimes \id}
M^\barB \bartimes_\A M.$$
The map $\alpha \bartimes \id$ corresponds to the $\BmodB$ map 
$$  
\barB  \otimes_\B \homm_\B(M \otimes \barB, \B) \otimes_\A \barA \otimes_\A \barA \otimes_\A M \otimes_\B \barB
\xrightarrow{\tau \otimes \id \otimes \tau \otimes \id \otimes \id \otimes \tau}
\homm_\B(M \otimes \barB, \B) \otimes_\A \barA \otimes_\A M, 
$$
while $\id \bartimes \alpha$ corresponds to the map 
$\tau \otimes \id \otimes \id \otimes \tau \otimes \id \otimes \tau$. 

\item Finally, we have a special convention regarding the maps
$RFR \xrightarrow{R\trace} R$ and $LFL \xrightarrow{\trace L} L$. 
By the general rules laid out above $RFR \xrightarrow{R\trace} R$ 
should denote the map 
\begin{align}
\label{eqn-Rtrace-wrong-way-around}
M^\barB \bartimes_\A M \bartimes_\B M^\barB 
\xrightarrow{\trace \bartimes \id} \B \bartimes_\B M^\barB
\xrightarrow{\alpha} M^\barB. 
\end{align}
Instead, we denote by $R\trace$ the map  
\begin{align}
\label{eqn-Rtrace-right-way-around}
M^\barB \bartimes_\A M \bartimes_\B M^\barB 
\xrightarrow{\id \bartimes \gamma \bartimes \id} 
M^\barB \bartimes_\A \barhom_\B(\B,M) \bartimes_\B M^\barB 
\xrightarrow{\composition} M^\barB. 
\end{align}
Note that it follows from Lemma 
\ref{lemma-explicit-description-of-units-counits-of-Tensor-Hom-adjunction}
that the maps \eqref{eqn-Rtrace-wrong-way-around} and
\eqref{eqn-Rtrace-right-way-around} are homotopic in $\BmodbarA$ and 
thus are isomorphic in $D(\BbimA)$.

Similarly, we denote by $\trace L$ the map 
\begin{align}
M^\barA \bartimes_\A M \bartimes_\B M^\barA
\xrightarrow{ \id \bartimes \gamma \bartimes \id }
M^\barA \bartimes_\A \barhom_\A(\A,M) \bartimes_\B M^\barA
\xrightarrow{\composition}
M^\barA. 
\end{align}

In a tensor product of several maps we always evaluate all
instances of the maps $R \trace$ and $\trace L$ first. 
For example, $FRFR \xrightarrow{\trace\;\trace} \id$
denotes the map 
$$
M^\barB \bartimes_\A M \bartimes_\B M^\barB \bartimes_\A M
\xrightarrow{\id \bartimes \gamma \bartimes \id \bartimes \id} 
M^\barB \bartimes_\A \barhom_\B(\B,M) \bartimes_\B M^\barB \bartimes_\A M 
\xrightarrow{\composition \bartimes \id}
M^{\barB} \bartimes_\A M 
\xrightarrow{\trace}
\B
$$
and not the map 
$$
M^\barB \bartimes_\A M \bartimes_\B M^\barB \bartimes_\A M
\xrightarrow{\id \bartimes \id \bartimes \trace} 
M^\barB \bartimes_\A \barhom_\B(\B,M) \bartimes_\B \B 
\xrightarrow{\id \bartimes \alpha} 
M^\barB \bartimes_\A M 
\xrightarrow{\trace}
\B.
$$
\end{itemize}

\begin{prps}
\label{prps-homotopy-adjunction}
Let $\A$ and $\B$ be DG-categories and let $M \in \AmodbarB$. 

\begin{enumerate}
\item
\label{item-LF-homotopy-adjunction}
If $M$ is $\A$-perfect, 
we have in $\AmodbarB$ and $\BmodbarA$,
respectively:
\begin{align}
\label{eqn-F-FLF-F-composition}
F \xrightarrow{\action F} FLF \xrightarrow{F \trace} F \quad & = 
\id + d\chi_\A, 
\\
\label{eqn-L-LFL-L-composition}
L \xrightarrow{L\action} LFL \xrightarrow{\trace L} L \quad & = 
\id + d\chi'_\A.
\end{align}
\item
\label{item-FR-homotopy-adjunction}
If $M$ is $\B$-perfect, we have in $\AmodbarB$ and $\BmodbarA$,
respectively:
\begin{align}
\label{eqn-F-FRF-F-composition}
F \xrightarrow{F\action} FRF \xrightarrow{\trace F} F \quad & = 
\id + d\chi_\B, 
\\
\label{eqn-R-RFR-R-composition}
R \xrightarrow{\action R} RFR \xrightarrow{R\trace} R \quad & = 
\id + d\chi'_\B.
\end{align}

\end{enumerate}
\end{prps}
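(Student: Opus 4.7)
The plan is to prove all four identities by a uniform structural argument, reducing each of them to the triangle identity of the genuine Tensor-Hom adjunction (Prop.~\ref{prps-bartimes-and-barhom-are-dg-adjoint}) combined with the defining relation $d\omega = \eta \circ \zeta - \id$ from Defn.~\ref{defn-homotopy-inverses-of-evaluation-maps}.  I would treat the identity \eqref{eqn-F-FRF-F-composition} in full detail; the remaining three identities then follow by structurally identical reasoning with $(\eta_\B,\zeta_\B,\omega_\B,\chi_\B)$ replaced by the relevant analogue, and with the convention for $R\trace$ and $\trace L$ from \S\ref{section-subsection-homotopy-adjunction-for-tensor-functors} applied when handling \eqref{eqn-R-RFR-R-composition} and \eqref{eqn-L-LFL-L-composition}.

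The first step is to rewrite both homotopy-adjunction maps in terms of Tensor-Hom data.  By Defn.~\ref{defn-homotopy-action-maps}, the homotopy action $\A\xrightarrow{\action} M\bartimes_\B M^\barB$ is $\zeta_\B$ postcomposed with the standard action $\A\xrightarrow{\action}\barhom_\B(M,M)$, the latter being the Tensor-Hom unit at $\A$ of Prop.~\ref{prps-bartimes-and-barhom-are-dg-adjoint}.  Tensoring on the right with $\id_F$ and inserting the implicit $\beta$-identification expresses $F\action\colon F\to FRF$ as
\begin{equation*}
F \xrightarrow{\action\,\bartimes\,\id_F} \barhom_\B(M,M)\bartimes_\A F \xrightarrow{\zeta_\B\,\bartimes\,\id_F} FRF.
\end{equation*}
On the other side, by Defn.~\ref{defn-homotopy-trace-maps} the map $\trace$ is the Tensor-Hom counit at $\B$, so on unpacking the explicit formulas for the counit $\ev$ from Lemma~\ref{lemma-explicit-description-of-units-counits-of-Tensor-Hom-adjunction} and for $\eta_\B$ from Defn.~\ref{defn-the-natural-map-eta}, and simplifying via associativity of $\composition$, one obtains the identification
\begin{equation*}
\trace F \;=\; \ev \circ \bigl(\eta_\B\bartimes\id_M\bigr)\colon FRF\longrightarrow F,
\end{equation*}
where $\ev\colon\barhom_\B(M,M)\bartimes_\A F\to F$ is the Tensor-Hom counit at $F$.

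Combining these, the composition $\trace F\circ F\action$ equals $\ev\circ\bigl((\eta_\B\circ\zeta_\B)\bartimes\id_F\bigr)\circ\bigl(\action\bartimes\id_F\bigr)$.  Substituting $\eta_\B\circ\zeta_\B=\id+d\omega_\B$ from Defn.~\ref{defn-homotopy-inverses-of-evaluation-maps} splits this into two summands.  The first, $\ev\circ(\action\bartimes\id_F)$, equals $\id_F$ by the Tensor-Hom triangle identity of Prop.~\ref{prps-bartimes-and-barhom-are-dg-adjoint}.  The second summand, since both $\ev$ and $\action$ are closed of degree $0$, equals $d\bigl(\ev\circ\bigl((\omega_\B\circ\action)\bartimes\id_F\bigr)\bigr)$, which is exactly $d\chi_\B$ by Defn.~\ref{defn-higher-homotopies-of-homotopy-adjunctions}.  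The main technical obstacle is the identification $\trace F = \ev\circ(\eta_\B\bartimes\id_M)$ in the second step: both sides are explicit compositions built from $\gamma$, $\delta_0$, and $\composition$, and their equality reduces to a careful bookkeeping of the implicit $\alpha$/$\beta$ identifications together with the associativity of $\composition$ in $\modbar$.  Once this identification is established for \eqref{eqn-F-FRF-F-composition}, the analogous identifications for the $\A$-perfect identities \eqref{eqn-F-FLF-F-composition} and \eqref{eqn-L-LFL-L-composition} use the second adjunction of Prop.~\ref{prps-bartimes-and-barhom-are-dg-adjoint}, while those for \eqref{eqn-R-RFR-R-composition} and \eqref{eqn-L-LFL-L-composition} invoke the description of $R\trace$ and $\trace L$ via $\gamma$ from \S\ref{section-subsection-homotopy-adjunction-for-tensor-functors}.
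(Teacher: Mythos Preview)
Your proposal is correct and takes essentially the same approach as the paper. The paper organizes the argument as a diagram chase (for the $\A$-perfect case): a triangle encoding $\eta_\A\circ\zeta_\A = \id + d\omega_\A$, a pentagon and small triangle encoding precisely your key identification $F\trace = \ev\circ(\id\bartimes\eta_\A)$, and the observation that the lower path $\ev\circ(\id\bartimes\action)$ is the identity; you present the same three ingredients linearly for the $\B$-perfect case, and your flagged ``main technical obstacle'' is exactly what the paper's pentagon and small triangle verify.
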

\begin{proof}
We only prove the assertion
\eqref{item-LF-homotopy-adjunction}, 
the assertion
\eqref{item-FR-homotopy-adjunction}
is proved similarly. 

Consider the following diagram of morphisms in $\AmodbarB$:
\begin{equation*}
\begin{tikzcd}[column sep={2cm},
execute at end picture={\node at ([xshift=-5mm,yshift=3mm]-3,0) [circle,draw]{$A$};  }]
M
\ar{rr}{\id \bartimes \action}
\ar{ddrr}[']{\id \bartimes \action}
& &
M\bartimes_\B M^\barA \bartimes_\A M
\ar{rr}{\trace\bartimes\id}
\ar{dd}{\id \bartimes \eta_\A}
& &
\A \bartimes_\A M  
\ar{dd}{\alpha}
\ar{dl}{\id\bartimes\gamma}
\\ 
& & &
\A \bartimes_\A \barhom_\A(\A,M) 
\ar{dr}{\ev}
&  
\\
& &
M \bartimes_\B \barhom_\A(M,M) 
\ar{rr}{\ev}
& &
M.
\end{tikzcd}
\end{equation*}

The triangle $(A)$ commutes up to 
$d \left(\left(\id\bartimes \omega_\A\right) \circ \left( \id \bartimes \action \right) \right)$. 
The rest of the diagram commutes: the pentagon --- by definition of the evaluation map, 
and the triangle --- by direct verification. Therefore the perimeter 
of the diagram commutes up to $d\chi_\A$. 

The composition of the lower-left half of the perimeter is readily 
verified to be the identity morphism. On the other hand, the
composition of the upper-right half of the perimeter is 
the LHS of \eqref{eqn-F-FLF-F-composition}. 
Since the perimeter of the diagram commutes up to $d\chi_\A$, the
equality in \eqref{eqn-F-FLF-F-composition} follows. 

Next, consider the following diagram of morphisms in $\BmodbarA$:
\begin{equation*}
\begin{tikzcd}[column sep={2cm}, 
execute at end picture={\node at ([xshift=-5mm,yshift=2mm]-3,0) [circle,draw]{$1$};  }]
M^\barA 
\ar{rr}{\action\bartimes\id}
\ar{ddrr}[']{\action\bartimes\id}
& &
M^\barA\bartimes_\A M \bartimes_\B M^\barA
\ar{rr}{\quad \quad \quad \; \quad \id \bartimes \gamma \bartimes \id}
\ar{dd}{\eta_\A\bartimes\id}
& &
M^\barA \bartimes_\A \barhom_\A(\A,M) \bartimes_\B M^\barA
\ar{dd}{\composition}
\\ 
& & & &  
\\
& &
\barhom_\A(M,M) \bartimes_\B M^\barA
\ar{rr}{\composition}
& &
M^\barA.
\end{tikzcd}
\end{equation*}

Similar to the above, the lower-left half of the perimeter composes to $\id$, 
while the upper-right --- to the LHS of \eqref{eqn-L-LFL-L-composition}. 
The triangle $(A)$ commutes up to $d\left( \left( \omega_\A \bartimes \id \right) \circ \left( \action \bartimes \id\right)\right)$ 
and the quadrilateral commutes by the associativity of the composition in $\modbarA$. 
It follows that the perimeter commutes up to $d\chi'_\A$, whence \eqref{eqn-L-LFL-L-composition}. 
\end{proof}

\begin{cor}
\label{cor-FR-FRFR-FR-equals-zero-and-other}
Let $\A$ and $\B$ be DG-categories and let $M \in \AmodbarB$. 
\begin{enumerate}
\item If $M$ is $\A$-perfect, we have in $\BmodbarB$,
respectively:
\begin{align}
\label{eqn-LF-LFLF-LF-composition}
LF \xrightarrow{L\action F}  LFLF \xrightarrow{\trace LF - LF\trace} LF
\quad = \quad d\xi'_\A,
\end{align}
\item If $M$ is $\B$-perfect, we have in $\AmodbarA$,
respectively:
\begin{align}
\label{eqn-FR-FRFR-FR-composition}
FR \xrightarrow{F\action R}  FRFR \xrightarrow{FR\trace  - \trace FR} FR 
\quad = \quad d\xi'_\B,
\end{align}
\end{enumerate}
\end{cor}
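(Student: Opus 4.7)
The plan is to derive both identities from the two ``triangle identities'' established in Proposition \ref{prps-homotopy-adjunction} by horizontally composing each with the complementary bimodule and then subtracting.

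For assertion (2), Proposition \ref{prps-homotopy-adjunction}(2) furnishes the identities
$\trace F \circ F\action = \id_F + d\chi_\B$ in $\barhom_{\AbimB}(F,F)$
and
$R\trace \circ \action R = \id_R + d\chi'_\B$ in $\barhom_{\BbimA}(R,R)$.
First I would horizontally compose the first identity with $\id_R$ on the right (in the functorial notation; equivalently, $\id_R \bartimes_\A (-)$ on the left in the bimodule notation) and the second identity with $\id_F$ on the left. Since the horizontal composition with a closed morphism is a DG operation it commutes with the differential, so these two operations yield
\begin{align*}
\trace FR \circ F\action R &= \id_{FR} + d(\chi_\B R), \\
FR\trace \circ F\action R &= \id_{FR} + d(F\chi'_\B),
\end{align*}
in $\barhom_{\BbimB}(FR, FR)$. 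Subtracting gives
\begin{align*}
(FR\trace - \trace FR) \circ F\action R = d(F\chi'_\B - \chi_\B R).
\end{align*}

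The remaining task is to identify the primitive $F\chi'_\B - \chi_\B R$ with $\xi'_\B = \omega_\B(\id_M)^\barB \bartimes \id - \id \bartimes \omega_\B(\id_M)$. Unwinding the definitions of $\chi_\B$ and $\chi'_\B$ from Definition \ref{defn-higher-homotopies-of-homotopy-adjunctions} and using Lemma \ref{lemma-explicit-description-of-units-counits-of-Tensor-Hom-adjunction} to rewrite the evaluation maps involved, one sees that after $\bartimes_\A \id_R$ the composition $\chi_\B$ reduces to $\id \bartimes \omega_\B(\id_M) \colon FR \to FR$, because the $\A$-action map applied to $\id_\A$ picks out $\id_M \in \barhom_\B(M,M)$ and $\omega_\B$ is a bimodule endomorphism. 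Symmetrically, $F\chi'_\B$ reduces to $\omega_\B(\id_M)^\barB \bartimes \id$. Their difference is exactly $\xi'_\B$. Assertion (1) is then proved by the entirely symmetric argument, horizontally composing the two identities of Proposition \ref{prps-homotopy-adjunction}(1) with $\id_L$ on the left and with $\id_F$ on the right respectively, subtracting, and identifying the resulting degree $-1$ primitive with $\xi'_\A$.

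The main obstacle I anticipate is the bookkeeping in the final identification: translating between the ``tensor/evaluation'' description of $\chi_\B$ (using $\action$, $\omega_\B$ and $\ev$) and the ``bar-tensor'' description of $\xi'_\B$ (using $\omega_\B(\id_M)$ and its $\barB$-dual) requires careful tracking of signs and of the conventions for contracting diagonal bimodules inherited from Proposition \ref{prps-alpha-and-tensoring-f-with-identity} and the rules enumerated just before the statement of Proposition \ref{prps-homotopy-adjunction}. All the structural content of the corollary is already contained in Proposition \ref{prps-homotopy-adjunction}; what remains after the subtraction is a purely formal matching of two degree $-1$ morphisms in $\barhom_{\BbimB}(FR,FR)$.
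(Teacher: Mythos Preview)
Your proposal is correct and follows essentially the same approach as the paper: the paper proves assertion (1) by whiskering the two identities of Proposition~\ref{prps-homotopy-adjunction}(1) with $F$ and $L$ respectively, subtracting to obtain $d(\chi'_\A F - L\chi_\A)$, and then identifying this primitive with $\xi'_\A$ via a short explicit computation; you carry out the mirror argument for assertion (2). Your anticipated bookkeeping step is exactly what the paper summarises in one displayed composition, so there is no gap.
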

\begin{proof}
By Prop.~\ref{prps-homotopy-adjunction} the LHS of 
\eqref{eqn-LF-LFLF-LF-composition} equals 
$d(\chi'_\A F - L \chi_\A)$, and $\chi'_\A F - L \chi_\A$ is the composition
\begin{align*}
M \bartimes_\B M^\barA 
\xrightarrow{\id \bartimes (\omega_\A \circ \action) \bartimes \id} 
M \bartimes_\B \barhom_\A (M,M) \bartimes_\B M^\barA
\xrightarrow{	\id \bartimes \composition - \ev \bartimes \id}
M \bartimes_\B M^\barA 
\end{align*}
which is precisely $\xi'_\A$. The other assertion works out similarly.
\end{proof}

Next we treat the two compositions dual to \eqref{eqn-LF-LFLF-LF-composition}
and \eqref{eqn-FR-FRFR-FR-composition}: 
\begin{align}
\label{eqn-FL-FLFL-FL-composition}
FL \xrightarrow{FL\action - \action FL}  FLFL \xrightarrow{F \trace L} FL, 
\\
\label{eqn-RF-RFRF-RF-composition}
RF \xrightarrow{\action RF - RF\action}  RFRF \xrightarrow{R \trace F} RF. 
\end{align}
Here things do not work
out so well, because, for instance, in 
\eqref{eqn-FL-FLFL-FL-composition}
after contracting 
$M^\barA \bartimes M \bartimes M^\barA \bartimes M$ to 
$M^\barA \bartimes \barhom_\A(\A,\A) \bartimes M$ 
the map $F{\trace}L$ composes $\barhom_\A(\A,\A)$ with 
$M^\barA$ on the left. 
This works well when composed with $FL\action$, but not 
with ${\action}FL$. Thus we do not get something as simple as 
$d(F\chi'_\A - \chi_\A L)$. 

We can fix this by composing
\eqref{eqn-FL-FLFL-FL-composition} with the homotopy equivalence 
$M^\barA \bartimes M \xrightarrow{\eta} \barhom_\A(M,M)$. Then it doesn't
matter whether in $M^\barA \bartimes \barhom_\A(\A,\A) \bartimes M$ 
we contract $\barhom_\A(\A,\A)$ to the left or to the right. 
And any boundary which lifts 
$\eta \circ \eqref{eqn-FL-FLFL-FL-composition}$ induces a lift of 
\eqref{eqn-FL-FLFL-FL-composition} itself. 
More generally: 

\begin{lemma}
\label{lemma-lifting-to-a-boundary-via-homotopy-equivalence}
Let $E,F,F' \in \AmodbarB$ and let $F \xrightarrow{s} F'$ and $F'
\xrightarrow{s'} F$ be closed maps of degree $0$ such that there exists 
$F \xrightarrow{t} F$ with $dt = \id - s' \circ s$. 

\begin{enumerate}
\item
\label{eqn-homotopy-retract-induced-lift-stage-one} 
Let $E \xrightarrow{f} F$ be a closed map and let $E
\xrightarrow{g} F'$ be such that 
$dg = s \circ f$. Then 
$$ d(t \circ f + s' \circ g) = f. $$
We say that $t \circ f + s' \circ g$ is the lift of $f$ induced by 
the lift $g$ of $s \circ f$.  
\item
\label{eqn-homotopy-retract-induced-lift-stage-two} 
Let $f$ and $g$ be as above and let $h$ be another lift of $f$. Let
$E \xrightarrow{j} F'$ be such that 
$dj = g - s \circ h$. Then 
$$ d(s' \circ j - t\circ h) = t \circ f + s' \circ g - h.$$
We say that $s' \circ j - t\circ h$ is the lift of $t \circ f + s' \circ g - h$ induced by the lift $j$ of $g - s \circ h$. 
\end{enumerate}
\end{lemma}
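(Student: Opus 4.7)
The plan is to settle this lemma by a direct Leibniz-rule computation, since both claims are purely formal identities in the DG category $\AmodbarB$ once one tracks the degrees of the maps involved. There is no need to use the $\modbar$-specific machinery developed earlier: the statement holds in any DG category and the homotopy equivalence hypothesis only enters through the single relation $dt = \id - s'\circ s$.

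First, I would record the degrees. Since $s$ and $s'$ are closed of degree $0$ and $\id - s'\circ s$ has degree $0$, the hypothesis $dt = \id - s'\circ s$ forces $|t| = -1$. In part \eqref{eqn-homotopy-retract-induced-lift-stage-one} the map $f$ is closed of degree $0$, so $s \circ f$ has degree $0$, and $dg = s\circ f$ gives $|g| = -1$. In part \eqref{eqn-homotopy-retract-induced-lift-stage-two} the assumption $dh = f$ forces $|h| = -1$, so $g - s\circ h$ has degree $-1$ and consequently $|j| = -2$.

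For \eqref{eqn-homotopy-retract-induced-lift-stage-one}, I would apply the graded Leibniz rule to each summand. Since $s'$ is closed of degree $0$ and $dg = s \circ f$, one has $d(s'\circ g) = s'\circ s \circ f$. Since $|t| = -1$ and $f$ is closed, one has $d(t \circ f) = (dt)\circ f - t \circ df = (\id - s'\circ s)\circ f = f - s'\circ s\circ f$. Adding these two yields $d(t\circ f + s'\circ g) = f$, as required.

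For \eqref{eqn-homotopy-retract-induced-lift-stage-two}, the same approach applies. Using $|s'|=0$ and $dj = g - s\circ h$ one gets $d(s'\circ j) = s'\circ g - s'\circ s\circ h$. Using $|t| = -1$ and $dh = f$ one gets $d(t\circ h) = (\id - s'\circ s)\circ h - t\circ f = h - s'\circ s\circ h - t\circ f$. Subtracting yields $d(s'\circ j - t\circ h) = s'\circ g + t\circ f - h$, which is exactly the desired identity. I expect no real obstacle here; the only thing to be careful about is the sign coming from $|t| = -1$ in the Leibniz rule, which is precisely what ensures the cancellation of the two $s'\circ s\circ h$ terms in part \eqref{eqn-homotopy-retract-induced-lift-stage-two} and of the two $s'\circ s\circ f$ terms in part \eqref{eqn-homotopy-retract-induced-lift-stage-one}.
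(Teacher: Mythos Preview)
Your proposal is correct and matches the paper's own proof, which simply reads ``Direct computation.'' The only minor remark is that the lemma as stated does not require $|f|=0$; however, your computation only uses $df=0$, so it goes through unchanged for $f$ of arbitrary degree.
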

\begin{proof}
Direct computation. 
\end{proof}

\begin{prps}
\label{prps-FL-FLFL-FL-and-RF-RFRF-RF-composed-with-eta}
Let $\A$ and $\B$ be DG-categories and let $M \in \AmodbarB$. 
\begin{enumerate}
\item  
\label{item-FL-FLFL-FL-eta-composition}
If $M$ is $\A$-perfect, then in $\AmodbarA$ the map $\eta \circ \eqref{eqn-FL-FLFL-FL-composition}$ is the boundary of 
\begin{align}
\label{eqn-FL-FLFL-FL-eta-composition-lift}
 \composition \circ \Bigl((\omega  \bartimes \eta) \circ (\action
\bartimes \id) - (\eta \bartimes \omega) \circ (\id \bartimes \action)\Bigr).
\end{align}
Define $\xi_\A$ to be the induced lift of \eqref{eqn-FL-FLFL-FL-composition}
as per Lemma \ref{lemma-lifting-to-a-boundary-via-homotopy-equivalence}\eqref{eqn-homotopy-retract-induced-lift-stage-one} with $s = \eta_\A$, $s' = \zeta_\A$, $t = \omega'_\A$, $f = \eqref{eqn-FL-FLFL-FL-composition}$, and $g = \eqref{eqn-FL-FLFL-FL-eta-composition-lift}$. 
\item 
\label{item-RF-RFRF-RF-eta-composition}
If $M$ is $\B$-perfect, then in $\BmodbarB$ the map $\eta \circ \eqref{eqn-RF-RFRF-RF-composition}$ is the boundary of 
\begin{align}
\label{eqn-RF-RFRF-RF-eta-composition-lift}
\composition \circ \Bigl( (\eta \bartimes \omega) \circ (\id
\bartimes \action) - (\omega \bartimes \eta) \circ (\action \bartimes
\id)\Bigr). 
\end{align}
Define $\xi_\B$ to be the induced lift of \eqref{eqn-RF-RFRF-RF-composition}
as per Lemma \ref{lemma-lifting-to-a-boundary-via-homotopy-equivalence}\eqref{eqn-homotopy-retract-induced-lift-stage-one} with $s = \eta_\B$, $s' = \zeta_\B$, $t = \omega'_\B$, $f = \eqref{eqn-RF-RFRF-RF-composition}$, and 
$g =  \eqref{eqn-RF-RFRF-RF-eta-composition-lift}$. 
\end{enumerate} 
\end{prps}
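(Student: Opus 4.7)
My plan is to prove both parts by a direct Leibniz-rule computation of the differential of the proposed lift, verifying that it equals $\eta$ composed with \eqref{eqn-FL-FLFL-FL-composition} or \eqref{eqn-RF-RFRF-RF-composition} respectively. The two parts are entirely analogous — swapping $L \leftrightarrow R$, $\A \leftrightarrow \B$, and left/right — so I treat part \eqref{item-FL-FLFL-FL-eta-composition} in detail.

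The first step is to read the formula \eqref{eqn-FL-FLFL-FL-eta-composition-lift} correctly: the $\action$ appearing in the lift is the \emph{ordinary} action map $\B \rightarrow \barhom_\A(M,M)$, not the homotopy action $\B \rightarrow FL$ used in \eqref{eqn-FL-FLFL-FL-composition}, so that $(\action \bartimes \id)\colon FL \rightarrow \barhom_\A(M,M) \bartimes_\B FL$ and $(\omega \bartimes \eta)$ is then literally the tensor of two well-typed bimodule maps into $\barhom_\A(M,M) \bartimes_\B \barhom_\A(M,M)$. Writing $H = H_1 - H_2$ for the two summands and applying Leibniz, every factor except $\omega_\A$ is closed, so $dH_1$ reduces to $\composition \circ ((d\omega_\A \circ \action) \bartimes \eta)$, and similarly for $H_2$. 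Substituting $d\omega_\A = \eta_\A\zeta_\A - \id$ and using the defining equality $\action_{\mathrm{homo}} = \zeta_\A \circ \action$ from Defn \ref{defn-homotopy-action-maps}, one obtains
\[
dH_1 \;=\; \composition \circ \bigl((\eta_\A \circ \action_{\mathrm{homo}}) \bartimes \eta_\A\bigr) \;-\; \composition \circ (\action \bartimes \eta_\A).
\]
Since $\action(1_\B) = \id_M$ in $\barhom_\A(M,M)$, the second term collapses to $\eta_\A$ by unitality of $\composition$. Rewriting the first term as $\composition \circ (\eta_\A \bartimes \eta_\A) \circ (\action_{\mathrm{homo}} \bartimes \id) = \composition \circ (\eta_\A \bartimes \eta_\A) \circ FL\action$, and handling $dH_2$ symmetrically, the $\pm \eta_\A$ terms cancel in $d(H_1 - H_2)$ and we are left with
\[
dH \;=\; \composition \circ (\eta_\A \bartimes \eta_\A) \circ \bigl(FL\action - \action FL\bigr).
\]

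The key coherence identity needed to finish is $\composition \circ (\eta_\A \bartimes \eta_\A) = \eta_\A \circ F\trace L$ as maps $FLFL \rightarrow \barhom_\A(M,M)$ in $\BmodbarB$. Both sides compute the same endomorphism of $M$: the inner $F \bartimes_\B L$ is evaluated via the trace, and the result bridges the outer $L$ and $F$ into an element of $\barhom_\A(M,M)$. This identity follows by unwinding the definition of $\eta_\A$ as $\composition \circ (\id \bartimes \gamma)$ from Defn \ref{defn-the-natural-map-eta}, the explicit definition of $F\trace L$ in terms of $\trace\colon M \bartimes_\B M^{\barA} \to \A$, and the associativity of $\composition$ — which itself reflects the coassociativity of $\Delta\colon \barA \rightarrow \barA \otimes_\A \barA$ established in \S\ref{section-the-bar-complex}. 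Granting this identity, $dH = \eta_\A \circ F\trace L \circ (FL\action - \action FL) = \eta_\A \circ \eqref{eqn-FL-FLFL-FL-composition}$, as required. The induced lifts $\xi_\A$ and $\xi_\B$ are then immediate from Lemma~\ref{lemma-lifting-to-a-boundary-via-homotopy-equivalence}\eqref{eqn-homotopy-retract-induced-lift-stage-one} with $s = \eta_\A$, $s' = \zeta_\A$, $t = \omega'_\A$ (resp.\ the $\B$-versions).

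The main obstacle is the coherence identity $\composition \circ (\eta \bartimes \eta) = \eta \circ F\trace L$: at the naive heuristic level of ``write $\eta(l \bartimes f)$ as $m \mapsto l(m)\cdot f$,'' the two sides appear to differ by a permutation of the four tensor factors. What saves the day is that neither side is really this naive formula — both live in $\homm_\A(M \otimes_\A \barA, M)$ and include precise bar-complex corrections — and the apparent discrepancy is reconciled by the sign-twisted associativity of $\otimes_\A \barA \otimes_\A$ together with the identities $d\mu = \tau \otimes \id - \id \otimes \tau$ and $\tau \circ \mu = 0$ from Lemma~\ref{lemma-d-mu-equals-tau-id-minus-id-tau}. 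Making this bookkeeping precise is the only delicate part of the argument; everything else is formal manipulation with closed morphisms and the fixed data of Defn~\ref{defn-homotopy-inverses-of-evaluation-maps}.
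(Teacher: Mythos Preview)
Your proposal is correct and follows essentially the same argument as the paper, just organised as a direct Leibniz computation rather than a diagram chase. The paper draws a two-rectangle diagram
\[
\begin{tikzcd}[column sep=large]
FL \ar{r}{\action\bartimes\id-\id\bartimes\action}\ar{d}{\eta} & FLFL \ar{r}{F\trace L}\ar{d}{\eta\bartimes\eta} & FL \ar{d}{\eta}\\
\barhom_\A(M,M) \ar{r}{\action\bartimes\id-\id\bartimes\action} & \barhom_\A(M,M)\bartimes_\B\barhom_\A(M,M) \ar{r}{\composition} & \barhom_\A(M,M)
\end{tikzcd}
\]
and observes: the right square genuinely commutes (this is exactly your coherence identity $\composition\circ(\eta\bartimes\eta)=\eta\circ F\trace L$); the left square commutes up to the boundary of the proposed lift; and the bottom row composes to zero (this is your ``$\pm\eta$ terms cancel''). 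Your steps (Leibniz on $\omega$, substitute $d\omega=\eta\zeta-\id$, use $\zeta\circ\action=\action_{\mathrm{homo}}$, cancel the unit terms, apply the coherence identity) are precisely the content of these three observations, read in the reverse direction.

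One small point: you describe the coherence identity as ``the delicate part'' requiring the $\mu$-identities $d\mu=\tau\otimes\id-\id\otimes\tau$ and $\tau\circ\mu=0$. In fact the paper treats this square as commuting on the nose, by associativity of $\composition$ in $\modbar$ (i.e.\ coassociativity of $\Delta$) together with the definition of $\eta_\A$ as $\composition\circ(\id\bartimes\gamma)$. No homotopy correction via $\mu$ is needed here; you are over-hedging. The $\mu$-identities come into play elsewhere in the paper (e.g.\ Prop.~\ref{prps-defining-the-maps-nu_A-and-nu_B}) but not for this particular square.
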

\begin{proof}
We only prove the assertion \eqref{item-FL-FLFL-FL-eta-composition}, the
assertion \eqref{item-RF-RFRF-RF-eta-composition} is proved analogously. 
Consider the diagram:
\begin{equation*}
\begin{tikzcd}[column sep={3cm}]
M^\barA \bartimes_\A M
\ar{r}{\action \bartimes \id - \id \bartimes \action}
\ar{d}{\eta}
&
M^\barA \bartimes_\A M \bartimes_\B
M^\barA \bartimes_\A M
\ar{d}{\eta \bartimes \eta}
\ar{r}{\composition \bartimes \id}
& 
M^\barA \bartimes_\A M
\ar{d}{\eta}
\\
\barhom_\A(M,M) 
\ar{r}{\action \bartimes \id - \id \bartimes \action}
&
\barhom_\A(M,M) \bartimes_\B \barhom_\A(M,M)
\ar{r}{\composition}
& 
\barhom_\A(M,M) 
\end{tikzcd}
\end{equation*}
The left rectangle commutes up to
$$ d
\Bigl((\omega  \bartimes \eta) \circ (\action
\bartimes \id) - (\eta \bartimes \omega) \circ (\id \bartimes
\action)\Bigr), $$
while the right rectangle genuinely commutes.  
The composition in the bottom row is zero. 
As the composition in the top row is 
\eqref{eqn-FL-FLFL-FL-composition}, the assertion 
\eqref{item-FL-FLFL-FL-eta-composition} follows. 
\end{proof}

\begin{cor}
\label{cor-RF-RFRF-RF-equals-dFxiB-and-other}
Let $\A$ and $\B$ be DG-categories and let $M \in \AmodbarB$. 
\begin{enumerate}
\item 
\label{item-FL-FLFL-FL-composition-lift}
If $M$ is $\A$-perfect, we have in $\AmodbarA$
\begin{align}
\label{eqn-FL-FLFL-FL-composition-lift}
FL \xrightarrow{FL\action - \action FL}  FLFL \xrightarrow{F \trace L} FL 
\quad = \quad d \xi_\A.
\end{align}
\item 
\label{item-RF-RFRF-RF-composition-lift}
If $M$ is $\B$-perfect, we have in $\BmodbarB$,
respectively:
\begin{align}
\label{eqn-RF-RFRF-RF-composition-lift}
RF \xrightarrow{\action RF - RF\action}  RFRF \xrightarrow{R \trace F} RF
\quad = \quad d \xi_\B .
\end{align}
\end{enumerate}
\end{cor}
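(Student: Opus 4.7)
The proof is essentially a direct application of the previous results. The plan is to observe that the corollary is almost tautologically true once one unpacks what ``induced lift'' means in Proposition~\ref{prps-FL-FLFL-FL-and-RF-RFRF-RF-composed-with-eta}, and then invokes Lemma~\ref{lemma-lifting-to-a-boundary-via-homotopy-equivalence}.

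First I would verify that the two compositions \eqref{eqn-FL-FLFL-FL-composition} and \eqref{eqn-RF-RFRF-RF-composition} are closed of degree $0$; this is immediate since each factor ($\action$, $\trace$, $\bartimes$ with identities) is closed of degree $0$, and closedness is preserved by sums, compositions and tensor products. Then, in Proposition~\ref{prps-FL-FLFL-FL-and-RF-RFRF-RF-composed-with-eta}, $\xi_\A$ (respectively $\xi_\B$) was defined to be the lift of \eqref{eqn-FL-FLFL-FL-composition} (respectively \eqref{eqn-RF-RFRF-RF-composition}) induced, via Lemma~\ref{lemma-lifting-to-a-boundary-via-homotopy-equivalence}\eqref{eqn-homotopy-retract-induced-lift-stage-one}, by the explicit lifts \eqref{eqn-FL-FLFL-FL-eta-composition-lift} and \eqref{eqn-RF-RFRF-RF-eta-composition-lift} of their compositions with $\eta_\A$ and $\eta_\B$ respectively. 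Recall that the lemma's data consists of $s, s', t, f, g$ with $dt = \id - s' \circ s$, $f$ closed, and $dg = s \circ f$, and its conclusion is precisely that $d(t \circ f + s' \circ g) = f$.

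The only thing to check, then, is that all the hypotheses of the lemma are in force for the substitutions specified in Proposition~\ref{prps-FL-FLFL-FL-and-RF-RFRF-RF-composed-with-eta}. For part \eqref{item-RF-RFRF-RF-composition-lift}, one has $s = \eta_\B$, $s' = \zeta_\B$, $t = \omega'_\B$, $f = \eqref{eqn-RF-RFRF-RF-composition}$, and $g = \eqref{eqn-RF-RFRF-RF-eta-composition-lift}$. The identity $dt = \id - s' \circ s$ is precisely the defining property of $\omega'_\B$ from Definition~\ref{defn-homotopy-inverses-of-evaluation-maps} (which uses $\B$-perfectness of $M$ via Lemma~\ref{lemma-A-and-B-perfection-iff-the-evaluation-map-is-homotopy-equivalence} to guarantee that $\eta_\B$ is a homotopy equivalence with the chosen quasi-inverse $\zeta_\B$). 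The identity $dg = s \circ f$ is exactly what Proposition~\ref{prps-FL-FLFL-FL-and-RF-RFRF-RF-composed-with-eta}\eqref{item-RF-RFRF-RF-eta-composition} asserts. Lemma~\ref{lemma-lifting-to-a-boundary-via-homotopy-equivalence}\eqref{eqn-homotopy-retract-induced-lift-stage-one} then gives $d\xi_\B = \eqref{eqn-RF-RFRF-RF-composition}$, as required. The argument for part \eqref{item-FL-FLFL-FL-composition-lift} is identical, with $\A$-perfectness of $M$ supplying the needed homotopy equivalence $\eta_\A$ and its associated data $(\zeta_\A, \omega_\A, \omega'_\A)$.

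There is no hard step to overcome: the genuine work was done in setting up Definition~\ref{defn-homotopy-inverses-of-evaluation-maps} (guaranteeing the existence of $\omega', \omega$, etc.), in Proposition~\ref{prps-FL-FLFL-FL-and-RF-RFRF-RF-composed-with-eta} (producing the explicit lift of $\eta \circ f$ by an explicit diagram chase), and in Lemma~\ref{lemma-lifting-to-a-boundary-via-homotopy-equivalence} (the abstract lifting machinery). All that remains for the corollary is bookkeeping, so the ``proof'' will be two or three lines that simply name the lemma being applied and record that the hypotheses were verified in the antecedent proposition.
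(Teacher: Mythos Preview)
Your proposal is correct and matches the paper's approach: the paper gives no explicit proof for this corollary, treating it as immediate from the definition of $\xi_\A$ and $\xi_\B$ in Proposition~\ref{prps-FL-FLFL-FL-and-RF-RFRF-RF-composed-with-eta} as induced lifts via Lemma~\ref{lemma-lifting-to-a-boundary-via-homotopy-equivalence}\eqref{eqn-homotopy-retract-induced-lift-stage-one}. Your unpacking of why this is indeed immediate is accurate.
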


\subsection{Canonical twisted complexes associated to a homotopy adjunction}

In Corollaries \ref{cor-FR-FRFR-FR-equals-zero-and-other}
and 
\ref{cor-RF-RFRF-RF-equals-dFxiB-and-other}
we've shown that each of the four compositions in 
\eqref{eqn-LF-LFLF-LF-composition}-\eqref{eqn-RF-RFRF-RF-composition} is a boundary. 
We thus obtain four natural three-term twisted complexes e.g. 
\begin{equation}
\label{eqn-FL-FLFL-FL-twisted-complex}
\begin{tikzcd}[column sep={3cm},row sep={1.5cm}] 
FL
\ar{r}{FL{\action}-{\action}FL}
\ar[bend left=20,dashed]{rr}{-\xi_\A}
&
FLFL
\ar{r}{F{\trace}L}
&
FL. 
\end{tikzcd}
\end{equation}

In this section, we show that these extend to natural four-term 
twisted complexes whose convolutions are the squares of the 
twist $T$, the dual twist $T'$, the co-twist $C$ and the dual co-twist
$C'$ of $F$. 

\begin{defn}
\label{defn-upsilon}
Let  $\A$ and $\B$ be DG-categories and let $M \in \AmodbarB$ . 
\begin{enumerate}
\item If $M$ is $\B$-perfect, define the degree $-1$ morphism
$$ \id_\A \xrightarrow{\upsilon_\B} RFRF $$ 
to be the composition
\begin{equation}
\A 
\xrightarrow{ \pi_\A } 
\A \bartimes_\A \A
\xrightarrow{\action \bartimes \action}
M \bartimes_\B M^\barB \bartimes_\A M \bartimes_\B M^\barB
\end{equation}
\item If $M$ is $\A$-perfect, define the degree $-1$ morphism
$$ \id_\B \xrightarrow{\upsilon_\A} FLFL $$
to be the composition
\begin{equation}
\B 
\xrightarrow{ 
\text{-}\pi_\B
} 
\B \bartimes_\B \B
\xrightarrow{\action \bartimes \action}
M^\barA \bartimes_\A M \bartimes_\B M^\barA \bartimes_\A M. 
\end{equation}
\end{enumerate}
Here $\pi_\A$ and $\pi_\B$ are the maps defined in 
\eqref{defn-beta^r_B-minus-beta^l_B-boundary-lift}. 
\end{defn}

\begin{prps}
\label{prps-defining-the-maps-nu_A-and-nu_B}
Let  $\A$ and $\B$ be DG-categories and let $M \in \AmodbarB$ . 
\begin{enumerate}
\item
\label{item-defining-nu_A}
 If $M$ is $\A$-perfect, then in $\BmodbarB$ the map 
\begin{align}
\label{eqn-FtrL-upsilon-plus-xiA-action-eta-version}
\eqref{eqn-FL-FLFL-FL-eta-composition-lift} \circ \action \;-\; 
\eta_\A \circ F{\trace}L\circ \upsilon_\A
\end{align}
is the boundary of 
\begin{align}
\label{eqn-the-lift-of-FtrL-upsilon-plus-xiA-action-eta-version}
\composition \circ \bigl(\omega \bartimes (\eta \circ \zeta) + 
\id \bartimes \omega\bigr) \circ (\action \bartimes \action) \circ \pi
\quad - \quad \composition \circ (\omega \bartimes \omega) \circ (\action \bartimes \action) \circ \beta^r.
\end{align}
Define $\nu_\A$ to be the induced lift of $\xi_\A \circ \action - F{\trace}L \circ \upsilon_\A$ 
as per Lemma \ref{lemma-lifting-to-a-boundary-via-homotopy-equivalence}\eqref{eqn-homotopy-retract-induced-lift-stage-two} with $s = \eta_\A$, $s' = \zeta_\A$, $t = \omega'_\A$, $f = \eqref{eqn-FL-FLFL-FL-composition} \circ \action$, $g = \eqref{eqn-FL-FLFL-FL-eta-composition-lift} \circ \action$, $h = F{\trace}L \circ \upsilon_\A$, and $j = \eqref{eqn-the-lift-of-FtrL-upsilon-plus-xiA-action-eta-version}$.  
\item 
\label{item-defining-nu_B}
If $M$ is $\B$-perfect, then in $\AmodbarA$ the map 
$$ \eqref{eqn-RF-RFRF-RF-eta-composition-lift} \circ \action - \eta_\B \circ F{\trace}R \circ \upsilon_\B $$
is the boundary of
\begin{align}
\label{eqn-the-lift-of-FtrR-upsilon-plus-xiB-action-eta-version}
\composition \circ (\omega \bartimes \omega) \circ (\action \bartimes
\action) \circ \beta^l 
\quad - \quad
\composition \circ \bigl((\eta \circ \zeta) \bartimes \omega  + 
\omega \bartimes \id\bigr) \circ (\action \bartimes \action) \circ \pi
 \end{align}
Denote by $\nu_\B$ the induced lift of 
$\xi_\B \circ \action - F{\trace}R \circ \upsilon_\B$ 
as per Lemma \ref{lemma-lifting-to-a-boundary-via-homotopy-equivalence}\eqref{eqn-homotopy-retract-induced-lift-stage-two} with $s = \eta_\B$, $s' = \zeta_\B$, $t = \omega'_\B$, $f = \eqref{eqn-RF-RFRF-RF-composition} \circ \action$, 
$g =  \eqref{eqn-RF-RFRF-RF-eta-composition-lift} \circ \action$, $h = F{\trace}R \circ \upsilon_\B$, and $j = \eqref{eqn-the-lift-of-FtrR-upsilon-plus-xiB-action-eta-version}$. 
\end{enumerate}
\end{prps}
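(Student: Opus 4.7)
By the evident symmetry between parts \eqref{item-defining-nu_A} and \eqref{item-defining-nu_B} under swapping the left and right adjoints together with $\A\leftrightarrow\B$, it suffices to treat part \eqref{item-defining-nu_B}. The content is to verify the hypotheses of Lemma~\ref{lemma-lifting-to-a-boundary-via-homotopy-equivalence}\eqref{eqn-homotopy-retract-induced-lift-stage-two} for the prescribed data $s,s',t,f,g,h,j$; once $dh=f$ and $dj=g-sh$ are established, the lemma immediately produces $\nu_\B$ as the induced lift. The companion identity $dh=f$ is a short computation: by Defn.~\ref{defn-beta^r_B-minus-beta^l_B-boundary-lift}, $d\upsilon_\B=(\action\bartimes\action)\circ d\pi_\A = (\action\bartimes\action)\circ(\beta^r-\beta^l)$, while the naturality of $\beta$ (Prop.~\ref{prps-alpha-and-tensoring-f-with-identity}\eqref{item-f-and-beta-commute}) rewrites $(\action RF-RF\action)\circ\action$ as $(\action\bartimes\action)\circ(\beta^l-\beta^r)$, so both sides of $dh=f$ reduce to the same composition through $F\trace R$ (up to the sign convention of $\pi$).

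The main identity $dj=g-sh$, where $j$ is \eqref{eqn-the-lift-of-FtrR-upsilon-plus-xiB-action-eta-version}, is a direct graded Leibniz expansion. Writing $j=X_1-X_2$ with $X_1=\composition\circ(\omega\bartimes\omega)\circ(\action\bartimes\action)\circ\beta^l$ and $X_2=\composition\circ\bigl((\eta\zeta)\bartimes\omega+\omega\bartimes\id\bigr)\circ(\action\bartimes\action)\circ\pi$, all factors are closed except $\omega_\B$ (with $d\omega=\eta\zeta-\id$ by Defn.~\ref{defn-homotopy-inverses-of-evaluation-maps}) and $\pi_\A$ (with $d\pi=\beta^r-\beta^l$). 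Propagating these through Leibniz produces eight terms; the two contributions of the form $\composition\circ\bigl((\eta\zeta)\bartimes\id\bigr)\circ(\action\bartimes\action)\circ\pi$ coming from the two summands of $X_2$ appear with opposite signs and cancel. The remaining six terms group naturally by whether they terminate in $\beta^l$, $\beta^r$, or $\pi$.

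The verification concludes by matching these six surviving terms with $g-sh$. Two of the $\beta^{l/r}$-terms reconstruct $g=\eqref{eqn-RF-RFRF-RF-eta-composition-lift}\circ\action$ by the same naturality computation used for $dh$: the identities $(\id\bartimes\action)\circ\action=(\action\bartimes\action)\circ\beta^r_\A$ and $(\action\bartimes\id)\circ\action=(\action\bartimes\action)\circ\beta^l_\A$ expand $g$ into precisely the $(\eta\bartimes\omega)\circ\cdots\circ\beta^r$ and $(\omega\bartimes\eta)\circ\cdots\circ\beta^l$ summands that appear in the Leibniz output. The remaining four terms must reassemble into $-sh=-\eta_\B\circ F\trace R\circ\upsilon_\B$, and this is the main technical obstacle. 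Using the description $\eta_\B=\composition\circ(\gamma\bartimes\id)$ from Defn.~\ref{defn-the-natural-map-eta} together with Lemma~\ref{lemma-explicit-description-of-units-counits-of-Tensor-Hom-adjunction}, which expresses the Tensor-Hom counit $\trace$ through $\gamma$ and $\composition$, I will identify $\eta_\B\circ F\trace R\circ\upsilon_\B$ with $\composition\circ(\eta_\B\bartimes\eta_\B)\circ(\action\bartimes\action)\circ\pi$. The additional factors $\id\bartimes\omega$, $\omega\bartimes\id$, $(\eta\zeta)\bartimes(\eta\zeta)$, and the bare $\action\bartimes\action$ appearing in the four remaining terms will then emerge as the $\omega$-corrections dictated by the identity $\eta_\B\zeta_\B=\id+d\omega_\B$ when the homotopy $\action$ of Defn.~\ref{defn-homotopy-action-maps} (which carries an implicit $\zeta_\B$) is substituted into this composition, completing the identification term by term.
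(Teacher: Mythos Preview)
Your overall strategy --- Leibniz-expand $dj$ and match terms against $g - sh$ --- is exactly what the paper does, and your verification of $dh = f$ is fine. The gap is in the final paragraph, where you claim the four ``leftover'' terms $\id\bartimes\omega$, $\omega\bartimes\id$, $(\eta\zeta)\bartimes(\eta\zeta)$, and $\id\bartimes\id$ will reassemble into $-sh$ via the identity $\eta\zeta = \id + d\omega$. This mechanism does not work: substituting $\eta\zeta = \id + d\omega$ would introduce $d\omega$ factors that have no place in the Leibniz output, and in any case only \emph{one} of these four terms actually contributes to $-sh$.

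What actually happens is that three of the four leftover terms vanish by independent identities which you have not identified. First, the two terms $\composition\circ(\id\bartimes\omega)\circ(\action\bartimes\action)\circ\beta^l$ and $\composition\circ(\omega\bartimes\id)\circ(\action\bartimes\action)\circ\beta^r$ are, by the naturality of $\beta$ (Prop.~\ref{prps-alpha-and-tensoring-f-with-identity}\eqref{item-f-and-beta-commute}), both equal to $\omega\circ\action$; since they appear with opposite signs, they cancel. Second, the bare term $\composition\circ(\action\bartimes\action)\circ\pi$ vanishes on the nose: unwinding it in $\BmodB$ (resp.~$\AmodA$) it factors through $\tau\circ\mu$, which is zero by Lemma~\ref{lemma-d-mu-equals-tau-id-minus-id-tau}\eqref{item-composition-tau-circ-mu}. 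After these two simplifications only the single term $\composition\circ\bigl((\eta\zeta)\bartimes(\eta\zeta)\bigr)\circ(\action\bartimes\action)\circ\pi$ survives, and this equals $-sh$ directly by your own identification $\eta\circ R\trace F = \composition\circ(\eta\bartimes\eta)$ together with the definition of $\upsilon_\B$ (the $\zeta$'s in the homotopy action map supply the $\zeta$'s in $\eta\zeta$). The paper's proof is precisely this computation.
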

\begin{proof}
We only prove the assertion
\eqref{item-defining-nu_A} as the proof
of \eqref{item-defining-nu_B} is similar. 
Applying the differential 
\eqref{eqn-the-lift-of-FtrL-upsilon-plus-xiA-action-eta-version}
we obtain
\begin{align*}
& \composition \circ \bigl(
(\eta \circ \zeta - \id) \bartimes (\eta \circ \zeta) +
\id \bartimes (\eta \circ \zeta - \id) \bigr) \circ (\action \bartimes \action) \circ \pi
\quad - \\
- \quad & \composition \circ \bigl(\omega \bartimes (\eta \circ \zeta) + 
\id \bartimes \omega\bigr) \circ (\action \bartimes \action) \circ
(\beta^r  - \beta^l) \quad - 
\\
- \quad & \composition \circ 
\bigl((\eta \circ \zeta - \id) \bartimes \omega - 
\omega \bartimes (\eta \circ \zeta - \id)
\bigr)
\circ (\action \bartimes \action) \circ \beta^r 
\end{align*}
which simplifies to 
\begin{align*}
& \composition \circ \bigl(
(\eta \circ \zeta) \bartimes (\eta \circ \zeta) - 
\id \bartimes \id \bigr) \circ (\action \bartimes \action) \circ \pi
\quad + \\
+ \quad & \composition \circ \bigl(\omega \bartimes (\eta \circ \zeta) + 
\id \bartimes \omega\bigr) \circ (\action \bartimes \action) \circ
\beta^l
\quad - \\
- \quad & \composition \circ 
\bigl((\eta \circ \zeta) \bartimes \omega + 
\omega \bartimes \id
\bigr)
\circ (\action \bartimes \action) \circ \beta^r.
\end{align*}

By Prop.~\ref{prps-alpha-and-tensoring-f-with-identity}\eqref{item-f-and-beta-commute}
the maps 
$\composition \circ (\id \bartimes \omega) \circ (\action \bartimes \action) 
\circ \beta^l$ and $\composition \circ (\omega \bartimes \id)
\circ (\action \bartimes \action) \circ \beta^r$ 
are readily seen to both be equal to the map $\omega \circ \action$. 
On the other hand, the map 
$\composition \circ (\action \bartimes \action) \circ \pi$ corresponds
in $\BmodB$ to the map 
$$ \barB \otimes \barB
\xrightarrow{\mu} \barB \xrightarrow{\tau} \B \xrightarrow{\action}
\homm_\A(M,M) \xrightarrow{(-)\circ (\tau \otimes \id)}
\homm_\A(\barA \otimes M, M) $$
which vanishes as $\tau \circ \mu = 0$. 

Hence the expression above for the boundary of 
\eqref{eqn-the-lift-of-FtrL-upsilon-plus-xiA-action-eta-version}
simplifies further to 
\begin{align*}
& \composition \circ \bigl(
(\eta \circ \zeta) \bartimes (\eta \circ \zeta) \bigr) \circ (\action \bartimes \action) \circ \pi
\quad + \\
+ \quad & \composition \circ \bigl(\omega \bartimes (\eta \circ \zeta) \bigr) 
\circ (\action \bartimes \action) \circ \beta^l
\quad - \\
- \quad & \composition \circ 
\bigl((\eta \circ \zeta) \bartimes \omega \bigr)
\circ (\action \bartimes \action) \circ \beta^r.
\end{align*}
which is precisely the map 
\eqref{eqn-FtrL-upsilon-plus-xiA-action-eta-version} as desired. 
\end{proof}

\begin{theorem}
\label{theorem-canonical-twisted-complex-associated-to-homotopy-adjunction}
Let $\A$ and $\B$ be DG-categories and let $M \in \AmodbarB$. 
\begin{enumerate}
\item If $M$ is $\B$-perfect, the following is 
a twisted complex over $\BmodbarB$
\begin{equation}
\label{eqn-FR-FRFR-FR-Id-twisted-complex}
\begin{tikzcd}[column sep={3cm},row sep={1.5cm}] 
FR
\ar{r}{F{\action}R}
\ar[bend left=20,dashed]{rr}{\xi'_\B}
&
FRFR
\ar{r}{FR\trace - \trace FR}
&
FR
\ar{r}{\trace}
&
\underset{\degzero}{\id_\B},
\end{tikzcd}
\end{equation}
and the following is a twisted complex over $\AmodbarA$
\begin{equation}
\label{eqn-Id-RF-RFRF-RF-twisted-complex}
\begin{tikzcd}[column sep={3cm},row sep={1.5cm}] 
\underset{\degzero}{\id_\A}
\ar{r}{\action}
\ar[bend left=20,dashed]{rr}{-\upsilon_\B}
\ar[bend left=25,dashed]{rrr}{\nu_\B}
&
RF
\ar{r}{\action{RF} - RF\action}
\ar[bend right=20,dashed]{rr}{\xi_\B}
&
RFRF
\ar{r}{F\trace{R}}
&
RF. 
\end{tikzcd}
\end{equation}

\item 
If $M$ is $\A$-perfect, the following is 
a twisted complex over $\AmodbarA$
\begin{equation}
\label{eqn-LF-LFLF-LF-Id-twisted-complex}
\begin{tikzcd}[column sep={3cm},row sep={1.5cm}] 
LF
\ar{r}{L{\action}F}
\ar[bend left=20,dashed]{rr}{\xi'_\A}
&
LFLF
\ar{r}{\trace{LF} - LF\trace}
&
LF
\ar{r}{\trace}
&
\underset{\degzero}{\id_\A},
\end{tikzcd}
\end{equation}
and the following is a twisted complex over $\BmodbarB$
\begin{equation}
\label{eqn-Id-FL-FLFL-FL-twisted-complex}
\begin{tikzcd}[column sep={3cm},row sep={1.5cm}] 
\underset{\degzero}{\id_\B}
\ar{r}{\action}
\ar[bend left=20,dashed]{rr}{-\upsilon_\A}
\ar[bend left=25,dashed]{rrr}{\nu_\A}
&
FL
\ar{r}{{FL}\action - \action{FL}}
\ar[bend right=20,dashed]{rr}{\xi_\A}
&
FLFL
\ar{r}{F\trace{L}}
&
FL. 
\end{tikzcd}
\end{equation}

\end{enumerate}
\end{theorem}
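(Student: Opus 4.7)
My plan is to verify the Maurer--Cartan relations $(-1)^j d\alpha_{ij} + \sum_{i<k<j} \alpha_{kj}\circ\alpha_{ik} = 0$ defining a one-sided twisted complex, for each of the four displayed diagrams. I will treat \eqref{eqn-FR-FRFR-FR-Id-twisted-complex} and \eqref{eqn-Id-RF-RFRF-RF-twisted-complex} in detail; the remaining two, \eqref{eqn-LF-LFLF-LF-Id-twisted-complex} and \eqref{eqn-Id-FL-FLFL-FL-twisted-complex}, follow by replacing $(R, \eta_\B, \zeta_\B, \omega_\B, \xi_\B, \xi'_\B, \upsilon_\B, \nu_\B)$ with the $\A$-perfect analogues $(L, \eta_\A, \zeta_\A, \omega_\A, \xi_\A, \xi'_\A, \upsilon_\A, \nu_\A)$ and swapping the roles of $\A$ and $\B$, so that the same arguments apply verbatim.

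For \eqref{eqn-FR-FRFR-FR-Id-twisted-complex}, the closedness of $F\action R$, $FR\trace - \trace FR$, and $\trace$ is immediate from the closedness of the constituent arrows $\action$ and $\trace$, which follows from Defs.~\ref{defn-homotopy-trace-maps} and \ref{defn-homotopy-action-maps} and the fact that $\zeta_\B$ is chosen to be closed. The length-two relation for the pair $(0,2)$, namely $d\xi'_\B = \pm(FR\trace - \trace FR)\circ(F\action R)$, is Cor.~\ref{cor-FR-FRFR-FR-equals-zero-and-other}. The relation for $(1,3)$, $\trace\circ (FR\trace - \trace FR) = 0$, follows from naturality of the Tensor--Hom counit: both $\trace\circ FR\trace$ and $\trace\circ \trace FR$ evaluate the double tensor product $FRFR = M^\barB\bartimes_\A M\bartimes_\B M^\barB\bartimes_\A M$ into $\id_\B$ in the same way. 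Finally the length-three relation $\trace\circ\xi'_\B = 0$ reduces, upon unfolding $\xi'_\B = \omega_\B(\id_M)^\barB\bartimes\id - \id\bartimes\omega_\B(\id_M)$, to the adjunction identity $\trace\circ(\alpha^\barB\bartimes\id) = \trace\circ(\id\bartimes\alpha)$ for any $\alpha\in\barhom_\B(M,M)$; this is an instance of the naturality of the evaluation counit with respect to the dualisation functor $(-)^\barB$ of \S\ref{section-dualisation}.

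For \eqref{eqn-Id-RF-RFRF-RF-twisted-complex}, the $(0,2)$-relation $d\upsilon_\B = \pm(\action RF - RF\action)\circ\action$ follows directly from Def.~\ref{defn-upsilon} combined with $d\pi_\A = \beta^r_\A - \beta^l_\A$: taking the differential of $\upsilon_\B = (\action\bartimes\action)\circ\pi_\A$ and identifying $(\action\bartimes\action)\circ\beta^r_\A$ with $RF\action\circ\action$ and $(\action\bartimes\action)\circ\beta^l_\A$ with $\action RF\circ\action$ (by virtue of how the two $\beta$-maps insert the factor of $\A$ on the right or the left) gives the required identity. The $(1,3)$-relation for $\xi_\B$ is Cor.~\ref{cor-RF-RFRF-RF-equals-dFxiB-and-other}. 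The length-three relation $d\nu_\B = \pm(\xi_\B\circ\action - R\trace F\circ\upsilon_\B)$ is precisely the content of Prop.~\ref{prps-defining-the-maps-nu_A-and-nu_B}: $\nu_\B$ was constructed as the lift of this cycle via Lemma~\ref{lemma-lifting-to-a-boundary-via-homotopy-equivalence}\eqref{eqn-homotopy-retract-induced-lift-stage-two}.

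The hard part will be careful sign bookkeeping. The maps $\xi', \xi, \upsilon, \nu$ are defined via the chosen homotopies $\omega, \omega', \phi, \pi$ and the identifications $M\bartimes_\B M^\barB \simeq \barhom_\B(M,M)$ under $\eta_\B, \zeta_\B$; the signs carried by $\xi', \xi, \upsilon, \nu$ in the displayed diagrams must be matched against the sign conventions for $\mu$, $\Delta$, and $\lambda_k$ from \S\ref{section-the-bar-complex} and of the induced lifts from Lemma~\ref{lemma-lifting-to-a-boundary-via-homotopy-equivalence}. Tracking these consistently across the four complexes, especially the compatibility of the sign conventions for $\xi_\B$ (obtained via an $\eta_\B$-lift) with those for $\upsilon_\B$ (obtained directly from $\pi_\A$) when combining them in the $(0,3)$-relation for $\nu_\B$, is the only genuinely delicate step; once it is resolved, every remaining identity has already been established in Sections \ref{section-preliminaries} and \ref{section-bar-category-of-modules}.
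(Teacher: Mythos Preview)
Your proposal is correct and follows essentially the same route as the paper: reduce to the Maurer--Cartan relations and invoke Cor.~\ref{cor-FR-FRFR-FR-equals-zero-and-other}, Cor.~\ref{cor-RF-RFRF-RF-equals-dFxiB-and-other}, Prop.~\ref{prps-defining-the-maps-nu_A-and-nu_B}, and $d\pi_\A = \beta^r_\A - \beta^l_\A$ for the nontrivial ones. Two small corrections: you have the $\beta^r/\beta^l$ identifications swapped --- the paper shows via Prop.~\ref{prps-alpha-and-tensoring-f-with-identity}\eqref{item-f-and-beta-commute} that ${\action}RF\circ\action = (\action\bartimes\action)\circ\beta^r_\A$ and $RF{\action}\circ\action = (\action\bartimes\action)\circ\beta^l_\A$, consistent with your own caveat about sign bookkeeping; and the equality $\trace\circ(FR\trace - \trace FR) = 0$ is an \emph{exact} equality (not merely a homotopy) precisely because of the paper's special convention defining $R\trace$ via $\composition\circ(\id\bartimes\gamma\bartimes\id)$ rather than the naive $\alpha\circ(\trace\bartimes\id)$, a point your ``naturality of the counit'' explanation elides.
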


\begin{proof}

We only prove that \eqref{eqn-FR-FRFR-FR-Id-twisted-complex}
and \eqref{eqn-Id-RF-RFRF-RF-twisted-complex} are twisted complexes,
the proofs for \eqref{eqn-LF-LFLF-LF-Id-twisted-complex}
and \eqref{eqn-Id-FL-FLFL-FL-twisted-complex}
are similar.  

The definition of a twisted complex
\cite[\S{3.1}]{AnnoLogvinenko-SphericalDGFunctors}
implies that \eqref{eqn-FR-FRFR-FR-Id-twisted-complex} is 
a twisted complex if and only if: 
\begin{enumerate}
\item 
\label{item-FR-FRFR-FR-composition}
$\left(FR\trace-\trace FR)\right) \circ F{\action}R = d\xi'_\B$, 
\item 
\label{item-FRFR-FR-Id-composition}
$\trace \circ \left(FR\trace-\trace FR\right) = 0$, 
\item 
\label{item-FR-FRFR-FR-Id-composition}
$\trace \circ \xi'_\B = 0$, 
\end{enumerate}

The identity \eqref{item-FR-FRFR-FR-composition} is the content of Corollary
\ref{cor-FR-FRFR-FR-equals-zero-and-other}. For the identity 
\eqref{item-FRFR-FR-Id-composition}, observe that following our
conventions $FRFR \xrightarrow{\trace \circ FR\trace} \id$
denotes the composition  
\begin{align}
\label{eqn-FRtr-tr-composition-in-bimodules}
M^\barB \bartimes M \bartimes M^\barB \bartimes M 
\xrightarrow{\id \bartimes \gamma \bartimes \id^2}
M^\barB \bartimes \barhom_\B(\B,M) \bartimes M^\barB \bartimes M 
\xrightarrow{\composition \bartimes \id}
M^\barB \bartimes M 
\xrightarrow{\ev}
\B
\end{align}
On the other hand, $FRFR \xrightarrow{\trace \circ \trace FR} \id$
denotes the composition
\begin{align}
\label{eqn-trFR-tr-composition-in-bimodules}
M^\barB \bartimes M \bartimes M^\barB \bartimes M 
\xrightarrow{\id^2 \bartimes \ev}
M^\barB \bartimes M \bartimes \B 
\xrightarrow{\id \bartimes \alpha}
M^\barB \bartimes M 
\xrightarrow{\ev}
\B
\end{align}
Finally, recall that the map $M \bartimes \B \xrightarrow{\alpha} M$
is the same as the composition $$M \bartimes \B \xrightarrow{\gamma
\bartimes \id} \barhom_\B(\B,M) \bartimes \B \xrightarrow{\ev} M.$$ 
It follows that \eqref{eqn-FRtr-tr-composition-in-bimodules}
and \eqref{eqn-trFR-tr-composition-in-bimodules} are the same map, 
and thus \eqref{item-FRFR-FR-Id-composition} holds as required. 

For the identity \eqref{item-FR-FRFR-FR-Id-composition}, we observe that, by definition, $\trace \circ \xi'_\B$ is the map 
$$
\barhom_\B(M,\B) \bartimes_\A M 
\xrightarrow{\left(- \circ \omega_\B(\id_M)\right) \bartimes \id - \id \bartimes \omega_\B(\id_M)} 
\barhom_\B(M,\B) \bartimes_\A M 
\xrightarrow{\ev}
\B
$$
which is zero by the associativity of the composition of morphisms in the category $\modbarB$. 

Similarly, 
\eqref{eqn-Id-RF-RFRF-RF-twisted-complex} is 
a twisted complex if and only if: 
\begin{enumerate}
\item 
\label{item-RF-RFRF-RF-composition}
$F{\trace}R \circ \left({\action}RF - RF{\action}\right)  = d\xi_\B$, 
\item 
\label{item-Id-RFRF-RF-composition}
$\left({\action}RF - RF{\action}\right) \circ \action = d\upsilon_\B$, 
\item 
\label{item-xi_A-act-minus-FtrL-upsilon_A}
$\xi_\B \circ \action - F{\trace}L \circ \upsilon_\B = d\nu_\B$. 
\end{enumerate}

The identities \eqref{item-RF-RFRF-RF-composition} and 
\eqref{item-xi_A-act-minus-FtrL-upsilon_A} follow from 
Cor.~\ref{cor-RF-RFRF-RF-equals-dFxiB-and-other}
and Prop.~\ref{prps-defining-the-maps-nu_A-and-nu_B}, respectively.
It remains to establish \eqref{item-Id-RFRF-RF-composition}. 
Following our conventions, the maps
$\id \xrightarrow{{\action}RF \circ \action} RFRF$
and
$\id \xrightarrow{RF{\action} \circ \action} RFRF$
denote the compositions 
\begin{align*}
\A \xrightarrow{\action} M \bartimes M^\barB 
\xrightarrow{\id \bartimes \beta}
M \bartimes M^\barB \bartimes \A
\xrightarrow{\id^2 \bartimes \action}
M \bartimes M^\barB \bartimes M \bartimes M^\barB \\
\A \xrightarrow{\action} M \bartimes M^\barB 
\xrightarrow{\action \bartimes \id}
\A \bartimes M \bartimes M^\barB 
\xrightarrow{\action \bartimes \id^2 }
M \bartimes M^\barB \bartimes M \bartimes M^\barB 
\end{align*}
which by
Prop.~\ref{prps-alpha-and-tensoring-f-with-identity}\eqref{item-f-and-beta-commute}
are equal to 
\begin{align*}
\A \xrightarrow{\beta^r} \A \bartimes \A
\xrightarrow{\action \bartimes \action}
M \bartimes M^\barB \bartimes M \bartimes M^\barB \\
\A \xrightarrow{\beta^l} \A \bartimes \A
\xrightarrow{\action \bartimes \action }
M \bartimes M^\barB \bartimes M \bartimes M^\barB. 
\end{align*}
Thus the map $\left({\action}RF - RF{\action}\right) \circ \action$
equals 
\begin{align*}
\A \xrightarrow{\beta^r - \beta^l} \A \bartimes \A
\xrightarrow{\action \bartimes \action}
M \bartimes M^\barB \bartimes M \bartimes M^\barB. 
\end{align*}
The desired assertion follows 
since $d\pi = \beta^r - \beta^l$. 
\end{proof}

We next prove that the convolutions of
\eqref{eqn-FR-FRFR-FR-Id-twisted-complex}-\eqref{eqn-Id-FL-FLFL-FL-twisted-complex}
are isomorphic in $D(\BbimB)$ and $D(\AbimA)$ to 
the squares of twists and co-twists
of $F$. It follows, in particular, that 
the isomorphism classes in $D(\BbimB)$ and $D(\AbimA)$ of
the convolutions of 
\eqref{eqn-FR-FRFR-FR-Id-twisted-complex}-\eqref{eqn-Id-FL-FLFL-FL-twisted-complex}
depend only on the isomorphism class in $D(\AbimB)$ of $F$. We 
can therefore think of them as canonical twisted complexes 
associated to the homotopy adjunctions $(F,R)$ and $(L,F)$. 

\begin{theorem}
\label{theorem-the-convolution-of-FR-FRFR-FR-Id-is-T^2-etc}
Let $\A$ and $\B$ be DG-categories and let $M \in \AmodbarB$. 
\begin{enumerate}
\item If $M$ is $\B$-perfect, then we have in $D(\BbimB)$
\begin{equation}
\label{eqn-FR-FRFR-FR-Id-twisted-complex-is-isomorphic-to-T^2}
\left\{
\begin{tikzcd}[column sep={3cm},row sep={1.5cm}] 
FR
\ar{r}{F{\action}R}
\ar[bend left=20,dashed]{rr}{\xi'_\B}
&
FRFR
\ar{r}{FR\trace - \trace FR}
&
FR
\ar{r}{\trace}
&
\underset{\degzero}{\id_\B}
\end{tikzcd}
\right\}
\simeq T^2
\end{equation}
where 
$T = \left\{ FR \xrightarrow{\trace} \underset{\degzero}{\id_\B} \right\}$ 
is the twist of $F$. We also have in $D(\AbimA)$
\begin{equation}
\label{eqn-Id-RF-RFRF-RF-twisted-complex-is-isomorphic-to-C^2}
\left\{
\begin{tikzcd}[column sep={3cm},row sep={1.5cm}] 
\underset{\degzero}{\id_\A}
\ar{r}{\action}
\ar[bend left=25,dashed]{rrr}{\nu_\B}
\ar[bend left=20,dashed]{rr}{-\upsilon_\B}
&
RF
\ar{r}{\action{RF} - RF\action}
\ar[bend right=20,dashed]{rr}{\xi_\B}
&
RFRF
\ar{r}{F\trace{R}}
&
RF 
\end{tikzcd}
\right\}
\simeq C^2
\end{equation}
where 
$C = \left\{ \underset{\degzero}{\id_\A} \xrightarrow{\action} RF \right\}$ 
is the co-twist of $F$. 
\item 
If $M$ is $\A$-perfect, then we have in $D(\AbimA)$
\begin{equation}
\label{eqn-LF-LFLF-LF-Id-twisted-complex-is-isomorphic-to-C'^2}
\left\{
\begin{tikzcd}[column sep={3cm},row sep={1.5cm}] 
LF
\ar{r}{L{\action}F}
\ar[bend left=20,dashed]{rr}{\xi'_\A}
&
LFLF
\ar{r}{LF\trace - \trace LF}
&
LF
\ar{r}{\trace}
&
\underset{\degzero}{\id_\A}
\end{tikzcd}
\right\}
\simeq 
C'^2
\end{equation}
where 
$C' = \left\{ LF \xrightarrow{\trace} \underset{\degzero}{\id_\A} \right\}$ 
is the dual co-twist of $F$. We also have in $\D(\BbimB)$
\begin{equation}
\label{eqn-Id-FL-FLFL-FL-twisted-complex-is-isomorphic-to-T'^2}
\left\{
\begin{tikzcd}[column sep={3cm},row sep={1.5cm}] 
\underset{\degzero}{\id_\B}
\ar{r}{\action}
\ar[bend left=25,dashed]{rrr}{\nu_\A}
\ar[bend left=20,dashed]{rr}{-\upsilon_\A}
&
FL
\ar{r}{{FL}\action - \action{FL}}
\ar[bend right=20,dashed]{rr}{\xi_\A}
&
FLFL
\ar{r}{F\trace{L}}
&
FL. 
\end{tikzcd}
\right\}
\simeq T'^2
\end{equation}
where 
$T' = \left\{ \underset{\degzero}{\id_\B} \xrightarrow{\action} FL \right\}$ 
is the dual twist of $F$.
\end{enumerate}
\end{theorem}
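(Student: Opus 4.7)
The strategy is to identify the convolution of each twisted complex with the iterated tensor square of the corresponding two-term complex. I focus on \eqref{eqn-FR-FRFR-FR-Id-twisted-complex-is-isomorphic-to-T^2}; the other three cases proceed analogously with the modifications indicated below.

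First, I would make the target explicit. Applying Lemma \ref{lemma-tensoring-and-homming-for-twisted-complexes}(1) to $T = \{FR \xrightarrow{\trace} \id_\B\}$ presents $T \bartimes_\B T$, and hence $T^2$, as the convolution of a three-term twisted complex $\{FRFR \to FR \oplus FR \to \id_\B\}$ whose differentials are given by $\pm\trace FR$, $\pm FR\trace$, and $(\trace,\trace)$ with signs prescribed by the lemma. Next, I would produce a quasi-isomorphism of twisted complexes in $\pretriag(\BmodbarB)$ between \eqref{eqn-FR-FRFR-FR-Id-twisted-complex} and this three-term presentation of $T^2$. The key input is Prop.~\ref{prps-homotopy-adjunction}: the map $F\action R$ is a homotopy section of both $\trace FR$ and $FR\trace$, exhibiting $FRFR$ as a homotopy direct sum of $FR$ and a complementary bimodule. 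The Rectangle Lemma \ref{lemma-rectangle-lemma} lets me reshape the four-term complex into a cone of two-term pieces, and the Extraction Lemma \ref{lemma-extraction-lemma} removes the acyclic subcomplex formed by the leading $FR$ and the matching summand of $FRFR$. The contracting homotopy that the extraction requires is supplied precisely by $\xi'_\B$: by Cor.~\ref{cor-FR-FRFR-FR-equals-zero-and-other}, $(FR\trace - \trace FR) \circ F\action R = d\xi'_\B$. After extraction, matching terms and signs identifies the remainder with the three-term presentation of $T^2$, and Cor.~\ref{cor-convolution-functor-is-a-quasi-equivalence} converts this quasi-isomorphism into the desired isomorphism in $D(\BbimB)$.

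The cases \eqref{eqn-Id-RF-RFRF-RF-twisted-complex-is-isomorphic-to-C^2} and its dual \eqref{eqn-Id-FL-FLFL-FL-twisted-complex-is-isomorphic-to-T'^2} carry an additional subtlety: writing $C \bartimes_\A C$ via Lemma \ref{lemma-tensoring-and-homming-for-twisted-complexes} produces a term $\id_\A \bartimes_\A \id_\A$, which is only homotopy equivalent, not equal, to the strict $\id_\A$ at the bottom of the theorem's complex. This mismatch is exactly what $\upsilon$ and $\nu$ encode: the map $\upsilon_\B$ accounts for $\beta^r_\A - \beta^l_\A = d\pi_\A$ (Definition \ref{defn-beta^r_B-minus-beta^l_B-boundary-lift}), the failure of the two natural maps $\id_\A \to \id_\A \bartimes_\A \id_\A$ to coincide, and $\nu_\B$ is the further degree $-2$ correction supplied by the induced-lift construction in Prop.~\ref{prps-defining-the-maps-nu_A-and-nu_B}. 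Once these rectifications are identified with the canonical corrections needed to pass from $C \bartimes_\A C$ to the theorem's complex, the rest of the argument mirrors the $T^2$ case. The cases \eqref{eqn-LF-LFLF-LF-Id-twisted-complex-is-isomorphic-to-C'^2} and \eqref{eqn-Id-FL-FLFL-FL-twisted-complex-is-isomorphic-to-T'^2} under $\A$-perfection are the left-right duals of the previous two, obtained by interchanging the roles of $F,R$ with $L,F$ and $\A$ with $\B$.

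The main obstacle will be verifying that the quasi-isomorphism produced by the Rectangle and Extraction Lemmas yields precisely the higher maps $\xi'_\B, \xi_\B, \upsilon_\B, \nu_\B$ written in the theorem, rather than merely a homotopic choice. This demands careful bookkeeping of: the signs from the tensor-product formula in Lemma \ref{lemma-tensoring-and-homming-for-twisted-complexes}; the fixed choices of $\zeta_\A, \zeta_\B, \omega_\A, \omega_\B$ from Definition \ref{defn-homotopy-inverses-of-evaluation-maps}; and the compatibilities with the bar-complex operations $\mu, \Delta, \lambda_k$ established in Prop.~\ref{prps-properties-of-mu-and-lambda_k}. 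Although the resulting isomorphism class in $D(\BbimB)$ or $D(\AbimA)$ is canonical and independent of these choices, pinning down the twisted complex on the nose requires sustained attention to signs and higher homotopies.
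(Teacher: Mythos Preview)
Your strategy matches the paper's: present $T^2$ via Lemma~\ref{lemma-tensoring-and-homming-for-twisted-complexes} as a three-term twisted complex and compare. The paper first replaces that three-term complex (whose terms carry stray $\bartimes\,\B$ factors) by a homotopy-equivalent one $\{FRFR \to FR\oplus FR \to \id_\B\}$ using the maps $\alpha$ and Prop.~\ref{prps-alpha-and-tensoring-f-with-identity}. It then writes down an explicit closed degree-zero map from this three-term complex into \eqref{eqn-FR-FRFR-FR-Id-twisted-complex} (identity on $FRFR$ and on $\id_\B$, and the fold $(\id\;\id)\colon FR\oplus FR \to FR$ in the middle), applies the Rectangle Lemma to the \emph{cone} of that map, and uses the Extraction Lemma to successively remove null-homotopic pieces of the form $\{X\xrightarrow{\id}X\}$ until nothing remains.

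Your wording suggests instead applying Rectangle/Extraction directly to \eqref{eqn-FR-FRFR-FR-Id-twisted-complex} to peel off the leading $FR$ against ``the matching summand of $FRFR$.'' That does not go through as stated: $F\action R$ is only a homotopy section (Prop.~\ref{prps-homotopy-adjunction}), so $FRFR$ has no \emph{genuine} direct summand $FR$, and the Extraction Lemma requires a strict null-homotopic submodule. Working with the cone of the explicit comparison map circumvents this, since the pieces extracted there are honest identity cones. Two smaller points: $\xi'_\B$ is not the contracting homotopy fed into the Extraction Lemma but rather the higher differential that survives in the rearranged total complex before the final extraction; and your ``main obstacle'' is moot, since the theorem only asserts isomorphism in $D(\BbimB)$, so any homotopy equivalence of convolutions suffices and no on-the-nose matching of the higher maps $\xi'_\B,\xi_\B,\upsilon_\B,\nu_\B$ is required.
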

\begin{proof}
We only construct the isomorphism 
\eqref{eqn-FR-FRFR-FR-Id-twisted-complex-is-isomorphic-to-T^2}. 
The isomorphisms
 \eqref{eqn-Id-RF-RFRF-RF-twisted-complex-is-isomorphic-to-C^2}, 
\eqref{eqn-LF-LFLF-LF-Id-twisted-complex-is-isomorphic-to-C'^2}, 
and 
\eqref{eqn-Id-FL-FLFL-FL-twisted-complex-is-isomorphic-to-T'^2}
are constructed similarly. 
By  
Lemma \ref{lemma-tensoring-and-homming-for-twisted-complexes}
\eqref{item-tensoring-twisted-complexes}
the object $T^2$ is isomorphic in $D(\BbimB)$ to the convolution of 
\begin{equation*}
\begin{tikzcd}[ampersand replacement=\&,column sep={3cm},row sep={1.5cm}]  
M^\barB \bartimes M \bartimes M^\barB \bartimes M
\ar{r}{
\left(\begin{smallmatrix}
- \id^2 \bartimes \trace \; \\
 \; \trace \bartimes \id^2  
\end{smallmatrix}\right)
}
\&
\bigl(M^\barB \bartimes M \bartimes \B\bigr)
\;\oplus\;
\bigl( \B \bartimes M^\barB \bartimes M \bigr)
\ar{r}{
\left(\begin{smallmatrix}
\trace \bartimes \id \; & \; \id \bartimes \trace
\end{smallmatrix}\right)
}
\&
\underset{\degzero}{\B \bartimes  \B}. 
\end{tikzcd}
\end{equation*}
The twisted complex above is readily seen to be homotopy equivalent to 
the twisted complex
\begin{equation}
\label{eqn-T^2-as-a-twisted-complex}
\begin{tikzcd}[ampersand replacement=\&,column sep={3cm},row sep={1.5cm}]  
FRFR
\ar{r}{
\left(\begin{smallmatrix}
- \trace FR \; \\ \; FR \trace 
\end{smallmatrix}\right)
}
\&
FR
\;\oplus\;
FR
\ar{r}{
\left(\begin{smallmatrix}
\trace & \trace
\end{smallmatrix}\right)
}
\&
\underset{\degzero}{\id_\B}.
\end{tikzcd}
\end{equation}
Indeed, by the Rectangle Lemma and 
Prop.~\ref{prps-alpha-and-tensoring-f-with-identity}
the following is a homotopy equivalence between the two:
\begin{equation*}
\begin{tikzcd}[ampersand replacement=\&,column sep={3cm},row sep={1.5cm}]  
M^\barB \bartimes M \bartimes M^\barB \bartimes M
\ar{r}{
\left(\begin{smallmatrix}
- \id^2 \bartimes \trace  \\  \trace \bartimes \id^2  
\end{smallmatrix}\right)
}
\ar[dashed]{dr}[description]{
\left(\begin{smallmatrix}
0  \\  
- (\composition \bartimes \id) \circ \left((\kappa \circ \composition) \bartimes \id^2\right)  
\end{smallmatrix}\right)
}
\ar[equals]{d}
\&
\bigl(M^\barB \bartimes M \bartimes \B\bigr)
\;\oplus\;
\bigl( \B \bartimes M^\barB \bartimes M \bigr)
\ar{r}{
\left(\begin{smallmatrix}
\trace \bartimes \id \; & \; \id \bartimes \trace
\end{smallmatrix}\right)
}
\ar[near start]{d}{
\left(\begin{smallmatrix}
\alpha &  0 \\
0 & \alpha   
\end{smallmatrix}\right)
}
\ar[dashed]{dr}[description]{
\left(\begin{smallmatrix}
\alpha \circ (\trace \bartimes \id) \circ \theta \; &  0 
\end{smallmatrix}\right)
}
\&
\underset{\degzero}{\B \bartimes  \B}
\ar{d}{\alpha}
\\
M^\barB \bartimes M \bartimes M^\barB \bartimes M
\ar{r}{
\left(\begin{smallmatrix}
- \alpha \circ (\id^2 \bartimes \trace) \\ \alpha \circ (\trace \bartimes \id^2)  
\end{smallmatrix}\right)
}
\&
\bigl(M^\barB \bartimes M \bigr)
\;\oplus\;
\bigl(M^\barB \bartimes M \bigr)
\ar{r}{
\left(\begin{smallmatrix}
\trace \; & \; \trace
\end{smallmatrix}\right)
}
\&
\underset{\degzero}{\B}. 
\end{tikzcd}
\end{equation*}

Consider now the following closed degree zero map of twisted complexes
\begin{equation}
\label{eqn-homotopy-equivalence-from-T^2-to-FR-FRFR-FR-Id}
\begin{tikzcd}[ampersand replacement=\&,column sep={3cm},row sep={1cm}]  
\&
FRFR
\ar{r}{
\left(\begin{smallmatrix}
- \trace FR \\ FR \trace 
\end{smallmatrix}\right)
}
\ar[equal]{d}
\&
FR
\;\oplus\;
FR
\ar{r}{
\left(\begin{smallmatrix}
\trace \; & \; \trace
\end{smallmatrix}\right)
}
\ar{d}{
\left(\begin{smallmatrix}
\id \; & \; \id
\end{smallmatrix}\right)
}
\
\&
\id_\B
\ar[equal]{d}
\\
FR
\ar{r}{F{\action}R}
\ar[bend right=15,dashed]{rr}{\xi'_\B}
\&
FRFR
\ar{r}{FR\trace - \trace FR}
\&
FR
\ar{r}{\trace}
\&
\underset{\degzero}{\id_\B}.
\end{tikzcd}
\end{equation}

By the Rectangle Lemma (Lemma \ref{lemma-rectangle-lemma}) the total
complex of \eqref{eqn-homotopy-equivalence-from-T^2-to-FR-FRFR-FR-Id}
is isomorphic to the total complex of
\begin{tiny}
\begin{equation*}
\begin{tikzcd}[ampersand replacement=\&,column sep={2cm},row sep={1cm}]  
FR
\ar[bend left=15,dashed,']{rrr}{0\rightarrow 0\colon \xi'_\B}
\ar{r}{0\rightarrow0\colon F{\action}R}
\&
\left(
FRFR \xrightarrow{\id} \underset{\degzero}{FRFR}
\right)
\ar{rr}{\text{-}1\rightarrow\text{-}1\colon
- \left(\begin{smallmatrix}- \trace FR \\ FR \trace\end{smallmatrix}\right),}[']{0\rightarrow 0\colon FR\trace - \trace FR}
\& \&
\left(
FR
\oplus
FR
\xrightarrow{ 
-
\left(\begin{smallmatrix}
\id & \id
\end{smallmatrix}\right)
}
\underset{\degzero}{FR}
\right)
\ar{r}{\text{-}1\rightarrow\text{-}1\colon
- \left(\begin{smallmatrix}
\trace & \trace
\end{smallmatrix}\right),}[']{0\rightarrow 0\colon \trace}
\&
\underset{\degzero}
{
\left(
\id_\B
\xrightarrow{\id}
\underset{\degzero}{\id_\B}
\right)
}.
\end{tikzcd}
\end{equation*}
\end{tiny}
Here and below, we use the following convention for labelling the arrows which correspond to maps of twisted complexes. To specify such map we list all its non-zero components in the format $i \rightarrow j \colon \alpha$. This means that the component of the map which goes from the degree $i$ element of the source complex to the degree $j$ element of the target complex is $\alpha$. 

By the Extraction Lemma (Lemma \ref{lemma-extraction-lemma}) 
the complex above is homotopy equivalent to
\begin{equation*}
\begin{tikzcd}[ampersand replacement=\&,column sep={3cm},row sep={1cm}]  
FR
\ar{r}
\ar[',bend right=10,dashed]{rr}{
0 \rightarrow \text{-}1\colon
\left(\begin{smallmatrix} \text{-} \trace FR \circ F\action R \\ FR\trace \circ F\action R\end{smallmatrix}\right), \;
0 \rightarrow 0\colon \xi'_\B
}
\&
0
\ar{r}
\&
\underset{\degminusone}{
\bigl\{
FR
\oplus
FR
\xrightarrow{ 
-
\left(\begin{smallmatrix}
\id \; & \; \id
\end{smallmatrix}\right)
}
\underset{\degzero}{FR}
\bigr\}
}
\end{tikzcd}
\end{equation*}
and that can be rearranged as 
\begin{equation*}
\begin{tikzcd}[ampersand replacement=\&,column sep={3cm},row sep={1cm}]  
\bigl\{
FR
\xrightarrow{- \trace FR \circ F \action R}
\underset{\degzero}{FR}
\bigr\}
\ar{r}{ \text{-}1 \rightarrow \text{-}1\colon
FR\trace \circ F\action R}[']{0 \rightarrow 0\colon \id, \; \text{-}1 \rightarrow 0\colon \xi'_\B }
\&
\underset{\degminusone}{
\bigl\{
FR
\xrightarrow{- \id}
\underset{\degzero}{FR}
\bigr\}
}
\end{tikzcd}
\end{equation*}
which is null-homotopic by the Extraction Lemma. Here and below, for any DG-category $\C$, we say that two $\mathbb{Z}$-graded collections of elements of $\pretriag \C$ and maps  between them can be \em rearranged \rm one into another if they have the same totalisation, considered as a $\mathbb{Z}$-graded collection of elements of $\C$ and maps between them. 
Note that a $\mathbb{Z}$-graded collection of elements of $\pretriag \C$ and maps between them is a twisted complex if and only if its totalisation is a twisted complex. Thus if we start with an element of $\pretriag \pretriag \C$, anything we can rearrange it into is also an element of $\pretriag \pretriag \C$.  Note further that this implies that a map of two elements of 
$\pretriag \C$ is closed of degree $0$ if and only if its totalisation is a twisted complex. 
\end{proof}

\subsection{Derived category perspective}
\label{section-derived-category-perspective}

As described in \S\ref{section-zolotom-po-mramoru}
any twisted complex over a DG-category defines several Postnikov systems 
in the homotopy category which compute its convolution. 
In this section, we study in detail four
Postnikov systems in $D(\BbimB)$ obtained from the twisted
complex \eqref{eqn-FR-FRFR-FR-Id-twisted-complex}
of Theorem 
\ref{theorem-canonical-twisted-complex-associated-to-homotopy-adjunction}
by repeated application of 
Corollary \ref{cor-postnikov-systems-from-twisted-complex}. 
These turn out to depend only on the isomorphism class of $M$ 
in $D(\AbimB)$ and thus induce canonical functorial Postnikov systems
which exist for any adjoint pair $(f,r)$ of enhanced derived functors.
Similarly, each of the twisted complexes 
\eqref{eqn-Id-RF-RFRF-RF-twisted-complex}, 
\eqref{eqn-LF-LFLF-LF-Id-twisted-complex}
and \eqref{eqn-Id-FL-FLFL-FL-twisted-complex}
of Theorem 
\ref{theorem-canonical-twisted-complex-associated-to-homotopy-adjunction}
defines four canonical Postnikov systems in the derived category.
An interested reader would have no trouble working these out 
following our treatment of the complex \eqref{eqn-FR-FRFR-FR-Id-twisted-complex}
in this section. 

Let 
\begin{align}
\label{eqn-trace-exact-triangle}
FR \xrightarrow{\trace} \id \xrightarrow{p} T \xrightarrow{q} FR[1] \\
\label{eqn-action-exact-triangle}
C \xrightarrow{r} \id \xrightarrow{\action} RF \xrightarrow{s} C[1]. 
\end{align}
be any exact triangles in $D(\AbimB)$ which complete $FR
\xrightarrow{\trace} \id$ and $\id \xrightarrow{\action} RF$. 
The objects $T$ and $C$ are called the \em twist \rm and the \em
co-twist \rm of $F$, respectively. 

Since $F$ and $R$ are genuinely adjoint in $D(\AbimB)$ 
$$ R \xrightarrow{\action R} RFR \xrightarrow{R \trace} R $$
$$ F \xrightarrow{F \action} FRF \xrightarrow{\trace F} F $$
are retracts. In a triangulated category all retracts are split, 
and thus we have isomorphisms
\begin{align}
\label{eqn-splittings-of-FRFR-FRT-1} 
FR \oplus FRT[-1] \xrightarrow{(F\action{R} \;\; FRq)} FRFR \\
\label{eqn-splittings-of-FRFR-TFR-1} 
FR \oplus TFR[-1] \xrightarrow{(F\action{R} \;\; qFR)} FRFR \\ 
FRFR \xrightarrow{\left(\begin{smallmatrix}FR\trace \\  FsR\end{smallmatrix}\right)} FR \oplus FCR[1] \\
\label{eqn-splittings-of-FRFR-FCR1-trFR} 
FRFR \xrightarrow{\left(\begin{smallmatrix}\trace{FR} \\ FsR\end{smallmatrix}\right)} FR \oplus FCR[1].
\end{align}

Below, we adopt the following convention. Given objects 
$A$, $B$ and $C$ in a triangulated category we say that a triangle
$$ A \xrightarrow{\deg i} B \xrightarrow{\deg j} C \xrightarrow{\deg k} A $$
with $i+j+k = 1$ is exact if the following induced triangle is exact:
$$ A \xrightarrow{\deg 0} B[i] \xrightarrow{\deg 0} C[i+j]
\xrightarrow{\deg 0} A[1]. $$

\begin{theorem}
\label{theorem-canonical-postnikov-systems-for-FR-FRFR-FR-Id}
Let $M \in \AmodbarB$ be $\B$-perfect and let $(F,R)$ be 
the corresponding homotopy adjoint pair. For any exact
triangle \eqref{eqn-trace-exact-triangle} completing 
$FR \xrightarrow{\trace} \id$:

\begin{enumerate}
\item 
\label{item-the-canonical-postnikov-system-for-FR-FRFR-FR-Id}
The following is a Postnikov system in $D(\AbimB)$:

\begin{equation}
\label{eqn-canonical-twisted-complex-postnikov-system-right-to-left}
\begin{tikzcd}[column sep={1.732cm,between origins}, row sep={2cm,between origins}] 
FR
\ar{rr}{F{\action}R}[']{\simeq \left(
\begin{smallampmatrix}
\id \\ 0
\end{smallampmatrix}
\right)}
\ar[dotted]{ddrrrr}[description]{\left(
\begin{smallampmatrix}
\id \\ 0
\end{smallampmatrix}
\right)}
&&
\underset{\simeq FR \oplus FRT[-1]}{FRFR}
\ar{rr}{FR\trace - \trace FR}[']{\simeq (0 \;\; \text{-}q\circ \trace{T})}
\ar[dotted]{rrrd}[description]{(0 \;\; - \trace T)}
\ar[phantom]{dddr}[description, pos=0.6]{\star}
&&
FR
\ar{rr}{\trace}
\ar[phantom]{dd}[description, pos=0.6]{\star}
&&
\id_\B.
\ar{ld}{p}
\arrow[to=N24,phantom]{}[description, pos=0.275]{\star}
\\
&&
& |[alias=N24]| &
& 
T
\ar[dashed]{lu}[description]{q}
\ar{ld}{\left(
\begin{smallampmatrix}
0 \\ pT
\end{smallampmatrix}
\right)}
&
\\
&&
&&
FR[2] \oplus T^2
\ar[dashed]{lluu}[description]{\left(
\begin{smallampmatrix}
\id \amsamp 0 \\
0 \amsamp qT
\end{smallampmatrix}
\right)}
\ar{ld}{(0 \;\; \id)}
&&
\\
&&
&
T^2
\ar[dashed]{llluuu}{0}
&
&&
\end{tikzcd}
\end{equation}
Here the triangles denoted by $\star$ are exact, the remaining
triangles are commutative, the morphisms of $\deg > 0$ are drawn 
with dashed arrows, and the morphisms of $\deg < 0$ are drawn with
dotted arrows. 

\item 
\label{item-three-remaining-postnikov-systems-for-FR-FRFR-FR-Id}
For any exact triangles
\begin{align}
\label{eqn-FRTminus1-FR-X-exact-triangle}
FRT[-1] \xrightarrow{\text{-}q \circ \trace T} FR  \xrightarrow{t} X
\xrightarrow{u} FRT \\
\label{eqn-Y-id-T^2-exact-triangle}
Y \xrightarrow{v} \id \xrightarrow{p^2} T^2 \xrightarrow{w} Y[1]
\end{align}
in $D(\AbimB)$ completing $\text{-}q \circ \trace T$ and $p^2$, there 
exists an isomorphism $X \simeq Y$ in $D(\AbimB)$
such that the following are Postnikov systems:
\begin{equation}
\label{eqn-canonical-twisted-complex-postnikov-system-left-to-right}
\begin{tikzcd}[column sep={1.732cm,between origins}, row sep={2cm,between origins}] 
FR
\ar{rr}{F{\action}R}[']{\simeq \left(
\begin{smallampmatrix}
\id \\ 0
\end{smallampmatrix}
\right)}
&
\ar[phantom]{d}[description, pos=0.4]{\star}
&
\underset{\simeq FR \oplus FRT[-1]}{FRFR}
\ar{rr}{FR\trace - \trace FR}[',pos=0.4]{\simeq (0 \;\; \text{-}q\circ \trace{T})}
\ar{ld}[description]{(0 \;\; \id)}
&
&
FR
\ar{rr}{\trace}
\ar{lldd}[description]{t}
&
&
\id_\B
\ar{lllddd}{p^2}
\\
& 
FRT[-1]
\ar[dashed]{lu}{0}
\ar{rrru}[description]{\text{-}q \circ \trace{T}}
&
&
\ar[phantom]{ll}[description, pos=0.5]{\star}
&
&&
\\
&&
X \simeq Y
\ar{rrrruu}[description]{v}
\ar[dashed]{lu}{u}
&&
\ar[phantom]{lllu}[description, pos=0.2]{\star}
&&
\\
&&
&
T^2
\ar[dashed]{lu}{w}
&
&&
\end{tikzcd}
\end{equation}

\begin{equation}
\label{eqn-canonical-twisted-complex-postnikov-system-center-left}
\begin{tikzcd}[column sep={1.732cm,between origins}, row sep={2cm,between origins}] 
FR
\ar{rr}{F{\action}R}[']{\simeq \left(
\begin{smallampmatrix}
\id \\ 0
\end{smallampmatrix}
\right)}
\ar[dotted]{drrr}[description]{\left(
\begin{smallampmatrix}
\id \\ 0
\end{smallampmatrix}
\right)}
&
\ar[phantom]{ddr}[description,pos=0.42]{\star}
&
\underset{\simeq FR \oplus FRT[-1]}{FRFR}
\ar{rr}{FR\trace - \trace FR}[']{\simeq (0 \;\; \text{-}q\circ \trace{T})}
&
\ar[phantom]{d}[description,pos=0.4]{\star}
&
FR
\ar{rr}{\trace}
\ar{dl}[description]{\left(
\begin{smallampmatrix}
0 \\ t
\end{smallampmatrix}
\right)}
&&
\id_\B
\ar{lllddd}{p^2}
\\
&&
&
FR[1] \oplus X \simeq Y
\ar{dl}[description]{(0 \;\; \id)}
\ar[dashed]{lu}[description]{\left(
\begin{smallampmatrix}
\id \amsamp 0 \\
0 \amsamp v
\end{smallampmatrix}
\right)}
&
&&
\\
&&
X \simeq Y
\ar[dashed]{uull}{0}
\ar{rrrruu}[description]{v}
&&
\ar[phantom]{lu}[description, pos=0.4]{\star}
&&
\\
&&
&
T^2
\ar[dashed]{lu}{w}
&
&&
\end{tikzcd}
\end{equation}

\begin{equation}
\label{eqn-canonical-twisted-complex-postnikov-system-center-right}
\begin{tikzcd}[column sep={1.732cm,between origins}, row sep={2cm,between origins}] 
FR
\ar{rr}{F{\action}R}[']{\simeq\left(
\begin{smallampmatrix}
\id \\ 0
\end{smallampmatrix}
\right)} 
\ar[dotted]{ddrrrr}[description, pos=0.4]{\left(
\begin{smallampmatrix}
\id \\ 0
\end{smallampmatrix}
\right)}
&&
\underset{\simeq FR \oplus FRT[-1]}{FRFR}
\ar[phantom]{dddr}[description, pos=0.6]{\star}
\ar{rr}{FR\trace - \trace FR}[']{\simeq (0 \;\; \text{-}q\circ \trace{T})}
&
\ar[phantom]{d}[description,pos=0.4]{\star}
&
FR
\ar{rr}{\trace}
\ar{dl}[description]{\left(
\begin{smallampmatrix}
0 \\ t
\end{smallampmatrix}
\right)}
&
\ar[phantom]{ldd}[description,pos=0.42]{\star}
&
\id_\B
\ar{lldd}[description]{\left(
\begin{smallampmatrix}
0 \\ p^2
\end{smallampmatrix}
\right)}
\\
&&
&
FR[1] \oplus X \simeq Y
\ar{urrr}[description]{(0 \;\; u)}
\ar[dashed]{lu}[description]{\left(
\begin{smallampmatrix}
\id \amsamp 0 \\
0 \amsamp v
\end{smallampmatrix}
\right)}
&
&&
\\
&&
&
&
FR[2] \oplus T^2
\ar[dashed]{lu}[description,pos=0.6]{\left(
\begin{smallampmatrix}
\id \amsamp 0 \\
0 \amsamp w 
\end{smallampmatrix}
\right)}
\ar{ld}{(0 \;\; \id)}
&&
\\
&&
&
T^2
\ar[dashed]{llluuu}{0}
&
&&
\end{tikzcd}
\end{equation}

Here the triangles denoted by $\star$ are exact, the remaining
triangles are commutative, the morphisms of $\deg > 0$ are drawn 
with dashed arrows, and the morphisms of $\deg < 0$ are drawn with
dotted arrows.
\end{enumerate}
\end{theorem}

\bf NB: \rm There are also versions of this theorem for each of 
the three other splittings
\eqref{eqn-splittings-of-FRFR-TFR-1}-\eqref{eqn-splittings-of-FRFR-FCR1-trFR}
of $FRFR$. We leave them as an exercise to the reader. 

\begin{proof}
\eqref{item-the-canonical-postnikov-system-for-FR-FRFR-FR-Id}:\\
To show that 
\eqref{eqn-canonical-twisted-complex-postnikov-system-right-to-left} 
is a Postnikov system in $D(\AbimB)$ means to show that all its 
non-starred pieces commute and all its starred pieces are exact. 
It suffices to establish this when \eqref{eqn-trace-exact-triangle} is the canonical exact triangle
\begin{align}
\label{eqn-trace-exact-triangle-canonical}
FR \xrightarrow{\quad\trace\quad} \id \xrightarrow{\quad 0,0\colon \id\quad } 
\Bigl\{ FR \xrightarrow{\trace} \underset{\degzero}{\id} \Bigr\}
\xrightarrow{\quad \text{-}1,\text{-}1\colon \id \quad} FR[1]. 
\end{align}
This is because any other exact triangle \eqref{eqn-trace-exact-triangle}
is isomorphic to \eqref{eqn-trace-exact-triangle-canonical}, and 
this isomorphism is readily seen to identify the corresponding diagrams
\eqref{eqn-canonical-twisted-complex-postnikov-system-right-to-left}. 
Hence if one is a Postnikov system, so is the other. 

Thus, let \eqref{eqn-trace-exact-triangle} be the exact triangle 
\eqref{eqn-trace-exact-triangle-canonical}. 
Then there is the following natural lift of the whole diagram  
\eqref{eqn-canonical-twisted-complex-postnikov-system-right-to-left} 
into $\pretriag(\AmodbarB)$. We lift the objects of 
\eqref{eqn-canonical-twisted-complex-postnikov-system-right-to-left} 
as follows:
\begin{itemize}
\item We lift $FR$ and $\id$ to themselves, 
\item We lift $FR \oplus FRT[-1]$ to the twisted complex
$\underset{\degzero}{FR \oplus FRFR} \xrightarrow{
\left(\begin{smallmatrix} 0 \; & \; -FR\trace \end{smallmatrix}\right) 
}
FR$, 
\item We lift $T$ to the twisted complex 
$FR \xrightarrow{\trace} \underset{\degzero}{\id}$, 
\item As per the proof of 
Theorem \ref{theorem-the-convolution-of-FR-FRFR-FR-Id-is-T^2-etc}
we lift $T^2$ to the twisted complex 
\begin{equation}
\label{eqn-lifts-of-the-objects-of-the-right-to-left-system-T^2} 
\begin{tikzcd}[ampersand replacement=\&,column sep={3cm}]  
FRFR
\ar{r}{
\left(\begin{smallmatrix}
- \trace FR \\ FR \trace 
\end{smallmatrix}\right)
}
\&
FR
\;\oplus\;
FR
\ar{r}{
\left(\begin{smallmatrix}
\trace \; & \; \trace
\end{smallmatrix}\right)
}
\&
\underset{\degzero}{\id_\B},
\end{tikzcd}
\end{equation}
\item Similarly, we lift $FR[2]\oplus T^2$ to the twisted complex 
\begin{equation}
\label{eqn-lifts-of-the-objects-of-the-right-to-left-system-FR(2)-plus-T^2} 
\begin{tikzcd}[ampersand replacement=\&,column sep={3cm}]  
FR \oplus FRFR
\ar{r}{
\left(\begin{smallmatrix}
0 & -\trace FR \\
0 &  FR \trace 
\end{smallmatrix}\right)
}
\&
FR
\;\oplus\;
FR
\ar{r}{
\left(\begin{smallmatrix}
\trace \; & \; \trace
\end{smallmatrix}\right)
}
\&
\underset{\degzero}{\id_\B}.
\end{tikzcd}
\end{equation}
\end{itemize}

The maps $p$ and $q$ in \eqref{eqn-trace-exact-triangle-canonical} 
and the trace map are all defined by maps in $\pretriag(\AmodbarB)$. 
Since all the maps in  
\eqref{eqn-canonical-twisted-complex-postnikov-system-right-to-left} 
are written in terms of $p$, $q$ and the trace map, they all 
have natural lifts to $\pretriag(\AmodbarB)$. 
We thus have a natural lift of
\eqref{eqn-canonical-twisted-complex-postnikov-system-right-to-left} to 
$\pretriag(\AmodbarB)$ and it is then straightforward to verify
directly on the level of twisted complexes that 
\eqref{eqn-canonical-twisted-complex-postnikov-system-right-to-left}
is a Postnikov system. 

However, there is a more conceptual approach.  
Recall the following complex of the Theorem 
\ref{theorem-canonical-twisted-complex-associated-to-homotopy-adjunction}:
\begin{equation}
\label{eqn-FR-FRFR-FR-Id-twisted-complex-derived-cat-perspective}
\begin{tikzcd}[column sep={3cm}] 
FR
\ar{r}{F{\action}R}
\ar[bend left=20,dashed]{rr}{\xi'_\B}
&
FRFR
\ar{r}{FR\trace - \trace FR}
&
FR
\ar{r}{\trace}
&
\underset{\degzero}{\id_\B}.
\end{tikzcd}
\end{equation}
Repeated applications of
Corollary \ref{cor-postnikov-systems-from-twisted-complex}
to this twisted complex
produce the following diagram in $\pretriag(\AmodbarB)$
whose image $D(\AbimB)$ is a Postnikov system: 
\begin{equation}
\label{eqn-canonical-twisted-complex-postnikov-system-right-to-left-in-DG}
\begin{tikzcd}[column sep={1.732cm,between origins}, row
sep={2cm,between origins}] 
FR
\ar{rr}{F{\action}R}
\ar[dotted]{ddrrrr}[description, sloped]{0,\text{-}2\colon
F\action{R}, \; 0,\text{-}1\colon \xi'_\B}
&&
FRFR
\ar{rr}{FR\trace - \trace FR}
\ar[dotted]{rrrd}[description, sloped]{0,\text{-}1\colon
FR\trace - \trace FR}
\ar[phantom]{dddr}[description, pos=0.6]{\star}
&&
FR
\ar{rr}{\trace}
\ar[phantom]{dd}[description, pos=0.6]{\star}
&&
\id_\B.
\ar{d}[description]{\id}
\arrow[to=N24,phantom]{}[description, pos=0.275]{\star}
&
\\
&&
& |[alias=N24]| &
& 
\ar[dashed]{lu}[description]{\text{-}1,0\colon \id}
\Bigl( FR \ar{r}
\ar{d}[description]{\id}
&
\underset{\degzero}{\id_\B} \Bigr)
\ar{d}[description]{\id}
\\
&&
&&
\ar[dashed]{lluu}[description]{-2,0\colon \id}
\Bigl(FRFR
\ar{d}[description]{\id}
\ar{r}
&
FR
\ar{r}
\ar{d}[description]{\id}
&
\underset{\degzero}{\id_\B} \Bigr)
\ar{d}[description]{\id}
\\
&&
&
\ar[dashed]{llluuu}[description]{\text{-}3,0\colon \id}
\Bigl(FR
\ar{r}
&
FRFR
\ar{r}
&
FR
\ar{r}
&
\underset{\degzero}{\id_\B} \Bigr)
\end{tikzcd}
\end{equation}

Consider the following $\pretriag(\AmodbarB)$ homotopy equivalences 
between the objects of  
\eqref{eqn-canonical-twisted-complex-postnikov-system-right-to-left-in-DG}
and the aforementioned natural lift of 
\eqref{eqn-canonical-twisted-complex-postnikov-system-right-to-left}:
\begin{itemize}
\item The homotopy equivalence  
\begin{equation*}
\begin{tikzcd}[ampersand replacement=\&,column sep={1.25cm},row sep={0.75cm}]  
\underset{\degzero}{FRFR}
\ar{r}{
\left(\begin{smallmatrix}
0  \\ FR \trace 
\end{smallmatrix}\right)
}
\ar{d}{\id}
\&
FR \oplus FR
\\
\underset{\degzero}{FRFR}.
\&
\end{tikzcd}
\end{equation*}
It descends in $D(\AbimB)$ to the splitting isomorphism 
$FR \oplus FRT[-1] 
\xrightarrow{\eqref{eqn-splittings-of-FRFR-FRT-1}} 
FRFR$. 
\item 
The homotopy equivalence 
\eqref{eqn-homotopy-equivalence-from-T^2-to-FR-FRFR-FR-Id}
which was demonstrated in the proof of 
Theorem \ref{theorem-the-convolution-of-FR-FRFR-FR-Id-is-T^2-etc}
to descend in $D(\AbimB)$ to an isomorphism
\begin{equation*}
T^2 \xrightarrow{\sim} 
\left\{
\begin{tikzcd}[column sep={1cm}] 
FR
\ar{r}
&
FRFR
\ar{r}
&
FR
\ar{r}
&
\underset{\degzero}{\id_\B}
\end{tikzcd}
\right\}.
\end{equation*}
\item 
The homotopy equivalence 
\begin{equation}
\label{eqn-homotopy-equivalence-from-FR(2)+T^2-to-FRFR-FR-Id}
\begin{tikzcd}[ampersand replacement=\&,column sep={3cm},row
sep={1cm}]  
FR \oplus FRFR
\ar{r}{
\left(\begin{smallmatrix}
0 & -\trace FR \\
0 &  FR \trace 
\end{smallmatrix}\right)
}
\ar[dotted]{dr}[description,sloped]{
\left(\begin{smallmatrix}
\xi'_\B \; & \; 0  
\end{smallmatrix}\right)
}
\ar{d}{
\left(\begin{smallmatrix}
F\action{R} \; & \; \id  
\end{smallmatrix}\right)
}
\&
FR
\;\oplus\;
FR
\ar{r}{
\left(\begin{smallmatrix}
\trace \; & \; \trace
\end{smallmatrix}\right)
}
\ar{d}{
\left(\begin{smallmatrix}
\id \; & \; \id
\end{smallmatrix}\right)
}
\
\&
\id_\B
\ar[equal]{d}
\\
FRFR
\ar{r}{FR\trace - \trace FR}
\&
FR
\ar{r}{\trace}
\&
\underset{\degzero}{\id_\B}
\end{tikzcd}
\end{equation}
obtained by rearranging the terms of 
$\eqref{eqn-homotopy-equivalence-from-T^2-to-FR-FRFR-FR-Id}$. 
It descends in $D(\AbimB)$ to an isomorphism 
\begin{equation*} FR[2] \oplus T^2 \xrightarrow{\sim}
\left\{
\begin{tikzcd}[column sep={1cm}] 
FRFR
\ar{r}
&
FR
\ar{r}
&
\underset{\degzero}{\id_\B} 
\end{tikzcd}
\right\}.
\end{equation*}
\end{itemize}

It can be readily checked on the level of twisted complexes 
that these equivalences identify up to homotopy 
the natural lift of 
\eqref{eqn-canonical-twisted-complex-postnikov-system-right-to-left}
and
\eqref{eqn-canonical-twisted-complex-postnikov-system-right-to-left-in-DG}. 
Hence the corresponding isomorphisms identify
\eqref{eqn-canonical-twisted-complex-postnikov-system-right-to-left}
and the image of 
\eqref{eqn-canonical-twisted-complex-postnikov-system-right-to-left-in-DG}
in $D(\AbimB)$. 
Since the latter is a Postnikov system, so must be the former. 

\eqref{item-three-remaining-postnikov-systems-for-FR-FRFR-FR-Id}:

As in the proof of 
\eqref{item-the-canonical-postnikov-system-for-FR-FRFR-FR-Id}, 
it is enough to prove the desired assertion when 
$$ T = \Bigl\{ FR \xrightarrow{\quad\trace\quad} \underset{\degzero}{\id} \Bigr\} $$
and \eqref{eqn-trace-exact-triangle} is the canonical exact triangle 
\eqref{eqn-trace-exact-triangle-canonical}. For similar reasons, 
it is enough to assume that the exact triangles  
\eqref{eqn-FRTminus1-FR-X-exact-triangle}
and 
\eqref{eqn-Y-id-T^2-exact-triangle}
are 
\begin{scriptsize}
\begin{equation}
\label{eqn-FRTminus1-FR-X-exact-triangle-canonical}
\left\{
\underset{\degzero}{FRFR} \xrightarrow{-FR\trace} FR
\right\}
\xrightarrow{0,0\colon -\trace FR}
FR 
\xrightarrow{0,0\colon \left(\begin{smallmatrix}0 \\ \id\end{smallmatrix}\right)}
\left\{
{FRFR} \xrightarrow{\left(
\begin{smallmatrix}-\trace{FR} \\ FR\trace \end{smallmatrix}
\right)} 
\underset{\degzero}{FR \oplus FR}
\right\}
\underset{-1, 0\colon \id}{\xrightarrow{0,1\colon (0\;\;\id)}}
\end{equation}
\begin{equation}
\label{eqn-Y-Id-T^2-exact-triangle-canonical}
\left\{
{FRFR} \xrightarrow{ \left(
\begin{smallmatrix}-\trace{FR} \\ FR\trace\end{smallmatrix}
\right) }
\underset{\degzero}{FR \oplus FR}
\right\}
\xrightarrow{0,0\colon (\trace \;\; \trace)}
\id
\xrightarrow{0,0\colon \id}
\left\{
\begin{tikzcd}[column sep={0.9cm}]
{FRFR} 
\ar{rr}{\left(
\begin{smallmatrix}-\trace{FR} \\ FR\trace\end{smallmatrix}
\right) } 
&&
{FR \oplus FR}
\ar{r}{(\trace\;\;\trace)}
&
\underset{\degzero}{\id}
\end{tikzcd}
\right\}
\underset{\text{-}2,\text{-}1\colon \id}{\xrightarrow{
\text{-1,0}\colon 
\left(\begin{smallmatrix}
\id & 0 \\
0 & \id
\end{smallmatrix}\right)
}
}
\end{equation}
\end{scriptsize}
In particular, 
$$ X = Y = \left\{
{FRFR} \xrightarrow{ \left(
\begin{smallmatrix}-\trace{FR} \\ FR\trace\end{smallmatrix}
\right)  } \underset{\degzero}{FR \oplus FR}
\right\} $$
and we can take the requisite isomorphism $X \simeq Y$ to be the identity map. 

The rest of the proof proceeds analogously to that 
of \eqref{item-the-canonical-postnikov-system-for-FR-FRFR-FR-Id}. 
We lift the objects of 
\eqref{eqn-canonical-twisted-complex-postnikov-system-left-to-right}-\eqref{eqn-canonical-twisted-complex-postnikov-system-center-right}
to $\pretriag(\AmodbarB)$ as in the proof of 
\eqref{item-the-canonical-postnikov-system-for-FR-FRFR-FR-Id}, plus: 
\begin{itemize}
\item We lift $X = Y$ to the twisted complex 
\begin{align}
{FRFR} \xrightarrow{\left(
\begin{smallmatrix}-\trace{FR} \\ FR\trace\end{smallmatrix}
\right)  }
\underset{\degzero}{FR \oplus FR}. 
\end{align}
\item We lift $FR[1] \oplus X = Y$ to the twisted complex
\begin{equation}
\begin{tikzcd}[column sep={1.5cm}]
FR \oplus FRFR
\ar{rr}{
\left( 
\begin{smallampmatrix}
0 \amsamp -\trace FR \\
0 \amsamp FR\trace
\end{smallampmatrix}
\right)
}
&&
\underset{\degzero}{FR \oplus FR}. 
\end{tikzcd}
\end{equation}
\end{itemize}
Then the morphisms in \eqref{eqn-canonical-twisted-complex-postnikov-system-left-to-right}-\eqref{eqn-canonical-twisted-complex-postnikov-system-center-right} 
all have natural lifts to $\pretriag(\AmodbarB)$, since they are
all written in terms of the trace map and the maps in the exact triangles 
\eqref{eqn-trace-exact-triangle-canonical}, 
\eqref{eqn-FRTminus1-FR-X-exact-triangle-canonical}, 
and \eqref{eqn-Y-Id-T^2-exact-triangle-canonical}. And these were all
defined by maps in $\pretriag(\AmodbarB)$.

Thus we have natural lifts of  
\eqref{eqn-canonical-twisted-complex-postnikov-system-left-to-right}-\eqref{eqn-canonical-twisted-complex-postnikov-system-center-right}
to $\pretriag(\AmodbarB)$. It is then straightforward to verify  
on the level of twisted complexes that
\eqref{eqn-canonical-twisted-complex-postnikov-system-left-to-right}-\eqref{eqn-canonical-twisted-complex-postnikov-system-center-right}
are Postnikov systems. Alternatively, these lifts can be identified 
up to homotopy with the three remaining diagrams induced, in addition to  
\eqref{eqn-canonical-twisted-complex-postnikov-system-right-to-left-in-DG},
by the canonical twisted complex 
\eqref{eqn-FR-FRFR-FR-Id-twisted-complex-derived-cat-perspective}.
The identifying homotopy equivalences are those used in the proof of
\eqref{item-the-canonical-postnikov-system-for-FR-FRFR-FR-Id} plus:
\begin{itemize}
\item The following restriction of
\eqref{eqn-homotopy-equivalence-from-T^2-to-FR-FRFR-FR-Id} to 
corresponding subcomplexes:
\begin{equation}
\begin{tikzcd}[column sep={1.5cm}]
\label{eqn-X-Y-to-FR-FRFR-FR-homotopy-equivalence}
&
FRFR
\ar{rr}{\left(
\begin{smallmatrix}-\trace{FR} \\ FR\trace\end{smallmatrix}
\right) }
\ar{d}{-\id}
&&
\underset{\degzero}{FR \oplus FR}
\ar{d}{
(\id \; \; \id)
}
\\
FR 
\ar{r}{\text{-} F\action{R}}
\ar[bend right=15,dashed]{rrr}{-\xi'_\B}
&
FRFR
\ar{rr}{\text{-} (FR\trace - \trace{FR})}
&&
\underset{\degzero}{FR}.
\end{tikzcd}
\end{equation}
It descends in $D(\AbimB)$ to an isomorphism 
\begin{equation*}
(X = Y) \xrightarrow{\sim} 
\left\{
\begin{tikzcd}[column sep={2.5cm}] 
FR
\ar{r}{- F{\action}R}
\ar[bend right=15,dashed]{rr}{-\xi'_\B}
&
FRFR
\ar{r}{-(FR\trace - {\trace}FR)}
&
\underset{\degzero}{FR}
\end{tikzcd}
\right\}.
\end{equation*}
\item
The following restriction of
\eqref{eqn-homotopy-equivalence-from-FR(2)+T^2-to-FRFR-FR-Id}
to corresponding subcomplexes:
\begin{equation}
\begin{tikzcd}[column sep={2.5cm}]
FR \oplus FRFR
\ar[dashed]{drr}[description, pos = 0.4]{
\left( 
\begin{smallampmatrix}
- \xi'_\B \\
0 
\end{smallampmatrix}
\right)
}
\ar{rr}{
\left( 
\begin{smallampmatrix}
0 \amsamp -\trace FR \\
0 \amsamp FR\trace
\end{smallampmatrix}
\right)
}
\ar{d}{
\left(
\begin{smallmatrix}
- F\action{R} \; \amsamp \; - \id 
\end{smallmatrix}
\right)
}
&&
\underset{\degzero}{FR \oplus FR}
\ar{d}{
\left(\begin{smallmatrix}
\id \; \amsamp \; \id
\end{smallmatrix}\right)
}
\\
FRFR
\ar{rr}{-(FR\trace - \trace{FR})}
&&
\underset{\degzero}{FR} 
\end{tikzcd}
\end{equation}
obtained by rearranging the terms of 
\eqref{eqn-X-Y-to-FR-FRFR-FR-homotopy-equivalence}. It descends in
$D(\AbimB)$ to an isomorphism 
\begin{equation*}
FR[1] \oplus X \simeq Y \xrightarrow{\sim} 
\left\{
\begin{tikzcd}[column sep={2.5cm}] 
FRFR
\ar{r}{FR\trace - {\trace}FR}
&
\underset{\degzero}{FR}
\end{tikzcd}
\right\}.
\end{equation*}

\end{itemize}
It follows that \eqref{eqn-canonical-twisted-complex-postnikov-system-left-to-right}-\eqref{eqn-canonical-twisted-complex-postnikov-system-center-right}
are Postnikov systems as desired. 
\end{proof}

\bibliography{references}
\bibliographystyle{amsalpha}
\end{document}